\documentclass[11pt]{article}

\usepackage{xcolor}
\usepackage{amsmath,soul,bm}
\usepackage{mathtools}
\usepackage{amssymb,mathrsfs,euscript}
\usepackage{enumerate}
\usepackage[normalem]{ulem}
\usepackage{cancel}
\usepackage{enumitem,todonotes}
\usepackage{tikz}
\usepackage{dsfont}
\usepackage{verbatim,authblk,amsthm}
\usepackage[round]{natbib}
\usepackage{float,graphicx,subcaption}
\usepackage{tabularx}
\setlength{\arraycolsep}{0.0em}
\setlength{\hoffset}{-1in}
   \setlength{\voffset}{-1in}
   \setlength{\oddsidemargin}{1.1in}
   \setlength{\textwidth}{6.42in}
   \setlength{\topmargin}{0.5in}
   \setlength{\headheight}{0.25in}
   \setlength{\headsep}{0.25in}
   \setlength{\textheight}{9.0in}
% Definitions of handy macros can go here
\newcommand*\circled[1]{\tikz[baseline=(char.base)]{
            \node[shape=circle,draw,inner sep=2pt] (char) {#1};}}
            
\newcommand{\inner}[2]{{\left\langle #1, #2 \right\rangle}}            
\newcommand{\norm}[1]{\left\|#1\right\|}

\newcommand{\X}{\mathcal X}

\newcommand{\R}{\mathbb R}

\newcommand{\E}{\mathbb E}
\newcommand{\h}{\mathscr H}
\allowdisplaybreaks

\newcommand{\Id}{\boldsymbol{I}}
\newcommand{\one}{\boldsymbol{1}}

\newcommand{\B}{\mathcal B}
\newcommand{\M}{\mathcal M}
\newcommand{\W}{\mathcal W}

\newcommand{\V}{\mathcal V}

\newcommand{\id}{\mathfrak J}
\newcommand{\ep}{\Upsilon}
\newcommand{\T}{\mathcal T}

\newcommand{\A}{\mathcal A}
\newcommand{\PP}{\mathcal P}
\newcommand{\s}{\mathcal S}
\newcommand{\D}{D}
\newcommand{\Ntl}{\mathcal{N}_{2}(\lambda)}

\newcommand{\Ntlsq}{\mathcal{N}^2_{2}(\lambda)}

\newcommand{\Nol}{\mathcal{N}_{1}(\lambda)}

\newcommand{\Cl}{C_{\lambda}}

\newcommand{\K}{\kappa}
\newcommand{\kk}{K}

\newcommand{\op}{\EuScript{L}^\infty(\h_{K})}

\newcommand{\opS}{\EuScript{L}^\infty(\h_{K_0})}

\newcommand{\opl}{\EuScript{L}^\infty(\Lp)}

\newcommand{\hs}{\EuScript{L}^2(\h_K)}
\newcommand{\hsS}{\EuScript{L}^2(\h_{K_0})}

\newcommand{\Lp}{L^{2}(P_0)}
\newcommand{\range}{\text{Ran}} 

\newcommand{\SgLP}{\A_{P,\lambda}^{-1/2}}
\newcommand{\SgL}{\A_{P_0,\lambda}^{-1/2}}

\newcommand{\SL}{\A_{P_0,\lambda}}
\newcommand{\SLP}{\A_{P,\lambda}}

\newcommand{\gsP}{g^{-1/2}_{\lambda}(\A_{P})}

\newcommand{\gShh}{g^{1/2}_{\lambda}(\hat{\A}_{P_0})}
\newcommand{\gSh}{g^{1/2}_{\lambda}(\hat{\A}_{P})}
\newcommand{\gS}{g_{\lambda}(\hat{\A}_{P})}

\newcommand{\gl}{g_{\lambda}}

\newcommand{\U}{u}

\newcommand{\stat}{\hat{\mathbb S}_{\lambda}}
\newcommand{\statt}{\mathbb S_{\lambda}}
\newcommand{\htens}{\otimes_{\h_{K_0}}}
\newcommand{\hhtens}{\otimes_{\h_{K}}}
\newcommand{\ltens}{\otimes_{\Lp}}

\newcommand{\PQ}{P_0}

\newcommand{\cd} {|\Lambda|}

\renewcommand{\epsilon}{\varepsilon}
% \usepackage[linktocpage,colorlinks,linkcolor=blue,anchorcolor=blue,citecolor=blue,urlcolor=blue,pagebackref]{hyperref}

%\usepackage{hyperref}
% \renewcommand*{\backref}[1]{\ifx#1\relax \else Page #1 \fi}
% \renewcommand*{\backrefalt}[4]{%
%   \ifcase #1 \footnotesize{(Not cited.)}%
%   \or        \footnotesize{(Cited on page~#2.)}%
%   \else      \footnotesize{(Cited on pages~#2.)}%
%   \fi
% }

%phantom space
\DeclarePairedDelimiter{\floor}{\lfloor}{\rfloor}

\newtheorem{theorem}{Theorem}
%\numberwithin{theorem}{section}

\newtheorem{corollary}[theorem]{Corollary}
\newtheorem{proposition}[theorem]{Proposition}
\theoremstyle{rem}
\newtheorem{remark}{Remark}

\newtheorem{assump}{Assumption}

\newenvironment{myassump}[2][]
  {\begin{assump}[#1]}
  {\end{assump}}

%\numberwithin{remark}{section}
\theoremstyle{example} 
\newtheorem{example}{Example}

\newtheorem{appxthm}{Theorem}[section]
\newtheorem{appxlem}[appxthm]{Lemma}

\newtheorem{appxcoro}[appxthm]{Corollary}

\theoremstyle{definition}

\theoremstyle{remark}

\begin{document}
\title{Minimax Optimal Goodness-of-Fit Testing with \\ Kernel Stein Discrepancy }

%\title{Minimax Optimal Score-based Nonparametric Goodness-of-Fit \\Tests on general domains with Kernel Stein Discrepancy}

%%for now have this title; just to emphasize everything lol

% \author{Omar Hagrass}%\thanks{bks18@psu.edu}} 
% \author{Bharath K. Sriperumbudur}%\thanks{nzs5368@psu.edu}}
% \author{Krishnakumar Balasubramanian}
% \affil{Department of Statistics,  
% Pennsylvania State University\\
% University Park, PA 16802, USA.\\
% \texttt{\{oih3,bks18,bxl9\}}@psu.edu}
\author[1]{Omar Hagrass}%\thanks{oih3@psu.edu}}
\author[2]{Bharath Sriperumbudur}%\thanks{bks18@psu.edu}}
\author[3]{Krishnakumar Balasubramanian}%\thanks{kbala@ucdavis.edu}}
\affil[1]{Department of Operations Research \& Financial Engineering, Princeton University}
\affil[2]{Department of Statistics, Pennsylvania State University}
\affil[3]{Department of Statistics, University of California, Davis}
\affil[1]{\texttt{oh2588@princeton.edu}}
\affil[2]{\texttt{\{bks18\}}@psu.edu}\affil[3]{\texttt{kbala}@ucdavis.edu}
\date{}
\maketitle

\begin{abstract}
We explore the minimax optimality of goodness-of-fit tests on general domains using the kernelized Stein discrepancy (KSD). The KSD framework offers a flexible approach for goodness-of-fit testing, avoiding strong distributional assumptions, accommodating diverse data structures beyond Euclidean spaces, and relying only on partial knowledge of the reference distribution, while maintaining computational efficiency. Although KSD is a powerful framework for goodness-of-fit testing, only the consistency of the corresponding tests has been established so far, and their statistical optimality remains largely unexplored. In this paper, we develop a general framework and an operator-theoretic representation of the KSD, encompassing many existing KSD tests in the literature, which vary depending on the domain. Building on this representation, we propose a modified discrepancy by applying the concept of spectral regularization to the KSD framework. We establish the minimax optimality of the proposed regularized test for a wide range of the smoothness parameter $\theta$ under a specific alternative space, defined over general domains, using the $\chi^2$-divergence as the separation metric. In contrast, we demonstrate that the unregularized KSD test fails to achieve the minimax separation rate for the considered alternative space. %We reveal the characteristics and limitations of KSD and demonstrate its non-optimality under a certain alternative space, defined over general domains when considering $\chi^2$-divergence as the separation metric. To address this issue of non-optimality, we propose a modified, minimax optimal test by incorporating a spectral regularizer, thereby overcoming the shortcomings of standard KSD tests. %Our results are established under a weak moment condition on the Stein kernel, which relaxes the bounded kernel assumption required by prior work in the analysis of kernel-based hypothesis testing.
Additionally, we introduce an adaptive test capable of achieving minimax optimality up to a logarithmic factor by adapting to unknown parameters. Through numerical experiments, we illustrate the superior performance of our proposed tests across various domains compared to their unregularized counterparts.
\end{abstract}
\textbf{MSC 2010 subject classification:} Primary: 62G10; Secondary: 65J20, 65J22, 46E22, 47A52.\\
\textbf{Keywords and phrases:} Goodness-of-fit test, maximum mean discrepancy, kernel Stein discrepancy, reproducing kernel Hilbert space, covariance operator, U-statistics, Bernstein's inequality, minimax separation, adaptivity, wild bootstrap, spectral regularization
\setlength{\parskip}{4pt}
\section{Introduction} 
Given $\mathbb{X}_n:=(X_i)_{i=1}^n\stackrel{i.i.d.}{\sim} P$, where $P$ is defined on a measurable space $\mathcal{X}$, a goodness-of-fit test involves evaluating the hypotheses $$
H_0 : P=P_0 \quad \text{vs.}\quad H_1: P \neq P_0,
$$ where $P_0$ denotes a fixed known distribution. See, for example,~\cite{lehmann,ingster2012nonparametric} for a textbook-level treatment of the subject. From a methodological perspective, several approaches exist for constructing goodness-of-fit tests, a few of which we discuss next. Approaches based on the concept of nearest-neighbor statistics have been explored when $\mathcal{X}$ is a $d$-dimensional Euclidean space~\citep{bickel1983sums, schilling1983goodness,schilling1983infinite}, or a  manifold~\citep{ebner2018multivariate}. Other tests for the case of manifolds include works by~\cite{mardia1984goodness},~\cite{jupp2008data} and \cite{boente2014goodness}, to name a few, which are invariably limited to the case of $P_0$ being uniform 
%for a class of manifolds, 
or $P_0$ being the von Mises--Fisher distribution on the sphere. Yet another approach for goodness-of-fit testing involves using the innovation process~\citep{khmaladze1988innovation,khmaladze1993goodness,khmaladze2016unitary}, which, to the best of our knowledge, is limited to Euclidean spaces. Recently, techniques based on optimal transport have been used by~\cite{hallin2021multivariate} for goodness-of-fit testing in the Euclidean setting. However, such approaches could potentially be extended for certain non-Euclidean spaces with additional computational hardships. Goodness-of-fit tests based on variations of $\chi^2$ tests have been analyzed, for example, in~\cite{khmaladze2013note} and~\cite{balakrishnan2019hypothesis} for discrete distributions. On the other hand, tests based on reproducing kernels have been developed for goodness-of-fit (and two-sample) testing problems~(e.g., see~\citealt{gretton12a,rkhs,Smola}), which are applicable on non-Euclidean domains.
% \textcolor{red}{On the problem of goodness-of-fit testing for discrete distributions, tests based on variations of $\chi^2$ tests have been analyzed, for example, in~\cite{khmaladze2013note} and~\cite{balakrishnan2019hypothesis}.}

Despite several decades of work on this classical problem, existing tests suffer from at least one of the following drawbacks: \vspace{-2mm}
\begin{itemize}%[noitemsep]
    \item[\textsf{(I)}] They rely on strong distributional assumptions, e.g., $P_0$ is Gaussian or belongs to some classical parametric family.
    \item[\textsf{(II)}] They cannot handle many non-Euclidean data, such as graphs, strings, functions, etc., which is prevalent in many modern applications such as social networks, bioinformatics, etc.
    \item[\textsf{(III)}] They require complete knowledge of $P_0$ and cannot handle distributions that are known only up to a normalization constant, even if $\mathcal{X}$ is Euclidean, for example, a restricted Boltzmann machine \citep{rbm}.
    \item[\textsf{(IV)}] They are not in general efficiently computable and may need additional samples from $P_0$ to make them computable even if the complete knowledge of $P_0$ is available.
    % , 
    % at times they require samples from the target density being tested for,
    % \item[\textsf{(iv)}] require knowledge of the entire target density, including the normalization constant which can be intractable in practice.
\end{itemize}

\vspace{-1mm}

The above-mentioned issues could all be simultaneously addressed by appealing to the framework of \emph{Stein operators} for characterizing distributions, in combination with reproducing kernels. Let $\h$ be a Hilbert space on $\X$ and $\psi_{Q}$ be defined on $\X \times \X$ such that $\psi_{Q}(\cdot,x)\in\h,\,\forall x\in\mathcal{X}$ and $\E_{Q}\psi_{Q}(y,X)=0,\,\forall\,y\in\X$.
Define the Stein operator $\s_{Q}$ with respect to a probability measure $Q$ acting on $f \in \h$ as $$(\s_{Q}f)(x):= \inner{\psi_{Q}(\cdot,x)}{f}_{\h},$$ so that $\E_{Q}(\s_{Q}f)(X)=\inner{\E_{Q}\psi_{Q}(\cdot,X)}{f}_{\h}=0$ if $\E_{Q}\Vert \psi_{Q}(\cdot,X)\Vert_\h<\infty$. Then, the kernel Stein discrepancy between two distributions $P$ and $P_0$ is defined as 
\begin{align}
\D_{\mathrm{KSD}}(P,P_0):=
%\D_{\mathrm{KSD}}(P,\s_{P_0},\h)&:= 
\sup_{f \in \h : \norm{f}_{\h}\leq 1} |\E_{P}(\s_{P_0}f)(X)|
= \norm{\E_P\psi_{P_0}(\cdot,X)}_{\h}=\sqrt{\E_{P\times P}K_0(X,Y)}, \label{eq:general KSD}
\end{align} 
where \begin{equation*}K_0(x,y):= \inner{\psi_{P_0}(\cdot,x)}{\psi_{P_0}(\cdot,y)}_{\h}
%=\inner{K_0(\cdot,x)}{K_0(\cdot,y)}_{\h_{K_0}}
%\label{Eq:stein-kernel-generic}
\end{equation*} is called the \emph{Stein kernel}. Since $K_0$ is a symmetric positive definite kernel, it follows from Moore-Aronszajn theorem \citep[Theorem 3, Chapter 1]{Berlinet} that there exists a unique reproducing kernel Hilbert space (RKHS), denoted as $\h_{K_0}$ as the reproducing kernel. Moreover, by the reproducing property of $K_0$, we have $K_0(x,y)=\inner{K_0(\cdot,x)}{K_0(\cdot,y)}_{\h_{K_0}}.$ We refer to $\h_{K_0}$ as the \emph{Stein RKHS}. It is easy to verify that if $P\mapsto \int_\mathcal{X} \psi_{P_0}(\cdot,x)\,dP(x)$ is injective 
%$K_0$ is universal 
\citep{universal}, then $D_{\mathrm{KSD}}(P,P_0)=0$ implies $P=P_0$, i.e., the goodness-of-fit testing problem can equivalently be posed as $H_0:D_{\mathrm{KSD}}(P,P_0)=0$ vs.~$H_1:D_{\mathrm{KSD}}(P,P_0)\ne 0$. Therefore, based on $(X_i)^n_{i=1}$, KSD-based goodness-of-fit test can be constructed using the $U$-statistic \begin{equation}\frac{1}{n(n-1)}\sum^n_{i\ne j}K_0(X_i,X_j)\label{Eq:test}\end{equation} with the test threshold being the $1-\alpha$ quantile of the asymptotic distribution of the test statistic under $H_0$---which can be obtained from the standard $U$-statistic theory---or that of i.i.d.~weighted bootstrap \cite{DEHLING1994392}. %yielding an asymptotic $\alpha$-level test. 
Note that this test addresses all the drawbacks \textsf{(I)}--\textsf{(IV)} if $K_0$ can be evaluated by only knowing $P_0$ up to a normalization. 

In the following, we provide some examples in the literature that specialize the above-mentioned general framework for specific domains $\mathcal{X}$. \vspace{-2mm}

\begin{example}[KSD on $\mathcal{X}=\mathbb{R}^d$]\label{example:ksd}
Suppose $\X=\R^d$ and $P_0$ have a smooth density $p_0$ w.r.t.~the Lebesgue measure. Let $\mathscr{H}_K$ be the RKHS associated with the reproducing kernel, $K$. By defining $\psi_{P_0}(\cdot,x)= K(\cdot,x)\nabla_x\log p_0(x)+\nabla_{x}K(\cdot,x)\in\mathscr{H}^d_K=:\mathscr{H}$ yields the KSD as proposed by \citet{chwialkowski16} and \citet{liub16} %defined in \eqref{Eq:ksd} 
with
    % By choosing $\h$ to be $\h_k^d:= \h_k \times\dots \h_K$ that consists of $d\times 1$ vector-valued functions $f=\{f_l : f_l \in \h_K\}_{l\in[d]}$  equipped with the inner product $\inner{f}{g}_{\h_K^d}=\sum_{l=1}\inner{f_l}{g_l}_{\h_K}$ for $f,g \in \h_K^d.$ and defining $\psi_{P_0}(y,x)= K(y,x)\nabla_x\log p_0(x)+\nabla_{x}K(y,x)$ mapping from $R^d \times R^d \to R^d.$ Then 
    \begin{align*}
     K_0(x,y) & = \nabla_{x} \log p_0(x)^{\top}\nabla_{y} \log p_0(y) K(x,y)+\nabla_{y} \log p_0(y)^{\top} \nabla_{x}K(x,y)\\
     & \qquad +\nabla_{x} \log p_0(x)^{\top} \nabla_{y}K(x,y)+\mathrm{Tr}(\nabla_{x}\nabla_{y}K(x,y)).
     \end{align*}
%which is the same $K_0$ defined below \eqref{Eq:ksd}.
In fact, the Stein operator in this setting can be given as $(\mathcal{S}_{P_0}f)(x)=\langle K(\cdot,x)\nabla_x\log p_0(x)+\nabla_{x}K(\cdot,x),\underline{f}\rangle_{\mathscr{H}^d_K}=\nabla^\top_x \log p_0(x) \underline{f}(x)+\bm{1}^\top\nabla_x f(x)$, which is the well-known Langevin-Stein operator, where $\underline{f}:=(f,\stackrel{d}{\ldots},f)^\top$ and $\bm{1}:=(1,\stackrel{d}{\ldots},1)^\top$. The KSD test based on the aforementioned $U$-statistic was developed in these works, which addresses the issues \textsf{(I)}, \textsf{(III)}, and \textsf{(IV)} since $K_0$ can be evaluated even if $p_0$ is known only up to normalization.\vspace{-2mm}
\end{example}
\begin{example}[KSD on a Riemannian manifold]\label{example:manifold}
Suppose $\X=(\mathcal{M},g)$, where $\M$ is a smooth $d$-dimensional Riemannian manifold (with either empty boundary or the kernel $K$ defined on $\X\times\X$ vanishes on the boundary) with Riemannian metric $g$ and $P_0$ has a smooth density $p_0$. The choice of $\psi_{P_0}(\cdot,x)=K(\cdot,x)\nabla_{\theta_x}\log (p_0(x)J(\theta))+\nabla_{\theta_x}K(\cdot,x)\in\mathscr{H}^d_K,\,x\in\mathcal{M},$
 yields the \emph{manifold kernel Stein discrepancy} ($\mathrm{mKSD}^{(1)}$) \citep{xu2021Manifold} which also covers the \emph{directional Stein discrepancy (dKSD)} \citep{xu2020directional} as a special case when $\M$ is the unit hypersphere $\mathbb{S}^{d-1}$. Here, $\theta=(\theta^1,\ldots,\theta^d)$ is a coordinate system on $\mathcal{M}$ that covers $\mathcal{M}$ almost everywhere,  $J=\sqrt{\text{det}\, g}$ is the volume element, and $\theta_x$ is the coordinate representation of $x\in\mathcal{M}$. Other variations of \emph{mKSD} discussed in \citep{xu2021Manifold} are covered by choosing the appropriate $\psi_{P_0}$. Note that this setting covers $P_0$ defined on a torus, sphere (e.g., uniform, von Mises-Fisher, Fisher-Bingham distribution), rotation group (e.g., Fisher distribution), etc., thereby addressing all the issues \textsf{(I)}--\textsf{(IV)} raised before.\vspace{-2mm}
\end{example}
\begin{example}[KSD on Hilbert spaces]\label{example:hilbert}
For a Gibbs measure, $P_0$ defined on a separable Hilbert space, $\mathcal{X}$, i.e., $dP_0/dN_C$ exists on $\mathcal{X}$ with $dP_0/dN_C \propto \exp(-U)$, where $N_C$ is a Gaussian measure with mean zero and covariance operator $C$, the Stein operator on $\mathcal{X}$ is defined through $\psi_{P_0}(\cdot,x)=\emph{Tr}[CD^2k(\cdot,x)]-\langle Dk(\cdot,x),x+CDU(x)\rangle_\mathcal{X},\,\,x\in\mathcal{X},$
%\todo{made inline} 
where $D$ denotes the Fr\'{e}chet differential~\citep[Definition 3.2]{InfiniteDimKSD}. Examples of distributions that can be tested in this setting include Gaussian processes on a separable Hilbert space.%\vspace{-2mm}
\end{example}
\vspace{-4mm}
Similar concrete Stein operators have been defined on 
%smooth manifolds~\cite{riemannianstein-1,riemannianstein-2}, 
exponential random graph models~\citep{randomgraph,xu2022agrasst}, point processes~\citep{pointprocesses}, Lie groups~\citep{qu2024kernel}, discrete distributions~\citep{discretedata}, latent variable models~\citep{latentmodels}, and censored data~\citep{censored-data}, which can be used to construct KSD-based goodness-of-fit tests on appropriate domains, beyond $\mathbb{R}^d$, thereby addressing the issues \textsf{(I)}--\textsf{(IV)}. To avoid the technical notation required to define these Stein operators, we do not introduce them here; however, the proposed framework subsumes all these non-Euclidean scenarios involving graph data, point process data, discrete data, censored data, and functional data.

Although the above discussion highlights the strength of the KSD-based approach, to the best of our knowledge, the statistical optimality of the corresponding KSD tests is not known. 
% \textcolor{red}{Recent research has shown that the MMD-based test is not optimal, as more powerful tests can be constructed using the concept of spectral regularization, as discussed in \citet{Krishna2021Gof} and \citet{hagrass2023Gof,twosampletest}. This approach allows for the inclusion of covariance operator information, in contrast to the MMD test, which considers only mean element information.} 
The central goal of this paper is to %extended the idea of spectral regularization to 
investigate and develop minimax optimal goodness-of-fit tests based on KSD that are devoid of the issues \textsf{(I)}--\textsf{(IV)}.
%\vspace{-2mm}
\subsection{Minimax framework}\label{subsec:minmaxframe}
%\vspace{-2mm}
Since the goal of our work is to investigate the minimax optimality of the KSD test, we first introduce the minimax framework for hypothesis testing pioneered by \citet{Burnashev} and \citet{Ingester1, Ingester2}, which is essential to understand our contributions. 

Let $\phi(\mathbb{X}_n)$ be any test that rejects $H_0$ when $\phi=1$ and fails to reject $H_0$ when $\phi=0$. Denote the class of all such asymptotic $\alpha$-level tests to be $\Phi_\alpha$. Let $\mathcal{C}$ be a set of probability measures on $\mathcal{X}$. The Type-II error of a test $\phi\in \Phi_\alpha$ w.r.t.~$\mathcal{P}_\Delta$ is defined as
$R_\Delta(\phi)=\sup_{P\in\mathcal{P}_\Delta}\mathbb{E}_{P^n}(1-\phi),$ 
where $$\mathcal{P}_\Delta:=\left\{P\in \mathcal{C}:\rho^2(P,P_0)\ge \Delta\right\},$$ is the class of $\Delta$-separated alternatives in probability metric $\rho$, with $\Delta$ being referred to as the \emph{separation boundary} or \emph{contiguity radius}. Of course, the interest is in letting $\Delta\rightarrow 0$ as $n\rightarrow \infty$ (i.e., shrinking alternatives) and analyzing $R_\Delta$ for a given test, $\phi$, i.e., whether $R_\Delta(\phi)\rightarrow 0$. In the asymptotic setting, the \emph{minimax separation} or \emph{critical radius} $\Delta^*$ is the fastest possible order at which $\Delta\rightarrow 0$ such that $\lim\inf_{n\rightarrow\infty}\inf_{\phi\in\Phi_\alpha}R_{\Delta^*}(\phi)\rightarrow 0$, i.e., for any $\Delta$ such that $\Delta/\Delta^*\rightarrow\infty$, there is no test $\phi\in\Phi_\alpha$ that is consistent over $\mathcal{P}_\Delta$. A test is \emph{asymptotically minimax optimal} if it is consistent over $\mathcal{P}_\Delta$ with $\Delta \asymp\Delta^*$.

In this work, we consider a class of $\Delta$-separated alternatives in $\chi^2$-divergence, defined in \eqref{Eq:alternative-theta}, of the form 
\begin{equation*}
\PP:=\PP_{\theta,\Delta}:= \left\{P: \frac{dP}{dP_0}-1 \in \range (\ep^{\theta}_{P_0}), 
 \ \chi^2(P,P_0)=\norm{\frac{dP}{dP_0}-1}^2_{L^2(P_0)} \geq \Delta\right\},\,\,\theta > 0, %\label{Eq:alternative-theta}
\end{equation*} where $\ep_{P_0}:L^2(P_0)\rightarrow L^2(P_0),\,f\mapsto \int_\X K_0(\cdot,x)f(x)\,dP_0(x)$ is the integral operator induced by $K_0$, and investigate the optimality of the KSD test w.r.t.~$\PP$. The condition $dP/dP_0-1\in \text{Ran}(\ep^\theta_{P_0})$ captures the smoothness of $dP/dP_0-1$ and the degree of smoothness is controlled by $\theta$, i.e., the larger $\theta$ is, more smooth is $dP/dP_0-1$. $\text{Ran}(\ep^\theta_{P_0})$---range space of $\ep^\theta_{P_0}$---for $\theta\in(0,\frac{1}{2}]$ can be interpreted as an interpolation space obtained by the real interpolation of $\h_{K_0}$ (corresponds to $\theta=\frac{1}{2}$) and $L^2(P_0)$ (corresponds to $\theta=0$) at scale $\theta$ \citep[Theorem 4.6]{Steinwart2012MercersTO}.

%\kb{can we actually do anything to intepret above for discrete cases? We actually can't right ? we might have to change/fix the intro accordingly?}\textcolor{red}{we are not highlighting the discrete case. so lets leave out that the discrete example. I think we are trying to address a lot but discrete is a different world and needs separate treatment.}\textcolor{red}{also, as per the formulation the above handles discrete too. In the discrete case, the space just redues to finite dimensional space if $P_0$ is supported on a finite set or a sequence space if $P_0$ is supported on a countable set. The range space makes sense only in the sequence space in terms of $\theta$ controls how fast the sequences decay while it does not make any impact in finite set. The range space condition is basically then solving a simple linear inverse problem. So if we are including discrete, we can elaborate a bit here. Otherwise, let us not.}\kb{my main point is if we totally ignore discrete, we should say so from the intro. we can't say doable and not do it :) Otherwise, at least I will ask this question if i am a reviewer :D and not too hard to imagine many other reviewers will also ask :) } \textcolor{red}{we can talk today afternoon. if the reviewers ask and they say promise in this approach, we can address later. anyway lets talk. We are saying this is a general framework that can handle multiple things and give a minimax test. But we also mentioned other things like censored data. One can ask to do these other things too. Also, is just a finite domain interesting enough? }

We would like to highlight that unlike the classical smoothness classes such as Sobolev or H\"{o}lder which are defined for functions defined on $\mathcal{X}=\mathbb{R}^d$, the smoothness class we considered above is defined through the spectrum of the operator $\ep_{P_0}$ that takes into account the interaction between $K_0$ and $P_0$, and applies to domains that are not necessarily Euclidean. Such a smoothness condition is standard in kernel-based methods like kernel regression \citep{Caponnetto-07} and has also been used in MMD-based two-sample and goodness-of-fit tests \citep{Krishna2021Gof, twosampletest, hagrass2023Gof}. %Moreover, the alternative class $\mathcal{P}$ also appears in \citep{Krishna2021Gof,hagrass2023Gof}, which investigates the minimax optimality of the MMD test w.r.t.~$\mathcal{P}$ (where $K_0$ corresponds to the kernel in Example~\ref{exampe:mmd} in Section~\ref{subsec:examples}).

%\kb{should be add a sentence about our general definition of smoothness is more broadly applicable (to non-eu domains) and see later for examples?}\textcolor{red}{Is the above ok?}\kb{ok to me}
%\vspace{-2mm}
\subsection{Contributions}\label{subsec:contributions}
%\vspace{-2mm}
Having established the importance of KSD as a discrepancy measure for goodness-of-fit testing, in this work, we investigate the minimax optimality of the KSD-based goodness-of-fit test.
%based on the test statistic in \eqref{Eq:test}.
To the best of our knowledge, only consistency of the test based on the test statistic in \eqref{Eq:test} has been established for $\X=\R^d$ with $\psi_{P_0}$ as defined in Example~\ref{example:ksd}.  
%of Section~\ref{subsec:examples} \citep{chwialkowski16,liub16}. 
To this end, the following are the main contributions of our work.\vspace{1mm}\\
\emph{(i)} %\textbf{Non-optimality of KSD test.} 
First, in Section~\ref{sec:optimal}, we provide an operator-theoretic representation of $\D_{\textrm{KSD}}$ (defined in \eqref{eq:general KSD}) in Proposition~\ref{pro:ksd} by showing that 
$\D^2_{\textrm{KSD}}(P,P_0)=\inner{\ep_{P_0} u}{u}_{\Lp}= \sum_{i\geq 1} \lambda_i \langle u,\tilde{\phi}_i\rangle^2_{\Lp}, $
where $(\lambda_i, \tilde{\phi}_i)_i$ are the eigenvalues and eigenfunctions of $\ep_{P_0}$, and $u:=dP/dP_0 - 1$. It follows from the above representation %of $D^2_{\textrm{KSD}}$ %in \eqref{Eq:fourier-ksd}, note 
that the Fourier coefficients of $u$, i.e., $(\langle u, \tilde{\phi}_i\rangle_{L^2(P_0)})_i$, are downweighted by the eigenvalues of $\ep_{P_0}$, which decay to zero as $i \rightarrow \infty$. This implies that if the differences between $P$ and $P_0$ are hidden in the high-frequency (i.e., large $i$) coefficients of $u$, these differences may not be sufficiently captured by the empirical version of $D^2_{\textrm{KSD}}(P,P_0)$---one choice is shown in \eqref{Eq:test}---, which serves as the test statistic for the KSD test. To address this issue, we modify $D^2_{\textrm{KSD}}(P,P_0)$ to 
$
D^2_\lambda(P,P_0):=\sum_i \lambda_i g_\lambda(\lambda_i)\langle u, \tilde{\phi}_i\rangle^2_{L^2(P_0)},
$
where $g_\lambda: (0,\infty) \rightarrow (0,\infty)$ is a spectral regularizer satisfying $x g_\lambda(x) \asymp 1$, i.e., $g_\lambda(x)$ approximates $x^{-1}$ as $\lambda \rightarrow 0$ (an example of such a regularizer is the Tikhonov regularizer, $g_\lambda(x) = (x + \lambda)^{-1}$). This property of the spectral regularizer ensures that $D^2_\lambda(P,P_0) \rightarrow \sum_i \langle u, \tilde{\phi}_i\rangle^2_{L^2(P_0)}$ as $\lambda \rightarrow 0$, thereby mitigating the aforementioned limitation of KSD, i.e., the down-weighting of the Fourier coefficients of $u$. A similar idea has been explored in MMD-based goodness-of-fit testing \citep{Krishna2021Gof,hagrass2023Gof}. However, as aforementioned, such MMD-based tests suffer from issues \textsf{(III)} and \textsf{(IV)}. Interestingly, we observe that a direct application of spectral regularization to the KSD, as described above, still faces issue \textsf{(IV)}, as detailed in the following paragraph. Therefore, one of the key contributions of this work is in proposing a test statistic by employing spectral regularization within the KSD framework, while addressing the challenges \textsf{(I)}-\textsf{(IV)}. We show that the spectral regularized version of KSD, $D^2_\lambda(P,P_0)$, is equal to $\Vert g^{1/2}_\lambda(\mathcal{A}_{P_0})\Psi_{P}\Vert^2_{\mathscr{H}_{K_0}}$, where $\Psi_{P} := \int K_0(\cdot,x) \, dP(x)$ and $\mathcal{A}_{P_0} := \int K_0(\cdot,x)\otimes_{\mathscr{H}_{K_0}} K_0(\cdot,x) \, dP_0(x)$ are the mean element of $P$ and the covariance operator of $P_0$ with respect to $K_0$. If an empirical version of $D^2_\lambda(P,P_0)$ is used as the test statistic, it still suffers from issue $\textsf{(IV)}$, since $g^{1/2}_\lambda(\mathcal{A}_{P_0})$ is generally computable for only certain $(K_0,P_0)$ pairs and may require samples from $P_0$ to estimate $\mathcal{A}_{P_0}$. This, in turn, defeats the purpose of using KSD. Therefore, we consider a modification of $D^2_\lambda$ as 
$
\tilde{D}^2_\lambda := \Vert g^{1/2}_\lambda(\mathcal{A}_{P})\Psi_P\Vert^2_{\mathscr{H}_{K_0}},
$
which can be empirically estimated based on the given samples $(X_1,\ldots,X_n)$ and does not require additional samples from $P_0$ or complete knowledge of $P_0$, but only information about $P_0$ up to a normalization constant. Thus this contribution overcomes all the shortcomings raised in $\textsf{(I)}$--$\textsf{(IV)}$. %The intuition behind this modification is that if $P$ is close to $P_0$, the above modification is a good approximation to $D_\lambda$. If $P$ is farther from $P_0$, then the testing problem is easy enough that the effect of regularization is insignificant. 
Using a $U$-statistic estimator of $\tilde{D}_\lambda$ as the test statistic, we propose a goodness-of-fit test using wild bootstrap and compute its separation boundary with respect to $\mathcal{P}$ in Corollaries~\ref{coro:poly} and \ref{coro:exp} (also see Corollaries \ref{coro:general:poly}, \ref{coro:general:exp}, and Theorem~\ref{thm:sep-bound-suff-conditions}). Furthermore, we establish the minimax optimality of the proposed test for a wide range of $\theta$ by deriving the minimax separation radius of the space $\mathcal{P}$ in Theorem~\ref{thm:minimax}, showing it to be $\Delta_n = o(n^{-4\theta\beta/(4\theta\beta+1)})$ as $n \rightarrow \infty$ if $\lambda_i \asymp i^{-\beta}, \, \beta > 1$ (i.e., polynomial decay of eigenvalues), and $\Delta_n = o(\sqrt{\log n}/n)$ as $n \rightarrow \infty$ if $\lambda_i \asymp e^{-i}$ (i.e., exponential decay of eigenvalues). This matches the separation boundary of the proposed test across a wide range of $\theta$. Finally, in Theorem~\ref{thm:non-optimality-KSD}, we show that the KSD test (based on the test statistic in \eqref{Eq:test}) cannot achieve a separation radius better than $\Delta_n = o(n^{-2\theta/(2\theta+1)})$ as $n \rightarrow \infty$, irrespective of the decay rate of the eigenvalues of $\ep_{P_0}$—the eigenvalue decay rate determines the smoothness of functions in $\mathscr{H}_{K_0}$---, thus establishing its non-optimality.\vspace{1mm}\\
\emph{(ii)} %\textbf{Adaptive test.} 
Note that the regularized test presented in Section~\ref{sec:optimal} depends on the regularization parameter $\lambda>0$ which has to be chosen appropriately. The optimal choice of $\lambda$ that yields a minimax optimal test depends on the unknown parameters $\theta$ and $\beta$ (in the case of polynomial decay of eigenvalues). Therefore, it is important to construct a minimax optimal test that does not require information about $\theta$ and $\beta$. To this end, by aggregating tests constructed for different $\lambda$, in Section~\ref{subsec:adaptation}, we construct an adaptive test (precisely, a union test) that is minimax optimal w.r.t.~$\mathcal{P}$ up to a $\log\log$ factor.\vspace{1mm}\\
\emph{(iii)} %\textbf{Interpretation for $\mathcal{P}$.} 
Since the class of alternatives, $\mathcal{P}$ as defined in Section~\ref{subsec:minmaxframe} is abstract, to develop a deeper understanding, in Section~\ref{sec:range}, we provide concrete examples to elaborate the range space condition that appears in $\mathcal{P}$. \vspace{1mm}
%Specifically, through the provided examples, we highlight the differences in the alternative classes when considering MMD-based tests that have access to full information about $P_0$ and KSD-based tests that only have partial information about $P_0$.
% In Example~\ref{Ex: uniform} ($P_0$ is a uniform distribution on $\mathcal{X}=[0,1]$) and Example~\ref{Ex: Gaussian} ($P_0$ is a Gaussian distribution on $\mathbb{R}$), we show that $\mathcal{H}\subset\mathscr{H}_{K_0}$, i.e., the Stein RKHS is a larger space than the RKHS induced by $K$.
%\vspace{0mm}
\\
\emph{(iv)} %\textbf{Numerical experiments.} 
In Section~\ref{sec:experiments}, we provide numerical experiments on $\mathbb{R}^d$, $\mathbb{S}^d$ (sphere), and a separable Hilbert space, comparing the performance of KSD and regularized KSD tests. We show that the regularized KSD tests have superior empirical performance over the KSD test in all these scenarios.
\vspace{2mm}\\
\noindent The proofs of all the results of this paper are provided in Section \ref{sec:proofs}. 
\section{Definitions \& notation}
%\vspace{-3mm}
For a topological space $\X$, $L^r(\X,\mu)$ denotes the Banach space of $r$-power $(r\geq 1)$ $\mu$-integrable function, where $\mu$ is a finite non-negative Borel measure on $\X$. For $f \in L^r(\X,\mu)=:L^r(\mu)$, $\norm{f}_{L^r(\mu)}:=(\int_{\X}|f|^r\,d\mu)^{1/r}$ denotes the $L^r$-norm of $f$. $\mu^n := \mu \times \stackrel{n}{...} \times \mu$ is the $n$-fold product measure. $\h_K$ denotes a reproducing kernel Hilbert space with a reproducing kernel $K: \X \times \X \to \R$. $[f]_{\sim}$ denotes the equivalence class associated with $f\in L^r(\X,\mu)$, that is the collection of functions $g \in L^r(\X,\mu)$ such that $\norm{f-g}_{L^r(\mu)}=0$. For two measures $P$ and $Q$, $P \ll Q$ denotes that $P$ is dominated by $Q$ which means, if $Q(A)=0$ for some measurable set $A$, then $P(A)=0$. Let $H_1$ and $H_2$ be abstract Hilbert spaces. $\EuScript{L}(H_1,H_2)$ denotes the space of bounded linear operators from $H_1$ to $H_2$. For $S \in \EuScript{L}(H_1,H_2)$, $S^*$ denotes the adjoint of $S$. $S \in \EuScript{L}(H) := \EuScript{L}(H,H)$ is called self-adjoint if $S^*=S$. For $S \in \EuScript{L}(H)$, $\text{Tr}(S)$, $\norm{S}_{\EuScript{L}^2(H)}$, and $\norm{S}_{\EuScript{L}^{\infty}(H)}$ denote the trace, Hilbert-Schmidt and operator norms of $S$, respectively. For $x,y \in H$, $x \otimes_{H} y$ is an element of the tensor product space of $H \otimes H$ which can also be seen as an operator from $H \to H$ as $(x \otimes_{H}y)z=x\inner{y}{z}_{H}$ for any $z \in H$. For constants $a$ and $b$, $a \lesssim b$ (resp. $a \gtrsim b$) denotes that there exists a positive constant $c$ (\emph{resp.} $c'$) such that $a\leq cb$ (\emph{resp.} $a \geq c' b)$. $a \asymp b$ denotes that there exists positive constants $c$ and $c'$ such that  $cb \leq a \leq c' b$. We denote $[\ell]$ for $\{1,\ldots,\ell\}$.

%\section{Kernel Stein discrepancy: An operator perspective}
%\section{Non-optimality of kernel Stein discrepancy}\label{subsec:ksd-nonoptimal}
%\vspace{-4mm}
\section{KSD-based tests via spectral regularization}\label{sec:optimal}
%\vspace{-2mm}
We begin by examining the operating characteristics of the kernel Stein discrepancy (KSD) based goodness-of-fit tests, by viewing KSD from an operator perspective. This examination will assist in demonstrating the non-optimality of such tests and in introducing the concept of spectral regularization. Throughout the paper, we maintain the following assumption. %\vspace{-3mm}

\begin{myassump}{$\mathbf{A_0}$}\label{assump:a0}%\tag{\[\mathbf{A_0}\]}
$(\mathcal{X},\mathcal{B})$ is a second countable (i.e., completely separable) space endowed with Borel $\sigma$-algebra $\mathcal{B}$. $(\h_{K_0},K_0)$ is an RKHS of real-valued functions on $\X$ with a continuous reproducing kernel $K_0$ satisfying $\int_{\X} K_0(x,x)\, dP_0(x) < \infty.$ 
\end{myassump} 
% \vspace{-3mm}
The following result presents an operator-theoretic representation of KSD.
%\vspace{-3mm}
\begin{proposition}\label{pro:ksd}
Define the inclusion operator $\id : \h_{K_0} \to \Lp$, $f \mapsto [f]_{\sim}$. Then under \ref{assump:a0},
\begin{equation}
  \D_{\mathrm{KSD}}^2(P,P_0) = \inner{\id^*u}{\id^*u}_{\h_{K_0}}= \inner{\ep_{P_0} u}{u}_{\Lp}= \sum_{i\geq 1} \lambda_i \langle u,\Tilde{\phi_i}\rangle^2_{\Lp}, \label{KSD-rep}  
\end{equation}
where $u:=\frac{dP}{dP_0}-1$, $\id^*$ is the adjoint of $\id$, defined as $\id^* : \Lp \to \h_{K_0}, \ f \mapsto \int K_0(\cdot,x)f(x)\,dP_0(x),$ $\ep_{P_0}:=\id\id^*$ is the integral operator with $(\lambda_i,\tilde{\phi}_i)_i$ being its eigensystem. \vspace{-2mm}
\end{proposition}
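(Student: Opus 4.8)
The plan is to recognize $\id^{*}u$ as the kernel mean embedding $\Psi_{P}:=\int_{\X}K_{0}(\cdot,x)\,dP(x)$ of $P$ into $\h_{K_{0}}$, after which the three claimed identities fall out by unwinding $\ep_{P_{0}}=\id\id^{*}$ and invoking the spectral theorem. Implicit in the statement is that $P\ll P_{0}$ with $u:=dP/dP_{0}-1\in\Lp$ (which holds for every $P$ in the alternative class $\PP$), since otherwise $\id^{*}u$ is not even defined.

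First I would check that $\id$ is a well-defined bounded operator: by the reproducing property and \ref{assump:a0},
\[
\norm{f}_{\Lp}^{2}=\int_{\X}\inner{f}{K_{0}(\cdot,x)}_{\h_{K_{0}}}^{2}\,dP_{0}(x)\le\norm{f}_{\h_{K_{0}}}^{2}\int_{\X}K_{0}(x,x)\,dP_{0}(x)<\infty,
\]
so in particular each $f\in\h_{K_{0}}$ lies in $\Lp$. Next I would compute $\id^{*}$. Since $\X$ is second countable and $K_{0}$ is continuous, $\h_{K_{0}}$ is separable and $x\mapsto K_{0}(\cdot,x)$ is strongly measurable (Pettis); and for $h\in\Lp$ the Cauchy--Schwarz inequality together with \ref{assump:a0} bounds $\int_{\X}\norm{K_{0}(\cdot,x)h(x)}_{\h_{K_{0}}}\,dP_{0}(x)=\int_{\X}\sqrt{K_{0}(x,x)}\,|h(x)|\,dP_{0}(x)$ by $\bigl(\int K_{0}(x,x)\,dP_{0}\bigr)^{1/2}\norm{h}_{\Lp}<\infty$, so the Bochner integral $\int_{\X}K_{0}(\cdot,x)h(x)\,dP_{0}(x)$ exists in $\h_{K_{0}}$. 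Commuting $\inner{f}{\cdot}_{\h_{K_{0}}}$ with this Bochner integral identifies $\id^{*}h=\int_{\X}K_{0}(\cdot,x)h(x)\,dP_{0}(x)$, hence $\ep_{P_{0}}=\id\id^{*}$. Because $\id^{*}\id$ is the $\h_{K_{0}}$-covariance operator of $P_{0}$, which is trace class with trace $\int_{\X}K_{0}(x,x)\,dP_{0}(x)$, the operator $\ep_{P_{0}}$ is positive, self-adjoint and trace class (hence compact) on $\Lp$, and the spectral theorem supplies $(\lambda_{i},\tilde\phi_{i})_{i}$ with $\ep_{P_{0}}=\sum_{i\ge1}\lambda_{i}\,\tilde\phi_{i}\otimes_{\Lp}\tilde\phi_{i}$.

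The crux is the identity $\id^{*}u=\Psi_{P}$. Writing $g:=dP/dP_{0}\in\Lp$, the same integrability bound (with $P$ in place of $P_{0}$) gives $\Psi_{P}\in\h_{K_{0}}$, and the Radon--Nikodym change of variables---applied pointwise, as $x\mapsto K_{0}(y,x)$ is $P$-integrable, and then lifted to $\h_{K_{0}}$ since evaluation functionals commute with Bochner integrals---yields $\id^{*}g=\int_{\X}K_{0}(\cdot,x)g(x)\,dP_{0}(x)=\int_{\X}K_{0}(\cdot,x)\,dP(x)=\Psi_{P}$. Moreover $\id^{*}\mathbf 1=\Psi_{P_{0}}$ satisfies $\norm{\Psi_{P_{0}}}_{\h_{K_{0}}}^{2}=\E_{P_{0}\times P_{0}}K_{0}(X,Y)=\D_{\mathrm{KSD}}^{2}(P_{0},P_{0})=\norm{\E_{P_{0}}\psi_{P_{0}}(\cdot,X)}_{\h}^{2}=0$ by \eqref{eq:general KSD}, so $\id^{*}\mathbf 1=0$ and therefore $\id^{*}u=\id^{*}(g-\mathbf 1)=\Psi_{P}$. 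Consequently
\[
\inner{\id^{*}u}{\id^{*}u}_{\h_{K_{0}}}=\norm{\Psi_{P}}_{\h_{K_{0}}}^{2}=\E_{P\times P}K_{0}(X,Y)=\D_{\mathrm{KSD}}^{2}(P,P_{0}),
\]
again by \eqref{eq:general KSD}, which is the first equality in \eqref{KSD-rep}. The remaining two are then formal: $\inner{\id^{*}u}{\id^{*}u}_{\h_{K_{0}}}=\inner{\id\id^{*}u}{u}_{\Lp}=\inner{\ep_{P_{0}}u}{u}_{\Lp}$ by definition of the adjoint, and expanding $u$ in the eigenbasis $(\tilde\phi_{i})$ gives $\inner{\ep_{P_{0}}u}{u}_{\Lp}=\sum_{i\ge1}\lambda_{i}\inner{u}{\tilde\phi_{i}}_{\Lp}^{2}$.

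I expect the only delicate point to be the measure-theoretic bookkeeping around the Bochner integrals: extracting separability of $\h_{K_{0}}$ and strong measurability of $x\mapsto K_{0}(\cdot,x)$ and $x\mapsto K_{0}(\cdot,x)h(x)$ from continuity of $K_{0}$ and second countability of $\X$, checking the integrability bounds against \ref{assump:a0}, and justifying the interchange of inner products (and evaluation functionals) with the Bochner integral, which is used both for the adjoint formula and for $\id^{*}u=\Psi_{P}$. Everything downstream of that is routine.
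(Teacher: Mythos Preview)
Your proof is correct and follows essentially the same route as the paper's. Both arguments hinge on the Stein property $\E_{P_0}K_0(\cdot,X)=0$: the paper invokes it directly as $\E_{P_0}K_0(x,Y)=\inner{\psi_{P_0}(\cdot,x)}{\E_{P_0}\psi_{P_0}(\cdot,Y)}_{\h}=0$ to subtract $\E_{P_0}K_0(\cdot,X)$ inside the inner product, while you phrase the same fact as $\id^{*}\mathbf 1=\Psi_{P_0}=0$; your treatment of the Bochner-integral bookkeeping is more thorough than the paper's, but the underlying identifications are identical.
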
  
\begin{proof}
From \eqref{eq:general KSD}, we have 
\begin{align}
\D_{\mathrm{KSD}}^2(P,P_0) &= \E_{P \times P} (K_0(X,Y)) 
= \E_{P \times P}\inner{K_0(\cdot,X)}{K_0(\cdot,Y)}_{\h_{K_0}} \nonumber\\  
&\stackrel{(*)}{=} \inner{\E_{P} K_0(\cdot,X)-\E_{P_0} K_0(\cdot,X)}{\E_{P} K_0(\cdot,Y)-\E_{P_0} K_0(\cdot,Y)}_{\h_{K_0}} \nonumber\\ 
&= \inner{\E_{P_0}\left(K_0(\cdot,X)u(X)\right)}{\E_{P_0}\left(K_0(\cdot,Y)u(Y)\right)}_{\h_{K_0}}\nonumber\\
&=\inner{\id^*u}{\id^*u}_{\h_{K_0}}=\inner{\id\id^*u}{u}_{\Lp}= \inner{\ep_{P_0} u}{u}_{\Lp}.\label{eq:temp}
\end{align}
% where 
% %where $\h_{K_0}$ represents the RKHS associated with the Stein kernel $K_0$, which is a symmetric positive definite kernel due to its definition by the inner product $\inner{\psi_{P_0}(\cdot,x)}{\psi_{P_0}(\cdot,y)}_{\h}$. Here, 
% $u:=\frac{dP}{dP_0}-1$, requiring $P_0 \gg P$  (i.e., $P$ is absolutely continuous w.r.t.~$P_0$) 
% %\kb{does this hold for discrete case?}\textcolor{red}{in the discrete case, we can interpret it as whenever $P_0$ is not supported on a point, $P$ cannot be supported either. so it shud be ok.}\kb{is it still called absolutely continuous. as long as it is standard, that's good.} 
% so that the Radon-Nikodym derivative $dP/dP_0$ is well defined. 
In the above computation, $(*)$ is derived by noticing that $\E_{P_0}K_0(x,Y)=\inner{\psi_{P_0}(\cdot,x)}{\E_{P_0}\psi_{P_0}(\cdot,Y)}_{\h}=0$ for all $x\in\mathcal{X}$. 
% By defining the operator $\ep_{P_0} : \Lp \to \Lp$, $f \mapsto \int K_0(\cdot,x)f(x)\,dP_0(x)$, it can verified that $\ep_{P_0}=\id\id^*$, where $\id : \h_{K_0} \to \Lp$, $f \mapsto [f]_{\sim}$ and $$\id^* : \Lp \to \h_{K_0}, \ f \mapsto \int K_0(\cdot,x)f(x)\,dP_0(x).$$ 
Under~\ref{assump:a0}, it is easy to verify that $\ep_{P_0}$ is a positive self-adjoint trace class operator since $\text{Tr}(\ep_{P_0})=\text{Tr}(\id\id^*)=\text{Tr}(\id^*\id)=\text{Tr}(\int K_0(\cdot,x)\otimes_{\h_{K_0}}K_0(\cdot,x)\,dP_0(x))=\int K_0(x,x)\,dP_0(x)<\infty$. Thus, the spectral theorem \citep[Theorems VI.16, VI.17]{Reed} yields that 
$\ep_{P_0} = \sum_{i \in I} \lambda_i \Tilde{\phi_i} \ltens \Tilde{\phi_i},$
where $(\lambda_i)_i \subset \R^+ $ are the eigenvalues and $(\Tilde{\phi}_i)_i$ are the orthonormal system of eigenfunctions of $\ep_{P_0}$ that span $\overline{\range(\ep_{P_0})}$ with the index set $I$ being either countable in which case $\lambda_i \to 0$ or finite. Therefore, the result follows by using the spectral representation of $\ep_{P_0}$ in \eqref{eq:temp}. \vspace{-1mm}
\end{proof}
\begin{remark} Since $\Tilde{\phi_i}$ represents an equivalence class in $\Lp$, by defining $\phi_i:= \id^* \Tilde{\phi_i}/\lambda_i$, it is clear that $\id\phi_i=[\phi_i]_{\sim}=\Tilde{\phi_i}$ and $\phi_i \in \h_{K_0}$. Throughout the paper, $\phi_i$ refers to this definition. \vspace{-1mm}
\end{remark}

\eqref{KSD-rep} shows that KSD is a weighted sum of the squared Fourier components of $u$ with the weights being the eigenvalues of the operator $\ep_{P_0}$. 
This representation of KSD in \eqref{KSD-rep} suggests that the KSD could overlook discrepancies between $P$ and $P_0$ occurring at higher frequency components as $\lambda_i \to 0$. 
%This inherent limitation mirrors the reasons behind the non-optimality of the MMD, as discussed in \cite{hagrass2023Gof}. 
To address this issue, we use the idea of spectral regularization as considered in \citet{Krishna2021Gof} and \citet{hagrass2023Gof,twosampletest} for the MMD test by using a regularization function designed to avoid neglecting the higher frequency components. For that matter, let 
\begin{equation}
\D_{\lambda}^2(P,P_0):= \inner{\ep_{P_0}g_{\lambda}(\ep_{P_0})u}{u}_{\Lp}=\sum_{i\geq 1} \lambda_i g_{\lambda}(\lambda_i) \langle u,\Tilde{\phi_i}\rangle^2_{\Lp}, \label{eq: reg-KSD}    
\end{equation}
 where $g_\lambda:(0,\infty)\rightarrow (0,\infty)$, $\lambda>0$ is a \emph{spectral regularizer} (satisfies $\lim_{\lambda\rightarrow 0} xg_\lambda(x)\asymp 1$, i.e., $g_\lambda(x)$ is an approximation to $x^{-1}$ with the approximation being controlled by $\lambda$), defined as
$$g_{\lambda}(\mathcal{C}) : = \sum_{i\geq 1} g_{\lambda}(\tau_i) (\alpha_i \otimes_H \alpha_i) + g_{\lambda}(0)\left(\Id - \sum_{i\geq 1} \alpha_i \otimes_H \alpha_i\right),$$
with $\mathcal{C}$ being any compact, self-adjoint operator defined on a separable Hilbert space, $H$ and $(\tau_i,\alpha_i)_i$ are the eigenvalues and eigenfunctions of $\mathcal{C}$. A common choice for regularization is the \emph{Tikhonov regularizer}, $g_{\lambda}(x)=(x+\lambda)^{-1},$ where $\lambda>0$ is regularization parameter. As $\lambda \to 0$,\, $\lambda_i/(\lambda_i+\lambda) \to 1$, hence uniformly weighting the frequency components. Another choice we will use is by setting $g_{\lambda}(x)=\max \left\{1,(x+\lambda)^{-1}\right\}$---we refer to this regularizer as \emph{TikMax regularizer}---which only reweighs frequency components corresponding to $\lambda_i < 1- \lambda$ and keeps the same weights corresponding to lower frequency components when $\lambda_i > 1- \lambda$. 
% We noted in numerical experiments that it can perform better than the \emph{Tikhonov} regularizer in some cases (see Section~\ref{sec:experiments}).

% that it can perform better than the \emph{Tikhonov} regularizer when $P$ and $P_0$ differ in lower frequency components (see Section~\ref{sec:experiments}).

% \textcolor{blue}{I have changed one statement here, I kept it commented. I realized it might be better not to say directly that TikMax perfomed better in the experiments where $P$ and $P_0$ differs in low frequency components, since it is not directly clear in the experiment whether they are different in low frequency components, right ? }

% \kb{use assuption environment?}
% \textcolor{red}{we can but it wud lead to 4 lines instead of compactly packing them like here.}
The results we present below hold for any spectral regularization function that satisfies the following sufficient conditions: \vspace{-2mm}
\begin{align*}
    &(E_1) \quad \sup_{x \in \Gamma}|x \gl(x)| \leq C_1, &&(E_2) \quad \sup_{x \in \Gamma}|\lambda\gl(x)| \leq C_2, \\ 
    &(E_3) \quad \sup_{\{x\in \Gamma: x\gl(x)<B_3\}} |B_3-x\gl(x)|x^{2\varphi} \leq C_3 \lambda^{2\varphi},
    & &(E_4) \quad \inf_{x \in \Gamma} \gl(x)(x+\lambda) \geq C_4, \vspace{-2mm}
\end{align*}
where $\Gamma : = [0,\K]$, $\varphi \in (0,\xi]$, the constant $\xi$ is called the \emph{qualification} of $\gl$, and $(C_i)^4_{i=1}$ and $B_3$ are finite positive constants (all independent of $\lambda>0$). These are common conditions on the spectral regularizer employed in the inverse problem literature (see, for instance, \citealp{Engl.et.al}) and have also been employed in kernel ridge regression \citep{BAUER200752} and kernel testing \citep{hagrass2023Gof,twosampletest}. The key condition is $(E_3)$ which provides a quantitative estimate on the amplified version of the approximation error in approximating $x^{-1}$ by $g_\lambda(x)$, and the qualification $\xi$ imposes a bound on the amplification factor. It is easy to verify that both Tikhonov and TikMax regularizers have $\xi=\infty$. We would like to mention that the popular spectral cut-off regularizer defined as $g_\lambda(x)=x^{-1}\mathds{1}_{\{x\ge \lambda\}}$ also has $\xi=\infty$ but does not satisfy $(E_4)$.

% The motivation behind these assumptions lies in the standard principles pervasive in the literature on inverse problems (see, for instance, \citealp{Engl.et.al}) and spectral regularized kernel ridge regression \citep{BAUER200752}. Notably, $(E_3)$ represents a relaxation from the stronger version of the assumption, which involves taking the supremum over the entire set $\Gamma$ (see \citealp[Remark 5]{twosampletest} for a more detailed discussion). \textcolor{red}{to change from the second line and intuitively speak about these conditions.}
%These assumptions closely mirror those employed in \citet{twosampletest,hagrass2023Gof}.

% \begin{remark}
% Both the Tikhonov regularizer, represented by $g_\lambda(x)=(x+\lambda)^{-1}$, and the TikMax regularizer, represented by $g_{\lambda}(x)=\max \left\{1,(x+\lambda)^{-1}\right\}$, satisfy the aforementioned conditions with $\xi=\infty.$
% \end{remark}

The regularized KSD in \eqref{eq: reg-KSD} can be written in a form that is more suitable for estimation as \vspace{-1mm}
\begin{align}
\D^2_\lambda(P,P_0)&=\langle \ep_{P_0} g_\lambda(\ep_{P_0})u,u\rangle_{\Lp}=\langle \id\id^*g_\lambda(\id\id^*)u,u\rangle_{\Lp}=\langle g_\lambda(\A_{P_0}) \Psi_P,\Psi_P\rangle_{\mathscr{H}_{K_0}}\nonumber\\
&=\norm{g^{1/2}_{\lambda}(\A_{P_0}) \Psi_P}_{\h_{K_0}}^2,\label{eq:reg-discrp} \vspace{-2mm}
\end{align}
where \vspace{-3mm} $$\A_{P_0}:=\int_{\X} K_0(\cdot,x) \htens K_0(\cdot,x) \, dP_0(x), \hspace{2mm}\text{and}\,\, \Psi_P := \int_{\X} K_0(\cdot,x) \, dP(x).$$

Based on the form in \eqref{eq:reg-discrp}, note that a test statistic would involve estimating $\Psi_P$ from $(X_i)^n_{i=1}$ and then would need either the complete knowledge of $P_0$ so that $\mathcal{A}_{P_0}$ could be computed or access to independent samples $\mathbb Y_m:=(Y^0_i)_{i=1}^m$ from $P_0$ so that $\mathcal{A}_{P_0}$ can be estimated. However, this defeats the purpose of working with KSD. 
%and these alternate solutions have already been worked out in the case of MMD \citep{Krishna2021Gof,hagrass2023Gof}. 
Therefore, to exploit the power of KSD, we propose a variation to \eqref{eq:reg-discrp} so that this variation can be completely estimated based on $(X_i)^n_{i=1}$. To this end, define
\begin{equation}\tilde{D}^2_\lambda(P,P_0):=\left \Vert g^{1/2}_\lambda(\mathcal{A}_P)\Psi_P\right\Vert^2_{\mathscr{H}_{K_0}}.\label{Eq:ksd-approx}\end{equation}
We argue that \eqref{Eq:ksd-approx} is a valid approximation to \eqref{eq:reg-discrp} since if $P$ is closer to $P_0$, then it is clearly a good approximation. On the other hand, if $P$ is far from $P_0$, then the testing problem is easy and so $P$ and $P_0$ could be distinguished without a significant influence from the regularization term. 

% and assuming we have access to independent samples $\mathbb Y_m:=(Y^0_i)_{i=1}^m$ from $P_0$, we can estimate the regularized discrepancy using the following statistic: 
% \begin{equation}
% \stat^{P_0}(\mathbb X_n,\mathbb Y_m) :=\frac{1}{n(n-1)}\sum_{1\leq i\neq j \leq n} \inner{\gShh K_0(\cdot,X_i)}{\gShh K_0(\cdot,X_j)}_{\h_{K_0}},  \label{eq:oracle-stat}  
% \end{equation}
% where $\hat{\A}_{P_0}$ is an estimator for $\A_{P_0}$ using the samples $(Y^0_i)_{i=1}^m$. For example, one can use
% \begin{equation*}
% \hat{\A}_{P_0} :=\frac{1}{m}\sum_{i=1}^{m} K_0(\cdot,Y_i^0) \htens K_0(\cdot,Y_i^0),
% \end{equation*}
% or a $U$-statistic estimator like the one considered in \citep[Section 4.2]{hagrass2023Gof} to estimate the covariance operator $\mathcal{A}_{P_0}$ in the case when $K_0(\cdot,x)=K(\cdot,x)-\E_{P_0}K(\cdot,x)$ for some reproducing kernel $K$. The test based on the estimator in \eqref{eq:oracle-stat} bears resemblance to the Oracle test presented in \citet[Section 4.1]{hagrass2023Gof} when $K_0(\cdot,x)=K(\cdot,x)-\E_{P_0}K(\cdot,x)$, and therefore is anticipated to inherit the minimax optimality over $\PP$ of the Oracle test of \cite[Section 4.1]{hagrass2023Gof}---also see  
% % over the alternative space $\PP$, as demonstrated in 
% \citep[Corollaries 5, 6]{hagrass2023Gof}. However, this paper aims to address the problem without assuming access to independent samples from $P_0$. 

To this end, we propose a test based on the following estimator of $\tilde{D}^2_\lambda$ as the test statistic,
 \begin{equation}
  \stat^{P}(\mathbb X_n) = \frac{1}{n_1(n_1-1)}\sum_{1\leq i\neq j \leq n_1} \inner{\gSh K_0(\cdot,X_i)}{\gSh K_0(\cdot,X_j)}_{\h_{K_0}},  
  \label{eq:test-stat}   
 \end{equation}
with 
\begin{equation*}
\hat{\A}_{P} :=\frac{1}{n_2}\sum_{i=1}^{n_2} K_0(\cdot,Z_i) \htens K_0(\cdot,Z_i),
\end{equation*}
where the given samples $\mathbb{X}_n:=(X_i)_{i=1}^n$ are divided into  $(X_i)_{i=1}^{n_1}$ and $\mathbb Z_{n_2}:=(Z_i)_{i=1}^{n_2}:=(X_i)_{i=n_1+1}^n$, $n_2=n-n_1$, with $(X_i)^{n_1}_{i=1}$ being used to estimate $\Psi_P$ and $\mathbb{Z}_{n_2}$ being used to estimate $\mathcal{A}_P$. 
% , and estimating the discrepancy as 
% Note that $\mathbb{Z}_{n_2}$ samples are used to obtain an approximation to the operator $\mathcal{A}_P$ (and not $\mathcal{A}_{P_0}$) while the rest are used inside the inner product in \eqref{eq:test-stat}. 
The advantage of the test statistic in \eqref{eq:test-stat} is that it is completely computable just using the given samples $\mathbb{X}_n$ and knowing $P_0$ up to a normalization (as in Examples~\ref{example:ksd}--\ref{example:hilbert}). 
% However, in general, it is not an approximation to $D^2_\lambda(P,P_0)$ since $\mathcal{A}_{P_0}$ is not approximated by $\hat{\mathcal{A}}_P$ unless $P=P_0$.  
% It is important to observe that the above statistic depends only on the given samples from $P$ by replacing $\A_{P_0}$ with $\A_{P}$ in \eqref{eq:oracle-stat}. 
Observe that under the null, we have $\E(\stat^P)=\E(\stat^{P_0})=0$. When $P \neq P_0$, but $P$ closer to $P_0$, we expect $\stat^{P}(\mathbb{X}_n)$ to be closer to $\stat^{P_0}(\mathbb{X}_n)$ (defined by replacing $\hat{\mathcal{A}}_P$ by $\mathcal{A}_{P_0}$ or its estimator $\hat{\mathcal{A}}_{P_0}$) and therefore will inhert the minimax optimality of $\stat^{P_0}(\mathbb{X}_n)$, as shown in Corollaries \ref{coro:poly} and \ref{coro:exp}.
% as $P$ gets closer to $P_0$ (i.e., when considering a sequence of local alternatives that converges to $P_0$)---difficult testing scenario---, we expect $\stat^{P}$ to approach $\stat^{P_0}$ and therefore inherit its minimax optimality for some range of $\theta$, as shown in Corollaries \ref{coro:poly} and \ref{coro:exp}.

% computational form
The following result provides the computational form of the statistic in \eqref{eq:test-stat} and shows that it can be computed by solving a finite-dimensional eigensystem.  %\vspace{-2mm}

\begin{proposition}\label{thm: computation}
Let $(\hat{\lambda}_i, \hat{\alpha_i})_i$ be the eigensystem of $ K_{n_2}/n_2$ where $[K_0(Z_i,Z_j)]_{i,j \in [n_2]}=:K_{n_2}$. Define $G := \sum_{i}\hat{\lambda}_i^{-1}\left( g_{\lambda}(\hat{\lambda}_i)-g_{\lambda}(0)\right)\hat{\alpha}_i\hat{\alpha}_i^\top.$ Then
\begin{equation*}
\stat^{P}(\mathbb{X}_n) = \frac{1}{n_1(n_1-1)} \left(\one_{n_1}^\top M \one_{n_1}-\mathrm{Tr}(M)\right), \ \ \textit{where} \ \ M=g_{\lambda}(0)K_{n_1}+\frac{1}{n_2}K_{n_{1},n_{2}} G K_{n_{1},n_{2}}^\top,
\end{equation*}
%where $$M=g_{\lambda}(0)K_{n_1}+\frac{1}{n_2}K_{n_{1},n_{2}} G K_{n_{1},n_{2}}^\top,$$
with $K_{n_1}:=[K_0(X_i,X_j)]_{i,j \in [n_1]}$ and $K_{n_{1},n_{2}}:=[K_0(X_i,Z_j)]_{i \in [n_1],j \in [n_2]}$. %\vspace{-3mm}
\end{proposition}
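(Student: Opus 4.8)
The plan is to diagonalize the finite-rank operator $\hat{\A}_{P}$ by relating it to the $n_2\times n_2$ Gram matrix $K_{n_2}/n_2$ — the standard kernel-PCA correspondence — and then reduce each inner product $\inner{\gSh K_0(\cdot,X_i)}{\gSh K_0(\cdot,X_j)}_{\h_{K_0}}$ to the $(i,j)$ entry of $M$. First, since $\gSh$ is self-adjoint (being a function of the self-adjoint operator $\hat{\A}_{P}$) and $(\gSh)^2=\gS$ by the functional calculus (because $(g^{1/2}_\lambda)^2=g_\lambda$ pointwise, hence on the spectrum of $\hat{\A}_P$ and at $0$), one has $\inner{\gSh K_0(\cdot,X_i)}{\gSh K_0(\cdot,X_j)}_{\h_{K_0}}=\inner{(\gSh)^*\gSh K_0(\cdot,X_i)}{K_0(\cdot,X_j)}_{\h_{K_0}}=\inner{\gS K_0(\cdot,X_i)}{K_0(\cdot,X_j)}_{\h_{K_0}}$, so it suffices to obtain a closed form for $\gS$.

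Next I would introduce the feature operator $\Xi:\R^{n_2}\to\h_{K_0}$, $\Xi v=\frac{1}{\sqrt{n_2}}\sum_{\ell=1}^{n_2}v_\ell K_0(\cdot,Z_\ell)$, whose adjoint satisfies $(\Xi^* f)_\ell=\frac{1}{\sqrt{n_2}}f(Z_\ell)$. A direct computation gives $\Xi\Xi^*=\hat{\A}_P$ and $\Xi^*\Xi=K_{n_2}/n_2$; consequently $\hat{\A}_P$ and $K_{n_2}/n_2$ share the same nonzero eigenvalues, the vectors $\{\Xi\hat{\alpha}_i\}_{\hat\lambda_i>0}$ are mutually orthogonal (since $\inner{\Xi\hat\alpha_i}{\Xi\hat\alpha_j}_{\h_{K_0}}=\hat\lambda_i\inner{\hat\alpha_i}{\hat\alpha_j}$), and $\hat\psi_i:=\hat\lambda_i^{-1/2}\Xi\hat{\alpha}_i$ is a unit-norm eigenfunction of $\hat{\A}_P$ with eigenvalue $\hat\lambda_i$ whenever $\hat\lambda_i>0$. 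Plugging this into the definition of $g_\lambda(\mathcal C)$ with $\mathcal C=\hat{\A}_P$ (the projection onto $\overline{\range(\hat{\A}_P)}$ being $\sum_{\hat\lambda_i>0}\hat\psi_i\htens\hat\psi_i$), and using $\hat\psi_i\htens\hat\psi_i=\hat\lambda_i^{-1}(\Xi\hat{\alpha}_i)\htens(\Xi\hat{\alpha}_i)$, yields
\[
\gS=g_\lambda(0)\Id+\sum_{i:\hat\lambda_i>0}\big(g_\lambda(\hat\lambda_i)-g_\lambda(0)\big)\hat\lambda_i^{-1}\,(\Xi\hat{\alpha}_i)\htens(\Xi\hat{\alpha}_i).
\]

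Then I would substitute this into $\inner{\gS K_0(\cdot,X_i)}{K_0(\cdot,X_j)}_{\h_{K_0}}$. The $g_\lambda(0)\Id$ term contributes $g_\lambda(0)K_0(X_i,X_j)=g_\lambda(0)(K_{n_1})_{ij}$. For the rank-one terms, the identity $\inner{\Xi\hat{\alpha}_k}{K_0(\cdot,X_i)}_{\h_{K_0}}=\frac{1}{\sqrt{n_2}}\sum_\ell(\hat{\alpha}_k)_\ell K_0(X_i,Z_\ell)=\frac{1}{\sqrt{n_2}}(K_{n_1,n_2}\hat{\alpha}_k)_i$ turns each summand into $\frac{1}{n_2}\big(g_\lambda(\hat\lambda_k)-g_\lambda(0)\big)\hat\lambda_k^{-1}(K_{n_1,n_2}\hat{\alpha}_k)_i(K_{n_1,n_2}\hat{\alpha}_k)_j$; summing over $k$ and recognizing $G=\sum_k\hat\lambda_k^{-1}\big(g_\lambda(\hat\lambda_k)-g_\lambda(0)\big)\hat{\alpha}_k\hat{\alpha}_k^\top$ produces $\frac{1}{n_2}(K_{n_1,n_2}G K_{n_1,n_2}^\top)_{ij}$. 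Hence $\inner{\gSh K_0(\cdot,X_i)}{\gSh K_0(\cdot,X_j)}_{\h_{K_0}}=M_{ij}$, and the claim follows by writing $\sum_{1\le i\ne j\le n_1}M_{ij}=\one_{n_1}^\top M\one_{n_1}-\mathrm{Tr}(M)$ and dividing by $n_1(n_1-1)$.

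The computation is essentially bookkeeping, so I do not anticipate a genuine obstacle; the two places that need care are (i) the $g_\lambda(0)$ contribution, which arises from the infinite-dimensional complement of $\range(\hat{\A}_P)$ in $\h_{K_0}$ and collapses to the finite scalar $g_\lambda(0)K_0(X_i,X_j)$ only because it is paired against the feature vectors $K_0(\cdot,X_i)\in\h_{K_0}$, and (ii) keeping track of the $1/\sqrt{n_2}$ normalizations when passing between eigenvectors of $K_{n_2}/n_2$ and eigenfunctions of $\hat{\A}_P$, which is what ultimately places the factor $1/n_2$ in front of $K_{n_1,n_2}GK_{n_1,n_2}^\top$ in $M$.
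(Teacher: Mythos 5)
Your proposal is correct and follows essentially the same route as the paper: your feature operator $\Xi$ is exactly the paper's $S_z^*$, the factorizations $\Xi\Xi^*=\hat{\A}_P$ and $\Xi^*\Xi=K_{n_2}/n_2$, the eigenfunction correspondence $\hat\psi_i=\hat\lambda_i^{-1/2}\Xi\hat\alpha_i$, and the resulting closed form $g_\lambda(\hat{\A}_P)=g_\lambda(0)\Id+\Xi G\Xi^*$ all coincide with the paper's $g_\lambda(\hat{\A}_P)=g_\lambda(0)\Id+S_z^*GS_z$. The only cosmetic difference is that you read off $M_{ij}$ entry by entry while the paper assembles the same expression in vectorized form using $S_x$, $\one_{n_1}$, and $\one_{n_1}^i$.
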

The complexity of computing $\stat^P(\mathbb{X}_n)$ is $O(n_2^3+n_1^2+n_2^{2}n_1)$. It is worth noting that this represents a savings of a factor of order $m^2+m n_2^2$ compared to the regularized MMD test statistic of \citet[Theorem 7]{hagrass2023Gof}, which involves sampling $m$ i.i.d.~samples from $P_0$,
% denotes the number of i.i.d samples obtained from $P_0$, 
and it also eliminates the time required for sampling those $m$ samples.

To compute the testing threshold,  we adopt a bootstrap approach \citep{Arcones92,Huskva, DEHLING1994392}, specifically the i.i.d. weighted bootstrap \citep{DEHLING1994392}, also known as the wild bootstrap to compute the test threshold. Let
$$\stat^{\epsilon}(\mathbb{X}_n):=\frac{1}{n_1(n_1-1)} \sum_{i\neq j} \epsilon_i \epsilon_j \inner{\gSh K_0(\cdot,X_i)}{\gSh K_0(\cdot,X_j)}_{\h_{K_0}},$$
where $\epsilon \in \{-1,1\}^{n_1}$ is a random vector of $n_1$ i.i.d Rademacher random variables. The following result shows that the distribution of $\stat^\epsilon(\mathbb{X}_n)$ can be used to approximate the distribution of $\stat^P(\mathbb{X}_n).$ Thus, for the test threshold, we use the $(1-\alpha)$ quantile of $\stat^{\epsilon}(\mathbb{X}_n)$, defined as
$$q^{\lambda}_{1-\alpha}:= \inf\left\{q \in \R: P_{\epsilon}\{\stat^{\epsilon} (\mathbb{X}_n)\leq q | \mathbb{X}_n\} \geq 1-\alpha \right\}.$$ 

\begin{remark} \label{rem:practical quantile}
In practice, the calculation of the exact quantile $q^{\lambda}_{1-\alpha}$ is computationally expensive. Thus, a Monte Carlo approximation is often used to approximate the exact quantile using $B$ independent realizations of $\stat^{\epsilon}$ defined as \vspace{-3mm}
$$q^{\lambda,B}_{1-\alpha}:= \inf\left\{q \in \mathbb{R}: \frac{1}{B} \sum_{k=1}^{B} \mathbb{I}(\stat^{\epsilon_k}(\mathbb{X}_n) \leq q) \geq 1-\alpha \right\}, \vspace{-3mm}
$$
where $(\epsilon_k)_{k=1}^B$ are $B$ randomly selected Rademacher vectors out of $2^n$ total possibilities. This can be justified using the Dvoretzky-Kiefer-Wolfowitz inequality (see \citealt{DKW}, \citealt{Massart}) to show that the approximation error between $q^{\lambda}_{1-\alpha}$ and $q^{\lambda,B}_{1-\alpha}$ gets arbitrarily small for sufficiently large $B$. Hence, all the results involving $q^{\lambda}_{1-\alpha}$ can be extended to $q^{\lambda,B}_{1-\alpha}$ for sufficiently large $B$.
\end{remark}
%\vspace{-5mm}
\begin{theorem}\label{Type-I error}
    Suppose $P=P_0$ and \ref{assump:a0} holds.  Let $\mathcal{D}_1$ be the distribution of $n_1\stat^{\epsilon}(\mathbb{X}_n)$ conditioned on $\mathbb{X}_{n}$, and $\mathcal{D}_2$ be the distribution of $n_1 \stat^P(\mathbb{X}_n)$ conditioned on $\mathbb{Z}_{n_2}$. Then for any $n_2>0$, $\lambda>0$, $\mathcal{D}_1$ and $\mathcal{D}_2$ converge to the same distribution as $n_1 \to \infty.$  \vspace{-2mm}
\end{theorem}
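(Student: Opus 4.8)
The plan is to reduce both $n_1\stat^P(\mathbb X_n)$ and $n_1\stat^\epsilon(\mathbb X_n)$ to continuous functionals of a normalized sum of $\h_{K_0}$-valued summands, and then apply the central limit theorem in the separable Hilbert space $\h_{K_0}$ to the first, and a conditional (multiplier / wild-bootstrap) central limit theorem in $\h_{K_0}$ to the second, conditioning on $\mathbb Z_{n_2}$ in the former case and on $\mathbb X_n$ in the latter. Two structural facts are recorded first. (a) Under the null $P=P_0$ one has $\Psi_{P_0}=\int_{\X}K_0(\cdot,x)\,dP_0(x)=0$: as observed in the proof of Proposition~\ref{pro:ksd}, $\E_{P_0}K_0(x,Y)=0$ for every $x\in\X$, so by the reproducing property $\inner{\Psi_{P_0}}{K_0(\cdot,x)}_{\h_{K_0}}=0$ for all $x$, forcing $\Psi_{P_0}=0$. (b) Since $n_2$ is held fixed while $n_1\to\infty$, $\gSh=g^{1/2}_{\lambda}(\hat\A_P)$ is a fixed bounded self-adjoint operator on $\h_{K_0}$ that is measurable with respect to $\mathbb Z_{n_2}$ (boundedness because $g_\lambda$ is finite on $[0,\norm{\hat\A_P}_{\EuScript{L}^{\infty}(\h_{K_0})}]$). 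Writing $\xi_i:=\gSh K_0(\cdot,X_i)\in\h_{K_0}$, using $\inner{\gSh K_0(\cdot,X_i)}{\gSh K_0(\cdot,X_j)}_{\h_{K_0}}=\inner{\xi_i}{\xi_j}_{\h_{K_0}}$ and $\epsilon_i^2=1$, a direct computation gives
\begin{align*}
n_1\stat^P &= \frac{n_1}{n_1-1}\left(\left\|\frac{1}{\sqrt{n_1}}\sum_{i=1}^{n_1}\xi_i\right\|_{\h_{K_0}}^{2}-\frac{1}{n_1}\sum_{i=1}^{n_1}\norm{\xi_i}_{\h_{K_0}}^{2}\right),\\
n_1\stat^\epsilon &= \frac{n_1}{n_1-1}\left(\left\|\frac{1}{\sqrt{n_1}}\sum_{i=1}^{n_1}\epsilon_i\xi_i\right\|_{\h_{K_0}}^{2}-\frac{1}{n_1}\sum_{i=1}^{n_1}\norm{\xi_i}_{\h_{K_0}}^{2}\right).
\end{align*}
Under $P=P_0$ the $\xi_i$ are i.i.d.\ with $\E_{P_0}\xi_i=\gSh\Psi_{P_0}=0$ and $\E_{P_0}\norm{\xi_i}_{\h_{K_0}}^{2}=\mathrm{Tr}(\Sigma_{n_2})<\infty$, where $\Sigma_{n_2}:=\gSh\A_{P_0}\gSh$ is a positive trace-class operator on $\h_{K_0}$ (recall $\A_{P_0}$ is trace class under~\ref{assump:a0}, as in Proposition~\ref{pro:ksd}).

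For $\mathcal D_2$, I would condition on $\mathbb Z_{n_2}$, so that $\xi_1,\dots,\xi_{n_1}$ are i.i.d., centered and square-integrable in $\h_{K_0}$. The CLT in a separable Hilbert space gives $\frac{1}{\sqrt{n_1}}\sum_i\xi_i\xrightarrow{d}\mathcal G$, a centered Gaussian element of $\h_{K_0}$ with covariance operator $\Sigma_{n_2}$; the strong law gives $\frac{1}{n_1}\sum_i\norm{\xi_i}_{\h_{K_0}}^{2}\to\mathrm{Tr}(\Sigma_{n_2})$ a.s.; and $\frac{n_1}{n_1-1}\to1$. By continuity of $x\mapsto\norm{x}_{\h_{K_0}}^{2}$ and Slutsky's theorem, $n_1\stat^P\xrightarrow{d}\norm{\mathcal G}_{\h_{K_0}}^{2}-\mathrm{Tr}(\Sigma_{n_2})$, which equals $\sum_k\eta_k(\zeta_k^2-1)$, with $(\eta_k)_k$ the eigenvalues of $\Sigma_{n_2}$ and $(\zeta_k)_k$ i.i.d.\ $N(0,1)$ (the series converging a.s.\ and in $L^2$ since $\Sigma_{n_2}$ is Hilbert--Schmidt).

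For $\mathcal D_1$, I would condition on $\mathbb X_n$, so that $\xi_1,\dots,\xi_{n_1}$ are fixed and only the Rademacher weights are random. The conditional covariance operator of $\frac{1}{\sqrt{n_1}}\sum_i\epsilon_i\xi_i$ is the empirical second-moment operator $\frac{1}{n_1}\sum_i\xi_i\htens\xi_i$, which converges a.s.\ in trace norm to $\E_{P_0}[\xi\htens\xi]=\Sigma_{n_2}$ by the strong law for i.i.d.\ trace-class-valued random elements (applicable because $\E_{P_0}\norm{\xi}_{\h_{K_0}}^{2}<\infty$), and the triangular array $\{\epsilon_i\xi_i/\sqrt{n_1}\}_{i\le n_1}$ satisfies Lindeberg's condition $\frac{1}{n_1}\sum_i\norm{\xi_i}_{\h_{K_0}}^{2}\mathds{1}\{\norm{\xi_i}_{\h_{K_0}}>\delta\sqrt{n_1}\}\to0$ a.s.\ for every $\delta>0$ (a standard consequence of $\E_{P_0}\norm{\xi}_{\h_{K_0}}^{2}<\infty$). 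The Hilbert-space Lindeberg CLT for triangular arrays then yields, for almost every realization of $\mathbb X_n$, $\frac{1}{\sqrt{n_1}}\sum_i\epsilon_i\xi_i\xrightarrow{d}\mathcal G$ conditionally; since the diagonal term $\frac{1}{n_1}\sum_i\norm{\xi_i}_{\h_{K_0}}^{2}$ is identical to the one in $n_1\stat^P$ and converges a.s.\ to $\mathrm{Tr}(\Sigma_{n_2})$, the continuous-mapping and (conditional) Slutsky arguments give $n_1\stat^\epsilon\xrightarrow{d}\norm{\mathcal G}_{\h_{K_0}}^{2}-\mathrm{Tr}(\Sigma_{n_2})$ conditionally on $\mathbb X_n$, almost surely. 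Because $\Sigma_{n_2}$, hence the law of $\mathcal G$, is a function of $\mathbb Z_{n_2}$ alone (and $\mathbb Z_{n_2}\subset\mathbb X_n$), the limits of $\mathcal D_1$ and $\mathcal D_2$ are the same distribution, which is the assertion.

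The main obstacle is the $\mathcal D_1$ step: establishing the conditional/multiplier CLT in infinite dimensions in the presence of the data-dependent (but, under the conditioning, deterministic) operator $\gSh$, i.e.\ verifying trace-norm convergence of the empirical second-moment operator together with the Lindeberg condition, and then handling the almost-sure conditioning carefully so that the two limiting laws coincide literally and not merely in shape. An alternative that avoids an explicit Hilbert-space CLT is to exploit the $U$-statistic structure directly: conditioned on $\mathbb Z_{n_2}$, $n_1\stat^P$ is a degenerate second-order $U$-statistic with kernel $h(x,y)=\inner{\gSh K_0(\cdot,x)}{\gSh K_0(\cdot,y)}_{\h_{K_0}}$---degenerate since $\E_{P_0}h(x,Y)=\inner{\gSh K_0(\cdot,x)}{\gSh\Psi_{P_0}}_{\h_{K_0}}=0$, and square-integrable since $|h(x,y)|\le\norm{\gSh}_{\EuScript{L}^{\infty}(\h_{K_0})}^{2}\sqrt{K_0(x,x)}\sqrt{K_0(y,y)}$ combined with $\E_{P_0}K_0(X,X)<\infty$ from~\ref{assump:a0}---and $n_1\stat^\epsilon$ is its i.i.d.\ weighted bootstrap, so that the degenerate $U$-statistic CLT and its weighted-bootstrap analogue of \citet{DEHLING1994392} both deliver the common limit $\sum_k\eta_k(\zeta_k^2-1)$.
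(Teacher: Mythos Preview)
Your proposal is correct, and in fact the ``alternative'' you sketch at the end is exactly the paper's proof: conditioned on $\mathbb Z_{n_2}$, the paper simply notes that $h(x,y)=\inner{\gSh K_0(\cdot,x)}{\gSh K_0(\cdot,y)}_{\h_{K_0}}$ is a symmetric, $P_0$-degenerate kernel with $\E_{P_0}h^2(X,Y)<\infty$ and $\E_{P_0}h(X,X)<\infty$ (both via $\norm{\gSh}_{\opS}<\infty$ and $\E_{P_0}K_0(X,X)<\infty$ from~\ref{assump:a0}), and then invokes \citet[Theorem~3.1]{DEHLING1994392} for the common limit of the degenerate $U$-statistic and its wild bootstrap. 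That is the entire argument.

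Your primary route---rewriting both statistics as $\|n_1^{-1/2}\sum\xi_i\|^2$ (resp.\ with Rademacher weights) minus the diagonal, applying the Hilbert-space CLT and SLLN for $\mathcal D_2$, and a conditional Lindeberg/multiplier CLT in $\h_{K_0}$ for $\mathcal D_1$---is a genuinely different and self-contained argument. It essentially re-derives the Dehling--Mikosch result in this special (Hilbert--Schmidt kernel) setting, and yields the explicit limit $\sum_k\eta_k(\zeta_k^2-1)$ directly. The advantage is transparency and the explicit identification of $\Sigma_{n_2}=\gSh\A_{P_0}\gSh$; the cost is that you must supply or cite a Hilbert-space Lindeberg CLT for triangular arrays and the trace-norm SLLN, whereas the paper offloads all of this to one citation. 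Either route is fine; the paper's is shorter because the black-box it invokes was built precisely for this situation.
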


Using the critical level $q^\lambda_{1-\alpha}$, in the next result, we investigate the conditions on the separation boundary of the test based on $\stat^P(\mathbb{X}_n)$ so that the power is asymptotically one w.r.t.~the class of local alternatives, $\mathcal{P}$ defined as 
\begin{equation}
\PP:=\PP_{\theta,\Delta}:= \left\{P \in \mathcal{S}: \frac{dP}{dP_0}-1 \in \range (\ep_{P_0}^{\theta}), 
 \ \chi^2(P,P_0)=\norm{\frac{dP}{dP_0}-1}^2_{L^2(P_0)} \geq \Delta\right\}, \label{Eq:alternative-theta}
\end{equation}
for $\theta > 0$, where \vspace{-2mm}
$$\mathcal{S}:=\left\{P : \E_{P} [K_0(X,X)^r] \leq c r!\kappa^r, \, \forall r>2,\,\, \text{and}\,\, \  \E_{P} [K_0(X,X)] < \infty\right\},$$
for some $c,\kappa>0$. To this end, we impose the following assumption on the pair 
% the general sufficient conditions for the separation boundary of the test based on $\stat^P$. 
%For that matter define the following assumption on the pair 
$(K_0,P_0)$.
\vspace{-2mm}
\begin{myassump}{$\mathbf{A_1}$}\label{assump:a2}%\tag{$\mathbf{A_2}$}
%$(A_2)$ 
$\E_{P_0}[K_0(X,X)^v] \leq \tilde{\kappa}^v$ for some $v > 2,$ $\tilde{\kappa}>0.$ 
\end{myassump}

%\begin{remark}
    % Note that \ref{assump:a1} directly implies \ref{assump:a2}. Thus, the result in Theorem \ref{thm:sep-bound-suff-conditions} also holds under the stronger assumption \ref{assump:a1} which was required in Theorem \ref{thm:minimax}.  
%\end{remark}
Define \vspace{-4mm} $$\Nol := \text{Tr}(\SgL\A_{P_0}\SgL)\,\,  \text{and} \,\,  \Ntl := \norm{\SgL\A_{P_0}\SgL}_{\hsS},$$which quantify the intrinsic dimensionality or degrees of freedom of $\h_{K_0}$. Notably, $\Nol$ plays a significant role in the analysis of kernel ridge regression, as discussed in works such as \citet{Caponnetto-07}.

Under Assumption \ref{assump:a2}, we provide general sufficient conditions on the separation boundary for any decay rate of the eigenvalues, as stated in Theorem \ref{thm:sep-bound-suff-conditions}. The following corollaries to Theorem \ref{thm:sep-bound-suff-conditions} offer explicit expressions for the separation boundary in the cases of polynomially and exponentially decaying eigenvalues of the operator $\A_{P_0}$. For simplified presentation, we will provide the results under the following assumption \ref{assump:a2'} that is stronger than \ref{assump:a2}, and the more general versions under \ref{assump:a2} are deferred to Section~\ref{sec:general-cor}. \vspace{-1mm}
\begin{myassump}{$\mathbf{A'_1}$}\label{assump:a2'}%\tag{$\mathbf{A'_2}$}
%$(A'_2)$ 
$\E_{P_0}[K_0(X,X)^v] \leq \tilde{\kappa}^v$ for all $v > 2,$ and some $\tilde{\kappa}>0.$  \vspace{0mm}
\end{myassump}
\begin{remark} 
   The difference between the assumptions \ref{assump:a2'} and \ref{assump:a2} lies in assuming that the required condition holds for all $v>2$, rather than only for some $v$ as assumed in \ref{assump:a2}. Thus the result presented below is just the limiting result as $v$ goes to infinity of the general result in Corollaries \ref{coro:general:poly} and \ref{coro:general:exp}. %Observe that \ref{assump:a2'} is stronger than \ref{assump:a1}, which is stronger than \ref{assump:a2}.\vspace{1mm}\\
\vspace{-3mm}
\end{remark}

\begin{corollary} \label{coro:poly}
Suppose $\lambda_i \lesssim i^{-\beta},$ $\beta>1$. Let $\lambda_n \asymp \Delta_n^{\frac{1}{2\tilde{\theta}}},$ with $\Delta_n \to 0$. Then for $(P_n)_n \subset \PP,$ we have  
$$\lim_{n \to \infty}P\{\stat^{P_n}(\mathbb{X}_n) \geq q^{\lambda}_{1-\alpha}\} = 1,$$
if one of the following holds:
\begin{enumerate}
   \item  \ref{assump:a2'} holds and $\Delta_n B_n^{-1} \to \infty,$ where 
\begin{align*}
% & \textit{for \ } \beta \geq \frac{3}{2}  & &\textit{, \ for \ } \beta < \frac{3}{2} \\ 
&B_{n}  =
\left\{
	\begin{array}{ll}
	n^{\frac{-4\tilde{\theta} \beta}{4\tilde{\theta} \beta+1}},  &  \ \ \Tilde{\theta}> \frac{2}{3} \\
		 \left(\frac{\log n}{n}\right)^{\frac{4\tilde{\theta}\beta}{1+4\beta-2\tilde{\theta}\beta}}, & \ \  \frac{1}{4\beta}+\frac{1}{2}\leq \Tilde{\theta} \leq  \frac{2}{3}  \\
      n^{-\tilde{\theta}}, & \ \ \text{otherwise}
	\end{array}
\right., 
& &B_{n}  =
\left\{
	\begin{array}{ll}
	n^{\frac{-4\tilde{\theta} \beta}{4\tilde{\theta} \beta+1}},  &  \ \ \Tilde{\theta}> \frac{1}{2}+\frac{1}{4\beta} \\
		 \left(\frac{\log n}{n}\right)^{\frac{2\tilde{\theta}\beta}{\beta+1}}, & \ \  1-\frac{1}{2\beta}\leq \Tilde{\theta} \leq  \frac{1}{2}+\frac{1}{4\beta}  \\
      n^{-\tilde{\theta}}, & \ \ \text{otherwise}
	\end{array}
\right., 
\end{align*} 
for $\beta \geq \frac{3}{2}$ and $\beta < \frac{3}{2}$, respectively, with $\tilde{\theta}=\min\{\theta,\xi\}.$
\item $\sup_{i}\norm{\phi_i}_{\infty} < \infty$ and $\Delta_n B_n^{-1} \to \infty,$ where
$$B_{n}  =
\left\{
	\begin{array}{ll}
	n^{\frac{-4\tilde{\theta} \beta}{4\tilde{\theta} \beta+1}},  &  \ \ \tilde{\theta} \geq \frac{1}{2\beta} \\
	n^{\frac{-4\tilde{\theta} \beta}{2\tilde{\theta}\beta+\beta+1}}	, & \ \  \frac{1}{2}-\frac{1}{2\beta} \leq \tilde{\theta} \leq \frac{1}{2\beta} \\
    \left(\frac{\log n}{n}\right)^{2\tilde{\theta}}, & \ \ \text{otherwise}
	\end{array}
\right..$$
\end{enumerate}

\end{corollary}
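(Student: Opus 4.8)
The proof should proceed as a corollary to the general Theorem~\ref{thm:sep-bound-suff-conditions}, which presumably gives an abstract sufficient condition on $\Delta_n$ of the form
\[
\Delta_n \gtrsim \lambda_n^{2\tilde\theta} + (\text{variance/estimation terms involving } \Nol, \Ntl, n_1, n_2, \lambda),
\]
so that the power tends to one. The first step is to instantiate the bias term: under the range-space condition $dP/dP_0 - 1 \in \range(\ep_{P_0}^\theta)$, property $(E_3)$ of the regularizer with $\varphi = \tilde\theta = \min\{\theta,\xi\}$ gives the approximation error bound $|D^2_\lambda(P,P_0) - \chi^2(P,P_0)| \lesssim \lambda^{2\tilde\theta}\|w\|^2$ where $u = \ep_{P_0}^\theta w$; this is what forces the choice $\lambda_n \asymp \Delta_n^{1/(2\tilde\theta)}$ so that the bias is of the same order as the separation. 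The second step is to evaluate $\Nol$ and $\Ntl$ under $\lambda_i \lesssim i^{-\beta}$: a standard computation (comparing sums to integrals) yields $\Nol = \sum_i \lambda_i/(\lambda_i+\lambda) \lesssim \lambda^{-1/\beta}$ and $\Ntl = (\sum_i \lambda_i^2/(\lambda_i+\lambda)^2)^{1/2} \lesssim \lambda^{-1/(2\beta)}$ (for $\beta > 1$). These get substituted into the abstract separation condition.

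The third and most delicate step is bookkeeping the various error contributions. The test statistic $\stat^P$ differs from $\tilde D^2_\lambda(P,P_0)$ through (a) the $U$-statistic fluctuation in estimating $\Psi_P$ from $n_1$ samples, (b) the perturbation from replacing $\A_{P_0}$ by $\hat\A_P$ (estimated from $n_2$ samples) — this is the price of the KSD-specific modification \eqref{Eq:ksd-approx} — and (c) the gap between the bootstrap quantile $q^\lambda_{1-\alpha}$ and the true quantile. Under the stronger moment assumption \ref{assump:a2'} (letting $v\to\infty$ in the Bernstein-type bounds behind Theorem~\ref{thm:sep-bound-suff-conditions}), each of these contributes a term that is a monomial in $\lambda, n_1, n_2$ up to logarithmic factors; choosing $n_1 \asymp n_2 \asymp n$ and substituting $\lambda_n \asymp \Delta_n^{1/(2\tilde\theta)}$ and the degrees-of-freedom bounds above, one collects all constraints of the form $\Delta_n \gtrsim (\text{power of } n)$. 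Taking the binding constraint among them — which changes depending on where $\tilde\theta$ sits relative to the thresholds $\tfrac23$, $\tfrac12 + \tfrac1{4\beta}$, $1-\tfrac1{2\beta}$ — produces exactly the piecewise expression for $B_n$, with the split on $\beta \gtrless \tfrac32$ coming from which term dominates when the $\Ntl$-type (Hilbert–Schmidt) contribution competes with the $\Nol$-type (trace) contribution. For part~2, the extra hypothesis $\sup_i\|\phi_i\|_\infty < \infty$ sharpens the variance bound (uniform boundedness of eigenfunctions controls an otherwise loose sup-norm term in the Bernstein inequality), which replaces the relevant monomials and yields the three-regime $B_n$ there.

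The main obstacle I anticipate is purely the careful tracking of the perturbation term (b): one must show that replacing $\A_{P_0}$ by $\hat\A_P$ inside $g_\lambda^{1/2}(\cdot)$ does not destroy the rate, which requires operator-perturbation bounds of the type $\|g_\lambda^{1/2}(\hat\A_P) - g_\lambda^{1/2}(\A_{P_0})\|$ controlled via $\|\hat\A_P - \A_{P_0}\|$ together with the regularizer conditions $(E_1),(E_2),(E_4)$ — and crucially, arguing (as the paper indicates informally) that when $P$ is far from $P_0$ the regularization is irrelevant and when $P$ is close the operators are close. Concretely I would split into the event $\{\|\A_{P_0} - \A_P\| \le \lambda/2\}$ (high probability, where perturbation bounds apply cleanly) and its complement (where $\chi^2(P,P_0)$ is necessarily large, so the crude bound suffices), and bound the probability of the bad event by a concentration inequality for the covariance operator under \ref{assump:a2'}. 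Everything else is routine substitution of the polynomial-decay bounds on $\Nol, \Ntl$ and optimization over the regimes of $\tilde\theta$.
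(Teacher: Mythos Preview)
Your high-level plan is correct and matches the paper: the corollary is obtained by instantiating the abstract conditions \eqref{eq:one}--\eqref{eq:four-(b)} of Theorem~\ref{thm:sep-bound-suff-conditions} with the polynomial-decay estimates $\Nol\lesssim\lambda^{-1/\beta}$, $\Ntl\lesssim\lambda^{-1/(2\beta)}$ and the choice $\lambda_n\asymp\Delta_n^{1/(2\tilde\theta)}$, then letting $v\to\infty$ in $C_\lambda$ (which is where assumption~\ref{assump:a2'} enters) and reading off which constraint is binding in each range of $\tilde\theta$. The one misconception to correct is your ``main obstacle'': the operator-perturbation work for replacing $\A_{P_0}$ by $\hat\A_P$ inside $g_\lambda^{1/2}(\cdot)$ is \emph{already} absorbed into Theorem~\ref{thm:sep-bound-suff-conditions} (specifically via Lemmas~\ref{Lem:AP-AP0}, \ref{Lem: SP-SP0}, \ref{lem:bound-op-norm-1}, \ref{lem:bound-op-norm-2}, \ref{Lem: bound zeta}), and its price surfaces only through the scalar quantities $C_\lambda$ and $D_\lambda$ in conditions \eqref{eq:two}--\eqref{eq:four-(b)}. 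No event-splitting on $\{\|\A_{P_0}-\A_P\|\le\lambda/2\}$ is needed at the level of the corollary---the proof there is purely algebraic. Incidentally, the paper does not control the perturbation via such an event either: it bounds $\|\SgL(\A_P-\A_{P_0})\SgL\|$ directly by $\sqrt{C_\lambda}\|u\|_{\Lp}$ (see \eqref{eq:hs-bound}), which is what produces the $\sqrt{C_\lambda\Delta_n}$ term in \eqref{eq:two} and the case split $4C_\lambda\Delta_n\lessgtr 1$. Finally, the $\beta\gtrless\tfrac32$ split is not a $\Nol$-vs-$\Ntl$ competition; it comes from comparing the two limiting thresholds $\tfrac23$ and $\tfrac12+\tfrac{1}{4\beta}$ that arise (as $v\to\infty$) from the $b_1$ in Corollary~\ref{coro:general:poly}, i.e.\ from which of conditions \eqref{poly-cond1}--\eqref{poly-cond3} dominates.
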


\begin{corollary}\label{coro:exp}
   Suppose $\lambda_i \lesssim e^{-\tau i},$ $\tau>0$. Let $\lambda_n \asymp \Delta_n^{\frac{1}{2\tilde{\theta}}},$ with $\Delta_n \to 0$. Then for any $(P_n)_n \subset \PP,$ we have  
$$\lim_{n \to \infty}P\{\stat^{P_n}(\mathbb{X}_n) \geq q^{\lambda}_{1-\alpha}\} = 1,$$
if one of the following holds:
\begin{enumerate}
    \item  \ref{assump:a2'} holds and $\Delta_n B_n^{-1} \to \infty,$ where 
$$B_{n}  =
\left\{
	\begin{array}{ll}
	\frac{\sqrt{\log n}}{n},  &  \ \ \Tilde{\theta}> \frac{2}{3} \\
		\left(\frac{(\log n)^2}{n}\right)^{\frac{2\tilde{\theta}}{2-\tilde{\theta}}} , & \ \  \frac{1}{2} \leq \tilde{\theta} \leq \frac{2}{3} \\
      n^{-\tilde{\theta}}, & \ \ \text{otherwise}
	\end{array}
\right.,$$
with $\tilde{\theta}=\min\{\theta,\xi\}.$\vspace{1mm}
\item  $\sup_{i}\norm{\phi_i}_{\infty} < \infty$ and $\Delta_n B_n^{-1} \to \infty,$ where
$B_{n}  = \frac{\sqrt{\log n}}{n}.$
\end{enumerate} 
\end{corollary}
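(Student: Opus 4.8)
I would derive Corollary~\ref{coro:exp} as a specialization of the general separation-boundary result, Theorem~\ref{thm:sep-bound-suff-conditions}, to exponentially decaying eigenvalues $\lambda_i \lesssim e^{-\tau i}$. That theorem guarantees asymptotic power one for the test based on $\stat^{P}(\mathbb{X}_n)$ with threshold $\qq$ whenever the separation $\Delta$ dominates a sum of terms built from: the approximation exponent $\tilde{\theta}=\min\{\theta,\xi\}$ (entering through a bias term of order $\lambda^{2\tilde{\theta}}$, via condition $(E_3)$), the effective dimensions $\Nol=\mathrm{Tr}(\SgL\A_{P_0}\SgL)$ and $\Ntl=\norm{\SgL\A_{P_0}\SgL}_{\hsS}$, the sample split $(n_1,n_2)$ with $n_1\asymp n_2\asymp n$, and---because $\A_{P_0}$ is replaced by the empirical $\A_P$ in $\tilde{D}^2_\lambda$---an additional term that is controlled through the moment condition in \ref{assump:a2'}. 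So the plan is to evaluate these quantities under exponential decay and then substitute the prescribed $\lambda_n\asymp\Delta_n^{1/(2\tilde{\theta})}$.

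\textbf{Step 1.} First I would estimate the effective dimensions. With $\lambda_i\asymp e^{-\tau i}$ the eigenvalue $\lambda_i$ exceeds $\lambda$ precisely for $i\lesssim \tfrac{1}{\tau}\log(1/\lambda)$, so $\Nol=\sum_i \tfrac{\lambda_i}{\lambda_i+\lambda}\asymp \#\{i:\lambda_i\geq\lambda\}\asymp \log(1/\lambda)$ and, by the same reasoning, $\Ntlsq=\sum_i\tfrac{\lambda_i^2}{(\lambda_i+\lambda)^2}\asymp\log(1/\lambda)$; thus both degrees-of-freedom quantities are of order $\log(1/\lambda)$, with hidden constants depending only on $\tau$. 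In the setting of part~2, where additionally $\sup_i\norm{\phi_i}_\infty<\infty$, the same exponential decay forces the relevant sup-norm-weighted quantities appearing in Theorem~\ref{thm:sep-bound-suff-conditions} to also be of order $\log(1/\lambda)$ rather than of order $\lambda^{-1}$, which is what will ultimately produce the uniformly faster rate in that part.

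\textbf{Step 2.} Next I would substitute $\lambda_n\asymp\Delta_n^{1/(2\tilde{\theta})}$, which makes the bias term $\lambda_n^{2\tilde{\theta}}\asymp\Delta_n$, so the separation condition of Theorem~\ref{thm:sep-bound-suff-conditions} reduces to asking that every remaining (variance-type) term be $o(\Delta_n)$, i.e.\ $\Delta_n B_n^{-1}\to\infty$ with $B_n$ the largest of those terms. Since each such term is, up to factors of $\log(1/\lambda_n)$, a negative power of $n$, the constraint $\Delta_n B_n^{-1}\to\infty$ forces $\Delta_n$ to decay no faster than polynomially in $n$, and hence $\log(1/\lambda_n)\asymp\log(1/\Delta_n)\asymp\log n$; I would then check that this substitution is self-consistent with the resulting rate. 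Inserting $\Nol,\Ntlsq\asymp\log n$ and $\lambda_n\asymp\Delta_n^{1/(2\tilde\theta)}$ into each term and solving the fixed-point relation $\Delta_n\asymp(\text{term})$ for $\Delta_n$ then isolates the dominant contribution in each range of $\tilde{\theta}$: for $\tilde{\theta}>2/3$ the Hilbert--Schmidt term $\sqrt{\Ntlsq}/n\asymp\sqrt{\log n}/n$ dominates, giving $B_n=\sqrt{\log n}/n$; for $1/2\leq\tilde{\theta}\leq 2/3$ a term combining the effective dimension with negative powers of $n$ and $\lambda_n$ dominates and, after the substitution, solving for $\Delta_n$ gives $B_n=\big((\log n)^2/n\big)^{2\tilde{\theta}/(2-\tilde{\theta})}$; and for smaller $\tilde{\theta}$ the error of estimating $\A_P$ on the $n_2$ split (bounded via \ref{assump:a2'}) dominates and contributes $B_n=n^{-\tilde{\theta}}$. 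Part~2 would go through identically except that the sup-norm bound from Step~1 removes the intermediate regime, so $\sqrt{\log n}/n$ dominates throughout.

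\textbf{Main obstacle.} The hard part is the bookkeeping rather than any new idea: I must verify, within each $\tilde{\theta}$-window, which of the several competing terms in Theorem~\ref{thm:sep-bound-suff-conditions} is largest after substituting $\lambda_n\asymp\Delta_n^{1/(2\tilde\theta)}$ and $\Nol,\Ntlsq\asymp\log n$, keep track of the exact powers of $\log n$ at the boundary values $\tilde{\theta}\in\{1/2,2/3\}$ where two terms become comparable, and resolve the mild circularity created by $\lambda_n$ depending on the very quantity $\Delta_n$ being bounded. Exponential decay is precisely what makes this manageable, since it collapses every effective-dimension sum to $\log(1/\lambda)$.
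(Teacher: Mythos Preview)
Your proposal is correct and follows essentially the same route as the paper: both specialize Theorem~\ref{thm:sep-bound-suff-conditions} to exponential eigenvalue decay by inserting $\Nol,\Ntlsq\asymp\log(1/\lambda)$, setting $\lambda_n\asymp\Delta_n^{1/(2\tilde\theta)}$ so that condition~\eqref{eq:one} is automatic, and then determining which of the remaining conditions~\eqref{eq:two}--\eqref{eq:four-(b)} is binding in each $\tilde\theta$-regime. The only organizational difference is that the paper first proves the general version under \ref{assump:a2} with parameter $v$ (Corollary~\ref{coro:general:exp}) and then obtains Corollary~\ref{coro:exp} as the limit $v\to\infty$, whereas you work directly under \ref{assump:a2'}; either way the three regimes and the crossover points $\tilde\theta\in\{1/2,2/3\}$ arise identically, and your attribution of the $n^{-\tilde\theta}$ rate to condition~\eqref{eq:three} (the covariance-estimation constraint $\lambda_n\sqrt{n}\to\infty$) is exactly what the paper uses.
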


%Observe that if $K_0$ is bounded then both the conditions in $\mathcal{S}$ are satisfied for any distribution $P$. Thus, when $\psi_{P_0}(\cdot,x)= K(\cdot,x)-\E_{P_0}K(\cdot,X)$ and $K$ is bounded, we recover the alternative space in \citet{hagrass2023Gof}.
% \begin{remark}
% (i) Observe that if $K_0$ is bounded then both the conditions in $\mathcal{S}$ are satisfied for any distribution $P$. Thus, when $\psi_{P_0}(y,x)= K(y,x)-\E_{P_0}K(y,X)$ and $K$ is bounded, we recover the alternative space in \citet{hagrass2023Gof}. 
% \end{remark}
%\\
%Define the following assumptions on $(P_0,K_0) $ pair.
%\vspace{1.5mm}
%$(A_1)$ $E_{P_0}[K_0(X,X)]^v \leq \tilde{\kappa}^v$ for some $v > 2,$ $\tilde{\kappa}>0.$  \vspace{1.5mm}
%\vspace{1.5mm}
%$(A_2)$ $\sup_{i}\norm{\phi_i}_{\opS} < \infty$.  
%\vspace{1.5mm}
%$(A_3)$ $\E_{P_0} [K_0(X,X)]^r \leq c r!\kappa^r, \, \forall r>2,$ for some $c,\kappa>0$ and $E_{P_0}[K_0(x,Y)]^2 < \infty,$ $\forall x \in \X.$ \vspace{1.5mm}
%mention that (A2) is stonger and implies (A1)
% mention that later we will show some examples to compare range condition for $\ep_{P_0}$ and $\T$ and the corresponding separation boundaries. 

\begin{remark} \label{rem:bounded-kernel}
 (i) The uniform boundedness assumption $\sup_{i}\norm{\phi_i}_{\infty} < \infty$ implies that the kernel $K_0$ is bounded---follows from Mercer's theorem (see \citealt[Lemma 2.6]{Steinwart2012MercersTO})---, which in turn is stronger than \ref{assump:a2} since the latter is only a moment condition on $K_0$. Assuming boundedness of $K_0$ is too strong, since for example, if $K_0$ is chosen as in Example~\ref{example:ksd} with $K$ being a bounded translation-invariant kernel on $\mathbb{R}^d$, then it imposes strong assumptions on $P_0$, i.e., $p_0$ cannot decay faster than $e^{-\Vert x\Vert_1}$. On the other hand, Assumption \ref{assump:a2} imposes the condition $\int |\nabla \log p_0|^{2v} p_0(x)\,dx\le \tilde{\kappa}^v$ for some $v>2$ $\tilde{\kappa}>0$, which of course is satisfied by $p_0$ that decays faster than $e^{-\Vert x\Vert_1}$. However, we present the result with both \ref{assump:a2} and the uniform bounded assumptions to compare and contrast the separation rates (see Corollaries~\ref{coro:poly} and \ref{coro:exp} for details).
 %this assumption in Theorem~\ref{thm:non-optimality-KSD} to show its similarity to that of MMD (see \citealt[Theorem 1]{hagrass2023Gof}) and to understand its impact on the behavior of the test in contrast to that of \ref{assump:a1}.
% } %However, we present the result with this assumption in Theorem~\ref{thm:non-optimality-KSD} to show its similarity to that of MMD (see \citealt[Theorem 1]{hagrass2023Gof}) and to understand its impact on the behavior of the test in contrast to that of \ref{assump:a1}.
 \vspace{1mm}\\
 (ii) %and therefore both the conditions in $\mathcal{S}$ hold for any $P$. 
 %On the other hand, 
 Note that the assumption $\sup_{i}\norm{\phi_i}_{\infty} < \infty$ does  
 % the assumption $\sup_{i}\norm{\phi_i}_{\infty} < \infty$
 not hold in general. For example, it holds in any $d$ for Gaussian kernels on $\mathbb{R}^d$ while it does not hold for $d\ge 3$ for Gaussian kernel defined on $\mathbb{S}^{d-1}$. See (\citealp[Theorem 5]{unibound}) and (\citealp[Remark 6]{twosampletest}) for details. \vspace{-2mm}
\end{remark}

Next, we will show in Theorem~\ref{thm:minimax} that the above separation boundary is optimal for a wide range of $\theta$. We refer the reader to Section~\ref{subsec:minmaxframe} for the notation used in the following result. Before presenting the result, we introduce the following assumption.\vspace{-6mm}\\
\begin{myassump}{$\mathbf{A_2}$}\label{assump:a1}%\tag{$\mathbf{A_1}$}
The pair $(P_0,K_0)$  satisfies $\E_{P_0} [K_0(X,X)^r] \leq c r!\kappa^r, \, \forall r>2,$ for some $c,\kappa>0.$  
\end{myassump}
\vspace{-6mm}
% \begin{remark}\label{rem:A1} 
% %(i) Assumption \ref{assump:a1} is weaker than assuming $K_0$ to be bounded. As aforementioned in (iii) of Section~\ref{subsec:contributions}, for example, if $K_0$ is chosen as in Example~\ref{example:ksd} with $K$ being a bounded translation-invariant kernel on $\mathbb{R}^d$, then it imposes strong assumptions on $P_0$, i.e., $p_0$ cannot decay faster than $e^{-\Vert x\Vert_1}$. On the other hand, Assumption \ref{assump:a1} imposes the condition $\int |\nabla \log p_0|^{2r} p_0(x)\,dx\le cr!\kappa^2$ which of course is satisfied by $p_0$ that decays faster than $e^{-\Vert x\Vert_1}$.
% %
%  %(ii) 
%  Note that although \ref{assump:a2} appears very similar to \ref{assump:a1}, it is weaker than \ref{assump:a1}. This means the behavior of the regularized KSD test captured in Theorem~\ref{thm:sep-bound-suff-conditions} is under the weaker condition of \ref{assump:a2} in contrast to Theorem~\ref{thm:minimax} which provides the minimax rates under \ref{assump:a1}. Also, observe that \ref{assump:a2'} is stronger than \ref{assump:a1}, which is stronger than \ref{assump:a2}.\vspace{1mm}\\
% \end{remark}

\begin{theorem}[Minimax separation]\label{thm:minimax}
If $\lambda_i \asymp i^{-\beta}$, $\beta>1$, and $n^{\frac{4\theta\beta}{4\theta\beta+1}}\Delta_{n} \to 0,$ then \\  $\liminf_{n\rightarrow\infty}\inf_{\phi\in\Phi_\alpha}R_{\Delta_n}(\phi)>0,$ provided 
one of the following holds: (a) $\theta \geq 1$ under~\ref{assump:a1}, and $\E_{P_0}[K_0(x,Y)^2] < \infty;$ $\forall x \in \X$; (b)  $\theta \geq \frac{1}{4\beta}$ under  $\sup_{i}\norm{\phi_i}_{\infty} < \infty$. Suppose $\lambda_i \asymp e^{-\tau i}$, $\tau>0$, $\theta > 0$ and $\frac{n}{\sqrt{\log n}}\Delta_{n} \to 0.$ Then $\liminf_{n\rightarrow\infty}\inf_{\phi\in\Phi_\alpha}R_{\Delta_n}(\phi)>0.$
\end{theorem}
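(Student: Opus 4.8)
This is a minimax lower bound, so the natural approach is the standard two-point / mixture-over-alternatives (Ingster--Suslina) method: construct a prior $\pi$ supported on $\PP_{\theta,\Delta_n}$ and show that the $\chi^2$-divergence (or likelihood-ratio second moment) between $P_0^{\otimes n}$ and the $\pi$-mixture $\int P^{\otimes n}\,d\pi(P)$ stays bounded, which forces $\liminf_n \inf_{\phi\in\Phi_\alpha} R_{\Delta_n}(\phi)>0$ by the usual reduction of testing error to total variation. Concretely, I would take perturbations of the form $\frac{dP_\sigma}{dP_0} = 1 + \sum_{i\in S} \sigma_i c_i \tilde\phi_i$ with $\sigma_i\in\{\pm1\}$ i.i.d. Rademacher, $S$ a suitably chosen band of indices, and coefficients $c_i$ calibrated so that (i) $\sum_{i\in S} c_i^2 \asymp \Delta_n$ (so each $P_\sigma \in \PP_{\theta,\Delta_n}$ on the separation side), and (ii) the range-space constraint $dP_\sigma/dP_0 - 1 \in \range(\ep_{P_0}^\theta)$ holds, which translates into $c_i \asymp \lambda_i^{\theta} a_i$ for some $\ell^2$-sequence $(a_i)$ — the clean choice being $c_i = c\,\lambda_i^\theta$ on the band and $0$ off it. One must also check $P_\sigma$ is a genuine probability measure (nonnegativity of $1 + \sum \sigma_i c_i \tilde\phi_i$ and integrating to $1$); the integral is automatic since $\E_{P_0}\tilde\phi_i = 0$, and nonnegativity is where the two sets of hypotheses in (a) vs (b) enter — under $\sup_i\|\phi_i\|_\infty<\infty$ one controls the perturbation in sup-norm directly, while under \ref{assump:a1} plus $\E_{P_0}[K_0(x,Y)^2]<\infty$ one argues nonnegativity holds on a high-probability event or works with a truncated/clipped density and absorbs the error, which is the more delicate route.

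**Key steps, in order.** First, fix the index band: for polynomial decay $\lambda_i\asymp i^{-\beta}$, take $S = \{N, \dots, 2N\}$ for an $N$ to be chosen; then $\sum_{i\in S} c_i^2 = c^2\sum_{i\in S}\lambda_i^{2\theta} \asymp c^2 N^{1-2\theta\beta}$ (when $2\theta\beta\neq 1$), and setting this $\asymp \Delta_n$ pins down $N = N(\Delta_n)$. Second, bound the $\chi^2$ between the mixture and $P_0^{\otimes n}$: the classical computation gives $\chi^2\big(\int P_\sigma^{\otimes n} d\pi, P_0^{\otimes n}\big) + 1 = \E_{\sigma,\sigma'}\prod_{k=1}^n\big(1 + \langle u_\sigma, u_{\sigma'}\rangle_{L^2(P_0)}\big) = \E_{\sigma,\sigma'}(1 + \sum_{i\in S}\sigma_i\sigma'_i c_i^2)^n \le \E_{\sigma,\sigma'}\exp(n\sum_{i\in S}\sigma_i\sigma'_i c_i^2)$, and since $\sigma_i\sigma'_i$ are Rademacher this is $\prod_{i\in S}\cosh(n c_i^2) \le \exp\big(\tfrac{1}{2}n^2\sum_{i\in S}c_i^4\big)$. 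Third, require $n^2\sum_{i\in S}c_i^4 = O(1)$: with $c_i = c\lambda_i^\theta$ this is $n^2 c^4 \sum_{i\in S}\lambda_i^{4\theta} \asymp n^2 c^4 N^{1-4\theta\beta}$ (times a $\log$ correction at the boundary $4\theta\beta = 1$, though the stated condition sits away from it), and substituting $N \asymp (c^2/\Delta_n)^{1/(2\theta\beta-1)}$ shows the whole thing is $O(1)$ precisely when $\Delta_n = o(n^{-4\theta\beta/(4\theta\beta+1)})$, which is the hypothesis. Fourth, the exponential-decay case $\lambda_i\asymp e^{-\tau i}$ is handled the same way but now the band is effectively a single frequency (or $O(\log\log)$ frequencies) near $i \asymp \tfrac{1}{\tau}\log(1/\Delta_n)$; the sums are dominated by their top term, $\sum c_i^2 \asymp \Delta_n$ needs $c \asymp \Delta_n^{1/2}/\lambda_{i_*}^\theta$, and $n^2\sum c_i^4 \asymp n^2 \Delta_n^2 = O(1)$ iff $\Delta_n = O(1/n)$, matching $\tfrac{n}{\sqrt{\log n}}\Delta_n\to 0$ with room to spare (the $\sqrt{\log n}$ being the slack that lets several frequencies be combined if one wants a sharper constant, but it is not needed for the lower bound itself).

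**Main obstacle.** The genuinely delicate point is \textbf{ensuring the perturbed measures are valid probability measures while keeping them inside $\PP_{\theta,\Delta_n}$}, i.e., reconciling the range-space/smoothness constraint with nonnegativity of the density. Under the bounded-eigenfunction hypothesis (b) this is routine: $\|\sum_{i\in S}\sigma_i c_i\tilde\phi_i\|_\infty \le \sum_{i\in S} c_i \|\phi_i\|_\infty \lesssim \sum_{i\in S} \lambda_i^\theta \to 0$ as $N\to\infty$ (using $\theta\beta$ large enough, which is exactly why (b) asks only $\theta \ge \tfrac{1}{4\beta}$ while the summability $\sum_{i\ge N}\lambda_i^\theta\to 0$ needs $\theta\beta>1$ — so in the regime $\tfrac1{4\beta}\le\theta\le\tfrac1\beta$ one instead uses a Bernstein/Rademacher concentration bound on $\sum\sigma_i c_i\tilde\phi_i(x)$ uniformly, or a truncation argument). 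Under hypothesis (a), without sup-norm control, the cleanest fix is Ingster's clipping: replace $dP_\sigma/dP_0$ by $(1+v_\sigma)_+$ renormalized, bound the $L^2(P_0)$ and range-space perturbation induced by clipping via $\E_{P_0}[v_\sigma^2\mathds 1_{\{v_\sigma<-1\}}]$, and use the moment assumption $\E_{P_0}[K_0(x,Y)^2]<\infty$ together with \ref{assump:a1} to show this correction is lower order than $\Delta_n$; this is where most of the technical bookkeeping lives and why the hypotheses differ between (a) and (b). I would present the bounded case in full and sketch the clipping argument for the general case, since the $\chi^2$-moment computation is identical in both.
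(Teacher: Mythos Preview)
Your overall architecture --- Ingster's Rademacher-mixture prior on finitely many eigendirections, followed by the standard $\cosh$/$\chi^2$ computation --- is exactly what the paper does (it defers the $\chi^2$ calculation itself to \citet{hagrass2023Gof}, replacing the integral operator there by $\ep_{P_0}$). For case~(b) your sup-norm argument is essentially right.

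The real gap is your treatment of case~(a). Clipping does not work here: the range-space condition $u\in\range(\ep_{P_0}^\theta)$ is a hard smoothness-class \emph{membership}, not an $L^2$ ball, and the pointwise nonlinear map $v\mapsto (1+v)_+/Z-1$ destroys it. You cannot ``bound the range-space perturbation induced by clipping'' because after clipping you are simply outside the class, so the constructed $P_\sigma$ would fail to lie in $\PP$. The paper instead avoids clipping entirely by exploiting the RKHS structure: since $\phi_i=\lambda_i^{-1}\id^*\tilde\phi_i\in\h_{K_0}$, one has the pointwise identity
\[
u_{n,k}(x)=\Big\langle K_0(\cdot,x),\,a_n\!\!\sum_{i\le B_n}\epsilon_{ki}\phi_i\Big\rangle_{\h_{K_0}}
=\E_{P_0}\!\Big[K_0(x,Y)\,a_n\!\!\sum_{i\le B_n}\epsilon_{ki}\lambda_i^{-1}\tilde\phi_i(Y)\Big],
\]
and Cauchy--Schwarz in $L^2(P_0)$ gives $|u_{n,k}(x)|\le a_n\big(\sum_{i\le B_n}\lambda_i^{-2}\big)^{1/2}\big(\E_{P_0}K_0(x,Y)^2\big)^{1/2}$. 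This is precisely where the otherwise-mysterious hypothesis $\E_{P_0}[K_0(x,Y)^2]<\infty$ enters, and with $B_n\asymp\Delta_n^{-1/(2\theta\beta)}$ it yields $|u_{n,k}(x)|\lesssim\Delta_n^{1/2-1/(2\theta)}\to 0$ exactly when $\theta>1$ --- which is why (a) carries the restriction $\theta\ge 1$. One then checks $P_k\in\mathcal S$ directly from $|u_{n,k}|<1$ and~\ref{assump:a1}.

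A secondary issue: in the exponential case a single frequency only handles $\Delta_n=O(1/n)$, whereas the theorem must cover all $\Delta_n=o(\sqrt{\log n}/n)$, e.g.\ $\Delta_n=\sqrt{\log n}/(n\log\log n)$, for which $n^2\Delta_n^2\to\infty$. You genuinely need $B_n\asymp\log(1/\Delta_n)$ frequencies so that $n^2 a_n^4 B_n\asymp n^2\Delta_n^2/B_n$ absorbs the logarithm; the upper constraint $B_n\lesssim\log(1/\Delta_n)$ then comes from the same pointwise bound above, and the two together give exactly $\Delta_n\lesssim\sqrt{\log n}/n$. The $\sqrt{\log n}$ is not slack.
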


\begin{remark}
(i) Assuming $g_\lambda$ possesses infinite qualification (examples include Tikhonov and TikMax regularizers), denoted by $\xi=\infty$, then $\tilde{\theta}=\theta$. Under \ref{assump:a2'}, a comparison between Corollary~\ref{coro:poly} (or Corollary~\ref{coro:exp}) and Theorem~\ref{thm:minimax} indicates that the spectral regularized test, based on $\stat^P(\mathbb{X}_n)$, achieves minimax optimality w.r.t.~$\PP$ within the ranges of $\theta$ where the separation boundary of the test coincides with the minimax separation. For a more general scenario, we can align Corollary \ref{coro:general:poly} (or Corollary~\ref{coro:general:exp}) with Theorem~\ref{thm:minimax} under \ref{assump:a1} and observe that the test is again minimax optimal w.r.t.~$\mathcal{P}$.\vspace{1mm}\\
\noindent (ii) Note that although \ref{assump:a2} appears very similar to \ref{assump:a1}, it is weaker than \ref{assump:a1}. This means the behavior of the regularized KSD test captured in Theorem~\ref{thm:sep-bound-suff-conditions} is under the weaker condition of \ref{assump:a2} in contrast to Theorem~\ref{thm:minimax} which provides the minimax rates under \ref{assump:a1}. Also, observe that \ref{assump:a2'} is stronger than \ref{assump:a1}, which is stronger than \ref{assump:a2}.\vspace{1mm}\\
\noindent(iii) If we refrain from assuming a specific decay rate of the eigenvalues $\lambda_i$ and instead work with $\sum_{i}\lambda_i < \infty$ (which holds since $\ep_{P_0}$ is a trace class operator), then the general condition on the minimax separation boundary becomes $n^{4\theta/(4\theta+1)}\Delta_{n} \to 0$.\vspace{1mm}\\
\noindent(iv) We would like to highlight that when the kernel associated with the integral operator $\ep_{P_0}$ is assumed to be bounded, the minimax separation result for the space $\PP$ has been established in the literature (see \citealt{Krishna2021Gof} and \citealt{hagrass2023Gof}). However, as discussed in Remark \ref{rem:bounded-kernel}(i), this assumption is too restrictive to impose on the Stein kernel $K_0$. Therefore, the result in Theorem \ref{thm:minimax} extends this minimax separation result to the case of an unbounded kernel. \vspace{-2mm}
\end{remark}
The above results demonstrate that the regularized KSD test achieves the minimax separation for the range of $\theta$ that aligns with the minimax rates established in Theorem \ref{thm:minimax}. Finally, the next result shows that the separation boundary for the unregularized KSD test fails to achieve the minimax separation, demonstrating its non-optimality. To this end, consider the $U$-statistic estimator of  \eqref{eq:general KSD}, given in \eqref{Eq:test} as \vspace{-.5mm}\begin{align*}
S:=\hat{D}^2_{\mathrm{KSD}} &= \frac{1}{n(n-1)} \sum_{i \neq j} K_0(X_i,X_j), \vspace{-.5mm}%\label{eq:KSD-stat}
\end{align*} 
which can be used as a test statistic in a goodness-of-fit test \citep{liub16,schrab2023ksd}. Similar to the regularized KSD, we consider a testing threshold based on wild bootstrap. 
Let 
$$S_{\epsilon}:=\frac{1}{n(n-1)} \sum_{i\neq j} \epsilon_i \epsilon_j K_0(X_i,X_j),$$
where $\epsilon \in \{-1,1\}^n$ is a random vector of $n$ i.i.d Rademacher random variables. Then it can be shown that under $H_0$, the conditional distribution of $S_{\epsilon}$ given the samples $\mathbb{X}_n$,  converges to the same distribution as $S$  \citep[Theorem 3.1]{DEHLING1994392} if $\E_{P_0}K_0^2(X,Y)<\infty$---this condition is satisfied in our case since $\E_{P_0}K_0^2(X,Y) \leq [\E_{P_0}K_0(X,X)^2] < \infty.$ Hence, for the test threshold, we use  the $(1-\alpha)$ quantile of $S_{\epsilon}$ defined as %\vspace{-2mm}
\begin{equation*}
    q_{1-\alpha}:= \inf\left\{q \in \R: P_{\epsilon}\{S_{\epsilon} \leq q | \mathbb{X}_n\} \geq 1-\alpha \right\}. %\vspace{-2mm}%\label{ksd-quantile} 
\end{equation*}

The following result shows that the separation boundary of the wild bootstrap test based on $\hat{D}^2_{\mathrm{KSD}}$ w.r.t.~the class of local alternatives, $\mathcal{P}$ cannot achieve a separation better than $n^{\frac{-2\theta}{2\theta+1}}.$

\vspace{-3mm}

% \textcolor{red}{In the following result, similar to the JMLR or AoS paper, don't you need the requirement that $\sup_{P\in \mathcal{P}} \Vert \mathcal{T}^{-\theta}u\Vert_{L^2(P_0)}<\infty?$} \textcolor{blue}{No, it is not needed here because the result is asymptotic (we didn't have to use Lemma A.19 in the AoS paper where we needed this assumption) }
\begin{theorem}[Separation boundary of wild bootstrap KSD test] \label{thm:non-optimality-KSD}
Suppose~\ref{assump:a0} holds and $n^{\frac{2\theta}{2\theta+1}}\Delta_n \to 0$ as $n \to \infty$. Then for any decay rate of $(\lambda_i)_i$, if~\ref{assump:a1} holds and $\E_{P_0}[K_0(x,Y)^2] < \infty;$ $\forall x \in \X$, we have
$$\liminf_{n \to \infty} \inf_{\PP}P_{H_1}\{\hat{D}^2_{\mathrm{KSD}}\geq q_{1-\alpha}\} < 1,$$
for any $\theta>1.$ Furthermore if $\sup_{i}\norm{\phi_i}_{\infty} < \infty$, then 
$$\liminf_{n \to \infty} \inf_{\PP}P_{H_1}\{\hat{D}^2_{\mathrm{KSD}}\geq q_{1-\alpha}\} < 1,$$
for any $\theta > 0.$
\end{theorem}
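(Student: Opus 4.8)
The plan is to exhibit, for any $\Delta_n$ with $n^{2\theta/(2\theta+1)}\Delta_n\to 0$, a sequence of alternatives $(P_n)_n\subset\PP$ against which the wild bootstrap KSD test has power bounded away from one. The natural construction is to place the perturbation of $P_0$ entirely on a single high-frequency eigenfunction: set $u_n:=dP_n/dP_0-1 = c_n\,\tilde\phi_{k_n}$ for a suitable index $k_n\to\infty$ and scale $c_n$ chosen so that $\chi^2(P_n,P_0)=c_n^2=\Delta_n$ (hence $P_n\in\PP$, provided $k_n$ is chosen so that $u_n\in\range(\ep_{P_0}^\theta)$, i.e. $\lambda_{k_n}^{-\theta}c_n$ stays bounded — this is where the decay rate of $\lambda_i$ and the constraint on $\theta$ enter). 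One must also check $P_n\in\s$, which under~\ref{assump:a1} is automatic since the density ratio is bounded for large $n$. By Proposition~\ref{pro:ksd}, $\E_{P_n}\hat D^2_{\mathrm{KSD}} = \D^2_{\mathrm{KSD}}(P_n,P_0)=\lambda_{k_n}c_n^2=\lambda_{k_n}\Delta_n$, which we will make $o(1/n)$ by taking $k_n$ large, so the signal in the test statistic is negligible compared to its null-level fluctuations.

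The key steps, in order: \emph{(1)} Decompose $\hat D^2_{\mathrm{KSD}} = \D^2_{\mathrm{KSD}}(P_n,P_0) + L_n + Q_n$, where $L_n$ is the linear (degenerate-at-$P_0$) term $\frac{2}{n}\sum_i (\E_{P_0}K_0(X_i,Y)\,\text{---}\,\ldots)$ — actually, since $\E_{P_0}K_0(x,Y)=0$ for all $x$, the Hoeffding decomposition under $P_n$ has linear part $\tfrac2n\sum_i h_1(X_i)$ with $h_1(x)=\E_{P_n}K_0(x,Y) = c_n\lambda_{k_n}\tilde\phi_{k_n}(x)$ (again using $\E_{P_0}K_0(x,Y)=0$), and a fully degenerate quadratic part $Q_n=\frac1{n(n-1)}\sum_{i\ne j}\big(K_0(X_i,X_j)-h_1(X_i)-h_1(X_j)\big)$. \emph{(2)} Bound the mean and variance of $L_n$: $\E L_n=0$ under the centered parametrization, and $\mathrm{Var}(L_n)\lesssim \tfrac1n\,c_n^2\lambda_{k_n}^2\|\tilde\phi_{k_n}\|_{L^2(P_0)}^2 = \tfrac1n\Delta_n\lambda_{k_n}^2$ (in the bounded-eigenfunction case one gets the same with $\|\tilde\phi_{k_n}\|_\infty^2$ as an extra bounded factor), so $L_n = o_{P}(\sqrt{\Delta_n}\,\lambda_{k_n}/\sqrt n)$, negligible. \emph{(3)} Show the degenerate part $Q_n$ under $P_n$ has essentially the same first two moments as under $P_0$: $\E_{P_n}Q_n=0$ and $\mathrm{Var}_{P_n}(Q_n)= \mathrm{Var}_{P_0}(Q_n)(1+o(1))$, because the perturbation contributes lower-order corrections when $\Delta_n\to 0$ — here Assumption~\ref{assump:a1} controls $\E_{P_n}K_0(X,X)^r$ uniformly. \emph{(4)} Show the wild bootstrap quantile $q_{1-\alpha}$ is consistent: conditionally on $\mathbb X_n$, $n S_\epsilon$ converges to the same limiting (Gaussian-chaos) law as $n Q_n$, and this limit does not change between $P_0$ and $P_n$ in the limit; so $q_{1-\alpha}$ stays at the order $1/n$ dictated by the null distribution. \emph{(5)} Combine: $\hat D^2_{\mathrm{KSD}} = \lambda_{k_n}\Delta_n + L_n + Q_n$ with $\lambda_{k_n}\Delta_n=o(1/n)$ and $L_n$ negligible, so $n\hat D^2_{\mathrm{KSD}}$ has the same limiting law as $nQ_n$ under $P_0$, which equals the limit of the bootstrap statistic; hence $P_{H_1}\{\hat D^2_{\mathrm{KSD}}\ge q_{1-\alpha}\}\to \alpha < 1$, giving the $\liminf<1$ claim. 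For the range of $\theta$: with $\lambda_i$ having \emph{arbitrary} summable decay, choosing $k_n$ to just barely satisfy $u_n\in\range(\ep_{P_0}^\theta)$ forces $\lambda_{k_n}\asymp\Delta_n^{1/(2\theta)}$, so $\lambda_{k_n}\Delta_n\asymp\Delta_n^{1+1/(2\theta)}=o(1/n)$ exactly when $n^{2\theta/(2\theta+1)}\Delta_n\to 0$; the extra hypothesis $\theta>1$ (vs.\ $\theta>0$ under bounded eigenfunctions) is what is needed to guarantee $P_n\in\s$ and the moment controls in steps (2)–(3) without the boundedness of $\phi_i$.

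The main obstacle I anticipate is step~(4) together with the $P_0$-vs-$P_n$ comparison in step~(3): one needs the wild bootstrap to be valid \emph{under the alternative}, i.e.\ that $n S_\epsilon \mid \mathbb X_n$ still tracks the null chaos limit even when the data are drawn from $P_n$, not $P_0$. This requires showing the empirical spectral quantities entering $S_\epsilon$ (essentially the eigenvalues of the normalized Gram matrix $K_0(X_i,X_j)$) concentrate around the eigenvalues of $\ep_{P_0}$ uniformly over $(P_n)_n\subset\PP$, which is where Assumption~\ref{assump:a1}'s Bernstein-type moment bound on $K_0(X,X)$ does the heavy lifting (via a Bernstein inequality for the relevant $U$-statistic / operator, as flagged in the keywords); in the unbounded-kernel case this is genuinely more delicate than the bounded case of \citet{Krishna2021Gof,hagrass2023Gof} and is presumably why $\E_{P_0}[K_0(x,Y)^2]<\infty$ for all $x$ is added as a hypothesis.
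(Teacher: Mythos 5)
Your proposal matches the paper's proof in both the construction and the overall strategy: a single high-frequency eigenfunction perturbation $u_n\propto\phi_{k_n}$ with $\lambda_{k_n}\asymp\Delta_n^{1/(2\theta)}$, a Hoeffding decomposition of $\hat D^2_{\mathrm{KSD}}$ into mean plus linear plus degenerate parts, the observation that $n\D^2_{\mathrm{KSD}}(P_n,P_0)\asymp n\Delta_n^{(2\theta+1)/(2\theta)}\to 0$ under the stated condition, and hence $n\hat D^2_{\mathrm{KSD}}\rightsquigarrow S:=\sum_i\lambda_i(a_i^2-1)$. The $\theta>1$ restriction also plays the role you identify, namely ensuring $|u_n|<1$ (so $P_n$ is a valid measure and $P_n\in\s$) via $|u_n(x)|\lesssim\lambda_{k_n}^{\theta-1}\big(\E_{P_0}K_0^2(x,Y)\big)^{1/2}$.

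Two places where the paper resolves details you flag as potential obstacles, more economically than your sketch would: \emph{(a)} for the variance of the linear term you write $\mathrm{Var}(L_n)\lesssim\tfrac1n\Delta_n\lambda_{k_n}^2\|\tilde\phi_{k_n}\|^2_{L^2(P_0)}$, but the relevant variance is under $P_n$, i.e.\ $\mathrm{Var}_{P_n}(\tilde\phi_{k_n})$, and in the unbounded-kernel case this involves a third moment of $\tilde\phi_{k_n}$ that you do not control. The paper sidesteps this by bounding $\mathcal{V}_n^2=\mathrm{Var}_{P_n}(h_1(X))\le\|\mathcal{A}_{P_n}\|_{\opS}\|\Psi_{P_n}\|^2_{\h_{K_0}}\le\E_{P_n}[K_0(X,X)]\,\|\Psi_{P_n}\|^2_{\h_{K_0}}$, which is $O(1)\cdot o(n^{-1})$ by \ref{assump:a1} and $\chi^2(P_n,P_0)=O(1)$. \emph{(b)} For the bootstrap quantile you aim to show $nq_{1-\alpha}$ converges to the $(1-\alpha)$-quantile of $S$ (so power $\to\alpha$), which indeed requires wild-bootstrap validity under the shrinking alternative. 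The paper needs much less: letting $\tilde d=\lim_n nq_{1-\alpha}$, it suffices that $\tilde d\ge 0$ (which follows since $q_{1-\alpha}>0$) together with $P\{S<0\}>0$, to conclude $P\{S\ge\tilde d\}<1$. So you do not need the bootstrap quantile to track the null chaos exactly, only that it stays nonnegative.
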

\begin{remark}
    (i) Theorem~\ref{thm:non-optimality-KSD} remains valid even if the testing threshold $q_{1-\alpha}$ is replaced with any threshold that converges in probability to the $(1-\alpha)$ quantile of the asymptotic distribution of $\hat{D}^2_{\mathrm{KSD}}$.\vspace{1mm} \\  
  \noindent  (ii) The combination of the above result with the minimax rates in Theorem \ref{thm:minimax} and the attained separation boundary of our proposed test (see Section~\ref{sec:optimal}), as demonstrated in Corollaries \ref{coro:poly} and \ref{coro:exp}, reveals the non-optimality of the test based on $\hat{D}_{\mathrm{KSD}}$. This is evident since $\inf_{\beta>1} 4\theta\beta/(4\theta\beta+1)=4\theta/(4\theta+1)>2\theta/(2\theta+1)$ and $1>2\theta/(2\theta+1)$ for any $\theta>0$.\vspace{-2mm}
\end{remark}

\subsection{Adaptation to $\lambda$ by aggregation} \label{subsec:adaptation}
Theorem~\ref{thm:sep-bound-suff-conditions} and Corollaries \ref{coro:poly}, \ref{coro:exp} demonstrate that the optimal $\lambda$ needed to achieve the minimax separation boundary depends on the unknown parameters, $\theta$ (and $\beta$ in the case of polynomial decay). To make the test practical, the idea is to devise an adaptive test by aggregating multiple 
% In practice, to devise an adaptive test across $\lambda$, we aggregate multiple 
tests constructed for various $\lambda$ taking values in a finite set, $\Lambda$. Using similar analysis as in \cite{hagrass2023Gof, twosampletest,Krishna2021Gof} and \cite{MMDagg}, the aggregate test can be shown to yield the optimal separation boundary as shown in Corollaries~\ref{coro:poly} and \ref{coro:exp}, but up to a $\log\log$ factor. In the following, we briefly introduce the aggregate test but skip stating the corresponding theoretical results since they are exactly the same as Corollaries~\ref{coro:poly}
and \ref{coro:exp} but with an additional $\log\log$ factor. 
% techniques to be employed to obtain results similar to Corollaries~\ref{coro:poly} and \ref{coro:exp} follow those in \cite{hagrass2023Gof, twosampletest}.
% % This aggregation method, a classical approach utilized in works such as \cite{Krishna2021Gof}, \cite{hagrass2023Gof}, \cite{twosampletest}, and \cite{MMDagg}, yields 
% a test with a separation boundary akin to Corollaries \ref{coro:poly} and \ref{coro:exp} (with adjustments up to logarithmic factors).

Define $\Lambda := \{\lambda_L, 2\lambda_L, \ldots, \lambda_U\}$, where $\lambda_U = 2^b\lambda_L$ for $b \in \mathbb{N}$, such that $|\Lambda|=b+1=1+\log_2\lambda_U/\lambda_L$, with $|\Lambda|$ denoting the cardinality of $\Lambda$. Let $\lambda^*$ be the optimal $\lambda$ for achieving minimax optimality. The crux lies in selecting $\lambda_L$ and $\lambda_U$ so that an element in $\Lambda$ closely approximates $\lambda^*$ for any $\theta$ (and $\beta$ in the case of polynomially decaying eigenvalues). Define $v^* := \sup\{x \in \Lambda: x \leq \lambda^*\}$. Then, it is straightforward to verify that $v^* \asymp \lambda^*$, meaning $v^*$ also represents an optimal choice for $\lambda$ within $\Lambda$, as for $\lambda_L\leq \lambda^* \leq \lambda_U$, we have $\lambda^*/2\leq v^* \leq \lambda^*$. Motivated by this observation, we construct an adaptive test based on the union of corresponding tests across $\lambda \in \Lambda$, rejecting $H_0$ if any tests do so. Furthermore, we can also adapt across different choices of $K$, or the kernel bandwidth (or kernel parameter) of a given $K$, e.g., $K(x,y)=e^{-\Vert x-y\Vert^2_2/h}$, $h>0$. Thus, the resulting test rejects $H_0$ if $\stat^{P,h}(\mathbb{X}_n) \geq q_{1-\alpha/\cd|W|}^{\lambda,h}$ for any $(\lambda,h) \in \Lambda \times W$, where $\stat^{P,h}(\mathbb{X}_n)$ is the test statistic as defined in \eqref{eq:test-stat}.

We would like to mention that recently, \cite{schrab2023ksd} proposed an adaptive version of the KSD test of \cite{chwialkowski16} and \cite{liub16}---see Example~\ref{example:ksd}---by adapting to kernel parameter of $K$ (here, $K$ is chosen to be the Gaussian kernel with bandwidth $h$ as shown above) by aggregating over many KSD tests constructed for a given bandwidth. They obtained the separation boundary for the aggregate test over restricted Sobolev balls. However, they did not investigate the minimax separation rates for such alternative spaces, leaving open the question of the minimax optimality of their rates.

\vspace{-2mm}

\section{Interpreting the class of alternatives, $\mathcal{P}$}\label{sec:range}

As aforementioned, the alternative class, $\mathcal{P}$, involves a smoothness condition defined by $\range (\ep_{P_0}^{\theta})$, where $\ep_{P_0}$ is based on $K_0$ and $P_0$. Since $\mathcal{P}$ is abstract, in this section, we provide an interpretation for $\mathcal{P}$ by analyzing the smoothness condition through some concrete examples. Moreover, a pertinent question to understand is how the range space assumption associated with the Stein RKHS $\h_{K_0}$, corresponding to the Stein kernel $K_0$, compares to that of the base RKHS $\h_K$ corresponding to the base kernel $K$. To clarify, depending on the domain of interest, the Stein kernel will take a form that depends on the base kernel $K$ and the measure $P_0$, as illustrated in Examples \ref{example:ksd}--\ref{example:hilbert}. Recall that $\ep_{P_0} : \Lp \to \Lp$, $ f \mapsto \int K_0(\cdot,x)f(x)\,dP_0(x)$.
As previously mentioned, $\text{Ran}(\ep^\theta_{P_0})$ can generally be interpreted as an interpolation space obtained through the real interpolation of $\h_{K_0}$ (which corresponds to $\theta=1/2$) and $L^2(P_0)$ (which corresponds to $\theta=0$) at the scale $\theta$ \citep[Theorem 4.6]{Steinwart2012MercersTO}. Now, let $\T_{P_0} : \Lp \to \Lp$, defined by $$\T_{P_0} f = \int \bar{K}(\cdot,x)f(x)\,dP_0(x),$$ be the integral operator associated with the centered kernel $\bar{K}(\cdot,x) := K(\cdot,x) - \E_{P_0}K(\cdot,x)$. A natural question that follows is how $\text{Ran}(\T^\theta_{P_0})$, an interpolation space between the RKHS $\h_{\bar{K}}$ associated with $\bar{K}$ (corresponding to $\theta=1/2$) and $L^2(P_0)$, compares to $\text{Ran}(\ep^\theta_{P_0})$. We address this question in Examples \ref{Ex: uniform} and \ref{Ex: Gaussian} by providing explicit forms for those two spaces. \vspace{-2mm}

\begin{remark}
The centered kernel $\bar{K}(\cdot,x) := K(\cdot,x) - \E_{P_0}K(\cdot,x)$ is used in the definition of $\T_{P_0}$ to ensure that the functions in $\text{Ran}(\T^\theta_{P_0})$ have zero mean with respect to $P_0$. This is necessary because the range space imposes a smoothness condition on $u = dP/dP_0 - 1$, which always satisfies $\E_{P_0}u = 0$. \vspace{-2mm}
\end{remark}

First, we present a generic result which will be used in all the examples.  %\vspace{-2mm}
\begin{proposition} \label{thm;range}
Suppose $L(x,y) = \sum_{k \in I}\lambda_k \gamma_{k}(x)\gamma_{k}(y),$ 
where $1 \cup (\gamma_k)_{k \in I}$ represents orthonormal basis with respect to $\Lp$. Then 
 $$\emph{Ran}(\mathcal{D}_{P_0}^{\theta})=\left\{ \sum_{k \in I} a_k \gamma_k(x) : \sum_{k \in I} \frac{a_k^2}{\lambda_k^{2\theta}} < \infty \right\},$$
where $\mathcal{D}_{P_0} : \Lp \to \Lp$, $f \mapsto \int L(\cdot,x)f(x)\,dP_0(x).$
\end{proposition}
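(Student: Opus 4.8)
The plan is to establish the characterization of $\range(\mathcal{D}_{P_0}^\theta)$ by first diagonalizing the operator $\mathcal{D}_{P_0}$ using the given expansion of $L$, and then applying the standard functional calculus for fractional powers of a compact positive self-adjoint operator. First I would verify that $\mathcal{D}_{P_0}$ is indeed compact, positive, and self-adjoint on $\Lp$: positivity and self-adjointness are immediate from the symmetry and positive-definiteness of $L$, and compactness (in fact trace-class) follows since $L$ is a Mercer-type kernel with $\sum_k \lambda_k < \infty$, which holds because $1 \cup (\gamma_k)_{k\in I}$ is orthonormal in $\Lp$ (so $\int L(x,x)\,dP_0(x) = \sum_k \lambda_k \|\gamma_k\|_{\Lp}^2 = \sum_k \lambda_k$). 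Then I would check that the $(\gamma_k)_{k\in I}$ are precisely the eigenfunctions of $\mathcal{D}_{P_0}$ with eigenvalues $(\lambda_k)_{k\in I}$: using the expansion of $L$ and orthonormality, $\mathcal{D}_{P_0}\gamma_j = \int \left(\sum_k \lambda_k \gamma_k(\cdot)\gamma_k(x)\right)\gamma_j(x)\,dP_0(x) = \lambda_j \gamma_j$, with the interchange of sum and integral justified by the trace-class property. One subtlety: the constant function $1$ is orthogonal to all $\gamma_k$ and lies in $\ker \mathcal{D}_{P_0}$, so the $(\gamma_k)$ span $\overline{\range(\mathcal{D}_{P_0})}$.

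Next I would invoke the spectral theorem to write $\mathcal{D}_{P_0} = \sum_{k\in I}\lambda_k\, \gamma_k \otimes_{\Lp}\gamma_k$ and hence $\mathcal{D}_{P_0}^\theta = \sum_{k\in I}\lambda_k^\theta\, \gamma_k\otimes_{\Lp}\gamma_k$. Since $\range(\mathcal{D}_{P_0}^\theta) = \{\mathcal{D}_{P_0}^\theta f : f \in \Lp\}$, and any $f \in \Lp$ decomposes as $f = c\cdot 1 + \sum_k c_k \gamma_k$ with $\sum_k c_k^2 < \infty$, we get $\mathcal{D}_{P_0}^\theta f = \sum_k \lambda_k^\theta c_k \gamma_k$. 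Writing $a_k := \lambda_k^\theta c_k$, an element $\sum_k a_k \gamma_k$ is in the range if and only if there exists a square-summable sequence $(c_k)$ with $a_k = \lambda_k^\theta c_k$, i.e. if and only if $\sum_k a_k^2/\lambda_k^{2\theta} = \sum_k c_k^2 < \infty$. This is exactly the claimed description. I would also note that convergence of $\sum_k a_k\gamma_k$ in $\Lp$ is automatic when $\sum_k a_k^2/\lambda_k^{2\theta} < \infty$, because $\sum_k a_k^2 \le (\sup_k \lambda_k^{2\theta})\sum_k a_k^2/\lambda_k^{2\theta} < \infty$ (using that $\lambda_k$ is bounded), so the set on the right-hand side is well-defined as a subset of $\Lp$.

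The main obstacle I anticipate is purely a matter of care rather than depth: justifying the term-by-term manipulations (interchanging infinite sums with the integral defining $\mathcal{D}_{P_0}$, and identifying the eigenfunctions) rigorously when $I$ is countably infinite and the $\gamma_k$ need not be uniformly bounded. This is handled by the Mercer/trace-class structure — $\sum_k\lambda_k<\infty$ gives $L^1$-convergence of the kernel expansion in $P_0\times P_0$, which suffices to interchange sum and integral — together with the fact that $\mathcal{D}_{P_0}$ maps $\Lp$ continuously into itself. A second, minor point is correctly bookkeeping the kernel of $\mathcal{D}_{P_0}$ (the span of $1$): since $\range(\mathcal{D}_{P_0}^\theta)$ contains no component along $1$, the description is consistent, and one just records that $\mathcal{D}_{P_0}^\theta$ is defined to act as zero on $\ker \mathcal{D}_{P_0}$ via the functional calculus convention already fixed in the paper. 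Once these technical points are dispatched, the statement follows directly from the spectral representation.
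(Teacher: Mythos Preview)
Your proposal is correct and follows essentially the same approach as the paper: diagonalize $\mathcal{D}_{P_0}$ via the orthonormality of the $\gamma_k$ to obtain $\mathcal{D}_{P_0}=\sum_k\lambda_k\,\gamma_k\otimes_{\Lp}\gamma_k$, take the fractional power termwise, and then read off the range by the change of variables $a_k=\lambda_k^\theta c_k$ with $(c_k)\in\ell^2$. The paper's proof is more terse and does not spell out the compactness/trace-class verification or the bookkeeping of $\ker\mathcal{D}_{P_0}$ that you mention, but the underlying argument is identical.
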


Based on the above results, we now discuss a few examples. 

\begin{example}[Uniform distribution on {$[0,1]$}] \label{Ex: uniform}
 Let $P_0$ be the uniform distribution defined on $[0,1],$ and 
 \begin{equation}
 K(x,y)= a_0+ \sum_{k \neq 0} |k|^{-\beta} e^{\sqrt{-1}2\pi kx}e^{-\sqrt{-1}2\pi ky},\,\,a_0 \geq 0,\,\,\beta>2.\label{eq:kernel-fourier}    
 \end{equation}
Then 
$$R_1:=\text{Ran}(\T_{P_0}^{\theta})=\left\{ \sum_{k \neq 0} a_k e^{\sqrt{-1}2 \pi k x } : \sum_{k \neq 0} a_k^2 k^{2\theta \beta} < \infty \right\}$$
and 
%$\psi_{P_0}(y,x)= K(y,x)-\E_{P_0}K(y,X)$. Then 
% $$\emph{Ran}(\mathcal{T}^{\theta})=\left\{ \sum_{k \neq 0} a_k e^{i2 \pi k x } : \sum_{k \neq 0} a_k^2 k^{2\theta \beta} < \infty \right\}$$ and 
% Furthermore suppose $\psi_{P_0}(y,x)= K(y,x)\nabla_x\log p_0(x)+\nabla_{x}K(y,x),$ then
$$R_2:=\text{Ran}(\ep_{P_0}^{\theta})=\left\{ \sum_{k \neq 0} a_k e^{\sqrt{-1}2 \pi k x } : \sum_{k \neq 0} a_k^2 k^{2\theta (\beta-2)} < \infty \right\},$$ where $K_0(x,y)=\nabla_x\nabla_y K(x,y)$. 
%$\mathcal{T}:\Lp\rightarrow\Lp,\,\,f\mapsto\int K(\cdot,x)f(x)\,dP_0(x)$. 
See Section~\ref{proof:ex-uniform} for the derivation of $R_1$ and $R_2.$
%\vspace{-2mm}
\begin{remark}
   We would like to highlight that since the null distribution $P_0$ is defined on $[0,1] \subset \mathbb{R}$ rather than on the entire $\mathbb{R}$, extra care is needed in defining the Stein operator (because of the lack of differentiability of $p_0$) while ensuring $\mathbb{E}_{P_0}[K_0(\cdot, X)] = 0$. %This is because the score function $\nabla_x \log p_0(x)$ may not be well-defined on the boundary of $\mathcal{X}$. 
   For the base kernel defined in \eqref{eq:kernel-fourier}, it is easy to verify that $K$ is a periodic function on $\mathbb{R}$ that is at least twice differentiable, and $\E_{P_0}[K_0(\cdot, X)] = 0$ which follows from the orthogonality of the Fourier basis. However, note that the Stein kernel $K_0(x, y) = \langle \psi_{P_0}(\cdot, x), \psi_{P_0}(\cdot, y) \rangle_{\mathcal{H}_{K_0}}$,
where $\psi_{P_0}(\cdot, x) = K(\cdot, x) \nabla_x \log p_0(x) + \nabla_x K(\cdot, x),$ is not rigorously defined because of the non-differentiability of $p_0$. But by avoiding rigor and employing a formal calculation, we can write $K_0(x,y)=\langle \nabla_x K(\cdot,x),\nabla_y K(\cdot,y)\rangle_{\mathcal{H}_{K_0}}=\nabla_x\nabla_y K(x,y)$ by treating $\nabla_x \log p_0(x) = 0$ on $[0, 1]$. 
%However, for a general base kernel $K$, the definition of the Stein kernel corresponding to the Langevin-Stein operator, as mentioned in Example \ref{example:ksd}, cannot be directly applied. 
We refer the reader to \citet[Section 3.1]{NIPS2015_698d51a1} for technical details related to defining the Stein operator on $\mathcal{X}\subset\mathbb{R}^d$.
%For further technical details on the well-definedness of the Stein operator when \( \mathcal{X} \subset \mathbb{R}^d \), see for example \citet[Section 3.1]{NIPS2015_698d51a1}.
\end{remark}
%\vspace{-3mm}
This means that the functions in $R_2$ are 
% Since $\h=\emph{Ran}({\mathcal{T}^{\frac{1}{2}}})$ and $\h_{K_0}=\emph{Ran}({\ep_{P_0}^{\frac{1}{2}}})$, it follows that the Stein RKHS, $\h_{K_0}$ is 
$2\theta$ less smooth than those in $R_1$ since the Fourier coefficients in $R_2$ decay at a rate that is $2\theta$ orders slower than those of $R_1$. Moreover, since 
$ \sum_{k \neq 0} a_k^2 k^{2\theta (\beta-2)} \leq \sum_{k \neq 0} a_k^2 k^{2\theta \beta},$ for $\beta >2$, we have $R_1 \subseteq R_2.$ Observe that the eigenfunctions satisfy the uniform boundedness condition as $\sup_{k}|e^{\sqrt{-1}2\pi k x}| = 1$. For $\beta > 3$, it can be demonstrated that both the base kernel $K$ and the Stein kernel $K_0$ are bounded. Consequently, the entire alternative space $\mathcal{P}_{\Delta}$ associated with the Stein kernel (i.e., the one defined through the condition $u \in R_2$) encompasses the alternative space associated with the base kernel (i.e., the one defined through the condition $u \in R_1$). Moreover, the minimax separation boundary linked with the alternative space defined through the base kernel follows an order of $n^{-4\theta\beta/(4\theta\beta+1)}$. Meanwhile, the larger alternative space associated with the Stein kernel has a minimax separation boundary order of $n^{-4\theta\tilde{\beta}/(4\theta\tilde{\beta}+1)}$, which is achieved by our proposed regularized KSD test, where $\tilde{\beta}=\beta-2$. This adjustment arises from the corresponding eigenvalue decay rate of the Stein kernel, which holds an order of $|k|^{-(\beta-2)}$. An example of a kernel that follows the form in \eqref{eq:kernel-fourier} is the periodic spline kernel, represented as $K(x,y) = (-1)^{r -1}(2r) !^{-1} B_{2r}([x-y])$, where $B_r$ denotes the Bernoulli polynomial, which is generated by the generating function $te^{tx}/(e^t-1) = \sum_{r=0}^{\infty} B_r(x) t^r/r!$, and $[t]$ denotes the fractional part of $t$. Using the formula $B_{2r}(x) = (-1)^{r-1} (2r)!(2\pi)^{-2r} \sum_{k \neq 0}^{\infty} |k|^{-2r} e^{\sqrt{-1} 2\pi kx}$, it can be demonstrated that $K(x,y) = \sum_{k \neq 0} (2\pi |k|)^{-2r} e^{\sqrt{-1} 2\pi kx} e^{-\sqrt{-1} 2\pi ky}$ (see \citealp[page 21]{Wahba1990} for details).
\end{example}
\begin{remark} \label{rem:unifrom}
    (i) The $s$-order Sobolev space defined on $[0,1]$ is given by
    $$\W^{s,2}:=\left\{f(x)=\sum_{k\in\mathbb{Z}} a_k e^{i2\pi kx},\,x\in[0,1]: \sum_{k\in\mathbb{Z}}(1+k^2)^sa^2_k<\infty\right\}.$$
    Since
    $\sum_{k\neq 0} k^{2\theta \beta}a_k^2 \leq \sum_{k \in\mathbb{Z}} (1+k^2)^{\theta \beta}a_k^2$, it follows that $\W^{s,2}\subset R_1$. This means, if $u:= dP/dP_0-1 \in \W^{s,2}$, then $u \in R_1$ with $\theta=s/\beta.$

    (ii) Note that for $d\geq 2$, we can generalize the above example by using a product of kernels over each dimension, which yields $$R_1 := \left\{\sum_{k_1,\dots, k_d} a_{k_1,\dots,k_d} \gamma_{k_1,\dots,k_d} : \sum_{k_1,\dots, k_d}a_{k_1,\dots,k_d}^2 (k_1 k_2 \dots k_d)^{2\theta\beta} < \infty\right\},$$ \vspace{-2mm} and \vspace{-2mm} 

     $$R_2 := \left\{\sum_{k_1,\dots, k_d} a_{k_1,\dots,k_d} \gamma_{k_1,\dots,k_d} : \sum_{k_1,\dots, k_d}a_{k_1,\dots,k_d}^2 (k_1^2+ \dots+k_d^2)^{-2\theta} (k_1 k_2 \dots k_d)^{2\theta\beta} < \infty\right\},$$
    where $\gamma_{k_1,\dots,k_d}(x) = \prod_{j=1}^d e^{\sqrt{-1} 2 \pi k_j x_j}$, $x=(x_1,\ldots,x_d)\in\mathbb{R}^d$ and for $\beta > \frac{2}{d},$ $R_1\subset R_2$. 

        Moreover observe that $$\sum_{k_1,\dots, k_d}a_{k_1,\dots,k_d}^2(k_1 k_2 \dots k_d)^{2\theta\beta} \leq \sum_{k_1,\dots, k_d}a_{k_1,\dots,k_d}^2(1+k_1^2+\dots+k_d^2)^{d\theta\beta},$$  
        which yields that $u \in \W^{s,2}$ implies $u \in R_1$ for $\theta = s/(d \beta).$

    % add the expression for the general d.
\end{remark}

\begin{example}[Gaussian distribution with Mehler kernel on $\mathbb{R}$] \label{Ex: Gaussian}
  Let $P_0$ be a standard Gaussian distribution on $\mathbb{R}$ and $K$ be the Mehler kernel, i.e., $$K(x,y):=\frac{1}{\sqrt{1-\rho^2}}\exp\left(-\frac{\rho^2(x^2+y^2)-2 \rho xy}{2(1-\rho^2)}\right),$$ for $0<\rho<1$. Then \vspace{-4mm}
 $$R_1:=\text{Ran} (\T_{P_0}^{\theta})=\left\{\sum_{k=1}^{\infty}a_{k}\gamma_k(x) : \sum_{k=1}^{\infty}a_k^2e^{-2k\theta\log \rho} < \infty \right\},$$
    where 
    $\gamma_k(x) = \frac{H_k(x)}{\sqrt{k!}} ,$ and $H_k(x) = (-1)^ke^{x^2/2}\frac{d^k}{dx^k}e^{-x^2/2}$ is the Hermite polynomial. Furthermore, 
$$R_2 := \text{Ran} (\ep_{P_0}^{\theta})=\left\{\sum_{k=1}^{\infty}a_{k}\gamma_k(x) : \sum_{k=1}^{\infty}a_k^2k^{-2\theta}e^{-2k\theta\log \rho}< \infty\right\}.$$ %Note that $R_1$ and $R_2$ correspond to the range space condition for MMD and KSD tests, respectively. 
Since $\sum_{k=1}^{\infty}a_k^2k^{-2\theta}e^{-2k\theta\log \rho} \leq \sum_{k=1}^{\infty}a_k^2e^{-2k\theta\log \rho},$ we have $R_1 \subseteq R_2.$ See Section~\ref{proof:ex-gaussian} for details.
\end{example}

\begin{remark}
% (i) Since $\sum_{k=1}^{\infty}a_k^2k^{-2\theta}e^{-2k\theta\log \rho} \leq \sum_{k=1}^{\infty}a_k^2e^{-2k\theta\log \rho},$ then $R_1 \subseteq R_2.$
% \\
(i) Let $\tilde{K}(x,y):=\phi(x-y):=\exp(-\sigma(x-y)^2),$ for $\sigma < \frac{\rho}{4(1-\rho)}$.  Then the corresponding RKHS $\mathscr{H}_{\tilde{K}}$ is the Gaussian RKHS, defined by $$\mathscr{H}_{\tilde{K}}=\left\{f\in L^2(\mathbb{R}):\int_{\mathbb{R}} |\hat{f}(\omega)|^2 e^{\frac{\omega^2}{4\sigma}}\,d\omega<\infty\right\},$$
where $\hat{f}$ is the Fourier transform of $f\in L^2(\R)$.
% and $\norm{f}^2_{\h_{\tilde{K}}} := \int_{-\infty}^{\infty} \frac{|\hat{f}(w)|^2}{\hat{\phi}(w)} \, dw,$ where $\hat{\phi}(w)$ is the Fourier transform of $\phi$. 
It can be shown that $\h_{\tilde{K}}\subset R_1$ for any $\theta\le 1/2$ (see Section~\ref{proof: remark} for details). Therefore, if $u:= dP/dP_0-1 \in \h_{\tilde{K}},$ then  $ u \in R_1\subset R_2,$ for any $\theta \leq 1/2.$ %see Section A.9 for details.
\vspace{1mm}\\
(ii) For $d \geq 2$, the above example can be generalized by using the product of kernels over each dimension. This yields that 
$$R_1 := \left\{\sum_{k_1,\dots, k_d} a_{k_1,\dots,k_d} \gamma_{k_1,\dots,k_d} : \sum_{k_1,\dots, k_d}a_{k_1,\dots,k_d}^2 e^{-2(k_1+\dots+k_d)\theta \log \rho} < \infty\right\},$$ \vspace{-2mm} and  \vspace{-2mm}

     $$R_2 := \left\{\sum_{k_1,\dots, k_d} a_{k_1,\dots,k_d} \gamma_{k_1,\dots,k_d} : \sum_{k_1,\dots, k_d}a_{k_1,\dots,k_d}^2 (k_1+ \dots+k_d)^{-2\theta} e^{-2(k_1+\dots+k_d)\theta \log \rho} < \infty\right\},$$  
    where $\gamma_{k_1,\dots,k_d}(x) = \prod_{j=1}^d\gamma_{k_j}(x_j),\,x=(x_1,\ldots,x_d)\in\mathbb{R}^d$. 
\end{remark}

\begin{example}[Uniform distribution on $\mathbb{S}^2$]\label{range:sphere}
    Let $P_0$ be a uniform distribution on $\X=\mathbb{S}^{2},$ where $\mathbb{S}^2$ denotes the unit sphere. Let 
    \begin{equation}
    K_0(x,y):= \sum_{k=1}^{\infty}\sum_{j=-k}^{k}\lambda_k Y^j_k(\theta_x,\phi_x)Y^j_k(\theta_y,\phi_y), \label{eq:kernel}    
    \end{equation}
     where 
    $x =(\sin \theta_x \cos \phi_x,\sin \theta_x \sin\phi_x,\cos \theta_x)$, $y =(\sin \theta_y \cos \phi_y,\sin \theta_y \sin\phi_y,\cos \theta_y)$ with \\ $0<\theta_x,\theta_y<\pi$, $0<\phi_x,\phi_y<2\pi$, and $$Y^j_{k}(\theta,\phi):= \sqrt{\frac{(2k+1)(k-j)!}{4\pi (k+j)!}}p^j_k(\cos \theta)e^{\sqrt{-1}j\phi},$$ with $p^j_k(x) := (-1)^j (1-x^2)^{\frac{j}{2}} \frac{d^j p_k(x)}{dx^j}$ and $p_k(x):= \frac{1}{k!2^k}\frac{d^k(x^2-1)^k}{dx^k}$. 
    If $\sum_{k=1}^{\infty}(2k+1) \lambda_k < \infty,$ then     
    $$\text{Ran} (\ep_{P_0}^{\theta}):=\left\{\sum_{k=1}^{\infty}\sum_{j=-k}^{k}a_{kj}Y_k^j(\theta_x,\phi_x) : \sum_{k=1}^{\infty}\sum_{j=-k}^{k} a_{kj}^2 \lambda_k^{-2\theta}< \infty\right\}.$$
\end{example}

\begin{remark}
(i) In Example~\ref{range:sphere}, we directly assume a form for the Stein kernel $K_0$, in contrast to Examples~\ref{Ex: uniform} and \ref{Ex: Gaussian}, where we first express a base kernel $K$, and then compute the corresponding Stein kernel $K_0$. Therefore, the claim in Example~\ref{range:sphere} follows directly from Proposition~\ref{thm;range}. On the other hand, suppose we choose  \eqref{eq:kernel} as the base kernel and then apply the appropriate Stein transformation as outlined in Example~\ref{example:manifold}, the transformed eigenfunctions lose orthogonality to $P_0$, making it challenging to derive an explicit expression for $\range(\ep_{P_0}^{\theta})$ using Proposition \ref{thm;range}. Conversely, in Examples~\ref{Ex: uniform} and \ref{Ex: Gaussian}, the transformation only alters the eigenvalues while preserving the eigenfunctions of the base kernel. 

\noindent (ii) For more details about kernels that take the form in \eqref{eq:kernel}, we refer the reader to \citet{unibound}. For example, \citet[Theorem 2 and 3]{unibound} provides explicit expressions for the eigenvalues corresponding to Gaussian and polynomial kernels on the sphere. \vspace{-3mm}
\end{remark}

The above examples have focused on the smoothness condition when $\mathcal{X}$ is continuous. Suppose now that $\mathcal{X}$ is a finite set with cardinality $|\X|$, on which $P_0$ and $K_0$ are supported---refer, for example, to \cite{discretedata} for a definition of the Stein kernel $K_0$ for discrete distributions. Let $\X=\{a_1,\ldots,a_{|\X|}\}$and define $P_0=\sum^{|\X|}_{i=1} w_i\delta_{a_i}$ where $w_i>0$ for all $i$. Then it is easy to verify that $L^2(P_0)$ is isomorphic to $\mathbb{R}^{|\X|}$ since $\Vert f\Vert^2_{L^2(P_0)}=\sum^{|\X|}_{i=1} w_i f^2(a_i)$. Consider $(\ep_{P_0}f)(a_j)=\sum^{|\X|}_{i=1} w_i K_0(a_j,a_i) f(a_i)$, which implies that $\ep_{P_0}$ can be identified with a $|\X|\times|\X|$ matrix. Then $u\in \text{Ran}(\ep_{P_0})$, implies that there exists $h\in\mathbb{R}^{|\X|}$ such that $u=\ep_{P_0}h=\mathbf{K}_0 \text{diag}(w_1,\ldots,w_{|\X|}) h$, where $[\mathbf{K}_0]_{ij}=K_0(a_i,a_j),\,i,j\in[|\X|]$. Therefore, if $\mathbf{K}_0$ is invertible, then it is clear that there always exists $h\in\mathbb{R}^{|\X|}$ such that $u=\ep_{P_0}h$. Hence, the range space condition is always satisfied since $\text{Ran}(\ep^\theta_{P_0})=\text{Ran}(\ep_{P_0})=\mathbb{R}^{|\X|}$ for all $\theta>0$. This means the alternative space is solely defined by the $\chi^2$ separation condition. A more interesting scenario is when $\X$ is countable, in which case $L^2(P_0)$ is isomorphic to the sequence space, $\ell^2(\X)$. We leave a detailed study of interpreting the range space condition in this setting for future work.
%\vspace{-2mm}

\section{Experiments}\label{sec:experiments}

% \kb{in general I feel we should add more discussion about experiments and conclusions} \textcolor{blue}{yes, I will adjust it}
%\vspace{-1mm}
In this section, we conduct an empirical evaluation and comparison of various goodness-of-fit tests: \vspace{-1mm}

\begin{itemize}
    \item \emph{KSD(Tikhonov)}: An adaptive spectral regularized KSD test with a Tikhonov regularizer.\vspace{.5mm}
    \item \emph{KSD(TikMax)}: An adaptive spectral regularized KSD test with a TikMax regularizer.\vspace{.5mm}
    \item \emph{KSD(Tikhonov)$^*$}: An adaptive spectral regularized KSD test with a Tikhonov regularizer based on an estimator of the regularized discrepancy in \eqref{eq:reg-discrp}. The test statistic is similar to that of \eqref{eq:test-stat} but with $\hat{\A}_P$ being replaced by $\hat{\A}_{P_0}$, where $\hat{\A}_{P_0}$ is computed based on extra samples from $P_0$.\vspace{.5mm}
    % which is defined with respect to $P_0$. This test uses extra samples from $P_0$ to estimate $\hat{\A}_{P_0}.$
    \item \emph{KSD(TikMax)$^*$}: An adaptive spectral regularized KSD test constructed similar to that of \\
    \emph{KSD(Tikhonov)$^*$} but with a TikMax regularizer.\vspace{.5mm}  
   \item \emph{KSD}: An unregularized KSD test using the median heuristic bandwidth for the kernel parameter. 
 \vspace{.5mm}
    \item \emph{MMD(Tikhonov)}: An adaptive spectral regularized MMD test (as implemented in \citealt{hagrass2023Gof}), which uses additional samples from $P_0$ to compute the test statistic.
\end{itemize}
\begin{remark}
(i) We examined the performance of the above-mentioned tests across various domains, including $\mathbb{R}^d$, $\mathbb{S}^{d-1}$, and infinite-dimensional Hilbert space. Depending on the domain $\mathcal{X}$, we apply the appropriate Stein operator to compute the kernel Stein discrepancy. Specifically, in Section \ref{RBM}, KSD refers to the variant proposed by \citet{chwialkowski16} and \citet{liub16} (see Example 2 for details). In Section \ref{spherical}, KSD denotes the directional Stein discrepancy (dKSD) for directional data, as introduced by \citet{xu2020directional} (see Example 3 for details) and in Section \ref{infdim}, KSD refers to the infinite-dimensional version described in \citet{InfiniteDimKSD} (see Example 4 for details).\vspace{1mm}\\
(ii) In Section \ref{RBM}, we also compare with KSDAgg test \citep{schrab2023ksd}, an adaptive version of the KSD test of \cite{chwialkowski16} and \cite{liub16}---see Example~\ref{example:ksd}---by aggregating tests over the kernel bandwidth $h \in W$ (see below for details). \vspace{1mm}\\
(iii) \emph{KSD(Tikhonov)$^*$} and \emph{KSD(TikMax)$^*$} are being considered to compare the performance of the regularized test when using $\hat{\A}_{P}$ versus $\hat{\A}_{P_0}$ as the estimated covariance operator. Recall that in our proposed test, we opted for $\hat{\A}_{P}$ over $\hat{\A}_{P_0}$ to create a practical test that circumvents the requirement for additional samples from $P_0$. %\vspace{-2mm}
\end{remark}
All these tests are conducted with both Gaussian and inverse multiquadric (IMQ) kernels defined as $K(x,y)=\exp\left(-\|x-y\|^2_2/(2h)\right)$ and $K(x,y)=\left(1+\|x-y\|^2_2/h\right)^{-0.5}$, respectively. For our proposed KSD adaptive tests, we aggregate tests jointly over $\lambda \in \Lambda$ and $h \in W$ (refer to Section \ref{subsec:adaptation} for details). Unless otherwise specified, all experiments utilize $\Lambda :=\{10^{-4}, 10^{-2.5}, 10^{-1},10^{0.5}\}$ and $W:=\{10^{-2}, 10^{-1.5}, 10^{-1}, 10^{-0.5}, 1,  10^{0.5}\} \times h_m$, where $h_m$ represents the kernel bandwidth determined using the median heuristic defined by $h_m:= \text{median}\{\|q-q'\|_2^2: q,q' \in X \cup Y\}$. Each test is repeated 500 times, and the average power is reported. For all KSD tests, we employed the i.i.d. weighted bootstrap to compute the test threshold with the number of bootstrap samples being $B=1000$. On the other hand, for the MMD(Tikhonov) test, we used the same values for $\Lambda$ and $W$ as mentioned above, along with 150 permutations to compute the test threshold (note that the MMD test is a permutation test).  
% Throughout the experiments, we use a critical level of $\alpha=0.05$ and provide results for both Gaussian and Inverse Multi Quadratic (IMQ) kernels defined as $K(x,y)=\exp\left(\frac{-1}{2h}\|x-y\|^2\right)$ and $K(x,y)=\left(1+\frac{\|x-y\|^2}{h}\right)^{-0.5}$, respectively.

%Depending on the domain $\mathcal{X}$, we employed the appropriate Stein operator to compute the kernel Stein discrepancy. Of course, these tests are applied in scenarios where $P_0$ is known only up to a normalization constant. In Section \ref{RBM}, we applied the KSD method of \citet{chwialkowski16} and \citet{liub16} (see Example 2 for details) to a Gaussian-Bernoulli restricted Boltzmann machine. In Section \ref{spherical}, we utilized the directional Stein discrepancy (dKSD) introduced by \citet{xu2020directional} (see Example 3 for details) on directional data, and we employed the infinite-dimensional version of KSD as described in \citet{InfiniteDimKSD} (see Example 4 for details) on Brownian motion in Section \ref{infdim}.

The main takeaway from the experiments could be summarized as follows:%\vspace{-1mm}
\begin{itemize}
    \item Overall, the proposed regularized KSD tests, consistently outperform the standard unregularized KSD test.\vspace{.5mm}
    \item Generally, when independent samples from $P_0$ are readily available, MMD(Tikhonov) demonstrates the best performance, albeit at the highest computational cost. Even in this setting, the regularized KSD tests are competitive and achieve performance matching that of MMD(Tikhonov) in some cases. \vspace{.5mm}
    \item The proposed regularized versions of KSD with respect to $\hat{\A}_P$ and $\hat{\A}_{P_0}$ (KSD$^*$), exhibit similar performance in most cases, with $\mathrm{KSD}^*$ slightly outperforming the regularized KSD. However, note that $\mathrm{KSD}^*$ has the benefit of using additional samples from $P_0.$ \vspace{-3mm}
\end{itemize}

%\subsection{Gaussian Mean shifts} \label{Gauss-mean-shift}
%Let $P_0=N(0, I_d)$ and $P=N(\mu,\sigma^2 I_d)$, where $\mu\ne 0$ and $\sigma^2=1$, indicating a test for Gaussian mean shift, where $I_d$ represents the $d$-dimensional identity matrix. We fix the number of samples used to estimate the covariance operator to $s=100$ for our proposed KSD tests. Figure~\ref{fig:Gaussian} illustrate the results for the Gaussian mean shift experiments. In this experimental setup, the unregularized KSD generally suffices since it addresses a simple problem where differences between $P$ and $P_0$ do not occur in higher frequency components. We observe that all tests perform better as the dimension $d$ increases.

%\begin{figure}[t]
%\centering
%\includegraphics[width=\textwidth]{plot_Gaussian_mean_shifts.png}
%\caption{Power for Gaussian shift experiment with different $d$, $s=100$ and $N=200$.} \label{fig:Gaussian}
%\vspace{-4mm}
%\end{figure}

%\vspace{-5mm}
\begin{figure}[t]
\centering
\includegraphics[scale=0.35]{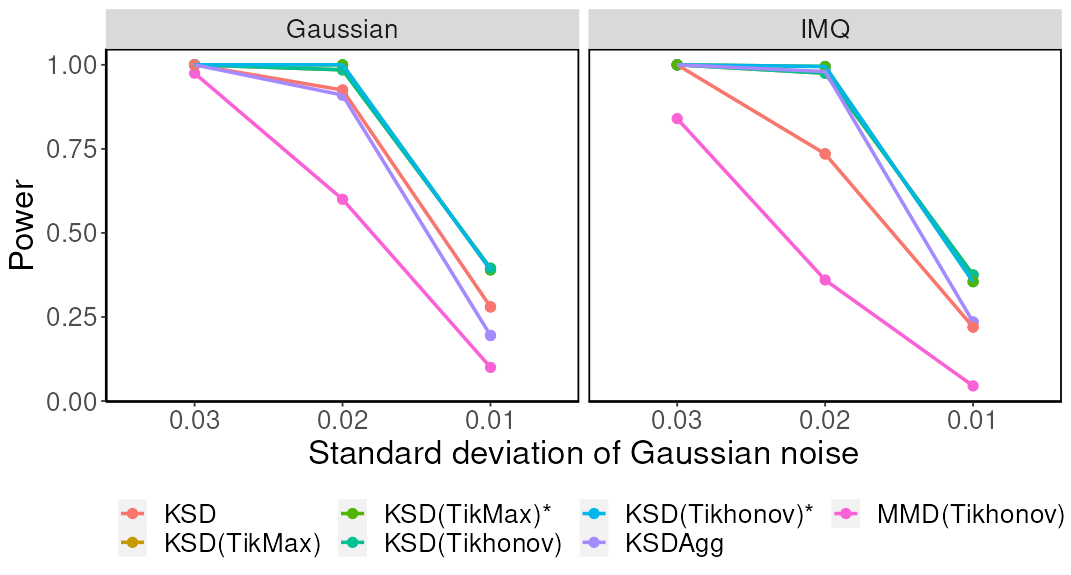}
\caption{Power of the tests for Gaussian-Bernoulli restricted Boltzmann machine with $d=50$, $n_2=100$ and $n=1000$.} \label{fig:RBM}
\vspace{-2mm}
\end{figure}
\subsection{Gaussian-Bernoulli restricted Boltzmann machine} \label{RBM}
As considered in \citet{liub16} and \citet{schrab2023ksd}, we investigate a Gaussian-Bernoulli restricted Boltzmann machine, a graphical model characterized by a binary hidden variable $h \in\{-1,1\}^{d_h}$ and a continuous observable variable $x \in \mathbb{R}^d$. These variables share a joint density function defined as
$
f(x, h)=Z^{-1}\exp \left( x^{\top} B h/2+b^{\top} x+c^{\top} h-\|x\|_2^2/2\right),
$
where $Z$ represents an unknown normalization constant. The target density is obtained as 
$\frac{dP_0}{dx}=p_0(x)=\sum_{h \in\{-1,1\}^{d_h}} f(x, h).$
We adopt the same settings and implementation as described in \citet{schrab2023ksd}, where $d=50$, and $d_h=40.$ Note that $p_0$ is known only up to a normalization constant and the score function admits a closed-form expression:
$$
\nabla \log p_0(x)=b-x+B \frac{\exp \left(2\left(B^{\top} x+c\right)\right)-1}{\exp \left(2\left(B^{\top} x+c\right)\right)+1} .
$$
Here $b$ and $c$ are drawn from a standard Gaussian, and the elements in $B$ are selected randomly from $\{-1,1\}.$ The alternate distribution $P$ is constructed similarly to $P_0$ but with each element of $B$ being perturbed by Gaussian noise, $N(0,\sigma)$ with $\sigma \in \{0.01, 0.02, 0.03\}$. Differentiating $P$ from $P_0$ becomes more challenging as $\sigma$ becomes small. We draw $n=1000$ samples from $P$ using a Gibbs sampler, using which all aforementioned KSD tests (except KSD$^*$ and MMD) are constructed with $n_2=100$ (see also Figure~\ref{fig:RBM_n2} for the effect of changing $n_2$). To construct KSD$^*$ and MMD tests, we also sample 100 and 1000 samples, respectively, from $P_0$ using a Gibbs sampler.  
Figure \ref{fig:RBM} illustrates the power of the tests considered. We note that KSD(Tikhonov)$^*$ and KSD(TikMax)$^*$ yield matching results and perform the best while being closely followed by our proposed tests KSD(Tikhonov) and KSD(TikMax), which strictly dominate the KSD test. KSDAgg test performs comparably to regularized KSD tests when using the IMQ kernel. However, its performance declines with the Gaussian kernel, highlighting the advantage of including the regularization parameter $\lambda$ rather than relying solely on aggregation over the kernel bandwidth. Surprisingly, the MMD(Tikhonov) exhibits the worst performance in this case, despite having access to extra samples from $P_0$. This unexpected outcome may be attributed to the nature of the experiment, where the samples from $P_0$ are obtained using a Gibbs sampler, which might only yield approximate samples or samples that are not independent, while the MMD(Tikhonov) requires extra independent samples from $P_0$. 
\begin{figure}[t]
\centering
\includegraphics[scale=0.35]{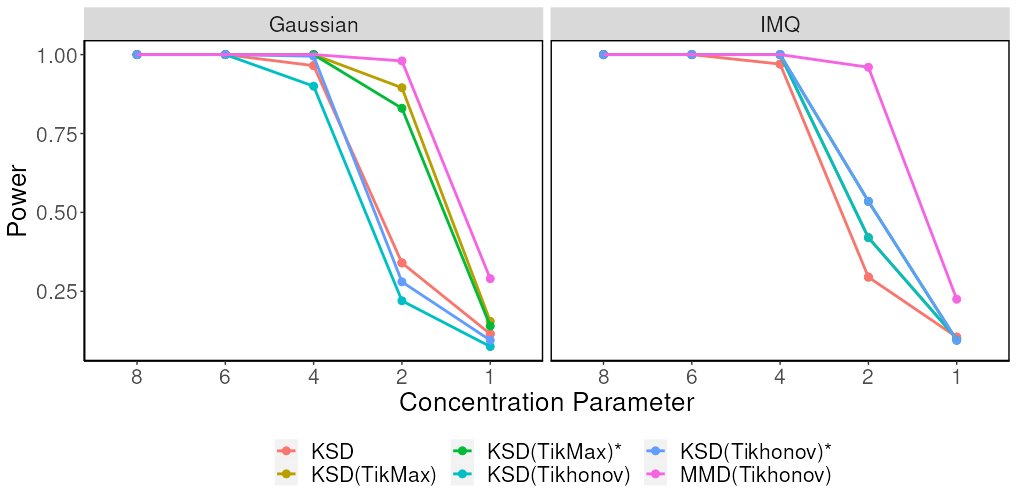}
\caption{Power for mixture of Watson distributions for different concentration parameter $k$  using $n_2=100$ and $n=500$.} \label{fig:watson}
\vspace{-2mm}
\end{figure}
%\vspace{-2mm}
\subsection{Directional data}\label{spherical}
% \kb{now $f$ is used instead of $P_0$ }\textcolor{blue}{$f$ is not the null here, just the density for the watson distribution}
The density for Watson distribution on $\mathbb{S}^{d-1}$ is given by $$f(x,\mu,k)=\frac{\Gamma(d/2)}{2\pi^{d/2}M(1/2,d/2,k)}\exp(k(\mu^Tx)^2),$$ where $x\in \mathbb{S}^{d-1}$, $k\geq 0$ is the concentration parameter, $\mu$ is the mean parameter, and $M$ denotes the Kummer's confluent hypergeometric function. In this experiment, we consider a mixture of two multivariate Watson distributions, representing axially symmetric distributions on a sphere. Note that $k=0$ corresponds to a uniform distribution on $\mathbb{S}^{d-1}$, which is chosen to be $P_0$ for this experiment. The alternate distributions are chosen to be a mixture of two Watson distributions with similar concentration parameters $k$ and mean parameters $\mu_1 ,\mu_2$ respectively, where $\mu_1=(1, \dots ,1) \in \mathbb{R}^d$ and $\mu_2=(-1,1 \dots ,1) \in \mathbb{R}^d$ with the first coordinate changed to $-1$, where changing $k$ creates a sequence of alternatives. The smaller the $k$ is, the more difficult it is to distinguish between $P_0$ and $P$.

We generate $n=500$ samples from $P$ (for different values of $k$) and construct all tests (except KSD$^*$ and MMD) based on them with $n_2=100$ (see also Figure~\ref{fig:watson_n2} for the effect of changing $n_2$). We use additionally generated samples of size 100 and 500 from $P_0$ to construct KSD$^*$ and MMD tests, respectively. 
% from a mixture of two Watson distributions with similar concentration parameters $k$ and mean parameters $\mu_1 ,\mu_2$ respectively, where $\mu_1=(1, \dots ,1) \in \mathbb{R}^d$ and $\mu_2=(-1,1 \dots ,1) \in \mathbb{R}^d$ with the first coordinate changed to $-1$.
Figure~\ref{fig:watson} presents the outcomes of testing against a spherical uniform distribution across various concentration parameters. With both Gaussian and IMQ kernels, we note that MMD(Tikhonov) demonstrates the most optimal performance, followed by KSD$^*$ versions, 
%(TikMax)*, 
which can be attributed to its access to extra independent samples from the null distribution. Our proposed tests KSD(Tikhonov) and KSD(TikMax) behave similarly and better than unregularized KSD with IMQ while KSD(Tikhonov) has similar performance to that of KSD with the Gaussian kernel. Also, KSD(TikMax) outperforms both the unregularized KSD test and the KSD(Tikhonov) test. Interestingly, using the TikMax regularizer generally provides better results than using the Tikhonov regularizer for the Gaussian kernel. This observation underscores the fact that the choice of regularizer and kernel can impact performance in practice. 

% For the Gaussian kernel results,  It is closely followed by KSD(TikMax), which outperforms both the unregularized KSD test and the KSD(Tikhonov) test specifically in this context. Interestingly, using the TikMax regularizer generally provides better results than using the Tikhonov regularizer for the Gaussian kernel. This observation underscores the fact that the choice of regularizer and kernel can impact performance in practice. 

% For the IMQ kernel, the best performance is obtained by MMD(Tikhonov), followed by KSD(Tikhonov)* and KSD(TikMax)*, with KSD(Tikhonov) and KSD(TikMax) closely following. In this case, the results for the Tikhonov and TikMax regularization methods for the KSD test align closely. 

   % \kb{why this happens?}\textcolor{blue}{I have added more details}
\subsection{Brownian motion} \label{infdim}
%\vspace{-2mm}
In this section, we adopt the setup outlined in \citet{InfiniteDimKSD}, which considers $\X=L^2[0,1]$ and $P_0=N_C$, a Brownian motion over $[0,1]$ with covariance operator $C$ that has eigenvalues $\gamma_i=(i-1/2)^{-2}\pi^{-2}$ and eigenfunctions $e_i(t)=\sqrt{2}\sin((i-1/2)\pi t),\,t\in[0,1]$. The following alternate distributions are considered in the experiments, which are variations of those considered in \cite{InfiniteDimKSD}. 
Let $B_t$ denote the standard Brownian motion. 
% We conduct the following experiments, which are a variation for the experiment considered in \cite{InfiniteDimKSD},
\begin{itemize}
    \item $P$ follows the law of $X_t=(1+\sin(2\delta\pi t))B_t$, $\delta \in \{0.15, 0.1, 0.05, 0.01\}.$\vspace{.5mm} 
    \item $P$ is the law of the Brownian motion clipped to $k$ frequencies, $k \in [4]$, i.e.,  $\sum_{i=1}^k \sqrt{\gamma_i} \xi_i e_i$,  where $\xi_i \stackrel{i.i.d}{\sim} N(0,1)$.
    % and ($\gamma_i$, $e_i$) represent the eigensystem of Brownian motion (see \citealt{InfiniteDimKSD} for details).
\end{itemize}
Note that $\delta$ and $k$ generate a sequence of alternatives in the above cases with small $\delta$ and large $k$ making $P$ difficult to distinguish from $P_0$.

For our proposed regularized tests, we use $\Lambda=\{0.01,0.05,1\}$ and $W=\{h_m\}$, where $h_m$ denotes the median bandwidth. We use additionally generated samples of size $n_2$ and $n$ (see the captions of Figures~\ref{fig:Infdim} and \ref{fig:Infdim2} for details) from $P_0$ to construct KSD$^*$ and MMD tests.  We generated the samples using the settings and code provided in \citet{InfiniteDimKSD}. In this setup, function samples were observed at 100 points uniformly distributed across the interval [0,1], and 100 basis functions of the Brownian motion were used for the numerical approximation of $X_t$.

Figure \ref{fig:Infdim} shows the power of the first experiment for different values of $\delta$ with $n_2=35$ and $n=70$ (see also Figure~\ref{fig:inf_delta_n2} for the effect of changing $n_2$). In this experiment, the best performance is achieved by KSD(Tikhonov)$^*$ and MMD(Tikhonov), which have access to extra samples, followed by KSD(Tikhonov), with matching power to KSD(Tikhonov)$^*$ at $\delta=0.01$. In this scenario, Tikhonov regularization outperforms TikMax regularization, which can be explained by the fact that $X_t$ is a perturbed version of $B_t$ with high-frequency components being added. Thus, Tikhonov regularization is a more appropriate choice in this case.

Figure \ref{fig:Infdim2} illustrates the power of the second experiment with different values of $k$ with $n_2=100$ and $n=200$ (see also Figure~\ref{fig:inf_freq_n2} for the effect of changing $n_2$). In this experiment, once again, the best performance is achieved by MMD(Tikhonov), followed by the regularized KSD methods with Tikhonov regularization, which outperforms the unregularized KSD and KSD(TikMax). This can be explained by the fact that $X_t$ differs from $B_t$ at higher frequency components, making the Tikhonov regularizer a more appropriate choice in this case too. \vspace{-4mm}
\begin{figure}[t]
\centering
\includegraphics[scale=0.35]{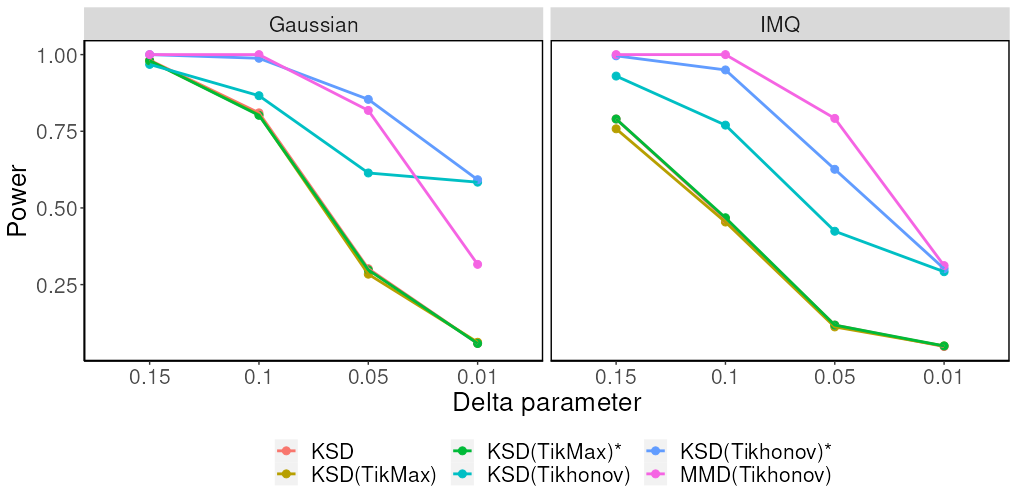}
\caption{Power for different values for $\delta$ using $n_2=35$ and $n=70$.} \label{fig:Infdim}
\vspace{0mm}
\end{figure}

\begin{figure}[H]
\centering
\includegraphics[scale=0.35]{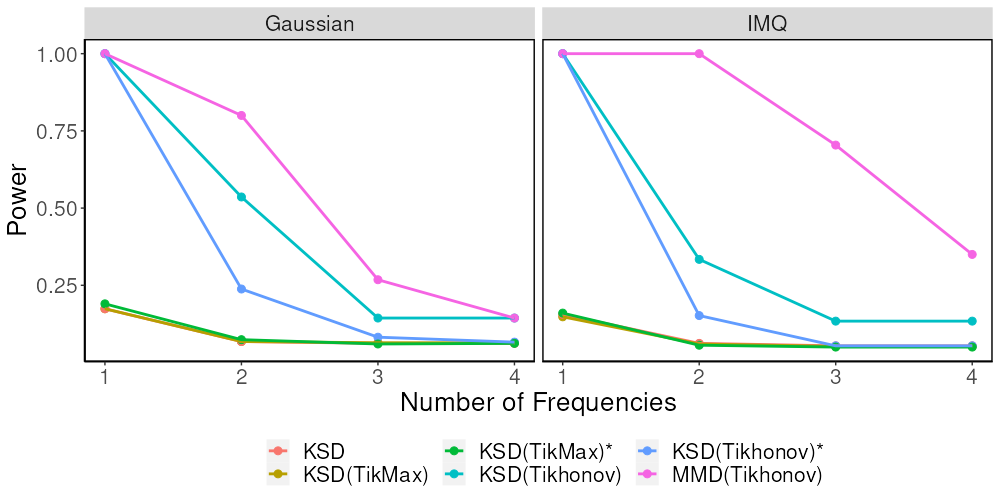}
\caption{Power for different values of $k$ using $n_2=100$ and $n=200$.} \label{fig:Infdim2}
\vspace{0mm}
\end{figure}
\vspace{-5mm}

\section{Discussion}\label{sec:discussion} 
%\vspace{-2mm}
To summarize, we studied the minimax optimality of goodness-of-fit tests across general domains, using the kernelized Stein discrepancy (KSD) framework—a versatile tool for conducting goodness-of-fit testing that can accommodate diverse data structures beyond Euclidean spaces and operate efficiently with only partial knowledge of the reference distribution. We formulated a comprehensive framework and provided an operator-theoretic representation of KSD, encompassing various existing KSD tests in the literature, each tailored to specific domains. Building on the concept of spectral regularization, we proposed a regularized KSD test and derived its separation boundary with respect to a suitable class of alternatives, using the $\chi^2$-distance as the separation metric. We demonstrated that the proposed regularized test is minimax optimal by deriving the minimax separation boundary of the considered alternative space and matching it with the separation boundary of our proposed test. This establishes the minimax optimality of the test across a wide range of the smoothness parameter $\theta$. On the other hand, we showed that the unregularized KSD test fails to achieve the minimax separation rate over the considered alternative space, thereby confirming its non-optimality. %Despite its versatility, we established the non-optimality of the KSD test w.r.t.~a suitable class of alternatives with $\chi^2$-distance as the separation metric. 
% concerning the $\chi^2$ distance as the separation metric under certain alternative spaces defined for general domains. 
%To address this non-optimality issue, we proposed a spectral regularized version of the KSD test and showed it to be minimax optimal. 
% Our results, anchored on a ``weak" moment condition on the Stein kernel, relaxed the bounded kernel assumption inherent in prior works analyzing kernel-based hypothesis testing. 
Furthermore, through numerical experiments, we demonstrated the superior performance of our proposed tests across various domains compared to the unregularized KSD test. 

There are several promising directions for future research. First, extending our results to handle composite null hypotheses would broaden the applicability of the proposed tests. Next, while our framework encompasses many KSD variations defined across different domains, there remains room for further investigation in terms of interpreting the smoothness condition (defined through the range space of an integral operator) for various specific domains and null distributions. Examples include discrete distributions supported on countable domain and network data, which can be addressed within the proposed general framework but require additional careful examination for any required domain-specific adjustments. Moreover, exploring alternative methods of adaptation could enhance the test performance. For instance, investigating approaches that provide tighter bounds than the conservative union bound, such as studying the asymptotic distribution of the adapted test, holds promise for further enhancing the test power.

%Additionally, exploring scenarios where data exhibit dependencies, such as Markovian data, could broaden the scope of our framework. and we have demonstrated the effective practical performance of the regularized KSD in Euclidean, directional, and infinite-dimensional data, 
%\vspace{-5mm}

% \textcolor{red}{
% \begin{supplement}
% \stitle{Proofs}
% \sdescription{In this supplementary section, we present the proofs of all the main results of the paper.}
% \end{supplement}
% \begin{supplement}
% \noindent\stitle{Technical Results}
% \sdescription{In this supplementary section, we present technical results that are used to prove the main results of the paper.}
% \end{supplement}
% \begin{supplement}
% \noindent\stitle{Additional Experiments}
% \sdescription{We provide additional experiments to highlight the impact of different choices for $n_2$.}
% \end{supplement}}

\section{Proofs}\label{sec:proofs}
In this section, we present proof of all the main results of the paper.

% \kb{In the proofs and the whole paper, we should use the assumption environment consistently or not use it at all.} \textcolor{red}{correct. will fix it.}
\subsection{Proof of Proposition \ref{thm: computation}}\label{subsec:compute}
Define $$S_z : \h_{K_0} \to \R^{n_2},\ \ f \mapsto \frac{1}{\sqrt{n_2}}(f(Z_1),\cdot \cdot \cdot,f(Z_{n_2}))^\top$$ so that $$S_z^* : \R^{n_2} \to \h_{K_0},\ \ \alpha \mapsto \frac{1}{\sqrt{n_2}}\sum_{i=1}^{n_2} \alpha_iK_0(\cdot,Z_i).$$ Then observe that $\hat{\A}_{P}=S_z^{*}S_z$ and $n_2S_z S_z^*= [K_0(Z_i,Z_j)]_{i,j\in [n_2]} := K_{n_2}.$ Let $(\hat{\lambda}_i,\hat{\alpha}_i)_i$ be the eigensystem of $\frac{1}{n_2}K_{n_2}$, i.e., $S_z S_z^* \hat{\alpha}_i=\hat{\lambda}_i \hat{\alpha}_i$. By applying $S_z^*$ to both sides on the eigen equation, we can show that $(\hat{\lambda}_i, \hat{\psi}_i)$ are the eigensystem of $\hat{\A}_P$ where $\hat{\psi}_i:= \frac{1}{\sqrt{\hat{\lambda}_i}}S_z^* \hat{\alpha}_i.$ Thus, using $\hat{\psi}_i$ in the definition of $g_{\lambda}(\hat{\A}_P)$ yields that 
$$g_{\lambda}(\hat{\A}_P):=g_{\lambda}(0)\Id+S_z^* G S_z.$$
Define $\one_n : =(1,\stackrel{n}{\ldots},1)^{\top}$, and let $\one_n^i$ be a vector of zeros with only the $i^{th}$ entry equal one. Also define $S_x$ similar to that of $S_z$ based on samples $(X_i)^{n_1}_{i=1}$ and observe that $\sqrt{n_{1}n_{2}}S_x S_z^{*}:=K_{n_1,n_2}.$ Therefore,
\begin{align*}
\stat^P(\mathbb{X}_n)=\frac{1}{n_1(n_1-1)}\left(\footnotesize{\circled{1}}-\circled{2}\right), 
\end{align*}
where $\circled{\footnotesize{1}}:=\sum_{i,j=1}^{n_1} h(X_i,X_j),$ $\circled{\footnotesize{2}}:=\sum_{i=1}^{n_1}h(X_i,X_i),$ and  $$h(X_i,X_j)=\inner{\gSh K_0(\cdot,X_i)}{\gSh K_0(\cdot,X_j)}_{\h_{K_0}}.$$
Note that 
\begin{align*}
\circled{\footnotesize{1}} & = \inner{\gS\sum_{i=1}^{n_1}K_0(\cdot,X_i)}{\sum_{j=1}^{n_1}K_0(\cdot,X_j)}_{\h_{K_0}}=n_1\inner{\gS S_x^*\one_{n_1}}{S_x^*\one_{n_1}}_{\h_{K_0}}\\
    &= n_1\inner{S_x\gS S_x^*\one_{n_1}}{\one_{n_1}}_{2}
     = n_1\inner{g_{\lambda}(0)\frac{1}{n_1}K_{n_1}+\frac{1}{n_{1} n_{2}}K_{n_1,n_2} GK_{n_1,n_2}^\top\one_{n_1}}{\one_{n_1}}_{2} \\
    & = \one_{n_1}^\top\left(g_{\lambda}(0)K_{n_1}+\frac{1}{n_2}K_{n_1,n_2} G K_{n_1,n_2}^\top\right)\one_{n_1},
\end{align*}
and 
\begin{align*}
    \circled{\footnotesize{2}} & = \sum_{i=1}^{n_1}\inner{\gS K_0(\cdot,X_i)}{K_0(\cdot,X_i)}_{\h_{K_0}}=n_1\sum_{i=1}^{n_1}\inner{\gS S_x^*\one_{n_1}^i}{S_x^*\one_{n_1}^i}_{\h_{K_0}}\\ 
    & = n_1\sum_{i=1}^{n_1}\inner{S_x\gS S_x^*\one_{n_1}^i}{\one_{n_1}^i}_{2} \\
&=n_1\sum_{i=1}^{n_1}\inner{g_{\lambda}(0)\frac{1}{n_1}K_{n_1}+\frac{1}{n_1 n_2}K_{n_1,n_2} G K_{n_1,n_2}^\top\one_{n_1}^i}{\one_{n_1}^i}_{2}\\
    & = \text{Tr}\left(g_{\lambda}(0)K_{n_1}+\frac{1}{n_2}K_{n_1,n_2} G K_{n_1,n_2}^\top\right).
\end{align*}

\subsection{Proof of Theorem \ref{Type-I error}}\label{subsec:wildbootstrap}
 Let $h(x,y) := \inner{\gShh K_0(\cdot,x)}{\gShh K_0(\cdot,y)}_{\h_{K_0}}$. 
 % \textcolor{red}{(Do you mean $\mathcal{A}_P$ instead of $P_0$ in $h$?)}\textcolor{blue}{In this case they are the same because we are considering the case when $H_0$ is true $(P=P_0)$}
Then observe that $h$ is symmetric and degenerate with respect to $P_0$,  i.e., $\E_{P_0}h(x,Y)=0$ and $\E_{P_0}h(X,Y)=0$. Moreover 
\begin{align*}
    \E_{P_0} (h^2(X,Y) | \mathbb Z_{n_2}) &\leq \norm{\gShh}_{\opS}^4 \left[\E_{P_0}K_0(X,X)\right] \left[\E_{P_0}K_0(Y,Y)\right] \\ &\lesssim \frac{[\E_{P_0}K_0(X,X)^2]}{\lambda^2} < \infty, 
\end{align*}
and 
\begin{align*}
    \E_{P_0} (h(X,X) | \mathbb Z_{n_2}) \leq \norm{\gShh}_{\opS}^2 \left[\E_{P_0}K_0(X,X)\right] \lesssim \frac{1}{\lambda} [\E_{P_0}K_0(X,X)] < \infty.
\end{align*}
The result therefore follows from \citet[Theorem 3.1]{DEHLING1994392}.

\subsection{Separation boundary theorem}\label{subsec:power}
\begin{appxthm}\label{thm:sep-bound-suff-conditions}
Let $n_2 \asymp n$, $\Delta_n := \chi^2(P_n,P_0),$ $\sup_{P_n \in \PP}\norm{\ep_{P_0}^{-\theta}\U}_{\Lp} < \infty,$ with $u:=dP_n/dP_0-1$ and $\tilde{\theta}:=\min\{\theta,\xi\}$. Suppose
\begin{equation}
    \lim_{n \to \infty} \Delta_n \lambda_n^{-2\tilde{\theta}}> d_1, \ \textit{for some } d_1>0, \label{eq:one}
\end{equation}
and 
\begin{equation}
    \lim_{n \to \infty}\frac{n \Delta_n D_{\lambda_n}^{-1/2}}{1+\sqrt{C_{\lambda_n} \Delta_n}} \to \infty,\label{eq:two}
\end{equation}where $$\Cl :=
\left\{
	\begin{array}{ll}
		\Nol \sup_{i} \norm{\phi_i}^2_{\infty},   &  \ \ \sup_i \norm{\phi_i}^2_{\infty} < \infty \\
		\left(\Nol\right)^{\frac{v-2}{2(v-1)}}\left(\frac{\kappa}{\lambda}\right)^{\frac{v}{v-1}},  & \ \  \text{otherwise}
	\end{array}\right., \,\,\,
 \text{and}\,\,\, D_{\lambda} =
\left\{
	\begin{array}{ll}
		\Ntlsq ,   &  \ \ \ 4\Cl\Delta_n < 1 \\
		\lambda^{-1}, & \ \  \text{otherwise}
	\end{array}
\right..$$
 Then for $(P_n)_n \subset \PP,$ we have $$\lim_{n \to \infty} P\{\stat^{P_n}(\mathbb{X}_n) \geq q^{\lambda}_{1-\alpha}\} = 1 $$ under either of the following conditions:

\begin{enumerate}
    \item  \ref{assump:a2} holds and 
    \begin{equation}
     \lambda_n {\sqrt{n}} \to \infty.    \label{eq:three}
    \end{equation}  
    Furthermore if $4 C_{\lambda_n} \Delta_n < 1$, then \eqref{eq:three} can be replaced by 
    \begin{equation}
        \frac{n\lambda_n(\mathcal{N}_{1}(\lambda_n)+C_{\lambda_n}\sqrt{\Delta_n})^{-1}}{\log n} \to \infty. \label{eq:three(b)}
    \end{equation} 

    \item  $\sup_{i}\norm{\phi_i}_{\infty} < \infty$ holds and 
    \begin{equation}
         \frac{n\lambda_n}{\log n} \to \infty. \label{eq:four}
    \end{equation}

    Furthermore if $4\Cl \Delta_n < 1$ and $\inf_{n>0} \E_{P_n}\phi_1^2 >0,$ then \eqref{eq:four} can be replaced by \begin{equation}
        \frac{n}{\mathcal{N}_{1}(\lambda_n) \log \mathcal{N}_{1}(\lambda_n)} \to \infty. \label{eq:four-(b)}
    \end{equation} 
\end{enumerate}
\end{appxthm}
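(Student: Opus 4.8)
The plan is to reduce the power statement to three quantitative facts and establish each in turn. Write $u:=dP_n/dP_0-1$; by Proposition~\ref{pro:ksd} and $\E_{P_0}K_0(\cdot,X)=0$ we have $\Psi_{P_n}=\id^*u$, so the population target is $\tilde{D}^2_{\lambda_n}(P_n,P_0)=\norm{g^{1/2}_{\lambda_n}(\A_{P_n})\Psi_{P_n}}_{\h_{K_0}}^2$. Since $\{\stat^{P_n}(\mathbb{X}_n)\ge q^{\lambda}_{1-\alpha}\}\supseteq\{\stat^{P_n}(\mathbb{X}_n)\ge\tfrac{1}{2}\tilde{D}^2_{\lambda_n}\}\cap\{q^{\lambda}_{1-\alpha}\le\tfrac{1}{2}\tilde{D}^2_{\lambda_n}\}$, it suffices to show (a) $\tilde{D}^2_{\lambda_n}\gtrsim\Delta_n$; (b) $\stat^{P_n}(\mathbb{X}_n)\ge\tfrac{1}{2}\tilde{D}^2_{\lambda_n}$ with probability tending to one; and (c) $q^{\lambda}_{1-\alpha}=o_P(\Delta_n)$. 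Throughout I would condition on $\mathbb{Z}_{n_2}$ — which enters the statistic only through $\hat{\A}_{P}$ — and treat $\stat^{P_n}$ as a $U$-statistic in $(X_i)_{i=1}^{n_1}$ that is degenerate after centering, dealing with the $\mathbb{Z}_{n_2}$-randomness at the end; the null analysis of the bootstrap is already supplied by Theorem~\ref{Type-I error} and here only needs to be upgraded with explicit, $\lambda$-dependent variance bounds.

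For (a): by the lower-bound conditions $(E_3)$--$(E_4)$, $D^2_{\lambda_n}(P_n,P_0)=\sum_i\lambda_i g_{\lambda_n}(\lambda_i)\langle u,\tilde{\phi}_i\rangle^2_{\Lp}\ge B_3\norm{u}^2_{\Lp}-C_3\lambda_n^{2\tilde{\theta}}\norm{\ep_{P_0}^{-\tilde{\theta}}u}^2_{\Lp}$, where the deficit is controlled using the range-space hypothesis $\sup_{P_n\in\PP}\norm{\ep_{P_0}^{-\theta}u}_{\Lp}<\infty$ and the qualification $\xi$, which caps the usable smoothness at $\tilde{\theta}=\min\{\theta,\xi\}$; by \eqref{eq:one} (for $d_1$ large enough relative to $\sup_{P_n}\norm{\ep_{P_0}^{-\theta}u}^2_{\Lp}$ and $\norm{\ep_{P_0}}$) this is $\gtrsim\Delta_n$. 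It then remains to pass from $D^2_{\lambda_n}$ (built on $\A_{P_0}=\id^*\id$) to $\tilde{D}^2_{\lambda_n}$ (built on $\A_{P_n}$): the difference is controlled by $\norm{g^{1/2}_{\lambda_n}(\cdot)(\A_{P_n}-\A_{P_0})g^{1/2}_{\lambda_n}(\cdot)}$, and bounding $\A_{P_n}-\A_{P_0}=\int K_0(\cdot,x)\htens K_0(\cdot,x)\,u(x)\,dP_0(x)$ together with $(E_1)$--$(E_2)$ shows this is $\lesssim\sqrt{C_{\lambda_n}\Delta_n}$. In the regime $4C_{\lambda_n}\Delta_n<1$ this is negligible, so $\tilde{D}^2_{\lambda_n}\asymp D^2_{\lambda_n}\gtrsim\Delta_n$; in the complementary ``easy'' regime a cruder bound — which is what motivates the choice $D_\lambda=\lambda^{-1}$ and the factor $(1+\sqrt{C_\lambda\Delta_n})$ in \eqref{eq:two} — still yields $\tilde{D}^2_{\lambda_n}\gtrsim\Delta_n$.

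For (b): first replace $g_{\lambda_n}(\hat{\A}_{P_n})$ by $g_{\lambda_n}(\A_{P_n})$. An operator Bernstein inequality for $\hat{\A}_{P_n}-\A_{P_n}$ — valid under the moment condition \ref{assump:a2}, respectively under $\sup_i\norm{\phi_i}_\infty<\infty$, since $n_2\asymp n$ — together with the standard perturbation estimate for spectral regularizers makes the error of this replacement $o_P(\Delta_n)$ under \eqref{eq:three} (respectively \eqref{eq:four}), and a localized Bernstein/Talagrand refinement sharpens the requirement to \eqref{eq:three(b)} (respectively \eqref{eq:four-(b)}) when $4C_{\lambda_n}\Delta_n<1$. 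Next, Hoeffding-decompose the conditional $U$-statistic into its mean $\norm{g^{1/2}_{\lambda_n}(\hat{\A}_{P_n})\Psi_{P_n}}^2$, a linear term, and a degenerate term. By $(E_1)$ the conditional variance of the linear term is $\lesssim C_1\tilde{D}^2_{\lambda_n}/n+(\tilde{D}^2_{\lambda_n})^2/n$, while a direct second-moment computation gives that of the degenerate term as $\lesssim n^{-2}\norm{g^{1/2}_{\lambda_n}(\hat{\A}_{P_n})\A_{P_n}g^{1/2}_{\lambda_n}(\hat{\A}_{P_n})}^2_{\hsS}\lesssim D_{\lambda_n}/n^2$ — the bound $\mathcal{N}^2_2(\lambda_n)$ via $\norm{\SgL\A_{P_0}\SgL}_{\hsS}$ and the $\A_{P_n}-\A_{P_0}$ estimate when $4C_{\lambda_n}\Delta_n<1$, and $\lambda_n^{-1}$ always, using $(E_1)$--$(E_2)$. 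Chebyshev's inequality then shows all three errors are $o_P(\Delta_n)$ precisely under \eqref{eq:two}, which combined with (a) gives (b).

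For (c): conditionally on $\mathbb{X}_n$, $n_1\stat^{\epsilon}(\mathbb{X}_n)$ is a Rademacher chaos of order two, so $q^{\lambda}_{1-\alpha}$ is bounded, up to an $\alpha$-dependent constant, by the square root of its conditional variance $\asymp n_1^{-2}\sum_{i\ne j}h(X_i,X_j)^2$ with $h(x,y)=\inner{g^{1/2}_{\lambda_n}(\hat{\A}_{P_n})K_0(\cdot,x)}{g^{1/2}_{\lambda_n}(\hat{\A}_{P_n})K_0(\cdot,y)}_{\h_{K_0}}$; this sum concentrates (again via \ref{assump:a2} or boundedness) around $n^{-2}\E_{P_n\times P_n}h^2\lesssim D_{\lambda_n}(1+C_{\lambda_n}\Delta_n)/n^2$, the extra $(1+C_{\lambda_n}\Delta_n)$ coming from the same passage $\A_{P_n}\to\A_{P_0}$ as in (a) and (b). Hence $q^{\lambda}_{1-\alpha}\lesssim_P\sqrt{D_{\lambda_n}}(1+\sqrt{C_{\lambda_n}\Delta_n})/n=o_P(\Delta_n)$ by \eqref{eq:two}, and combining (a)--(c) yields $P\{\stat^{P_n}(\mathbb{X}_n)\ge q^{\lambda}_{1-\alpha}\}\to1$. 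The hard part throughout is the interplay, under the mere moment assumption \ref{assump:a2} rather than boundedness of $K_0$, between the heavy-tailed summands and the regularization scale $\lambda_n$: controlling $\hat{\A}_{P_n}-\A_{P_n}$ and the degenerate second moment without a uniform bound forces a careful operator Bernstein argument and precise bookkeeping of the effective dimensions $\Nol$, $\Ntl$ and the regularizer constants $C_1,\dots,C_4,\xi$, and obtaining the sharp conditions \eqref{eq:three(b)}/\eqref{eq:four-(b)} in the small-$\Delta_n$ regime requires a localized concentration argument instead of the crude estimate.
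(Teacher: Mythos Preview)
Your three-step skeleton (signal lower bound, variance/Chebyshev concentration of the $U$-statistic, Rademacher-chaos bound on the wild-bootstrap quantile) is exactly the paper's route; the technical ingredients you invoke---the spectral-regularizer inequalities $(E_1)$--$(E_4)$, the operator Bernstein/Tropp bounds under \ref{assump:a2} or $\sup_i\|\phi_i\|_\infty<\infty$, and the de~la~Pe\~na tail bound for the bootstrap---are also the ones the paper uses. The one organizational difference is that the paper never pivots around the deterministic $\tilde D^2_{\lambda_n}$ but around the \emph{random} conditional mean $\zeta:=\|g^{1/2}_{\lambda_n}(\hat{\A}_{P_n})\Psi_{P_n}\|^2$: all variance and quantile bounds are expressed through the sandwich operators $\W=\hat\A_{P,\lambda}^{-1/2}\A_{P,\lambda}^{1/2}$ (or $\V=\hat\A_{P,\lambda}^{-1/2}\A_{P_0,\lambda}^{1/2}$), and Bernstein/Tropp is used only to show $\|\W\|,\|\W^{-1}\|=O_P(1)$. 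This avoids ever comparing $g_\lambda(\hat\A_P)$ with $g_\lambda(\A_P)$ directly, which is important because for a general regularizer satisfying only $(E_1)$--$(E_4)$ there is no Lipschitz/perturbation estimate of the kind your step~(b) invokes; the sandwich trick via $(E_4)$ is the substitute.

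Two specific points to fix. First, your claim in (a) that ``in the complementary easy regime \ldots\ still yields $\tilde D^2_{\lambda_n}\gtrsim\Delta_n$'' is not correct: passing from $\A_{P_0}$ to $\A_{P_n}$ only gives $\tilde D^2_{\lambda_n}\gtrsim D^2_{\lambda_n}/(1+\sqrt{C_{\lambda_n}\Delta_n})$ (this is Lemma~\ref{Lem: SP-SP0} in the paper), and that denominator is precisely why \eqref{eq:two} carries the factor $1+\sqrt{C_{\lambda_n}\Delta_n}$---you should lower the pivot from $\tfrac12\Delta_n$ to $c\,\Delta_n/(1+\sqrt{C_{\lambda_n}\Delta_n})$ throughout. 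Second, your quantile bound in (c) omits the cross terms: expanding $h(x,y)$ around $\Psi_{P_n}$ produces, besides the degenerate contribution $\lesssim D_{\lambda_n}/n^2$, terms of order $\zeta/n^2$ and $\zeta^2/n^2$ (these appear in the paper's $\gamma$ in Lemma~\ref{lem:bound BS quantile}); they are harmless under \eqref{eq:two} but should be accounted for.
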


\begin{proof}
We will show that under the conditions stated in the theorem, for any $0<\delta<1,$ we have $$\lim_{n \to \infty} P\{\stat^{P_n}(\mathbb{X}_n) \geq q^{\lambda}_{1-\alpha}\} > 1-3\delta,$$ and thus the desired result holds. Let $$\gamma := \frac{\log \frac{2}{\alpha}}{n_1\sqrt{\delta}} \left(\norm{\W}_{\opS} \sqrt{\zeta} + \norm{\W}_{\opS}^2 \sqrt{D_{\lambda}}+\zeta\right),$$ and $$\gamma_1:=\frac{1}{\sqrt{\delta}}\left(\frac{\norm{\W}_{\opS}\sqrt{\zeta}}{\sqrt{n_1}}+\frac{\norm{\W}_{\opS}^2 \sqrt{\D_{\lambda}}}{n_1}\right),$$
where $\zeta := \E \left(\stat^{P_n}(\mathbb{X}_n) | \mathbb Z_{n_2}\right)= \norm{\gSh \Psi_P}_{\h_{K_0}}^2$ and $\W=\hat{\A}_{P,\lambda}^{-1/2}\A_{P,\lambda}^{1/2}$. Observe that 

\begin{align*}
& P\left\{\stat^{P_n}(\mathbb{X}_n) \geq q^{\lambda}_{1-\alpha}\right\} \\& \geq P\left\{ \left\{\stat^{P_n}(\mathbb{X}_n) \geq \zeta - \sqrt{\frac{\text{Var}(\stat^{P_n} \mid \mathbb{Z}_{n_2})}{\delta}}\right\} \cap \left\{q^{\lambda}_{1-\alpha} \leq \zeta - \sqrt{\frac{\text{Var}(\stat^{P_n} \mid \mathbb{Z}_{n_2})}{\delta}}\right\} \right\} \\
& \geq 1 - P\left\{\stat^{P_n}(\mathbb{X}_n) \leq \zeta - \sqrt{\frac{\text{Var}(\stat^{P_n} \mid \mathbb{Z}_{n_2})}{\delta}}\right\} - P\left\{q^{\lambda}_{1-\alpha} \geq \zeta - \sqrt{\frac{\text{Var}(\stat^{P_n} \mid \mathbb{Z}_{n_2})}{\delta}}\right\} \\
& \geq 1 - P\left\{\left|\stat^{P_n}(\mathbb{X}_n) - \zeta\right| \geq \sqrt{\frac{\text{Var}(\stat^{P_n} \mid \mathbb{Z}_{n_2})}{\delta}}\right\} - P\left\{q^{\lambda}_{1-\alpha} \geq \zeta - \sqrt{\frac{\text{Var}(\stat^{P_n} \mid \mathbb{Z}_{n_2})}{\delta}}\right\} \\
& \geq 1- \delta - P\left\{q^{\lambda}_{1-\alpha} \geq \zeta - \sqrt{\frac{\text{Var}(\stat^{P_n} \mid \mathbb{Z}_{n_2})}{\delta}}\right\}, 
\end{align*}
where in the last step we have invoked Chebyshev's inequality. Thus by Lemmas \ref{lem:bound-var} and \ref{lem:bound BS quantile}, we can deduce that it is sufficient to show
$$P\left(\gamma \leq \zeta - c_1\gamma_1\right) \geq 1-2\delta, $$ for some constant $c_1>0.$ Observe that 
\begin{align*}
    &P\left(\gamma \leq \zeta - c_1\gamma_1\right) = P\left(\zeta \geq \gamma + c_1\gamma_1\right) \\ 
    &\geq P\left(\{\zeta \geq \gamma\} \cap \left\{\zeta \geq \frac{c_1\norm{\W}_{\opS}\sqrt{\zeta}}{\sqrt{\delta n_1}}\right\} \cap \left\{\zeta \geq \frac{c_1\norm{\W}^2_{\opS}\sqrt{\D_{\lambda}}}{\sqrt{\delta} n_1}\right\}\right) \\ 
    &\stackrel{(*)}{\geq} P\left(\{\zeta \geq \gamma\} \cap \left\{ \frac{c_2\norm{\W^{-1}}_{\opS}^{-2}\Delta_n}{1+\sqrt{\Cl}\sqrt{\Delta_n}} \geq \frac{\norm{\W}^2_{\opS}}{\delta n_1}\right\}\cap \left\{\norm{\W^{-1}}^{2}_{\opS} \leq \frac{3}{2}\right\} \right.\\
    &\qquad\qquad\left.\cap \left\{ \frac{c_2\norm{\W^{-1}}_{\opS}^{-2}\Delta_n}{1+\sqrt{\Cl}\sqrt{\Delta_n}} \geq \frac{\norm{\W}^2_{\opS}\sqrt{\D_{\lambda}}}{\sqrt{\delta} n_1}\right\} \cap \left\{\norm{\W}^2_{\opS} \leq 2\right\}\right)\\
    & \stackrel{(**)}{\geq} P\left(\left\{\zeta \geq \gamma\right\} \cap \left\{\norm{\W^{-1}}^{2}_{\opS} \leq \frac{3}{2}\right\} \cap \left\{\norm{\W}^2_{\opS} \leq 2\right\} \right) \\
    %&\geq P\left(\left\{\zeta \geq \frac{\log^2(\frac{2}{\alpha})}{\delta n_1^2}\right\} \cap  \left\{\zeta \geq \frac{\log(\frac{2}{\alpha})\norm{\W}^2_{\opS}\sqrt{\D_{\lambda}}}{\sqrt{\delta} n_1}\right\} \cap \left\{\norm{\W^{-1}}^{-2}_{\opS} \leq \frac{3}{2}\right\} \cap \left\{\norm{\W}^2_{\opS} \leq 2\right\}  \right) \\
    &\stackrel{(\dag)}{\geq} P\left(\left\{\norm{\W^{-1}}^{2}_{\opS} \leq \frac{3}{2}\right\} \cap \left\{\norm{\W}^2_{\opS} \leq 2\right\}  \right) \\
    &\geq 1- P\left(\norm{\W}^2_{\opS} \geq 2\right)-P\left(\norm{\W^{-1}}^{2}_{\opS} \geq \frac{3}{2}\right) \stackrel{(\ddag)}{\geq} 1-2\delta,
\end{align*}
where $c_2>0$, $(*)$ follows from Lemma \ref{Lem: bound zeta}$(i)$ under the assumption that $u \in \range (\ep_{P_0}^{\theta}),$ \\ $\sup_{P \in \PP}\norm{\ep_{P_0}^{-\theta}\U}_{\Lp} < \infty,$  and  for some $c_3>0,$
\begin{equation}
    \Delta_n \geq c_3 \lambda^{2\tilde{\theta}}. \label{eq:cond1}
\end{equation} 
$(**)$ and $(\dag)$ follows under the dominant condition that 
\begin{equation}
    \frac{\Delta_n}{1+\sqrt{\Cl}\sqrt{\Delta_n}} \geq \frac{c_4(\delta) \sqrt{\D_{\lambda}}}{n}, \label{eq:cond2}
\end{equation}
for some constant $c_4(\delta)\asymp \delta^{-1}$. $(\ddag)$ follows from Lemma \ref{lem:bound-op-norm-1}$(i)$ assuming 
\begin{equation}
    \lambda \geq \frac{c_5(\delta)}{\sqrt{n}}, \label{eq:lambda-bd1}
\end{equation}
where $c_5(\delta) \asymp \log(\frac{2}{\delta}),$
and if $4\Cl \Delta_n < 1$, then using Lemma \ref{lem:bound-op-norm-1}$(ii),$ \eqref{eq:lambda-bd1} can be replaced by
\begin{equation}
    \lambda \geq \frac{c_6(\delta)\log n}{n} \left(\Nol+\Cl \sqrt{\Delta_n}\right), \label{eq:lambda-bd2}
\end{equation}
for some constant $c_6(\delta) \asymp \log(\frac{2}{\delta}).$  When $\sup_{i}\norm{\phi_i}^2_{\opS}\leq C < \infty$ (which implies that the kernel is bounded) then using Lemma \ref{lem:bound-op-norm-1}$(iii),$
\eqref{eq:lambda-bd1} can be replaced by
\begin{equation}
    \lambda \geq \frac{c_7(\delta)\log{n}}{n}, \label{eq:lambda-bd3}
\end{equation}
where $c_7(\delta) \asymp \log(\frac{1}{\delta}).$ On the other hand, when $\sup_{i}\norm{\phi_i}^2_{\op}\leq C < \infty$ and $4\Cl \Delta_n < 1$, define
$$\gamma := \frac{\log \frac{2}{\alpha}}{n_1\sqrt{\delta}} \left(\norm{\V}_{\op} \sqrt{\zeta} + \norm{\V}_{\op}^2 \sqrt{\D_{\lambda}}+\zeta\right),$$ and $$\gamma_1:=\frac{1}{\sqrt{\delta}}\left(\frac{\norm{\V}_{\opS}\sqrt{\zeta}}{\sqrt{n_1}}+\frac{\norm{\V}_{\opS}^2 \sqrt{\D_{\lambda}}}{n_1}\right),$$ where $\V := \hat{\A}_{P,\lambda}^{-1/2}\A_{P_0,\lambda}^{1/2}.$ 
Then by Lemma \ref{lem:bound-var}$(ii)$, Lemma \ref{Lem: bound zeta}$(ii)$ and Lemma \ref{lem:bound-op-norm-2} and  exactly using the same approach as above, we can show that $$P\left(\gamma \leq \zeta - c_8\gamma_1\right) \geq 1-2\delta, $$ for some constant $c_8>0,$
under the same conditions \eqref{eq:cond1}, \eqref{eq:cond2} and replacing \eqref{eq:lambda-bd1} with 
\begin{equation}
    n \geq c_9(\delta) \Nol \log \Nol, \label{eq:lambda-bd4}
\end{equation}
for some constant $c_9(\delta)\asymp\log (\frac{1}{\delta}),$ and using $\log\frac{x}{\delta} \leq \log\frac{1}{\delta}\log x$ for large enough $x \in \R,$ and assuming $\E_{P_n}\phi_1^2 >0.$
Finally observe that the conditions \eqref{eq:one}--\eqref{eq:four-(b)} given in the theorem statement implies \eqref{eq:cond1}--\eqref{eq:lambda-bd4} respectively for any $0<\delta<1.$ 
    
\end{proof}

\subsection{General version of Corollories \ref{coro:poly} and \ref{coro:exp}}\label{sec:general-cor}
In this section, we present a general version of Corollories \ref{coro:poly} and \ref{coro:exp} under the weaker assumption of \ref{assump:a2}.
\begin{appxcoro} \label{coro:general:poly}
Suppose $\lambda_i \lesssim i^{-\beta},$ $\beta>1$. Let $\lambda_n \asymp \Delta_n^{\frac{1}{2\tilde{\theta}}},$ with $\Delta_n \to 0$. Then for $(P_n)_n \subset \PP,$ we have  
$$\lim_{n \to \infty}P\{\stat^{P_n} \geq q^{\lambda}_{1-\alpha}\} = 1,$$
if one of the following holds:
\begin{enumerate}
    \item [(i)] \ref{assump:a2} holds and $\Delta_n B_n^{-1} \to \infty,$ where 
$$B_{n}  =
\left\{
	\begin{array}{ll}
	n^{\frac{-4\tilde{\theta} \beta}{4\tilde{\theta} \beta+1}},  &  \ \ \Tilde{\theta}> b_1 \\

 \left(\frac{\log n}{ n}\right)^{\frac{2\tilde{\theta}\beta}{\beta+1}},  & \ \ b_2<\Tilde{\theta} \leq b_1\\
		 \left(\frac{\log n}{n}\right)^{\frac{4\tilde{\theta}\beta}{2\beta+\frac{v}{v-1}(1-\frac{2}{v}+2\beta)-2\tilde{\theta}\beta}}, & \ \  \frac{v-2}{4\beta(v-1)}+\frac{v}{2(v-1)}< \Tilde{\theta} \leq  b_2 \\
      n^{\frac{-8\tilde{\theta} \beta}{2(1+\frac{v}{v-1})\beta+4\tilde{\theta}\beta+\frac{v-2}{v-1}}}, & \ \ \frac{2v-3}{2v-2}-\frac{v-2}{4\beta(v-1)}\leq \tilde{\theta} \leq \frac{v-2}{4\beta(v-1)}+\frac{v}{2(v-1)} \\
      n^{-\tilde{\theta}}, & \ \ \text{otherwise}
	\end{array}
\right.,$$
with \hspace{-0.1mm}$b_1=  \max\left\{\frac{1}{3}+\frac{2\beta v-1}{6\beta(v-1)}, \frac{v-2}{4\beta(v-1)}+\frac{v}{2(v-1)}\right\},$  $b_2 = \min\{\frac{v}{v-1}-\frac{v}{2(v-1)\beta},b_1\},$ and $\tilde{\theta}=\min\{\theta,\xi\}.$
\item [(ii)] $\sup_{i}\norm{\phi_i}_{\infty} < \infty$ and $\Delta_n B_n^{-1} \to \infty,$ where
$$B_{n}  =
\left\{
	\begin{array}{ll}
	n^{\frac{-4\tilde{\theta} \beta}{4\tilde{\theta} \beta+1}},  &  \ \ \tilde{\theta} > \frac{1}{2\beta} \\
	n^{\frac{-4\tilde{\theta} \beta}{2\tilde{\theta}\beta+\beta+1}}	, & \ \  \frac{1}{2}-\frac{1}{2\beta} < \tilde{\theta} \leq \frac{1}{2\beta} \\
    \left(\frac{\log n}{n}\right)^{2\tilde{\theta}}, & \ \ \tilde{\theta}\le\min\{\frac{1}{2}-\frac{1}{2\beta},\frac{1}{2\beta}\}
	\end{array}
\right..$$
\end{enumerate}

\end{appxcoro}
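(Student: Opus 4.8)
The plan is to obtain Corollary~\ref{coro:general:poly} as a specialization of the general separation-boundary result in Theorem~\ref{thm:sep-bound-suff-conditions}: I take the stated choice $\lambda_n\asymp\Delta_n^{1/(2\tilde{\theta})}$ and verify, regime by regime in $\tilde{\theta}$, that the hypotheses \eqref{eq:one}--\eqref{eq:four-(b)} of that theorem collapse to the single requirement $\Delta_n B_n^{-1}\to\infty$. Condition \eqref{eq:one} is immediate, since $\Delta_n\lambda_n^{-2\tilde{\theta}}\asymp 1$ is bounded away from $0$; and the norm hypothesis on $\ep_{P_0}^{-\theta}u$ is supplied by $P_n\in\PP$ via the range-space constraint on $u:=dP_n/dP_0-1$.

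The next step is to control the effective-dimension quantities under $\lambda_i\lesssim i^{-\beta}$, $\beta>1$. An integral comparison gives $\Nol\lesssim\lambda^{-1/\beta}$ and $\Ntlsq\lesssim\lambda^{-1/\beta}$ (both sums converge because $\beta>1$), hence $\Ntl\lesssim\lambda^{-1/(2\beta)}$. For the amplification constant $\Cl$ I would use, in case (ii), boundedness of the eigenfunctions to get $\Cl\lesssim\Nol\lesssim\lambda^{-1/\beta}$; and in case (i), Assumption~\ref{assump:a2} with moment exponent $v$ to get $\Cl\lesssim\lambda^{-a}$ with $a:=\frac{v-2}{2\beta(v-1)}+\frac{v}{v-1}$. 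Since these quantities enter the hypotheses of Theorem~\ref{thm:sep-bound-suff-conditions} only through denominators that must diverge, only upper bounds are needed, which is why the weaker assumption $\lambda_i\lesssim i^{-\beta}$ (rather than a two-sided $\asymp$) is enough.

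The core is then a case analysis on whether $4\Cl\Delta_n<1$ for large $n$. Writing $\Delta_n\asymp\lambda_n^{2\tilde{\theta}}$, one has $\Cl\Delta_n\asymp\lambda_n^{2\tilde{\theta}-a}$ in case (i), or $\lambda_n^{2\tilde{\theta}-1/\beta}$ in case (ii), so $4\Cl\Delta_n<1$ eventually precisely when $\tilde{\theta}>a/2$ in case (i), resp.\ $\tilde{\theta}>\frac{1}{2\beta}$ in case (ii); otherwise $D_\lambda=\lambda^{-1}$ and the denominator $1+\sqrt{\Cl\Delta_n}$ in \eqref{eq:two} is of order $\sqrt{\Cl\Delta_n}$. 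In the first branch \eqref{eq:two} reduces to $n\Delta_n\Ntl^{-1}\to\infty$, i.e.\ $n\Delta_n^{1+1/(4\tilde{\theta}\beta)}\to\infty$, which is exactly $\Delta_n\gg n^{-4\tilde{\theta}\beta/(4\tilde{\theta}\beta+1)}$; the remaining hypothesis is the $\lambda$-lower bound \eqref{eq:three(b)} (case (i)) or \eqref{eq:four-(b)} (case (ii)), and one checks whether $\Nol$ or $\Cl\sqrt{\Delta_n}$ dominates in \eqref{eq:three(b)} according as $\tilde{\theta}$ exceeds or falls below $a-\frac{1}{\beta}$, which produces the $(\log n/n)^{2\tilde{\theta}\beta/(\beta+1)}$ and $(\log n/n)^{2\tilde{\theta}/(1+a-\tilde{\theta})}$ rates and the thresholds $b_1,b_2$. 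In the second branch \eqref{eq:two} becomes $n\sqrt{\Delta_n}\,\lambda_n^{1/2}\Cl^{-1/2}\to\infty$, giving $\Delta_n\gg n^{-4\tilde{\theta}/(1+a+2\tilde{\theta})}$, which is the quoted $B_n$ after clearing denominators by $2\beta$; and for still smaller $\tilde{\theta}$ it is instead the bare bound \eqref{eq:three} ($\lambda_n\sqrt{n}\to\infty$, i.e.\ $\Delta_n\gg n^{-\tilde{\theta}}$) or \eqref{eq:four} ($\lambda_n\gg\log n/n$, i.e.\ $\Delta_n\gg(\log n/n)^{2\tilde{\theta}}$) that is binding, the other conditions being automatically implied. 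Collecting the dominant lower bound over all regimes yields the piecewise $B_n$ in each of (i) and (ii); Corollary~\ref{coro:poly} then follows from (i) by letting $v\to\infty$, so that $a\to\frac{1}{2\beta}+1$.

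The step I expect to be the main obstacle is bookkeeping rather than conceptual: ordering the several candidate lower bounds on $\Delta_n$ (from \eqref{eq:two}, from \eqref{eq:three}/\eqref{eq:three(b)}, and from \eqref{eq:four}/\eqref{eq:four-(b)}) as $\tilde{\theta}$ varies, identifying which one dominates on each subinterval, and checking that the breakpoints agree with the explicit thresholds $b_1$, $b_2$, $\frac{1}{2}\pm\frac{1}{2\beta}$, $\frac{1}{2\beta}$, etc.\ in the statement---this amounts to verifying a handful of algebraic identities, such as $a-\frac{1}{\beta}=\frac{v}{v-1}-\frac{v}{2\beta(v-1)}$ and the equivalence of the exponent $4\tilde{\theta}/(1+a+2\tilde{\theta})$ with the one written in the Corollary. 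One must also confirm in each regime that the conditions declared non-binding really are implied by the binding one for the chosen $\lambda_n$.
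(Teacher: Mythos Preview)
Your proposal is correct and follows essentially the same approach as the paper: specialize Theorem~\ref{thm:sep-bound-suff-conditions} under $\lambda_i\lesssim i^{-\beta}$ using the bounds $\Nol\lesssim\lambda^{-1/\beta}$, $\Ntl\lesssim\lambda^{-1/(2\beta)}$ and the two expressions for $\Cl$, then split according to whether $4\Cl\Delta_n<1$ and identify, regime by regime in $\tilde\theta$, which of \eqref{eq:two}, \eqref{eq:three}/\eqref{eq:three(b)} or \eqref{eq:four}/\eqref{eq:four-(b)} is binding. Your abbreviation $a:=\frac{v-2}{2\beta(v-1)}+\frac{v}{v-1}$ and the thresholds you derive ($a/2$, $a-\tfrac{1}{\beta}$, $\tfrac{1}{2\beta}$, etc.) coincide with the paper's, and you correctly anticipate that the remaining work is purely the algebraic bookkeeping of comparing exponents.
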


\begin{proof} 

We will verify that the conditions of Theorem \ref{thm:sep-bound-suff-conditions} are satisfied. If $\lambda_i \lesssim i^{-\beta},$ for $\beta >1$ then by \citet[Lemma B.9]{kpca}, we have $\Nol \lesssim \lambda^{\frac{-1}{\beta}}$ and $\Ntl \leq \mathcal{N}^{1/2}_{1}(\lambda) \lesssim \lambda^{\frac{-1}{2\beta}}$. Since
$\lambda_n \asymp \Delta_n^{\frac{1}{2\tilde{\theta}}}$,  \eqref{eq:one} is directly satisfied.\vspace{1.5mm}\\ \emph{(i)}  %$(A_2)$ holds}:  
In this case $$\Cl = \frac{(\Nol)^{\frac{v-2}{2(v-1)}}}{\lambda^{\frac{v}{v-1}}}=\lambda^{\frac{-v}{v-1}\left(\frac{1}{2\beta}-\frac{1}{v\beta}+1\right)} \asymp \Delta_n^{{\frac{-v}{2\tilde{\theta}(v-1)}\left(\frac{1}{2\beta}-\frac{1}{v\beta}+1\right)}}.$$ 
Observe that  
$$4C_\lambda \Delta_n \asymp \Delta_n^{1-{\frac{v}{2\tilde{\theta}(v-1)}\left(\frac{1}{2\beta}-\frac{1}{v\beta}+1\right)}} \stackrel{(*)}{<}1,$$
for $n$ large enough, where $(*)$ holds since $\Delta_n \to 0$, and using $1-{\frac{v}{2\tilde{\theta}(v-1)}\left(\frac{1}{2\beta}-\frac{1}{v\beta}+1\right)}>0,$ which is equivalent to $\tilde{\theta} > \frac{v-2}{4\beta(v-1)}+\frac{v}{2(v-1)}.$  Thus it remains to verify \eqref{eq:two} and \eqref{eq:three(b)} for this range of $\tilde{\theta}$. Observe that in this range $D_{\lambda}=\Ntlsq \lesssim \lambda^{\frac{-1}{\beta}} \asymp \Delta_n^{\frac{-1}{2\tilde{\theta}\beta}},$ thus \eqref{eq:two} is equivalent to $\Delta_n^{\frac{4\tilde{\theta}\beta +1}{4\tilde{\theta}\beta}} n \to \infty$, which in turn is equivalent to
\begin{equation}
\Delta_n n^{\frac{4\tilde
{\theta}\beta}{4\tilde
{\theta}\beta+1}} \to \infty. \label{poly-cond1}    
\end{equation}

\eqref{eq:three(b)} is implied if $\frac{n\lambda_n}{\mathcal{N}_1(\lambda_n) \log n} \to \infty$ and $\frac{n\lambda_n}{C_{\lambda_n} \sqrt{\Delta_n} \log n} \to \infty,$ where we used that $\frac{1}{a+b} \leq \frac{1}{a} + \frac{1}{b},$ for $a,b>0.$ Then after simplifying the expressions,  the first condition $\frac{n\lambda_n}{\mathcal{N}_1(\lambda_n) \log n} \to \infty$ is implied when 

\begin{equation}
  \Delta_n \left(\frac{n}{\log n}\right)^{\frac{2\tilde{\theta}\beta}{\beta+1}} \to \infty,  \label{poly-cond2}
\end{equation} 
and the second condition 
$\frac{n\lambda_n }{C_{\lambda_n} \sqrt{\Delta_n}\log n} \to \infty$
is implied if $\Delta_n^{\frac{2\beta +\frac{v}{v-1}(1-\frac{2}{v}+2\beta)-2\tilde{\theta}\beta}{4\tilde{\theta}\beta}}\left(\frac{n}{\log n}\right) \to \infty.$ This is directly satisfied  if $2\beta +\frac{v}{v-1}(1-\frac{2}{v}+2\beta)-2\tilde{\theta}\beta < 0$ 
% \textcolor{red}{(why should this quantity be less than zero? Even if so, then automatically the lhs of B.3 goes to zero, right? What I would do based on B.1, B.2 and B.3, is that they implied if $\Delta_n \min\{\cdot,\cdot,\cdot\}\rightarrow\infty$ where each $\cdot$ is the expression involving $n$ in B.1, B.2, and B.3. Do you agree? Based on this we can obtain the rates for the range of $\tilde{\theta}$ but all happening when $\tilde{\theta}>...$ so that $4C_\lambda\Delta_n<1$ Is this what you did too?)}  \textcolor{blue}{Sorry, I have written this in the wrong way, actually the general condition should be $  \Delta_n^{\frac{2\beta +\frac{v}{v-1}(1-\frac{2}{v}+2\beta)-2\tilde{\theta}\beta}{4\tilde{\theta}\beta}}\left(\frac{n}{\log n}\right) \to \infty$, so this is equivalent to B.3 only if the exponent is positive otherwise it goes to infinity. I have fixed it now.} \textcolor{red}{But this only addresses one part right. If the exponent is negative, then truly the product goes to infinity. But the product can go to infty even when the exponent is positive, correct? }
which is equivalent to $\tilde{\theta} > 1+\frac{v}{v+1}+\frac{v-2}{(v-1)2\beta}.$ On the other hand, if $\tilde{\theta} \le 1+\frac{v}{v+1}+\frac{v-2}{(v-1)2\beta},$ then the second condition $\frac{n\lambda_n }{C_{\lambda_n} \sqrt{\Delta_n}\log n} \to \infty$
is implied if 
\begin{equation}
   \Delta_n \left(\frac{n}{\log n}\right)^{\frac{4\tilde{\theta}\beta}{2\beta +\frac{v}{v-1}(1-\frac{2}{v}+2\beta)-2\tilde{\theta}\beta}} \to \infty.\label{poly-cond3}
\end{equation} 
Since $\max\left\{1+\frac{v}{v+1}+\frac{v-2}{(v-1)2\beta}, \frac{v-2}{4\beta(v-1)}+\frac{v}{2(v-1)}\right\}=1+\frac{v}{v+1}+\frac{v-2}{(v-1)2\beta}$ for all $v>2$, in the case of 
$\tilde{\theta}> 1+\frac{v}{v+1}+\frac{v-2}{(v-1)2\beta},$ the conditions on $\Delta_n$ to be satisfied are \eqref{poly-cond1} and \eqref{poly-cond2}, and in the case $\frac{v-2}{4\beta(v-1)}+\frac{v}{2(v-1)}<\tilde{\theta}\leq 1+\frac{v}{v+1}+\frac{v-2}{(v-1)2\beta}$, then \eqref{poly-cond3} has also to be satisfied in addition to \eqref{poly-cond1} and \eqref{poly-cond2}. 
% \textcolor{red}{How do you know that left limit is upper bounded by the right limit in the above the inequality? If it is so, then you do not need the maximum in the condition for $\tilde{\theta}$, correct? I did some calculation and it appears that the inequality in the above line holds only for $v>3.5$ or something like that. Maybe do the calculation and see for what values of $v$ that inequality is satisfied and probably make an assumption of that in the corollary statement? Else, we need a full blown analysis.}
%%%%%%%%%%%%%
%Suppose 
%%%%%%%%%%%%%%%
On the other hand, if $\tilde{\theta}\le \frac{v-2}{(v-1)4\beta}+\frac{v}{2(v-1)},$ then it is easy to verify that \eqref{eq:two} (with $D_{\lambda} = \lambda^{-1}$) and \eqref{eq:three} are implied if 
\begin{equation}
  \Delta_n n^{\frac{4\tilde{\theta}}{4\tilde{\theta} +1 }}\to \infty, \label{poly-cond4} 
\end{equation}

\begin{equation}
  \Delta_n n^{\frac{8\tilde{\theta} \beta}{2(1+\frac{v}{v-1})\beta+4\tilde{\theta}\beta+\frac{v-2}{v-1}}} \to \infty, \label{poly-cond5}  
\end{equation}
and 

\begin{equation}
\Delta_n  n^{\tilde{\theta}}\to \infty. \label{poly-cond6}    
\end{equation}
Observe that when
$\frac{4\tilde{\theta} \beta}{4\tilde{\theta}\beta+1} \leq \frac{2\tilde{\theta} \beta}{\beta+1} \Leftrightarrow \frac{1}{2}+\frac{1}{4\beta} \leq \tilde{\theta},$ and $\frac{4\tilde{\theta}\beta}{4\tilde{\theta}\beta+1} \leq \frac{4\tilde{\theta}\beta}{2\beta+\frac{v}{v-1}(1-\frac{2}{v}+2\beta)-2\tilde{\theta}\beta} \Leftrightarrow \frac{1}{3}+\frac{2\beta v-1}{6\beta(v-1)}
%\frac{2v-1}{3(v-1)}-\frac{1}{6(v-1)\beta} 
\leq \tilde{\theta}$ then \eqref{poly-cond1} dominates \eqref{poly-cond2} and \eqref{poly-cond3} for large enough $n$. 
When $\frac{4\Tilde{\theta}\beta}{2\beta + \frac{v}{v-1}(1-\frac{2}{v}+2\beta)-2\Tilde{\theta}\beta} < \frac{4\Tilde{\theta}\beta}{\beta+1} \Leftrightarrow \Tilde{\theta} < \frac{v}{v-1}-\frac{v}{2(v-1)\beta}$, then \eqref{poly-cond3} dominates \eqref{poly-cond2} . If $\frac{8\tilde{\theta} \beta}{2(1+\frac{v}{v-1})\beta+4\tilde{\theta}\beta+\frac{v-2}{v-1}} \leq \tilde{\theta} \Leftrightarrow \frac{2v-3}{2v-2}-\frac{v-2}{4\beta(v-1)}\leq \tilde{\theta}$, and $\frac{8\tilde{\theta} \beta}{2(1+\frac{v}{v-1})\beta+4\tilde{\theta}\beta+\frac{v-2}{v-1}} \leq \frac{4\tilde{\theta}}{4\tilde{\theta}+1} \Leftrightarrow \tilde{\theta} < \frac{v}{2(v-1)}+\frac{v-2}{4\beta(v-1)}$, then \eqref{poly-cond5} dominates both \eqref{poly-cond4} and \eqref{poly-cond6} for large enough $n$. In other ranges \eqref{poly-cond6} dominates. Thus putting things together yields that $$\Delta_n B_n^{-1} \to \infty,$$ with $B_n$ as mentioned in Corollary \ref{coro:general:poly}\emph{(i)}.

\underline{\emph{(ii)} $\sup_{i}\norm{\phi_i}_{\infty} < \infty$:}  In this case, $\Cl \asymp \Nol$ and we need to verify equations \eqref{eq:one}, \eqref{eq:two} and \eqref{eq:four}, where \eqref{eq:four} can be replaced by \eqref{eq:four-(b)} when $4\Cl \Delta_n < 1$. When $\tilde{\theta}>\frac{1}{2\beta}$, it can be verified that $4\Cl \Delta_n < 1$ since  $\Delta_n  \to 0$. Then the required conditions are implied when 
$$\Delta_n \min\left\{n^{\frac{4\tilde{\theta}\beta}{4\tilde{\theta}\beta+1}}, \left(\frac{n}{\log n}\right)^{2\tilde{\theta}\beta}\right\} \to \infty.$$ On the other hand, if $\tilde{\theta} \leq \frac{1}{2\beta}$, then the required conditions are implied when 
$$\Delta_n \min\left\{n^{\frac{4\tilde{\theta}}{4\tilde{\theta}+1}}, n^{\frac{4\tilde{\theta}\beta}{2\tilde{\theta}\beta+\beta+1}}, \left(\frac{n}{\log n}\right)^{2\tilde{\theta}}\right\} \to \infty.$$
Putting the conditions together yields $$\Delta_n B_n^{-1} \to \infty,$$ where $B_n$ is mentioned in Corollary \ref{coro:general:poly}\emph{(ii)}.
\end{proof}

\begin{appxcoro}  \label{coro:general:exp}  
   Suppose $\lambda_i \lesssim e^{-\tau i},$ $\tau>0$. Let $\lambda_n \asymp \Delta_n^{\frac{1}{2\tilde{\theta}}}$ with $\Delta_n \to 0$. Then for any $(P_n)_n \subset \PP,$ we have  
$$\lim_{n \to \infty}P\{\stat^{P_n} \geq q^{\lambda}_{1-\alpha}\} =1,$$
if one of the following holds:
\begin{enumerate}
    \item [(i)]  \ref{assump:a2} holds and $\Delta_n B_n^{-1} \to \infty,$ where 
$$B_{n}  =
\left\{
	\begin{array}{ll}
	\frac{\sqrt{\log n}}{n},  &  \ \ \Tilde{\theta}> \frac{2v-1}{3(v-1)} \\
		\left(\frac{(\log n)^2}{n}\right)^{\frac{2\tilde{\theta}}{\frac{2v-1}{v-1}-\tilde{\theta}}} , & \ \  \frac{v}{2(v-1)} \leq \tilde{\theta} \leq \frac{2v-1}{3(v-1)} \\
      \left(\frac{\log n}{n}\right)^{\frac{4\tilde{\theta}(v-1)}{2\tilde{\theta}(v-1)+2v-1}}, & \ \ \frac{2v-3}{2(v-1)} \leq \tilde{\theta} \leq \frac{v}{2(v-1)} \\
      n^{-\tilde{\theta}}, & \ \ \text{otherwise}
	\end{array}
\right.,$$
where $\tilde{\theta}=\min\{\theta,\xi\}.$
\item[(ii)]  $\sup_{i}\norm{\phi_i}_{\infty} < \infty$ and $\Delta_n B_n^{-1} \to \infty,$ where
$$B_{n}  = \frac{\sqrt{\log n}}{n}.$$
\end{enumerate} 
\end{appxcoro}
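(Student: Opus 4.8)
The plan is to verify the hypotheses \eqref{eq:one}–\eqref{eq:four-(b)} of Theorem~\ref{thm:sep-bound-suff-conditions} with the prescribed choice $\lambda_n \asymp \Delta_n^{1/(2\tilde\theta)}$, proceeding exactly as in the proof of Corollary~\ref{coro:general:poly} but substituting the degrees-of-freedom estimates appropriate to exponentially decaying eigenvalues. First I would record the elementary bound that, for $\lambda_i \lesssim e^{-\tau i}$, $\Nol = \sum_i \lambda_i/(\lambda_i+\lambda) \asymp \log(1/\lambda)$ and hence $\Ntl \le \sqrt{\Nol} \lesssim \sqrt{\log(1/\lambda)}$ (the exponential analogue of $\Nol \lesssim \lambda^{-1/\beta}$ from \citet[Lemma B.9]{kpca}). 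Since $\lambda_n \asymp \Delta_n^{1/(2\tilde\theta)}$ with $\Delta_n\to0$, condition \eqref{eq:one} is immediate, and—because $\Delta_n B_n^{-1}\to\infty$ forces $\Delta_n$ to lie between $\sqrt{\log n}/n$ and a constant—we have $\log(1/\lambda_n) \asymp \log(1/\Delta_n) \asymp \log n$, so $\Nol \asymp \log n$ and $\Ntlsq \lesssim \log n$.

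Next I would split according to whether the ``$4\Cl\Delta_n<1$'' branch of Theorem~\ref{thm:sep-bound-suff-conditions} is active. Under \ref{assump:a2}, $\Cl = (\Nol)^{\frac{v-2}{2(v-1)}}(\kappa/\lambda)^{\frac{v}{v-1}} \asymp (\log n)^{\frac{v-2}{2(v-1)}}\,\Delta_n^{-\frac{v}{2\tilde\theta(v-1)}}$, so $4\Cl\Delta_n \to 0$ exactly when $1-\frac{v}{2\tilde\theta(v-1)}>0$, i.e.\ $\tilde\theta > \frac{v}{2(v-1)}$ (the subpolynomial $\log$ factor being harmless). In that regime $D_{\lambda_n} = \Ntlsq \lesssim \log n$, so \eqref{eq:two} collapses to $n\Delta_n/\sqrt{\log n}\to\infty$, i.e.\ $\Delta_n \gg \sqrt{\log n}/n$, while \eqref{eq:three(b)}—after splitting the denominator via $1/(a+b)\le 1/a + 1/b$—reduces to $n\lambda_n/(\Nol\log n)\to\infty$ and $n\lambda_n/(\Cl\sqrt{\Delta_n}\log n)\to\infty$; substituting $\lambda_n,\Nol,\Cl$ and simplifying turns these into $\Delta_n \gg ((\log n)^2/n)^{2\tilde\theta}$ and $\Delta_n \gg ((\log n)^2/n)^{2\tilde\theta/(\frac{2v-1}{v-1}-\tilde\theta)}$. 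Comparing the surviving rates against $\sqrt{\log n}/n$ shows the first is always weaker here, the $\sqrt{\log n}/n$ rate dominates once $\tilde\theta > \frac{2v-1}{3(v-1)}$, and the $\Cl$-driven rate dominates on $\frac{v}{2(v-1)}\le\tilde\theta\le\frac{2v-1}{3(v-1)}$, giving the first two lines of $B_n$ in part~(i).

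When $\tilde\theta \le \frac{v}{2(v-1)}$ the ``otherwise'' branch is active: $\Cl\Delta_n \to\infty$, so in \eqref{eq:two} the denominator is $\asymp \sqrt{\Cl\Delta_n}$, and $D_{\lambda_n} = \lambda_n^{-1} \asymp \Delta_n^{-1/(2\tilde\theta)}$. Then \eqref{eq:two} becomes $n\Delta_n^{1/2+1/(4\tilde\theta)}/\sqrt{\Cl}\to\infty$, which after inserting $\Cl$ and collecting the $\Delta_n$-exponent $\frac{2\tilde\theta(v-1)+2v-1}{4\tilde\theta(v-1)}$ yields $\Delta_n \gg ((\log n)/n)^{4\tilde\theta(v-1)/(2\tilde\theta(v-1)+2v-1)}$ (a slightly smaller power of $\log n$ being absorbed), while \eqref{eq:three} reads $\lambda_n\sqrt n\to\infty$, i.e.\ $\Delta_n \gg n^{-\tilde\theta}$; comparing $n$-exponents, $n^{-\tilde\theta}$ dominates precisely for $\tilde\theta \le \frac{2v-3}{2(v-1)}$, which produces the last two lines of $B_n$. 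For part~(ii), $\sup_i\norm{\phi_i}_{\infty}<\infty$ forces $\Cl \asymp \Nol \asymp \log n$, so $4\Cl\Delta_n\to0$ in the relevant regime and one may use \eqref{eq:four-(b)}, which reads $n/(\Nol\log\Nol) \asymp n/(\log n\log\log n)\to\infty$ and holds automatically; then $D_{\lambda_n} = \Ntlsq \lesssim \log n$ and \eqref{eq:two} is the sole binding constraint, again $\Delta_n \gg \sqrt{\log n}/n$, so $B_n = \sqrt{\log n}/n$.

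I expect the main obstacle to be purely the bookkeeping: correctly identifying, for each sub-interval of $\tilde\theta$, which of the several sufficient conditions in Theorem~\ref{thm:sep-bound-suff-conditions} binds, carrying the $v$-dependent exponents through the simplifications without slip, and pinning down the three breakpoints $\frac{2v-3}{2(v-1)} \le \frac{v}{2(v-1)} \le \frac{2v-1}{3(v-1)}$ by matching exponents of $n$ and of $\log n$. A secondary technical point is justifying $\log(1/\lambda_n)\asymp\log n$ uniformly over the admissible $\Delta_n$; this is handled by the observation above that $\Delta_n B_n^{-1}\to\infty$ already constrains $\Delta_n$ to lie between $\sqrt{\log n}/n$ and a constant.
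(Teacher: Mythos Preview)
Your proposal is correct and follows essentially the same route as the paper: verify the conditions of Theorem~\ref{thm:sep-bound-suff-conditions} using $\Nol\lesssim\log(1/\lambda)$, split on whether $4\Cl\Delta_n<1$ at the threshold $\tilde\theta=\frac{v}{2(v-1)}$, and then compare the resulting rates to locate the breakpoints $\frac{2v-3}{2(v-1)}$ and $\frac{2v-1}{3(v-1)}$. Two cosmetic remarks: your claim $\log(1/\lambda_n)\asymp\log n$ is only $\lesssim$ in general (e.g.\ $\Delta_n=1/\log n$), but since all uses of $\Nol,\Ntl,\Cl$ are as upper bounds this does no harm; and in the $\tilde\theta\le\frac{v}{2(v-1)}$ branch the paper retains an additional $n^{-4\tilde\theta/(4\tilde\theta+1)}$ term from the ``$1$'' in the denominator of \eqref{eq:two}, which you correctly suppress as dominated once $\Cl\Delta_n\to\infty$.
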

\begin{proof}
    We will verify that the conditions of Theorem \ref{thm:sep-bound-suff-conditions} are satisfied. If $\lambda_i \asymp e^{-\tau i},$ then \citep[Lemma B.9]{kpca} yields $\Nol \lesssim \log \frac{1}{\lambda}$ and $\Ntl \leq \mathcal{N}^{1/2}_{1}(\lambda) \lesssim \sqrt{\log \frac{1}{\lambda}}$. Since
$\lambda_n \asymp \Delta_n^{1/2\tilde{\theta}}$,  \eqref{eq:one} is directly satisfied. 

\emph{(i)}  In this case $\Cl \asymp \frac{(\log\lambda^{-1})^{\frac{v-2}{2(v-1)}}}{\lambda^{\frac{v}{v-1}}}$.\vspace{1mm}

\underline{\textit{For $\tilde{\theta}> \frac{v}{2(v-1)}$:}} In this case, it can be verified that $4C_\lambda \Delta_n< 1$ since $\Delta_n \to 0$. Thus we only need to verify \eqref{eq:two} and \eqref{eq:three(b)}. Equation \eqref{eq:two} is implied when 
$$\Delta_n \frac{n}{\sqrt{\log n}} \to \infty.$$
\eqref{eq:three(b)} is implied if 
$$\Delta_n \left(\frac{n}{(\log n)^2}\right)^{2\tilde{\theta}} \to \infty,$$
and $ \frac{n\lambda_n}{\Cl\sqrt{\Delta_n}\log n} \to \infty$
 which is satisfied for large enough $n$ when $\tilde{\theta} \geq \frac{2v-1}{v-1}.$

 \underline{\textit{For} $\frac{v}{2(v-1)}\leq \tilde{\theta} \leq \frac{2v-1}{v-1}$:} In addition to the conditions in the previous case we also have 

 $$\Delta_n \left(\frac{n}{(\log n)^2}\right)^{\frac{2\tilde{\theta}}{\frac{2v-1}{v-1}-\tilde{\theta}}} \to \infty.$$

\underline{\textit{For} $\tilde{\theta} \leq \frac{v}{2(v-1)}$:} In this case, we need to verify equations \eqref{eq:two} (with $D_{\lambda} = \lambda^{-1}$) and \eqref{eq:three} which are implied if 
$$\Delta_n \min\left\{n^{\frac{4\tilde{\theta}}{4\tilde{\theta}+1}}, \left(\frac{n}{\log n}\right)^{\frac{4\tilde{\theta}(v-1)}{2\tilde{\theta}(v-1)+2v-1}},n^{\tilde{\theta}}\right\} \to \infty.$$
Putting the conditions together while using 
$$\frac{2\tilde{\theta}}{\frac{2v-1}{v-1}-\tilde{\theta}} \geq 1 \Leftrightarrow \tilde{\theta} \geq \frac{2v-1}{3(v-1)},$$
and 
$$\frac{4\tilde{\theta}(v-1)}{2\tilde{\theta}(v-1)+2v-1} > \tilde{\theta} \Leftrightarrow \frac{2v-3}{2(v-1)} > \tilde{\theta},$$
yields the desired result.\vspace{1mm}\\
\underline{\emph{(ii)} $\sup_{i}\norm{\phi_i}_{\infty} < \infty$:} In this case $\Cl \asymp \Nol \asymp \log \frac{1}{\lambda}$, and it can also be verified that for large enough $n$, $4\Cl \Delta_n < 1$ is implied when $\Delta_n \to 0.$ Thus it only remains to verify equations \eqref{eq:two} and \eqref{eq:four-(b)} which are both implied when 
$\Delta_n \frac{n}{\sqrt{\log n}} \to \infty.$
\end{proof}

\subsection{Proof of Theorem \ref{thm:minimax}}\label{subsec:minimax}
For completeness, we briefly outline the main argument in Ingster's method for the lower bound. Let \(\phi\) be any \(\alpha\)-level test, and suppose \(\mathbb{E}_{P_0^n} \left[\left(L^{-1} \sum_{k=1}^L dP_k^n/dP_0^n\right)^2\right] = M_n^2 < \infty\). Then, as shown in \citet{Ingester1},
\begin{align*}
\inf_{\phi \in \Phi_{\alpha}} R_\Delta(\phi) + \alpha &= \inf_{\phi \in \Phi_{\alpha}} \sup_{P \in \mathcal{P}_\Delta} \mathbb{E}_{P^n}(1 - \phi) + \alpha \\ & \geq  1 - \mathrm{TV}\left(\frac{1}{L}\sum_{k=1}^L P_k^n,P_0^n\right)  
\geq 1 - \frac{1}{2} M_n \left(\sqrt{M_n^2 + 4} - M_n\right),
\end{align*}
where $\mathrm{TV}(P,Q)$ represents the total variation between the measures $P$ and $Q$. 
Thus, \(\lim_{n \to \infty} M_n < \infty\) implies that \(\lim_{n \to \infty} M_n (\sqrt{M_n^2 + 4} - M_n) < 2\), which in turn implies that \(\lim_{n \to \infty} \inf_{\phi \in \Phi_{\alpha}} R_\Delta(\phi) + \alpha > 0\). Therefore, there exists \(\tilde{\alpha} > 0\) such that for any \(\alpha \leq \tilde{\alpha}\),
\[
\lim_{n \to \infty} \inf_{\phi \in \Phi_{\alpha}} R_\Delta(\phi) > 0,
\]
which means there is no test $\phi\in\Phi_\alpha$ that is consistent over $\mathcal{P}_\Delta$. This means, it is sufficient to find a set of distributions $\{P_k\}_{k=1}^L \subset \PP$ such that
\begin{equation*}%\label{Eq:E.1}
    \lim_{n \to \infty}\E_{P_0^n}\left[\left(\frac{1}{L}\sum_{k=1}^L\frac{dP_k^n}{dP_0^n}\right)^2\right] < \infty.
\end{equation*}
Then the proof exactly follows the proof of Theorem 2 of \citet{hagrass2023Gof} by replacing $\T$ with $\ep_{P_0}$ with one main difference, which is bounding $$\left|\frac{dP_k}{dP_0}(x)-1\right|=|u_{n,k}(x)|=\left|a_n\sum_{i=1}^{B_n}\epsilon_{ki}\phi_{i}(x)\right|$$ in the case when $\sup_{i} \norm{\phi_i}_{\infty}$ \emph{is not finite}, since in this paper we do not assume that $K_0$ is bounded. To this end, in the following, we obtain a bound on $|u_{n,k}(x)|$ without assuming the boundedness of $K_0$. Let $C_n=\floor{\sqrt{B_n}}$ and $a_n=\sqrt{\frac{\Delta_n}{C_n}}.$ Then
\begin{align*}
    |u_{n,k}(x)| &= \left|\inner{K_0(\cdot,x)}{a_n\sum_{i=1}^{B_n}\epsilon_{ki}\phi_{i}}_{\h_{K_0}}\right| = \left|\inner{K_0(\cdot,x)}{a_n\sum_{i=1}^{B_n}\epsilon_{ki}\lambda_{i}^{-1}\id^*\tilde{\phi}_{i}}_{\h_{K_0}}\right|\\
    &= \left|\E_{P_0}\left[K_0(x,Y) a_n \sum_{i=1}^{B_n}\epsilon_{ki}\lambda_{i}^{-1} \tilde{\phi}_i(Y)\right]\right| \\
    & \leq \left(a_n^2\sum_{i,j}^{B_n}\epsilon_{ki} \epsilon_{kj} \lambda_i^{-1} \lambda_j^{-1}\E_{P_0}[\tilde{\phi}_{i}(Y)\tilde{\phi}_{j}(Y)]\right)^{1/2}\left(\E_{P_0}[K(x,Y)^2]\right)^{1/2} \\
    &= a_n \left(\sum_{i=1}^{B_n} \epsilon_{ki}^2\lambda_{i}^{-2}\right)^{1/2}\left(\E_{P_0}[K(x,Y)^2]\right)^{1/2}.
\end{align*}
Thus when $\lambda_i \asymp i^{-\beta}$, the above bound yields 
$$|u_{n,k}(x)| \lesssim \sqrt{\Delta_n} B_n^{\beta} \lesssim \Delta_n^{\frac{1}{2}-\frac{
1
}{2\theta}},$$
where $B_n \asymp \Delta_n^{\frac{-1}{2\theta\beta}}$, which implies $|u_{n,k}(x)| < 1$ for large enough $n$ and $\theta >1.$ For the case $\lambda_i \asymp e^{-\tau i}$, the bound yields that 
$$|u_{n,k}(x)| \lesssim a_n C_n^{1/2} e^{\tau B_n} \lesssim 1, $$  where we used $B_n \asymp\tau^{-1}\log(\Delta_n^{-1/2}).$ This means for large $n$, $|u_{n,k}(x)| < 1$. Moreover, under \ref{assump:a1}, we have 
\begin{align*}
    \E_{P_k}\left[K_0(X,X)\right] =  \E_{P_0}\left[K_0(X,X)(u_{n,k}(X)+1)\right] \leq 2\E_{P_0}\left[K_0(X,X)\right] < \infty,
\end{align*}
and 
\begin{align*}
\E_{P_k}\left[K_0(X,X)^r\right] =  \E_{P_0}\left[K_0(X,X)^r(u_{n,k}(X)+1)\right] \leq 2\E_{P_0}\left[K_0(X,X)^r\right] \leq 2cr!\kappa^r.
\end{align*}
Thus $P_k \in \PP$ for any $k \in \{1,\dots,L\}$.
 
\subsection{Proof of Theorem \ref{thm:non-optimality-KSD}}\label{subsec:non-optimal-ksd}
Observe that for any $P \in \PP_{\Delta},$ we have  
$$ \inf_{P \in \PP_{\Delta}}P_{H_1}\{\hat{D}^2_{\mathrm{KSD}}\geq q_{1-\alpha}\} \leq P_{H_1}\{\hat{D}^2_{\mathrm{KSD}}\geq q_{1-\alpha}\}.$$
Then it is sufficient to find a sequence of probability distributions $\{P_n\}_n$, such that $P_n \in \PP_{\Delta_n},$ for each $n$, and $$\lim_{n \to \infty} P_{n}\{\hat{D}^2_{\mathrm{KSD}}\geq q_{1-\alpha}\} < 1.$$
First we will show that if $\D^2_{\mathrm{KSD}}(P_n,P_0) = \norm{\Psi_{P_{n}}}^2_{\h_{K_0}}= o(n^{-1})$, then the desired result holds, where $\Psi_P:=\E_{P}K_0(\cdot,X)$. 
% \textcolor{red}{Do you mean $\hat{D}^2_{\mathrm{KSD}}$ instead of $\D^2(P_n,P_0)$? Also I do not see $\Psi_{P_n}$ defined in the main document. In the following calculations, should not $K$ be $K_0$?} \textcolor{blue}{I have fixed it}
% Suppose $(X_i)_{i=1}^n \stackrel{i.i.d}{\sim} P_n$ and 
Observe that
\begin{align*}
  &\hat{D}^2_{\mathrm{KSD}} = \frac{1}{n(n-1)} \sum_{i \neq j} K_0(X_i,X_j) \\
  &= \frac{1}{n(n-1)}\sum_{i \neq j}\inner{K_0(\cdot,X_i)-\Psi_{P_n}}{K_0(\cdot,X_j)-\Psi_{P_n}}_{\h_{K_0}}+\frac{2}{n} \sum_{i=1}^n \inner{K_0(\cdot,X_i)}{\Psi_{P_n}}_{\h_{K_0}}\\
  &\qquad\qquad-\norm{\Psi_{P_n}}^2_{\h_{K_0}} \\
  &= \frac{1}{n(n-1)}\sum_{i \neq j}\inner{K_0(\cdot,X_i)-\Psi_{P_n}}{K_0(\cdot,X_j)-\Psi_{P_n}}_{\h_{K_0}}+\frac{2}{n} \sum_{i=1}^n \inner{K_0(\cdot,X_i)-\Psi_{P_n}}{\Psi_{P_n}}_{\h_{K_0}}\\
  &\qquad\qquad+\norm{\Psi_{P_n}}^2_{\h_{K_0}} \\
  &:= \circled{\tiny{1}}+\circled{\tiny{2}}+\norm{\Psi_{P_n}}^2_{\h_{K_0}}.
\end{align*}
Then under $H_1$, i.e., $(X_i)_i\stackrel{i.i.d.}{\sim} P_n\in\mathcal{P}$, using standard arguments for the convergence of U-statistics (see \citealp[Section 5.5.2]{Serfling1980}) yields that 
 $$n\, \circled{\tiny{1}}\, {\rightsquigarrow} \sum_{i=1}^{\infty} \lambda_i (a_i^2-1):=S,$$
where $a_i \stackrel{i.i.d}{\sim} \mathcal{N}(0,1).$
Next, note that
$$n\,\circled{\tiny{2}} = 2\sqrt{n}\mathcal{V}_n \frac{1}{\sqrt{n}} \sum_{i=1}^{n}\inner{K_0(\cdot,X_i)-\Psi_{P_n}}{\frac{\Psi_{P_n}}{\mathcal{V}_n}}_{\h_{K_0}},$$
where $\mathcal{V}_n:=\sqrt{\norm{\mathcal{A}^{1/2}_{P_n}\Psi_{P_n}}^2_{\h_{K_0}}-\norm{\Psi_{P_n}}^4_{\h_{K_0}}}$ and $\mathcal{A}_{P_n}:=\int_\mathcal{X} K_0(\cdot,x)\otimes_{\h_{K_0}} K_0(\cdot,x)\,dP_n(x)$. The central limit theorem yields that $\frac{1}{\sqrt{n}} \sum_{i=1}^{n}\inner{K_0(\cdot,X_i)-\Psi_{P_n}}{\frac{\Psi_{P_n}}{\mathcal{V}_n}}_{\h_{K_0}} \rightsquigarrow Z,$
where $Z \sim \mathcal{N}(0,1).$ Thus, it follows that if $\norm{\Psi_{P_{n}}}^2_{\h_{K_0}}= o(n^{-1})$ and $\chi^2(P_n,P_0)=O(1)$ as $n\rightarrow\infty$, then 
\begin{align*}
n\mathcal{V}^2_n&=n\norm{\mathcal{A}^{1/2}_{P_n}\Psi_{P_n}}^2_{\h_{K_0}}-n\norm{\Psi_{P_n}}^4_{\h_{K_0}}\le n\Vert\mathcal{A}_{P_n}\Vert_{\opS}\norm{\Psi_{P_{n}}}^2_{\h_{K_0}}\\
& \le n\mathbb{E}_{P_n}K_0(X,X)\norm{\Psi_{P_{n}}}^2_{\h_{K_0}}\\
&\le n\left(\mathbb{E}_{P_0}K_0(X,X)+\sqrt{\mathbb{E}_{P_0} K_0^2(X,X)} \sqrt{\chi^2(P_n,P_0)}\right)\norm{\Psi_{P_{n}}}^2_{\h_{K_0}}\rightarrow 0
\end{align*}
as $n\rightarrow\infty$, implying
$$n\hat{D}^2_{\mathrm{KSD}} \rightsquigarrow S , $$ thereby yielding 
% \textcolor{red}{is there $n$ missing in front of $\hat{D}^2_{KSD}$? Otherwise, I do not follow the following inequality.} \textcolor{blue}{I have fixed it}

$$P_{H_1}\{\hat{D}^2_{\mathrm{KSD}}\geq q_{1-\alpha}\}= P_{H_1}\{n\hat{D}^2_{\mathrm{KSD}}\geq nq_{1-\alpha}\}\to P\{S \geq \tilde{d}\}<1,$$ 
where $\tilde{d}=\lim_{n \to \infty} n q_{1-\alpha}$, and the last inequality follows using the facts that $q_{1-\alpha}>0$ (by the symmetry of the wild bootstrap distribution) $S$ has a non-zero probability of being negative. Thus it remains to show that we can construct a sequence of probability distributions $\{P_n\}_{n} \in \PP_{\Delta_n}$ such that $\norm{\Psi_{P_{n}}}^2_{\h_{K_0}}= o(n^{-1}).$ 
Let $h$ be a strictly decreasing continuous function on $\mathbb{R}^{+}$, and suppose $\lambda_i =h(i)$. Let $k_n=\floor{ h^{-1}\left(\Delta_{n}^{1/2\theta}\right)}$ and define 
$$u_n := \lambda_{k_n}^{\theta}\phi_{k_n}.$$
Recall that $\id \phi_{k_n}=[\phi_{k_n}]_{\sim}=\tilde{\phi}_{k_n}$. Thus $\E_{P_0} \phi^2_{k_n} = \E_{P_0} \tilde{\phi}^2_{k_n}=1,$ and $$\E_{P_0} \phi_{k_n} = \E_{P_0} \tilde{\phi}_{k_n}=\lambda_{k_n}^{-1}\inner{1}{\id \id^* \tilde{\phi}_{k_n}}_{\Lp}=\inner{\id^* 1}{\phi_{k_n}}_{\h_{K_0}}=\inner{\E_{P_0}K_0(\cdot,x)}{\phi_{k_n}}_{\h_{K_0}}=0.$$  Then observe that $u_n \in \id u_n = [u_n]_{\sim} = \lambda_{k_n}^{\theta}\tilde{\phi}_{k_n}  \in \range(\ep_{P_0}^{\theta})$ and $\norm{u_n}_{\Lp}^2 = \chi^2(P_n,P_0) \geq \Delta_n.$
Thus it remains to ensure there exists $P_n$ such that $u_n=\frac{dP_n}{dP_0}-1$, for which it is sufficient to bound $|u_n(x)|$ as follows:
\vspace{1mm}\\
\noindent{\textit{Case 1:}} \underline{\ref{assump:a1} holds and $\theta>1$}\vspace{1mm}\\
In this case, 
\begin{align*}
    |u_n(x)| &= \lambda^\theta_{k_n}|\inner{K_0(\cdot,x)}{\phi_{k_n}}_{\h_{K_0}}| 
    = \lambda_{k_n}^{\theta-1}|\E_{P_0}(\tilde{\phi}_{k_n}(Y)K_0(x,Y))|\\
    & \leq \lambda_{k_n}^{\theta-1} \left(\E_{P_0}\tilde{\phi}_{k_n}^2(Y)\right)^{1/2} \left(\E_{P_0}K_0^2(x,Y)\right)^{1/2} \lesssim \lambda_{k_n}^{\theta-1},
\end{align*}
which implies that $|u_n(x)|<1$ for large enough $n$. Then to ensure that $P_n \in \PP$, it remains to show $P_n \in \mathcal{S}$. This is indeed the case since
\begin{align*}
\E_{P_n}\left[K_0(X,X)\right] =  \E_{P_0}\left[K_0(X,X)(u_n(X)+1)\right] \stackrel{(*)}{\leq} 2\E_{P_0}\left[K_0(X,X)\right] < \infty,
\end{align*}
and 
\begin{align*}
\E_{P_n}\left[K_0(X,X)^r\right] =  \E_{P_0}\left[K_0(X,X)^r(u_n(X)+1)\right]\stackrel{(*)}{\leq} 2\E_{P_0}\left[K_0(X,X)^r\right] \stackrel{(\dag)}{\leq} 2cr!\kappa^r,
\end{align*}
where $(*)$ follows using $|u_n(x)|<1$ for large $n$, and $(\dag)$ follows by \ref{assump:a1}. \vspace{1mm}\\
\textit{Case 2: \underline{$\sup_{i}\norm{\phi_i}_{\infty} < \infty$}}\vspace{1mm}\\
In this case, 
$$|u_n(x)| \leq \sup_k\norm{\phi_k}_{\infty} \lambda_{k_n}^{\theta}\leq 1,$$
for $n$ large enough. Then to ensure that $P_n \in \PP$, it remains to show $P_n \in \mathcal{S}$ which follows similarly to the previous case.

Thus in both the cases, $D^2_{\mathrm{KSD}}(P_n,P_0)=\norm{\Psi_{P_n}}^2_{\h_{K_0}}=\norm{\ep_{P_0}^{1/2}u_n}^2_{\Lp} \sim \Delta_n^{\frac{2\theta+1}{2\theta}}=o(n^{-1})$.

\subsection{Proof of Proposition~\ref{thm;range}}
Observe that $$\mathcal{D}_{P_0}=\sum_{k \in I} \lambda_k \gamma_k \ltens \gamma_k$$
since 
\begin{align*}
   \mathcal{D}_{P_0}f&=\int K_0(\cdot,x) f(x)\,dP_0(x)=\int \sum_{k\in I}\lambda_k \gamma_k\gamma_k(x) f(x)\,dP_0(x)\\&=\sum_{k\in I} \lambda_k\gamma_k\langle \gamma_k,f\rangle_{L^2(P_0)},\,\,f\in L^2(P_0). 
\end{align*}

Therefore, for $f \in \Lp$,  $$ \mathcal{D}^{\theta}_{P_0}f= \sum_{k \in I} \lambda_k^{\theta} b_k \gamma_k,$$
where $b_k := \inner{f}{\gamma_k}_{\Lp},$ with $\sum_{k \in I} b_k^2 < \infty.$ Thus any function belonging to $ \range(\mathcal{D}_{P_0}^{\theta})$ can be expressed as $\sum_{k \in I} a_k \gamma_k,$ with $(a_k)_{k\in I}$ satisfying $\sum_{k \in I} a^2_k \lambda_k^{-2\theta} < \infty.$

\subsection{Proof of Example \ref{Ex: uniform}}\label{proof:ex-uniform}
Observe that
$\bar{K}(x,y) : =  \sum_{k \neq 0} |k|^{-\beta} e^{\sqrt{-1}2\pi kx}e^{-\sqrt{-1}2\pi ky}$. Since $1 \cup (e^{\sqrt{-1}2\pi k \cdot})_{k \in I}$ form an orthonormal basis in $L^2([0,1])$, the result follow by applying Proposition \ref{thm;range}.
\vspace{2mm}\\
For $K_0(x,y)=\nabla_{x}\nabla_{y}K(x,y),$
% \underline{Case : $\psi_{P_0}(y,x)= K(y,x)\nabla_x\log p_0(x)+\nabla_{x}K(y,x),$} 
we have 
$$K_0(x,y) = \sum_{k \neq 0 } \lambda_k \frac{d \gamma_k(x)}{dx}\frac{d \overline{\gamma_k(y)}}{dy},$$ where $\lambda_k:=|k|^{-\beta}$, and $\gamma_k(x) := e^{\sqrt{-1}2\pi k x}.$ Then observe that $\frac{d\gamma_k(x)}{dx} = \sqrt{-1}2\pi k \gamma_k(x),$ thus 

$$K_0(x,y) = (2\pi)^2 \sum_{k} \tilde{\lambda}_k \gamma_k(x)\gamma_k(y),$$ with $\tilde{\lambda}_k:= k^2 \lambda_k= |k|^{-(\beta-2)}.$ Thus we have $\E_{P_0}K_0(\cdot,X)=0$ and the result follows by applying Proposition~\ref{thm;range}.

\subsection{Proof of Example \ref{Ex: Gaussian}}\label{proof:ex-gaussian}
By Mehler's formula \citep{Kibble_1945}, we have 
\begin{equation}\label{Eq:mehler}K(x,y) = \sum_{k=0}^{\infty} \rho^k \gamma_k(x)\gamma_k(y),
\end{equation}
where 
$\gamma_k(x) = \frac{1}{\sqrt{k!}} H_k(x),$ and $H_k(x) = (-1)^ke^{x^2/2}\frac{d^k}{dx^k}e^{-x^2/2}.$
Observe that  $\gamma_0(x)=1$ and $1 \cup (\gamma_k)_{k=1}^{\infty}$ form an orthonormal basis with respect to $L^2(P_0)$ where $p_0(x) := \frac{1}{\sqrt{2\pi}}e^{-\frac{x^2}{2}}.$

Then $$\bar{K}(x,y) = \sum_{k=0}^{\infty} \rho^k \tilde{\gamma_k}(x)\tilde{\gamma_k}(y)= \sum_{k=1}^{\infty} \rho^k \gamma_k(x)\gamma_k(y),$$
where $\tilde{\gamma}_k(x)=\gamma_k(x)-\E_{P_0}\gamma_k(X).$ Thus the result follow by applying Theorem \ref{thm;range}. On the other hand, %if $\psi_{P_0}(\cdot,x)= K(\cdot,x)\nabla_x\log p_0(x)+\nabla_{x}K(\cdot,x)$ then 
% \underline{Case : $\psi_{P_0}(y,x)= K(y,x)\nabla_x\log p_0(x)+\nabla_{x}K(y,x)$} We have 
\begin{align*}
    K_0(x,y) &= \frac{d \log p_0(x)}{dx}\frac{d \log p_0(y)}{dy}K(x,y)+ \frac{d \log p_0(x)}{dx} \frac{\partial K(x,y)}{\partial y} + \frac{d \log p_0(y)}{dy} \frac{\partial K(x,y)}{\partial x} \\ &\qquad\qquad + \frac{\partial^2 K(x,y)}{\partial x \partial y}.
\end{align*}
Observe that $$\frac{d \log p_0(x)}{dx}=-x,\,\, \frac{\partial K(x,y)}{\partial y} = \sum_{n=0}^{\infty} \frac{k\rho^k}{k!} H_{k-1}(y)H_k(x),$$ and $$\frac{\partial^2 K(x,y)}{\partial x \partial y} = \sum_{n=0}^{\infty}\frac{k^2\rho^k}{k!}H_{k-1}(y)H_{k-1}(x),$$ where we used that $\frac{d H_{k}(x)}{dx}=k H_{n-1}(x).$ Then by using $H_{k+1}(x)=xH_{k}(x)-kH_{k-1}(x),$ \ it can be shown that 
$$K_0(x,y) = \frac{1}{\rho} \sum_{k=1}^{\infty} \lambda_k \gamma_k(x) \gamma_k(y),$$
where $\lambda_k = k \rho^k = k e^{k\log \rho}.$  Thus the result follows by applying Proposition~\ref{thm;range}.

\subsection{Proof of Remark \ref{Ex: Gaussian} (i)} \label{proof: remark}
Observe that 
\begin{align*}
K(x,y)&=\frac{1}{\sqrt{1-\rho^2}}\exp\left(-\frac{\rho^2(x^2+y^2)-2 \rho xy}{2(1-\rho^2)}\right)\\ & = \frac{1}{\sqrt{1-\rho^2}} \exp\left(\frac{-\rho}{2(1-\rho^2)}(x-y)^2\right) \exp\left(\frac{\rho}{2(1+\rho)} x^2\right) \exp\left(\frac{\rho}{2(1+\rho)} y^2\right).    
\end{align*}
Let $\bar{K}(x,y)=\exp\left(\frac{-\rho}{2(1-\rho^2)
}(x-y)^2\right).$  An application of the Mehler formula in \eqref{Eq:mehler} yields
\begin{align*}
\bar{K}(x,y)&=\sqrt{1-\rho^2}\sum^\infty_{k=0} \rho^k \gamma_k(x)\gamma_k(y)\exp\left(\frac{-\rho}{2(1+\rho)} x^2\right) \exp\left(\frac{-\rho}{2(1+\rho)} y^2\right)\\
&=\sum_{k=0}^{\infty} \lambda_k \psi_k(x)\psi_k(y),
\end{align*}
where $\lambda_k=(1-\rho)\rho^k,$ and $\psi_k(x)=\left(\frac{1+\rho}{1-\rho}\right)^{1/4}\gamma_k(x) \exp\left(\frac{-\rho}{2(1+\rho)}x^2\right).$ Note that  $(\psi_k)_{k=0}^{\infty}$ form an orthonormal basis with respect  $L^2(Q)$ with $Q$ being a Gaussian distribution with mean zero and variance $\frac{1+\rho}{1-\rho}.$ Then the RKHS associated with $\bar{K}$ can be defined as  
$$\h_{\bar{K}} := \left\{\sum_{k=0}^{\infty}a_k\psi_k(x) : \sum_{k=0}^{\infty} a_k^2 \lambda_k^{-1} < \infty \right\}.$$ Hence for any $f \in \h_{\bar{K}},$ the function $\bar{f}(x):= \exp\left(\frac{\rho}{2(1+\rho)}x^2\right)f(x)-a_0 \in R_1$ for any $\theta \leq \frac{1}{2}.$  Therefore,
$$S :=\left\{\exp\left(\frac{\rho}{2(1+\rho)}x^2\right)g_f(x)-a_0  :  g_f \in \h_{\bar{K}} \right\}\subseteq R_1,$$ for any $\theta \leq \frac{1}{2}.$

Let $\Tilde{K}(x,y) = \exp(-\sigma(x-y)^2).$ Then we will show that $\h_{\tilde{K}} \subseteq S$. To this end, we need to show that for any $f \in \h_{\tilde{K}}$ with $\E_{P_0}f=0,$ $\exists g_f \in \h_{\bar{K}}$ such that $f(x)= \exp\left(\frac{\rho}{2(1+\rho)}x^2\right)g_f(x)-a_0.$ Thus it is sufficient to show that $$g_f(x):=\exp\left(\frac{-\rho}{2(1+\rho)}x^2\right)\left(f(x)+a_0\right)\in \h_{\bar{K}}.$$ 
%belongs to the space $\h_{\bar{K}}.$ 
Observe that $$g_f(x)=\exp\left(\frac{-\rho}{2(1+\rho)}x^2\right)f(x)+a_0\exp\left(\frac{-\rho}{2(1+\rho)}x^2\right) =: g_1(x) + g_2(x).$$ We will show $g_1, g_2 \in \h_{\bar{K}}$ and therefore the claim follows that $\h_{\tilde{K}}\subseteq R_1$. For that matter, note that 

\begin{align*}
&|\hat{g}_1(\omega)|\\&=\left|\int_{-\infty}^{\infty} \hat{f}(t)\exp\left(\frac{-(\omega-t)^2(1+\rho)}{2\rho}\right) \, dt \right| \\
& \leq \int_{-\infty}^{\infty} |\hat{f}(t)| \exp\left(\frac{-(\omega-t)^2(1+\rho)}{2\rho}\right) \exp\left(\frac{-t^2}{8\sigma}\right)\exp\left(\frac{t^2}{8\sigma}\right) \, dt \\
& \leq \left(\int_{-\infty}^{\infty} |\hat{f}(t)|^2 \exp\left(\frac{t^2}{4\sigma}\right) \, dt \right)^{1/2} \left(\int_{-\infty}^{\infty} \exp\left(\frac{-(\omega-t)^2(1+\rho)}{\rho}\right) \exp\left(\frac{-t^2}{4\sigma}\right) \, dt \right)^{1/2} \\
& =\Vert f\Vert_{\h_{\tilde{K}}} \left(\int_{-\infty}^{\infty} \exp\left(\frac{-(\omega-t)^2(1+\rho)}{\rho}\right) \exp\left(\frac{-t^2}{4\sigma}\right) \, dt \right)^{1/2} \\
&\lesssim \sqrt{\exp \left(\frac{-(1+\rho)}{4\sigma(1+\rho)+\rho}\omega^2\right)}.
\end{align*}
% where $(*)$ follows using $f \in \h_{\tilde{K}}.$
Then 
\begin{align*}
    \norm{g_1}_{\h_{\bar{K}}}^2 &= \int_{-\infty}^{\infty} |\hat{g}_1(\omega)|^2 \exp\left(\frac{\omega^2(1-\rho^2)}{2\rho}\right) \, d\omega \\ & \lesssim  \int_{-\infty}^{\infty} \exp\left(-\omega^2\left(\frac{1+\rho}{4\sigma(1+\rho)+\rho}-\frac{1-\rho^2}{2\rho}\right)\right) \, d\omega < \infty,
\end{align*}
where the last inequality follows using $\sigma < \frac{\rho}{4(1-\rho)}$ to ensure the exponent argument is negative. Next, observe that $\hat{g}_2(\omega)=a_0 \exp\left(\frac{-\omega^2(1+\rho)}{2\rho}\right).$ Thus 
\begin{align*}
    \norm{g_2}_{\h_{\bar{K}}}^2 &= \int_{-\infty}^{\infty} |\hat{g}_2(\omega)|^2 \exp\left(\frac{\omega^2(1-\rho^2)}{2\rho}\right) \, d\omega  \\ 
    & = a^2_0 \int_{-\infty}^{\infty} \exp\left(-\omega^2 \left(\frac{1+\rho}{2\rho}-\frac{1-\rho^2}{2\rho}\right)\right) \,d\omega < \infty.
\end{align*}

\section*{Acknowledgments}
The authors thank the Editor-In-Chief, the  Associate Editor and the reviewers for their constructive comments that helped to improve the presentation. BKS thanks Zolt\'{a}n Szab\'{o} and Florian Kalinke for their comments on the proof of Theorem~\ref{thm:non-optimality-KSD}. OH and BKS are partially supported by the National Science Foundation (NSF) CAREER award DMS-1945396. KB is supported by the NSF grant DMS-2053918.

\bibliographystyle{plainnat} % We choose the "plain" reference style
\bibliography{main} % Entries are in the refs.bib file

\section{Technical results}
In the following, we present technical results that are used to prove the main results of the paper. Unless
specified otherwise, the notation used in this section matches that of the main paper.
% \textcolor{red}{Why is the word lemma not in bold and the statement of lemma in italics? Check how you did in gof or two sample paper.}
\begin{appxlem} \label{Lem:AP-AP0}
    Let $\h_K$ be an RKHS associated with kernel $K$. Define $\A_Q:\h_K \to \h_K$, $$\A_Q:=\int_{\X} K(\cdot,x)\hhtens K(\cdot,x) \, dQ(x),$$ where $Q$ is a probability measure that can be either $P_0$ or $P,$ $$\Nol:= \emph{Tr}(\SgL\A_{P_0}\SgL),\  \text{and}  \hspace{2mm} \Ntl:= \norm{\SgL\A_{P_0}\SgL}_{\hs},$$ 
    where $\A_{P,\lambda}:= \A_{P} + \lambda \Id.$ Suppose $$\int_{\X} K(x,x) \, dQ(x) < \infty, \ \text{and} \int_{\X} [K(x,x)]^v \, dP_0(x) < \kappa^v,$$ for some $\kappa>0$ and $v\geq2$. Then the following hold:
    \begin{enumerate}
    \item $\norm{\SgLP\A_{P}\SgLP}_{\hs}^2 \leq \frac{\emph{Tr}(\A_P)}{\lambda};$ 
     \item If $4\Cl\norm{u}_{\Lp}^2 \leq 1,$  then 
     \begin{align*}
       &(a) \ \norm{\SgLP\A_{P}\SgLP}_{\hs}^2 \leq 8 \Ntlsq +2, \\
       &(b) \ \emph{Tr}(\SgLP \A_{P} \SgLP)  \leq 2\Nol + 2\Cl\norm{u}_{\Lp}, 
     \end{align*}
where $u:=\frac{dP}{dP_0}-1$ and $$\Cl =
\left\{
	\begin{array}{ll}
		\Nol \sup_{i} \norm{\phi_i}^2_{\infty},   &  \ \ \sup_i \norm{\phi_i}^2_{\infty} < \infty \\
		\left(\Nol\right)^{\frac{v-2}{2(v-1)}}\left(\frac{\kappa}{\lambda}\right)^{\frac{v}{v-1}},  & \ \  \text{otherwise}
	\end{array}
\right.,$$
with $\phi_i:=\frac{\alpha_i}{\sqrt{\lambda_i}}$ and $(\lambda_i,\alpha_i)_{i}$ being the  orthonormal eigensystem of $\A_{P_0}.$
    \end{enumerate}
\end{appxlem}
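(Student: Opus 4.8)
The plan is to control the relevant operator norms by passing back and forth between the operator $\A_P$ and the reference operator $\A_{P_0}$, using the key identity that $\A_P - \A_{P_0}$ is an integral against $u\,dP_0$, together with the fact that the regularized operators $\A_{P,\lambda}$ and $\A_{P_0,\lambda}$ are comparable once $4\Cl\norm{u}_{\Lp}^2\le 1$. First, for part (1), I would observe that $\A_P$ is a positive trace-class operator on $\h_K$, and write $\SgLP\A_P\SgLP = \sum_i \frac{\mu_i}{\mu_i+\lambda}\,(\beta_i\otimes\beta_i)$ where $(\mu_i,\beta_i)_i$ is the eigensystem of $\A_P$. Then $\norm{\SgLP\A_P\SgLP}_{\hs}^2 = \sum_i \bigl(\tfrac{\mu_i}{\mu_i+\lambda}\bigr)^2 \le \sum_i \tfrac{\mu_i}{\mu_i+\lambda}\le \frac1\lambda\sum_i\mu_i = \frac{\mathrm{Tr}(\A_P)}{\lambda}$, using $\tfrac{\mu_i}{\mu_i+\lambda}\le 1$ and $\tfrac{1}{\mu_i+\lambda}\le\tfrac1\lambda$. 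Note $\mathrm{Tr}(\A_P)=\int K(x,x)\,dQ(x)<\infty$ by hypothesis, so this is finite.

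For part (2), the engine is the operator-norm comparison $\norm{\A_{P_0,\lambda}^{1/2}\A_{P,\lambda}^{-1/2}}_{\opS}^2 \le 2$ and $\norm{\A_{P,\lambda}^{1/2}\A_{P_0,\lambda}^{-1/2}}_{\opS}^2 \le 2$ (or constants of that order), which should follow from bounding $\norm{\A_{P_0,\lambda}^{-1/2}(\A_{P}-\A_{P_0})\A_{P_0,\lambda}^{-1/2}}_{\opS}$: writing $\A_P-\A_{P_0}=\int K(\cdot,x)\otimes K(\cdot,x)\,u(x)\,dP_0(x)$, one expands in the eigenbasis $(\lambda_i,\alpha_i)$ of $\A_{P_0}$ and uses $\phi_i=\alpha_i/\sqrt{\lambda_i}$ to get a bound proportional to $\sqrt{\Cl}\,\norm{u}_{\Lp}$ (this is exactly where the two cases in the definition of $\Cl$ enter — the bounded-eigenfunction case via $\sup_i\norm{\phi_i}_\infty$, the unbounded case via a H\"older/interpolation argument with exponents $\tfrac{v}{v-1}$ and $\tfrac{v-2}{2(v-1)}$ against the moment bound $\E_{P_0}[K(X,X)^v]\le\kappa^v$). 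Once $4\Cl\norm{u}_{\Lp}^2\le1$ we get the spectral sandwich $\tfrac12\A_{P_0,\lambda}\preceq\A_{P,\lambda}\preceq\tfrac32\A_{P_0,\lambda}$ (or similar), hence the operator-norm bounds by $2$. Then for (2a), $\norm{\SgLP\A_P\SgLP}_{\hs} \le \norm{\SgLP\A_{P,\lambda}\SgLP}_{\hs} \le \norm{\SgLP\A_{P_0,\lambda}^{1/2}}_{\opS}^2\,\norm{\A_{P_0,\lambda}^{-1/2}\A_{P}\A_{P_0,\lambda}^{-1/2}}_{\hs}$ — wait, more carefully: bound $\norm{\SgLP\A_P\SgLP}_{\hs}$ by inserting $\A_{P_0,\lambda}^{\pm1/2}$, reducing to $\norm{\A_{P_0,\lambda}^{-1/2}\A_P\A_{P_0,\lambda}^{-1/2}}_{\hs}$ times $\norm{\W}_{\opS}^2\le2$, then split $\A_P=\A_{P_0}+(\A_P-\A_{P_0})$ and use $\norm{\SgL\A_{P_0}\SgL}_{\hs}=\Ntl$ plus the $\hs$-norm (dominated by $\opS$-norm times a trace factor, or directly) of the perturbation; squaring and using $(a+b)^2\le 2a^2+2b^2$ produces $8\Ntlsq+2$. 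Part (2b) is analogous with the trace norm: $\mathrm{Tr}(\SgLP\A_P\SgLP) \le \norm{\W}_{\opS}^2\,\mathrm{Tr}(\SgL\A_P\SgL) \le 2\bigl(\mathrm{Tr}(\SgL\A_{P_0}\SgL) + \mathrm{Tr}(\SgL(\A_P-\A_{P_0})\SgL)\bigr) = 2\Nol + 2\,\mathrm{Tr}(\SgL(\A_P-\A_{P_0})\SgL)$, and the last trace is bounded by $\Cl\norm{u}_{\Lp}$ by the same eigen-expansion (here a single power of $\Cl$, not $\sqrt{\Cl}$, because the trace is linear in the perturbation whereas the $\opS$-comparison needed a square root).

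I expect the main obstacle to be the perturbation estimate $\norm{\A_{P_0,\lambda}^{-1/2}(\A_P-\A_{P_0})\A_{P_0,\lambda}^{-1/2}}_{\opS}\lesssim\sqrt{\Cl}\norm{u}_{\Lp}$ (and its trace-norm counterpart) in the \emph{unbounded-kernel} case, since there one cannot use $\sup_i\norm{\phi_i}_\infty$ and must instead interpolate: split the sum over eigenvalues at a threshold depending on $\lambda$, bound the low-index part by $\Nol$-type quantities and the high-index tail using the moment condition $\E_{P_0}[K(X,X)^v]\le\kappa^v$ via H\"older with the conjugate exponents that generate the factor $(\Nol)^{(v-2)/(2(v-1))}(\kappa/\lambda)^{v/(v-1)}$. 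The bookkeeping of which norm ($\opS$, $\hs$, trace) picks up which power of $\Cl$, and verifying the $4\Cl\norm{u}_{\Lp}^2\le1$ threshold gives exactly the constants $2$ and $3/2$, is the delicate part; everything else is routine manipulation of positive self-adjoint operators.
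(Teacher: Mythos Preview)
Your proposal is correct and follows essentially the same route as the paper: part~(i) via the elementary spectral inequality, and part~(ii) via the sandwich $\A_{P,\lambda}\approx\A_{P_0,\lambda}$ obtained from $\norm{\SgL\B\SgL}_{\op}\le\norm{\SgL\B\SgL}_{\hs}\le\sqrt{\Cl}\,\norm{u}_{\Lp}\le\tfrac12$ (with $\B:=\A_P-\A_{P_0}$), then splitting $\A_P=\A_{P_0}+\B$ inside $\SgL(\cdot)\SgL$ and collecting constants.

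One clarification on the unbounded-kernel perturbation bound: the paper does \emph{not} split the eigenvalue sum at a threshold. Instead it writes
\[
\norm{\SgL\B\SgL}_{\hs}^2=\int\!\!\int\langle K(\cdot,y),\SL^{-1}K(\cdot,x)\rangle_{\h_K}^2\,u(x)u(y)\,dP_0(x)\,dP_0(y),
\]
applies Cauchy--Schwarz to extract $\norm{u}_{\Lp}^2$, and then H\"older-splits the remaining fourth power as $\langle\cdot,\cdot\rangle^{2v/(v-1)}\cdot\langle\cdot,\cdot\rangle^{2(v-2)/(v-1)}$, bounding the first factor crudely by $\norm{\SL^{-1}}_{\op}^{v/(v-1)}\|K(\cdot,x)\|^{v/(v-1)}\|K(\cdot,y)\|^{v/(v-1)}$ (hence the $(\kappa/\lambda)^{v/(v-1)}$ via the moment assumption) and the second by $\bigl(\E_{P_0}\langle K(\cdot,Y),\SL^{-1}K(\cdot,X)\rangle^2\bigr)^{(v-2)/(2(v-1))}=(\Nol)^{(v-2)/(2(v-1))}$. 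This produces $\Cl$ directly; your threshold-splitting idea is a different mechanism and would not obviously yield the same clean exponent combination, so if you carry out the details you should use the integral-form H\"older instead. The trace bound $\mathrm{Tr}(\SgL\B\SgL)\le\Cl\norm{u}_{\Lp}$ follows the same pattern with one fewer power of the inner product.
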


\begin{proof}
Observe that $\A_{P_0}=\id^*\id$, where $\id : \h_{K} \to \Lp$, $f \mapsto [f]_{\sim}$ and $\id^* : \Lp \to \h_{K}$, $f \mapsto \int K(\cdot,x)f(x)\,dP_0(x)$.  \\ Then it can verified that $\ep_{P_0}:=\id\id^* : \Lp \to \Lp$, $f \mapsto \int K(\cdot,x)f(x)\,dP_0(x).$
Note that \begin{align*}
    \text{Tr}(\ep_{P_0})&=\text{Tr}(\A_{P_0}) = \int_{x} \text{Tr}(K(\cdot,x)\hhtens K(\cdot,x) )\, dP_0(x) = \int_{x} \inner{K(\cdot,x)}{K(\cdot,x)}_{\h_K}\, dP_0(x) \\
    & =\int_{\X} K(x,x) \, dP_0(x) < \infty,
\end{align*}
which implies $\ep_{P_0}$ and $\A_{P_0}$ (similarly $\A_{P}$) are trace class operators. Let $(\lambda_i,\Tilde{\phi_i})_i$ be the eigenvalues and eigenfunctions of $\ep_{P_0}.$ Since $\ep_{P_0}=\id\id^*$ and $\A_{P_0}=\id^*\id$, we have $\id\id^*\Tilde{\phi}_i=\lambda_i\Tilde{\phi}_i$ which implies $\id^*\id\left(\id^*\Tilde{\phi}_i\right)=\lambda_i\left(\id^*\Tilde{\phi}_i\right)$, i.e., $\A_{P_0}\alpha_i=\lambda_i\alpha_i$, where $\alpha_i:=\id^*\Tilde{\phi}_i/\sqrt{\lambda_i}$. Note that $(\alpha_i)_i$ form an orthonormal system in $\h_K$. Define $\phi_i=\frac{\id^*\Tilde{\phi_i}}{\lambda_i}$, thus $\alpha_i=\sqrt{\lambda_i}\phi_i.$ Based on these notations, we present the proofs of the desired results as follows:

\emph{(i)} Observe that
\begin{align*}
    \norm{\SgLP\A_{P}\SgLP}_{\hs}^2 \leq \norm{\SgLP\A_{P}\SgLP}_{\op} \text{Tr}\left(\SgLP\A_{P}\SgLP\right)  \leq \frac{\text{Tr}(\A_P)}{\lambda}. 
\end{align*}

\emph{(ii)-(a)} Let $$\B = \int_{\X} K(\cdot,x) \hhtens K(\cdot,x) \ u(x) \, dP_0(x).$$
Then it is clear that $\A_P=\A_{P_0}+\B$ and we have 
\begin{align*}
    &\norm{\SgLP \A_{P} \SgLP}_{\hs}^2 \leq \norm{\SgLP \A_{P_0,\lambda}^{1/2}}_{\op}^4 \norm{\SgL \A_{P} \SgL}_{\hs}^2 \\
    & \leq \norm{\A_{P_0,\lambda}^{1/2} \A_{P,\lambda}^{-1}\A_{P_0,\lambda}^{1/2} }_{\op}^2 \norm{\SgL \A_{P} \SgL}_{\hs}^2 \\
    & \leq \norm{\left(\Id-\SgL(\A_{P_0}-\A_{P})\SgL\right)^{-1}}_{\op}^2 \norm{\SgL \A_{P} \SgL}_{\hs}^2 \\
    & \stackrel{(*)}{\leq} \left(1-\norm{\SgL \B \SgL}_{\op}\right)^{-2}\left( 2\Ntlsq + 2\norm{\SgL\B\SgL}_{\hs}^2\right) \\
    &\stackrel{(\dag)}{\leq} \left(1-\sqrt{\Cl}\norm{u}_{\Lp}\right)^{-2}\left(2 \Ntlsq + 2\Cl \norm{u}_{\Lp}^2\right) \\
    & \leq 8\Ntlsq+2,
\end{align*}
where $(*)$ follows when $\norm{\SgL\B\SgL}_{\op} < 1.$ By using $$\norm{\SgL\B\SgL}_{\op}\leq \norm{\SgL\B\SgL}_{\hs},$$ $(\dag)$ follows if
\begin{equation}
 \norm{\SgL\B\SgL}_{\hs} \leq \sqrt{\Cl}\norm{u}_{\Lp}, \label{eq:hs-bound} 
\end{equation}
which we show 
below for the two cases of $\Cl.$ \vspace{1mm}\\
\emph{Case 1:} $\sup_{i} \norm{\phi_i}_{\infty}^2 < \infty$. Then we have, 
\begin{align*}
    &\norm{\SgL \B \SgL}_{\hs}^2 = \text{Tr}\left(\SL^{-1} \B \SL^{-1} \B\right) \\
    &= \hspace{-1mm}\int_{\X} \int_{\X} \text{Tr}\left[\SL^{-1}K(\cdot,x)\hhtens K(\cdot,x) \SL^{-1} K(\cdot,y)\hhtens K(\cdot,y) \right] u(x) u(y) \, d\PQ(y)\,d\PQ(x)\\
    &\stackrel{(\ddagger)}{=} \sum_{i,j} \left(\frac{\lambda_i}{\lambda_i+\lambda}\right) \left(\frac{\lambda_j}{\lambda_j+\lambda}\right) \left[\int_{\X}\phi_i(x)\phi_j(x) u(x) \, d\PQ(x)\right]^2 \\
    & \leq \sum_{i,j} \frac{\lambda_i}{\lambda_i+\lambda} \inner{\phi_j}{\phi_i u}_{\Lp}^2 
     \leq \sum_{i} \frac{\lambda_i}{\lambda_i+\lambda} \norm{\phi_i u}_{\Lp}^2 
     \leq  \Nol \sup_i \norm{\phi_i}_{\infty}^2 \norm{u}_{\Lp}^2,
\end{align*}
where $(\ddagger)$ follows from
\begin{align}
&\text{Tr}\left[\SL^{-1}K(\cdot,x)\hhtens K(\cdot,x) \SL^{-1} K(\cdot,y)\hhtens K(\cdot,y) \right] \nonumber\\
    &= \inner{K(\cdot,y)}{\SL^{-1}K(\cdot,x)}_{\h_K}^2 \nonumber \\
    &= \left\langle K(\cdot,y),\sum_i \frac{1}{\lambda_i+\lambda} \inner{K(\cdot,x)}{\sqrt{\lambda_i}\phi_i}_{\h_K}\sqrt{\lambda_i}\phi_i\right.\nonumber\\
    &\qquad\qquad\left.+\frac{1}{\lambda}\left(K(\cdot,x)-\sum_{i}\inner{K(\cdot,x)}{\sqrt{\lambda_i}\phi_i}_{\h_K}\sqrt{\lambda_i}\phi_i\right)\right\rangle^2_{\h_K} \nonumber \\
    & = \left[\sum_i \frac{\lambda_i}{\lambda_i+\lambda}\phi_i(x)\phi_i(y)+\frac{1}{\lambda}\left(\inner{K(\cdot,x)}{K(\cdot,y)}_{\h_K}-\sum_i\lambda_i\phi_i(x)\phi_i(y)\right) \right]^2 \nonumber \\ &\stackrel{(\star)}{=}\left[\sum_i \frac{\lambda_i}{\lambda_i+\lambda}\phi_i(x)\phi_i(y)\right]^2, \label{eq:(i)}
\end{align}
where in $(\star)$ we used $\inner{K(\cdot,x)}{K(\cdot,y)}_{\h}=\sum_i\lambda_i\phi_i(x)\phi_i(y)$ which follows by Mercer's theorem (see \citealt[Lemma 2.6]{Steinwart2012MercersTO}). 

\noindent \emph{Case 2:} Suppose $\sup_{i} \norm{\phi_i}_{\infty}^2$ is not finite. Then 
\begin{align*}
     &\norm{\SgL \B \SgL}_{\hs}^2 = \text{Tr}\left(\SL^{-1} \B \SL^{-1} \B\right) \\
    &= \hspace{-1mm}\int_{\X} \int_{\X} \text{Tr}\left[\SL^{-1}K(\cdot,x)\hhtens K(\cdot,x) \SL^{-1} K(\cdot,y)\hhtens K(\cdot,y) \right] u(x) u(y) \, d\PQ(y)\,d\PQ(x) \\
    &= \int_{\X}\int_{\X} \inner{K(\cdot,y)}{\SL^{-1}K(\cdot,x)}_{\h_K}^2 u(x)u(y) \, dP_0(x) \,dP_0(y) \\
    & \leq \left(\int_{\X}\int_{\X} \inner{K(\cdot,y)}{\SL^{-1}K(\cdot,x)}_{\h_K}^4 \, dP_0(x) \,dP_0(y)\right)^{1/2} \norm{u}_{\Lp}^2 \\ 
    &=  \left(\int_{\X}\int_{\X}\inner{K(\cdot,y)}{\SL^{-1}K(\cdot,x)}_{\h_K}^{\frac{2v}{v-1}} \inner{K(\cdot,y)}{\SL^{-1}K(\cdot,x)}_{\h_K}^{\frac{2(v-2)}{v-1}}dP_0(x) \,dP_0(y)\right)^{1/2} \\ &\qquad\qquad\times\norm{u}_{\Lp}^2 \\
    &\leq \hspace{-1mm} \left( \int_{\X}\int_{\X} 
 \hspace{-1mm}\norm{\SL^{-1} K(\cdot,x)}_{\h_K}^{\frac{2v}{v-1}} \hspace{-1mm} \norm{ K(\cdot,y)}_{\h_K}^{\frac{2v}{v-1}}\hspace{-1mm} \inner{K(\cdot,y)}{\SL^{-1}K(\cdot,x)}_{\h_K}^{\frac{2(v-2)}{v-1}} \hspace{-2mm}dP_0(x) \,dP_0(y)\right)^{1/2}\\
    &\qquad\qquad\times\norm{u}_{\Lp}^2 \\
    &\leq  \frac{\norm{u}_{\Lp}^2}{\lambda^{\frac{v}{v-1}}} \left( \int_{\X}\int_{\X}[K(x,x)K(y,y)]^{\frac{v}{v-1}} \inner{K(\cdot,y)}{\SL^{-1}K(\cdot,x)}_{\h_K}^{\frac{2(v-2)}{v-1}}\hspace{-2mm}dP_0(x) \,dP_0(y)\right)^{1/2}\\
% \end{align*}
% \begin{align*}
&\leq \frac{\norm{u}_{\Lp}^2}{\lambda^{\frac{v}{v-1}}} \left( \int_{\X}\int_{\X}[K(x,x)K(y,y)]^{v} dP_0(x) \,dP_0(y)\right)^{\frac{1}{2(v-1)}}\\
&\qquad\qquad\times\left(\E_{P_0}\inner{K(\cdot,Y)}{\SL^{-1}K(\cdot,X)}_{\h_K}^{2}\right)^{\frac{v-2}{2(v-1)}}\\
&\leq \norm{u}_{\Lp}^2 \left(\frac{\kappa}{\lambda}\right)^{\frac{v}{v-1}} \\ &\qquad\qquad\times \left(\E_{P_0}\text{Tr}\left[\SL^{-1}K(\cdot,X)\hhtens K(\cdot,X) \SL^{-1} K(\cdot,Y)\hhtens K(\cdot,Y) \right] \right)^{\frac{v-2}{2(v-1)}} \\ 
&\leq \norm{u}_{\Lp}^2 \left(\frac{\kappa}{\lambda}\right)^{\frac{v}{v-1}} \left(\text{Tr}({\SgL \A_{P_0} \SgL})\right)^{\frac{v-2}{2(v-1)}} =  \Cl \norm{u}_{\Lp}^2.
\end{align*}
% \textcolor{red}{In the penultimate line, the expectation gives $\text{Tr}[\SgL \A_{P_0} \SgL \SgL \A_{P_0} \SgL]$ which is the Hilbert Schimdt norm squared of $\SgL \A_{P_0} \SgL$. So this should give a better bound than what you have, correct? But under poly case, we shud get same result as you have. so maybe, we leave it as is?} \textcolor{blue}{Yes, I think it will give the same results in both poly case and exponential case since I am using there the bound $\Ntlsq \leq \Nol.$ }

\emph{(ii)-(b)} Note that
\begin{align*}
    \text{Tr}(\SgLP \A_{P} \SgLP) &\leq \norm{\SgLP \A_{P_0,\lambda}^{1/2}}_{\op}^2 \text{Tr}(\SgL \A_{P} \SgL) \\
    & \leq \left(1-\norm{\SgL \B \SgL}_{\op}\right)^{-1}\hspace{-2mm}\left( \Nol + \text{Tr}(\SgL\B\SgL)\right) \\
    &\stackrel{(*)}{\leq} \left(1-\sqrt{\Cl}\norm{u}_{\Lp}\right)^{-1}\left( \Nol + \Cl\norm{u}_{\Lp}\right) \\
    & \stackrel{(\dag)}{\leq} 2\Nol + 2\Cl\norm{u}_{\Lp},
\end{align*}
where $(*)$ follows using $\norm{\SgL\B\SgL}_{\op}  \leq \sqrt{\Cl}\norm{u}_{\Lp}$ as in \emph{(ii)-(a)}, and if $$\text{Tr}(\SgL \B \SgL) \leq \Cl \norm{u}_{\Lp}$$ holds, which we show below.

\noindent\emph{Case 1:} $\sup_{i} \norm{\phi_i}_{\infty}^2 < \infty$. Then we have, 

\begin{align*}
    \text{Tr}(\SgL \B \SgL) &= \int_{\X}\text{Tr}(\SgL K(\cdot,x)\hhtens K(\cdot,x) \SgL) u(x) \, dP_0(x) \\
    &\stackrel{(*)}{=} \sum_{i}\frac{\lambda_i}{\lambda_i+\lambda}\int_{\X}\phi_i^2(x) u(x) \, dP_0(x) \\
    & \leq \Nol \norm{u}_{\Lp} \sup_{i} \norm{\phi_i}_{\infty}^2=\Cl \norm{u}_{\Lp},
\end{align*}
where $(*)$ follows similar to the proof of \eqref{eq:(i)}.\\
\emph{Case 2:} Suppose $\sup_{i} \norm{\phi_i}_{\infty}^2$ is not finite. Then 
\begin{align*}
    &\text{Tr}(\SgL \B \SgL) \\ &= \int_{\X} \inner{\SL^{-1}K(\cdot,x)}{K(\cdot,x)}_{\h_K} u(x) \, dP_0(x)\\
    &\leq \norm{u}_{\Lp} \left(\int_{\X} \inner{\SL^{-1}K(\cdot,x)}{K(\cdot,x)}_{\h_K}^2 \, dP_0(x)\right)^{1/2} \\ 
    & \leq \norm{u}_{\Lp}\left(\int_{\X}\inner{\SL^{-1}K(\cdot,x)}{K(\cdot,x)}_{\h_K}^v \, dP_0(x)\right)^{\frac{1}{2(v-1)} } \\&\qquad\qquad\times\left(\E_{P_0}\inner{\SL^{-1}K(\cdot,x)}{K(\cdot,x)}_{\h_K}\right)^{\frac{v-2}{2(v-1)}} \\
    &\leq \norm{u}_{\Lp}\left(\frac{\kappa}{\lambda}\right)^{\frac{v}{2(v-1)}} \left(\text{Tr}(\SgL\A_{P_0}\SgL)\right)^{\frac{v-2}{2(v-1)}} \leq  \Cl \norm{u}_{\Lp}
\end{align*}
and the result follows.
\end{proof}

\begin{appxlem} \label{Lem: SP-SP0}
    Let $\statt^{P}:= \norm{g^{1/2}_{\lambda}(\A_{P}) \Psi_P}_{\h_{K_0}}^2$ and $\statt^{P_0}:= \norm{\SgL \Psi_P}_{\h_{K_0}}^2,$ where $\A_{P_0,\lambda}=\A_{P_0}+\lambda \Id$ and $g_\lambda$ satisfies $(E_1)$--$(E_4)$. Then 
    $$\statt^{P} \geq \frac{C_4 \ \statt^{P_0}}{1+\sqrt{\Cl}\norm{u}_{\Lp}},$$
 where $\Cl$ is defined in Lemma \ref{Lem:AP-AP0} with $K=K_0$. 
\end{appxlem}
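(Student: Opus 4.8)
The plan is to first lower-bound $\statt^{P}$ by a Tikhonov-type quantity built from $\A_{P,\lambda}:=\A_{P}+\lambda\Id$ using only the spectral-regularizer property $(E_4)$, and then to compare $\A_{P,\lambda}$ against $\A_{P_0,\lambda}$ through the self-adjoint perturbation $\B:=\A_{P}-\A_{P_0}=\int_{\X}K_0(\cdot,x)\htens K_0(\cdot,x)\,u(x)\,dP_0(x)$ (so that $\A_{P,\lambda}=\A_{P_0,\lambda}+\B$), which has already been controlled in Lemma~\ref{Lem:AP-AP0} applied with $K=K_0$.

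First, since $g^{1/2}_{\lambda}(\A_P)$ is self-adjoint, $\statt^{P}=\inner{g_{\lambda}(\A_P)\Psi_P}{\Psi_P}_{\h_{K_0}}$. The operator $\A_P$ is positive and trace-class, so its spectrum is contained in $\Gamma=[0,\K]$; applying the spectral theorem to the pointwise inequality $g_{\lambda}(x)(x+\lambda)\ge C_4$ valid on $\Gamma$ by $(E_4)$ gives the operator inequality $g_{\lambda}(\A_P)\succeq C_4\,\A_{P,\lambda}^{-1}$, whence $\statt^{P}\ge C_4\,\inner{\A_{P,\lambda}^{-1}\Psi_P}{\Psi_P}_{\h_{K_0}}$. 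Next, set $v:=\SgL\Psi_P$, which is well-defined since $\A_{P_0,\lambda}\succeq\lambda\Id$ makes $\SgL$ bounded, and note $\norm{v}_{\h_{K_0}}^2=\statt^{P_0}$. Writing $\A_{P,\lambda}^{-1}=\SgL\,(\SgL\A_{P,\lambda}\SgL)^{-1}\,\SgL$ gives $\inner{\A_{P,\lambda}^{-1}\Psi_P}{\Psi_P}_{\h_{K_0}}=\inner{T^{-1}v}{v}_{\h_{K_0}}$ with $T:=\SgL\,\A_{P,\lambda}\,\SgL=\Id+\SgL\B\SgL$. Since $\A_{P,\lambda}\succeq\lambda\Id$ we have $T\succeq\lambda\,\A_{P_0,\lambda}^{-1}\succ 0$, so $T\preceq\norm{T}_{\opS}\Id$ yields $T^{-1}\succeq\norm{T}_{\opS}^{-1}\Id$ and therefore $\inner{T^{-1}v}{v}_{\h_{K_0}}\ge\norm{T}_{\opS}^{-1}\statt^{P_0}$. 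Finally, as $\B$ is self-adjoint, $\norm{T}_{\opS}\le 1+\norm{\SgL\B\SgL}_{\opS}\le 1+\norm{\SgL\B\SgL}_{\hsS}\le 1+\sqrt{\Cl}\norm{u}_{\Lp}$, the last inequality being exactly \eqref{eq:hs-bound} from the proof of Lemma~\ref{Lem:AP-AP0} with $K=K_0$, which holds without any restriction on $\norm{u}_{\Lp}$. Chaining the three estimates delivers $\statt^{P}\ge C_4\,\statt^{P_0}/(1+\sqrt{\Cl}\norm{u}_{\Lp})$.

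The quantitative heart of the argument — the bound $\norm{\SgL\B\SgL}_{\hsS}\le\sqrt{\Cl}\norm{u}_{\Lp}$ — is already available, so the work here is essentially assembly. The points demanding care are the functional-calculus step, where one must confirm that the spectrum of $\A_P$ indeed lies in the interval $\Gamma$ on which $(E_4)$ is posited, and the fact that $\B$ is only self-adjoint (not positive semidefinite, since $u$ changes sign); this forces the comparison between $\A_{P,\lambda}$ and $\A_{P_0,\lambda}$ to go through the operator norm $\norm{\SgL\B\SgL}_{\opS}$ rather than a one-sided operator inequality, which is precisely why the bound $\statt^{P}\ge C_4\,\statt^{P_0}/(1+\sqrt{\Cl}\norm{u}_{\Lp})$ is two-sided-safe and requires no smallness of $\norm{u}_{\Lp}$.
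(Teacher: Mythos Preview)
Your proof is correct and follows essentially the same route as the paper: both use $(E_4)$ to pass from $g_\lambda(\A_P)$ to $\A_{P,\lambda}^{-1}$ (the paper via Lemma~\ref{lemma: bounds for g}(ii), you via the equivalent operator inequality $g_\lambda(\A_P)\succeq C_4\A_{P,\lambda}^{-1}$), and then both control $\norm{\SgL\A_{P,\lambda}\SgL}_{\opS}=\norm{\Id+\SgL\B\SgL}_{\opS}\le 1+\sqrt{\Cl}\norm{u}_{\Lp}$ by invoking the Hilbert--Schmidt bound \eqref{eq:hs-bound}. Your formulation through $T^{-1}\succeq\norm{T}_{\opS}^{-1}\Id$ is just the positive-operator restatement of the paper's norm factorization $\norm{\SgL\A_{P,\lambda}^{1/2}}_{\opS}^{-2}$, so the two arguments are the same up to notation.
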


\begin{proof}
    By applying Lemma \ref{lemma: bounds for g}(ii), we have \begin{align*}
        \statt^{P} &\stackrel{(*)}{\geq} \norm{\SgLP\gsP}_{\opS}^{-2}\norm{\SgLP \Psi_P}_{\h_{K_0}}^2  \geq C_4 \norm{\SgL\A_{P,\lambda}^{1/2}}_{\opS}^{-2} \statt^{P_0},
    \end{align*}
    where $(*)$ follows using $$\norm{\SgLP \Psi_P}_{\h_{K_0}}^2=\norm{\SgLP g^{-1/2}_{\lambda}(\A_{P}) g^{1/2}_{\lambda}(\A_{P})\Psi_P}_{\h_{K_0}}^2 \leq \norm{\SgLP\gsP}_{\opS}^{2} \statt^{P} $$
Next we upper bound $\norm{\SgL\A_{P,\lambda}^{1/2}}_{\opS}^{2}$ as shown below.  Let $\B= \A_P-\A_{P_0}$. Then 
\begin{align*}
\norm{\SgL\A_{P,\lambda}^{1/2}}_{\opS}^{2} &=\norm{\SgL \A_{P,\lambda} \SgL}_{\opS}=\norm{\Id + \SgL \B \SgL}_{\opS} \\
    & \leq 1 + \norm{\SgL \B \SgL}_{\opS} \leq 1 + \norm{\SgL \B \SgL}_{\hsS}\\
    &\stackrel{(*)}{\leq} 1+\sqrt{\Cl}\norm{u}_{\Lp},
\end{align*}
where $(*)$ follows from \eqref{eq:hs-bound}.
\end{proof}

\begin{appxlem} \label{lem:bound-var}
 Let $\zeta= \norm{\gSh \Psi_P}_{\h_{K_0}}^2,$ $\W=\hat{\A}_{P,\lambda}^{-1/2}\A_{P,\lambda}^{1/2}$ and $\V=\hat{\A}_{P,\lambda}^{-1/2}\A_{P_0,\lambda}^{1/2}$, where $g_\lambda$ satisfies $(E_1)$--$(E_4)$. Then
    \begin{enumerate}
    \item $$\E\left((\stat^{P}-\zeta)^2 | \mathbb Z_{n_2}\right) \leq c\left(\frac{\norm{\W}_{\opS}^{2}\zeta 
 \  }{n_1}+\frac{\norm{\W}_{\opS}^4 D_{\lambda}}{n_1^2}\right),$$
 where $c>0$ is a constant, $D_{\lambda} =
\left\{
	\begin{array}{ll}
		\Ntlsq ,   &  \ \ \ 4\Cl\norm{u}_{\Lp}^2 \leq 1 \\
		\lambda^{-1}, & \ \  \text{otherwise}
	\end{array}
\right.,$ and $\Cl$ is defined in Lemma \ref{Lem:AP-AP0} with $K=K_0$;
\item $$\E\left((\stat^{P}-\zeta)^2 | \mathbb Z_{n_2}\right) \leq \tilde{c}\left(\frac{\norm{\V}_{\opS}^{2}\zeta 
 \  }{n_1}+\frac{\norm{\V}_{\opS}^4 \Ntlsq}{n_1^2}\right),$$ if $4\Cl \norm{u}^2_{\Lp} < 1,$
 where $\tilde{c}>0$ is a constant.    \end{enumerate}
\end{appxlem}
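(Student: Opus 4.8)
The plan is to view $\stat^{P}$, conditionally on $\mathbb Z_{n_2}$, as a $U$-statistic of order two and apply the classical Hoeffding variance decomposition. Given $\mathbb Z_{n_2}$ the operator $\gSh$ is deterministic, $X_1,\dots,X_{n_1}$ are i.i.d.\ $\sim P$ and independent of $\mathbb Z_{n_2}$, and $\stat^{P}=\frac{1}{n_1(n_1-1)}\sum_{i\neq j}h(X_i,X_j)$ with symmetric kernel $h(x,y):=\inner{\gSh K_0(\cdot,x)}{\gSh K_0(\cdot,y)}_{\h_{K_0}}$. Since $\E\big(h(X_i,X_j)\mid\mathbb Z_{n_2}\big)=\norm{\gSh\Psi_P}_{\h_{K_0}}^2=\zeta$, the conditional second moment equals the conditional variance, so Hoeffding's formula gives
\begin{equation*}
\E\big((\stat^{P}-\zeta)^2\mid\mathbb Z_{n_2}\big)\;=\;\frac{4(n_1-2)}{n_1(n_1-1)}\sigma_1^2+\frac{2}{n_1(n_1-1)}\sigma_2^2\;\leq\;\frac{4\sigma_1^2}{n_1}+\frac{4\sigma_2^2}{n_1^2},
\end{equation*}
where $\sigma_1^2=\mathrm{Var}\big(h_1(X_1)\mid\mathbb Z_{n_2}\big)$ with $h_1(x)=\inner{\gSh K_0(\cdot,x)}{\gSh\Psi_P}_{\h_{K_0}}=\inner{K_0(\cdot,x)}{\gS\Psi_P}_{\h_{K_0}}$ (using $\gSh^2=\gS$ and self-adjointness), and $\sigma_2^2=\mathrm{Var}\big(h(X_1,X_2)\mid\mathbb Z_{n_2}\big)$. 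Thus it suffices to show, for (i), $\sigma_1^2\lesssim\norm{\W}_{\opS}^2\,\zeta$ and $\sigma_2^2\lesssim\norm{\W}_{\opS}^4\,D_{\lambda}$, and for (ii), $\sigma_1^2\lesssim\norm{\V}_{\opS}^2\,\zeta$ and $\sigma_2^2\lesssim\norm{\V}_{\opS}^4\,\Ntlsq$.

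For $\sigma_1^2$ I would drop the variance to the second moment, $\sigma_1^2\leq\E_P h_1(X)^2=\inner{\gS\Psi_P}{\A_P\gS\Psi_P}_{\h_{K_0}}=\norm{\A_P^{1/2}\gS\Psi_P}_{\h_{K_0}}^2\leq\norm{\A_{P,\lambda}^{1/2}\gS\Psi_P}_{\h_{K_0}}^2$ (since $\A_P\preceq\A_{P,\lambda}$), and then factor $\A_{P,\lambda}^{1/2}\gS\Psi_P=\big(\A_{P,\lambda}^{1/2}\hat{\A}_{P,\lambda}^{-1/2}\big)\big(\hat{\A}_{P,\lambda}^{1/2}\gSh\big)\big(\gSh\Psi_P\big)=\W^{*}\big(\hat{\A}_{P,\lambda}^{1/2}\gSh\big)\big(\gSh\Psi_P\big)$; using $(E_1)$–$(E_2)$ in the form $\norm{\hat{\A}_{P,\lambda}^{1/2}\gSh}_{\opS}^2=\norm{(\hat{\A}_P+\lambda\Id)g_{\lambda}(\hat{\A}_P)}_{\opS}\leq C_1+C_2$ yields $\sigma_1^2\leq(C_1+C_2)\norm{\W}_{\opS}^2\,\zeta$. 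For $\sigma_2^2$ I would pass to $\E_{P\times P}h(X,Y)^2$ and, by Fubini with the tensor identity $\E_P\big[\gSh K_0(\cdot,X)\htens\gSh K_0(\cdot,X)\big]=\gSh\A_P\gSh$, rewrite $\E_{P\times P}h(X,Y)^2=\mathrm{Tr}\big((\gSh\A_P\gSh)^2\big)=\norm{\gSh\A_P\gSh}_{\hsS}^2$. Factoring $\gSh\A_P\gSh=(\gSh\hat{\A}_{P,\lambda}^{1/2})\,(\hat{\A}_{P,\lambda}^{-1/2}\A_P\hat{\A}_{P,\lambda}^{-1/2})\,(\hat{\A}_{P,\lambda}^{1/2}\gSh)$, bounding the outer factors by $C_1+C_2$ and writing $\hat{\A}_{P,\lambda}^{-1/2}\A_P\hat{\A}_{P,\lambda}^{-1/2}=\W\,(\SgLP\A_P\SgLP)\,\W^{*}$, gives $\norm{\gSh\A_P\gSh}_{\hsS}\leq(C_1+C_2)\norm{\W}_{\opS}^2\,\norm{\SgLP\A_P\SgLP}_{\hsS}$. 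Finally I would invoke Lemma~\ref{Lem:AP-AP0}: part (i) there gives $\norm{\SgLP\A_P\SgLP}_{\hsS}^2\leq\mathrm{Tr}(\A_P)/\lambda\lesssim\lambda^{-1}$ (absorbing $\mathrm{Tr}(\A_P)=\E_P K_0(X,X)$, finite under the running assumptions on $P$, into the constant), while if $4\Cl\norm{u}_{\Lp}^2\leq1$ part (ii)(a) gives $\norm{\SgLP\A_P\SgLP}_{\hsS}^2\leq 8\Ntlsq+2\lesssim\Ntlsq$; together these give $\sigma_2^2\lesssim\norm{\W}_{\opS}^4 D_{\lambda}$ and hence (i).

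For (ii) I would rerun the same scheme but route each operator sandwich through $\A_{P_0,\lambda}$ instead of $\A_{P,\lambda}$. Under $4\Cl\norm{u}_{\Lp}^2<1$, the estimate \eqref{eq:hs-bound} from the proof of Lemma~\ref{Lem:AP-AP0}, applied with $\B:=\A_P-\A_{P_0}$, gives $\norm{\SgL\B\SgL}_{\opS}\leq\norm{\SgL\B\SgL}_{\hsS}\leq\sqrt{\Cl}\,\norm{u}_{\Lp}\leq\tfrac{1}{2}$, whence $\norm{\A_{P,\lambda}^{1/2}\SgL}_{\opS}^2=\norm{\Id+\SgL\B\SgL}_{\opS}\leq\tfrac{3}{2}$ and $\norm{\SgL\A_P\SgL}_{\hsS}\leq\Ntl+\sqrt{\Cl}\,\norm{u}_{\Lp}$, so $\norm{\SgL\A_P\SgL}_{\hsS}^2\lesssim\Ntlsq$. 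Then writing $\A_{P,\lambda}^{1/2}\gS\Psi_P=(\A_{P,\lambda}^{1/2}\SgL)\,\V^{*}\,(\hat{\A}_{P,\lambda}^{1/2}\gSh)\,(\gSh\Psi_P)$ and $\hat{\A}_{P,\lambda}^{-1/2}\A_P\hat{\A}_{P,\lambda}^{-1/2}=\V\,(\SgL\A_P\SgL)\,\V^{*}$, the two estimates of the previous paragraph go through verbatim and deliver $\sigma_1^2\lesssim\norm{\V}_{\opS}^2\,\zeta$ and $\sigma_2^2\lesssim\norm{\V}_{\opS}^4\,\Ntlsq$. The main obstacle I expect is the $\sigma_2^2$ bound: getting the tensor/trace identity $\E_{P\times P}h(X,Y)^2=\norm{\gSh\A_P\gSh}_{\hsS}^2$ right and then matching the Hilbert--Schmidt norm $\norm{\SgLP\A_P\SgLP}_{\hsS}$ (resp.\ $\norm{\SgL\A_P\SgL}_{\hsS}$) to the intrinsic-dimension quantity $\Ntlsq$ via Lemma~\ref{Lem:AP-AP0}; the operator-norm manipulations with $\W,\V$ and $(E_1)$–$(E_2)$, and the appeal to Hoeffding's variance formula, are otherwise routine bookkeeping.
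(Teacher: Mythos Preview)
Your proposal is correct and is essentially the same argument as the paper's. The paper writes out the explicit centered decomposition $\stat^P-\zeta=\circled{1}+\circled{2}$ with $a(x)=\gSh(K_0(\cdot,x)-\Psi_P)$, which is exactly the Hoeffding decomposition you invoke: your $\sigma_1^2$ term is the variance of $\circled{2}$ and your $\sigma_2^2$ term controls $\E(\circled{1}^2)$. The subsequent operator manipulations---factoring through $\hat{\A}_{P,\lambda}^{1/2}\gSh$ and bounding it by $(C_1+C_2)^{1/2}$ via $(E_1)$--$(E_2)$, sandwiching $\A_P$ by $\W(\cdot)\W^{*}$ or $\V(\cdot)\V^{*}$, and appealing to Lemma~\ref{Lem:AP-AP0} (and \eqref{eq:hs-bound} for part (ii))---coincide with the paper's proof line by line; the trace identity $\E_{P\times P}h(X,Y)^2=\norm{\gSh\A_P\gSh}_{\hsS}^2$ you flag as the main obstacle is precisely what the paper uses (after dropping $\Psi_P\htens\Psi_P$ via $\A_P-\Psi_P\htens\Psi_P\preceq\A_P$).
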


\begin{proof}
    Let $a(x)=\B\SgLP(K_0(\cdot,x)-\Psi_P)$ and $\B=\gSh\A_{P,\lambda}^{1/2}$. Then  
    \begin{align*}
        \stat^P &= \frac{1}{n_1(n_1-1)}\sum_{i \neq j}\inner{a(X_i)}{a(X_j)}_{\h_{K_0}} + \frac{2}{n_1} \sum_{i=1}^{n_1} \inner{a(X_i)}{\B\SgLP \Psi_P}_{\h_{K_0}} + \zeta.
    \end{align*}
Thus we have $\stat^P-\zeta={\circled{\tiny{1}}}+{\circled{\tiny{2}}},$ where ${\circled{\tiny{1}}}:= \frac{1}{n_1(n_1-1)}\sum_{i \neq j}\inner{a(X_i)}{a(X_j)}_{\h_{K_0}}$ and \\${\circled{\tiny{2}}}:= \frac{2}{n_1} \sum_{i=1}^{n_1}\inner{a(X_i)}{\B\SgLP \Psi_P}_{\h_{K_0}}.$ 
Next, we bound each term as follows: \\
\emph{(i)} 
\begin{align*}
    &\E\left(\circled{\tiny{1}}^2|\mathbb Z_{n_2}\right) \\&\stackrel{(*)}{=} \frac{2}{n_1^2(n_1-1)^2}\sum_{i \neq j} \E \inner{a(X_i)}{a(X_j)}_{\h_{K_0}}^2 \\
    &\leq \frac{4}{n_1^2} \norm{\B\SgLP \E_{P}[(K_0(\cdot,X)-\Psi_P)\htens(K_0(\cdot,X)-\Psi_P)]\SgLP\B^*}^2_{\hsS} \\
    &\stackrel{(\dag)}{\leq} \frac{4}{n_1^2}(C_1+C_2)^2 \norm{\W}_{\opS}^4 \norm{\SgLP \A_P \SgLP}_{\hsS}^2, 
\end{align*}
where $(*)$ follows from \citep[Lemma A.3$(ii)$]{twosampletest} and in $(\dag)$ we used Lemma \ref{lemma: bounds for g}(i) and that 
% \textcolor{red}{in the above inequality, is the last term $D_\lambda$? $D_\lambda$ is not defined in the proof.} \textcolor{blue}{No, It will show up as $\lambda^{-1}$ after using lemma (A.1) to bound the norm } \textcolor{red}{correct. but it is not mentioned how we are getting  bound on the norm..we shud mention that by using lemma A.1, isn't it?} \textcolor{blue}{It is mentioned below before (ii)}
\begin{align*}
 \E_{P}((K_0(\cdot,X_i)-\Psi_P)\htens(K_0(\cdot,X_i)-\Psi_P)) = \A_P - \Psi_P \htens \Psi_P \preccurlyeq \A_P.   
\end{align*}
For term \circled{\tiny{2}} we have 
\begin{align*}
&\E\left(\circled{\tiny{2}}^2|\mathbb Z_{n_2}\right) \leq \frac{4}{n_1^2} \sum_{i=1}^{n_1} \E\inner{a(X_i)}{\B\SgLP \Psi_P}_{\h_{K_0}}^2 \\ 
    &=  \frac{4}{n_1^2} \sum_{i=1}^{n_1}\E \inner{ a(X_i)\htens a(X_i)}{ \B\SgLP \Psi_P \htens \Psi_P \SgLP\B^*}_{\hsS} \\
    & \leq \frac{4}{n_1} \text{Tr}\left(\SgLP\A_P\SgLP\B^*\B \SgLP \Psi_P \htens \Psi_P \SgLP \B^*\B\right) \\
    & \leq \frac{4}{n_1} \norm{\SgLP\A_P\SgLP}_{\opS} (C_1+C_2)\norm{\W}_{\opS}^2 \\ &\qquad\qquad \times \text{Tr}\left(\B\SgLP \Psi_P \htens \Psi_P \SgLP\B^* \right) \\
    &= \frac{4}{n_1} (C_1+C_2)\norm{\W}_{\op}^2 \zeta.
\end{align*}
Using the above bounds and Lemma \ref{Lem:AP-AP0} yields the desired result.  

\noindent \emph{(ii)} Consider
\begin{align*}
    &\E\left(\circled{\tiny{1}}^2|\mathbb Z_{n_2}\right) =\frac{2}{n_1^2(n_1-1)^2}\sum_{i \neq j} \E \inner{a(X_i)}{a(X_j)}_{\h_{K_0}}^2 \\
    &\leq \frac{4}{n_1^2} \norm{\gSh \E_{P}[(K_0(\cdot,X)-\Psi_P)\htens(K_0(\cdot,X)-\Psi_P)]\gSh}^2_{\hsS} \\
    &\leq \frac{4}{n_1^2}(C_1+C_2)^2 \norm{\V}_{\opS}^4 \norm{\SgL \A_P \SgL}_{\hsS}^2 \\
    &\leq \frac{4\norm{\V}_{\opS}^4(C_1+C_2)^{2}}{n_1^2}  \left(2\norm{\SgL \A_{P_0} \SgL}_{\hsS}^2\right.\\
    &\qquad\qquad\qquad\qquad\left.+2\norm{\SgL (\A_P-A_{P_0})\SgL}_{\hsS}^2 \right) \\
    &\stackrel{(*)}{\leq} \frac{4\norm{\V}_{\opS}^4(C_1+C_2)^{2}}{n_1^2}  \left(2\Ntlsq+2\Cl \norm{u}_{\Lp}^2\right) \\
   & \lesssim \frac{4\norm{\V}_{\opS}^4\Ntlsq}{n_1^2},  
\end{align*}
where $(*)$ follow from \eqref{eq:hs-bound} and  
\begin{align*}
&\E\left(\circled{\tiny{2}}^2|\mathbb Z_{n_2}\right) 
    \leq \frac{4}{n_1} \text{Tr}\left(\SgLP\A_P\SgLP\B^*\B \SgLP \Psi_P \htens \Psi_P \SgLP \B^*\B\right) \\
    & \leq \frac{4}{n_1} \norm{\SgL\A_P\SgL}_{\opS} (C_1+C_2)\norm{\V}_{\opS}^2\\&\qquad\qquad\times\text{Tr}\left(\B\SgLP \Psi_P \htens \Psi_P \SgLP\B^* \right) \\
    &\leq \frac{4\norm{\V}_{\op}^2 \zeta(C_1+C_2)}{n_1} \\&\qquad\qquad \times \left(\norm{\SgL\A_{P_0}\SgL}_{\opS}+\norm{\SgL(\A_P-A_{P_0})\SgL}_{\opS}\right) \\
    &\stackrel{(\dag)}{\leq}\frac{4\norm{\V}_{\op}^2 \zeta(C_1+C_2)}{n_1} \left(1+\sqrt{\Cl}\norm{u}_{\Lp}\right) \lesssim \frac{\norm{\V}_{\op}^2 \zeta}{n_1},
\end{align*}
where we used \eqref{eq:hs-bound} in ($\dag$).
\end{proof}

\begin{appxlem} \label{lem:bound-op-norm-1}
Suppose there exist some $c>0$ and $\kappa>0$ such that for any $r>2,$ $$\int_{\X} [K(x,x)]^r \, dP(x) \leq c r!  \kappa^r,$$ and one of the following hold: 
\begin{enumerate}
    \item $\lambda \geq \frac{16\kappa}{\sqrt{n}}\log \frac{2}{\delta} \max\{\sqrt{c},1\}.$ 
    \item \hspace{-1mm}$4\Cl\norm{u}^2_{\Lp} < 1,$ $n>4,$ $\lambda \geq \frac{d(\log \frac{2}{\delta})(\log n)}{n} \left(\Nol+\Cl\norm{u}_{\Lp}\right),$ and \\ $\E_{P_0}[K(X,X)^v] < \tilde{\kappa}^v,$ for some $d>0$, $v>2$ and $\Cl$ is defined in Lemma \ref{Lem:AP-AP0} (with $\kappa$ replaced by $\tilde{\kappa}$).
    \item $\sup_{x}K(x,x) \leq \kappa$ and $\frac{d_1\log \frac{d_2 n}{\delta}}{n}\leq \lambda \leq \norm{\A_P}_{\op},$ for some $d_1>0$ and $d_2>0.$
\end{enumerate}
Then for $0<\delta\leq 1$,
\begin{enumerate}
    \item $P\left(\norm{\SgLP(\A_P-\hat{\A}_P)\SgLP}_{\op}\leq \frac{1}{2}\right)\geq 1-\delta$;
    \item $P\left(\norm{\A_{P,\lambda}^{1/2}\hat{\A}_{P,\lambda}^{-1/2}}_{\op}\leq \sqrt{2}\right) \geq 1-\delta$;
    \item $P\left(\norm{\A_{P,\lambda}^{-1/2}\hat{\A}_{P,\lambda}^{1/2}}_{\op}\leq \sqrt{\frac{3}{2}}\right) \geq 1-\delta,$
\end{enumerate}
where $\A_{P}$ is defined in Lemma \ref{Lem:AP-AP0}.
\end{appxlem}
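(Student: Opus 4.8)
The three estimates are tied together by a deterministic step, so the whole lemma reduces to part (1). Writing $E:=\SgLP(\A_P-\hat{\A}_P)\SgLP$ and using $\hat{\A}_{P,\lambda}=\A_{P,\lambda}-(\A_P-\hat{\A}_P)$, one has the exact identity $\A_{P,\lambda}^{-1/2}\hat{\A}_{P,\lambda}\A_{P,\lambda}^{-1/2}=\Id-E$. On the event $\{\norm{E}_{\op}\le\tfrac12\}$ this gives $\tfrac12\Id\preccurlyeq\Id-E\preccurlyeq\tfrac32\Id$, hence $\tfrac12\A_{P,\lambda}\preccurlyeq\hat{\A}_{P,\lambda}\preccurlyeq\tfrac32\A_{P,\lambda}$, and therefore $\norm{\A_{P,\lambda}^{1/2}\hat{\A}_{P,\lambda}^{-1/2}}_{\op}^2=\norm{\A_{P,\lambda}^{1/2}\hat{\A}_{P,\lambda}^{-1}\A_{P,\lambda}^{1/2}}_{\op}\le 2$ and $\norm{\A_{P,\lambda}^{-1/2}\hat{\A}_{P,\lambda}^{1/2}}_{\op}^2=\norm{\A_{P,\lambda}^{-1/2}\hat{\A}_{P,\lambda}\A_{P,\lambda}^{-1/2}}_{\op}\le\tfrac32$. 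Thus (2) and (3) hold on any event on which (1) holds, and it suffices to establish (1) with probability $\ge 1-\delta$ under each of the three sets of hypotheses.

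For (1), write $\hat{\A}_P=\frac1n\sum_{i=1}^n K(\cdot,Z_i)\hhtens K(\cdot,Z_i)$ with $Z_i$ i.i.d.\ from $P$, put $M_i:=\bigl(\A_{P,\lambda}^{-1/2}K(\cdot,Z_i)\bigr)\hhtens\bigl(\A_{P,\lambda}^{-1/2}K(\cdot,Z_i)\bigr)$ and $\Sigma:=\E M_1=\SgLP\A_P\SgLP$, so that $E=\frac1n\sum_{i=1}^n(\Sigma-M_i)$ is a normalized sum of i.i.d.\ centered rank-one (hence Hilbert--Schmidt) self-adjoint operators on $\h_K$. The plan is to invoke a Bernstein-type inequality for such sums; the required inputs are (i) control of $\norm{M_i}_{\op}=\inner{K(\cdot,Z_i)}{\A_{P,\lambda}^{-1}K(\cdot,Z_i)}_{\h_K}\le K(Z_i,Z_i)/\lambda$, whose moments are governed by the standing hypothesis $\int K(x,x)^r\,dP\le c\,r!\,\kappa^r$; (ii) the variance bound $\E(M_i-\Sigma)^2\preccurlyeq\E M_i^2$; and (iii) the ``degrees of freedom'' quantities $\text{Tr}(\Sigma)$ and $\norm{\Sigma}_{\op}\le 1$. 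One then checks that each case-specific lower bound on $\lambda$ forces the resulting deviation to be at most $\tfrac12$.

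The three cases differ only in how $M_i$ is handled. In Case (1) I would use the crude bound $\norm{E}_{\op}\le\norm{E}_{\hs}$ and apply the Hilbert-space valued Bernstein inequality in $\EuScript{L}^2(\h_K)$ using $\E\norm{M_i}_{\hs}^r=\E\bigl(K(Z,Z)/\lambda\bigr)^r\le c\,r!\,(\kappa/\lambda)^r$; this yields a deviation of order $\kappa\lambda^{-1}(\sqrt{c}\vee 1)\log(2/\delta)/\sqrt n$, which is $\le\tfrac12$ exactly under $\lambda\ge 16\kappa n^{-1/2}\log(2/\delta)\max\{\sqrt c,1\}$, and requires no information on the effective dimension. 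In Case (3), boundedness gives $\norm{M_i}_{\op}\le\kappa/\lambda$ almost surely and $\E M_i^2\preccurlyeq(\kappa/\lambda)\Sigma$ with $\text{Tr}(\Sigma)\le\lambda^{-1}\text{Tr}(\A_P)\le\kappa/\lambda$, so the intrinsic-dimension operator Bernstein inequality (whose failure prefactor is $\text{Tr}(\Sigma)/\norm{\Sigma}_{\op}\lesssim\kappa/\lambda\lesssim\kappa n$) gives a deviation of order $\sqrt{\kappa\log(\kappa n/\delta)/(\lambda n)}+\kappa\log(\kappa n/\delta)/(\lambda n)$, which is $\le\tfrac12$ under $\lambda\ge d_1\log(d_2 n/\delta)/n$ with appropriate $d_1,d_2$; the condition $\lambda\le\norm{\A_P}_{\op}$ is a mild non-degeneracy requirement. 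In Case (2), where $K$ only has a finite $v$-th moment under $P_0$, I would combine a truncation onto $\{\max_{i\le n}K(Z_i,Z_i)\le R\}$ (probability $\ge 1-\delta/2$ for $R\asymp\tilde\kappa(n/\delta)^{1/v}$ by Markov) with a moment version of Bernstein's inequality whose variance and trace inputs are estimated not by crude a.s.\ bounds but by the H\"older-interpolation estimates behind Lemma~\ref{Lem:AP-AP0}(ii): $\text{Tr}(\SgLP\A_P\SgLP)\le 2\Nol+2\Cl\norm{u}_{\Lp}$ and $\norm{\SgLP\A_P\SgLP}_{\hs}^2\le 8\Ntlsq+2$, where $4\Cl\norm{u}_{\Lp}^2<1$ is precisely what licenses passing from the $\A_{P_0}$-based quantities to the $\A_P$-based ones. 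Feeding these in produces a deviation $\le\tfrac12$ under $\lambda\gtrsim (\log n)\log(2/\delta)\bigl(\Nol+\Cl\norm{u}_{\Lp}\bigr)/n$, i.e.\ the stated hypothesis.

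The main obstacle is Case (2): the kernel is unbounded, so the almost-sure norm bound on $M_i$ that operator Bernstein inequalities normally demand is unavailable, and one must simultaneously (a) choose a truncation/moment bookkeeping that keeps the discarded event small while not degrading the $n^{-1}$ term, and (b) convert the $P_0$-defined intrinsic-dimension quantities $\Nol,\Ntl$ into the $P$-defined trace and variance terms that actually enter the concentration bound---exactly the content of Lemma~\ref{Lem:AP-AP0} and of the small-perturbation hypothesis $4\Cl\norm{u}_{\Lp}^2<1$. Getting the precise $\lambda$-threshold out of this is the delicate part. Cases (1) and (3) are, by contrast, routine applications of the vector and intrinsic-dimension Bernstein inequalities.
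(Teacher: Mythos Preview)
Your deterministic reduction of (2) and (3) to (1), and your handling of Cases (1) and (3), match the paper's proof essentially exactly: the paper also bounds $\norm{E}_{\op}$ by $\norm{E}_{\hs}$ and applies a Hilbert-space Bernstein inequality for Case (1) (this is the paper's ``$q=0$'' specialization), and uses the intrinsic-dimension operator Bernstein (Tropp) inequality for Case (3).

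Case (2) is where you diverge. The paper does \emph{not} truncate. Instead it bounds $\E\norm{\xi_1}_{\hs}^r$ for every $r\ge 2$ by a H\"older interpolation parametrized by $q\in[0,1)$: splitting the integrand into a ``crude'' factor controlled by $K(Z,Z)^{\text{large power}}/\lambda^{\text{large power}}$ and a ``trace-like'' factor controlled by $[\text{Tr}(\SgLP\A_P\SgLP)]^q$, one obtains $\E\norm{\xi_1}_{\hs}^r\le\frac{r!}{2}\theta^2 B^{r-2}$ with $B=d_q/\lambda$, $d_q=2\kappa/(1-q)$, $\theta^2=2c^{1-q}(d_q/\lambda)^{2-q}[\text{Tr}(\SgLP\A_P\SgLP)]^q$. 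Applying vector Bernstein and then choosing $\epsilon:=1-q=1/\log n$ (so $d_q\asymp\kappa\log n$ and $n^{1/(1+\epsilon)}\ge n/e$) yields precisely $\lambda\gtrsim\log(2/\delta)\,(\log n)\,\bigl(\Nol+\Cl\norm{u}_{\Lp}\bigr)/n$, the trace factor entering via Lemma~\ref{Lem:AP-AP0}(ii)-(b). This $q$-trick is the mechanism that puts $\Nol+\Cl\norm{u}_{\Lp}$ as a \emph{multiplicative} factor in the $\lambda$-threshold.

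Your truncation route, as written, does not reproduce this. First, you truncate at $R\asymp\tilde\kappa(n/\delta)^{1/v}$ using a $P_0$-moment, but the $Z_i$ are drawn from $P$; the standing $P$-Bernstein hypothesis in fact gives sub-exponential tails, so the correct cutoff would be $R\asymp\kappa\log(n/\delta)$, not a polynomial one. Second, and more structurally, after truncation the operator-Bernstein variance proxy is $(R/\lambda)\norm{\Sigma}_{\op}\le R/\lambda$, while $\text{Tr}(\Sigma)\lesssim\Nol+\Cl\norm{u}_{\Lp}$ enters only through the intrinsic dimension, i.e.\ inside a logarithm. The resulting sufficient condition is of the form $\lambda\gtrsim R\,\log\!\bigl((\Nol+\Cl\norm{u}_{\Lp})/\delta\bigr)/n$, which is not the stated hypothesis (the effective-dimension factor lands in the wrong place). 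So while truncation is a plausible alternative, it proves a different lemma; to get the condition exactly as stated you need the paper's H\"older interpolation with $q=1-1/\log n$.
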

\begin{proof}
    Let $\xi_i = \SgLP \left((K(\cdot,Z_i)\hhtens K(\cdot,Z_i))-\A_P\right)\SgLP$. \\Then $\frac{1}{n}\sum_{i=1}^n \xi_i = \SgLP (\hat{\A}_P- \A_P) \SgLP$ and $\E_P(\xi_i)=0.$
    The main idea is to apply Bernstein's inequality for Hilbert-valued random elements as stated in \citet[Theorem D.1]{kpca}. \\
    Let $\theta^2= 2c^{1-q}\left(\frac{d_q}{\lambda}\right)^{2-q}\left[\text{Tr}(\SgLP \A_P \SgLP)\right]^{q},$ $B=\frac{d_q}{\lambda}$ and $d_q=\frac{2\kappa}{1-q}$, for some $0\leq q< 1.$ Then we bound $\E\norm{\xi_1}_{\hs}^r$ as follows. 
    \begin{align*}
    &\E\norm{\xi_1}_{\hs}^r \stackrel{(*)}{\leq} \E \left(\norm{\SgLP\left(K(\cdot,Z_1) \hhtens K(\cdot,Z_1)\right) \SgLP}^{2}_{\hs}\right)^\frac{r}{2} \\
        &= \E \left(\inner{\SgLP K(\cdot,Z_1)}{\SgLP K(\cdot,Z_1)}^2_{\h_K}\right)^\frac{r}{2} \\
        &= \E \left(\text{Tr}\left(\SLP^{-1} K(\cdot,Z_1)\hhtens K(\cdot,Z_1) \SLP^{-1} K(\cdot,Z_1)\hhtens K(\cdot,Z_1)\right) \right)^{\frac{r}{2}-q + q} \\ 
        &\stackrel{(\dag)}{\leq}  \left[\E\left(\inner{\SLP^{-1} K(\cdot,Z_1)}{K(\cdot,Z_1)}_{\h_K}^2\right)^{\frac{2(r/2-q)}{1-q}}\right]^{\frac{1-q}{2}}  \\ & \quad \times\left[\E\left(\text{Tr}\left(\SLP^{-1} K(\cdot,Z_1)\hhtens K(\cdot,Z_1) \SLP^{-1} K(\cdot,Z_1)\hhtens K(\cdot,Z_1)\right)\right)^{\frac{2q}{1+q}}\right]^{\frac{1+q}{2}} \\
        & \leq \frac{1}{\lambda^{r-2q}} \left[\E (K(Z_1,Z_1))^{\frac{2(r-2q)}{1-q}}\right]^{\frac{1-q}{2}}  \\ & \times \left[\E\left(\norm{\SLP^{-1/2}K(\cdot,Z_1)}_{\h_{K}}^2 \text{Tr}\left( \SLP^{-1} K(\cdot,Z_1)\hhtens K(\cdot,Z_1)\right)\right)^{\frac{2q}{1+q}}\right]^{\frac{1+q}{2}} \\
        &\stackrel{(\ddag)}{\leq} \frac{1}{\lambda^{r-q}} \left[\E (K(Z_1,Z_1))^{\frac{2(r-2q)}{1-q}}\right]^{\frac{1-q}{2}} \left[\E (K(Z_1,Z_1))^{\frac{2q}{1-q}}\right]^{\frac{1-q}{2}} \left[\text{Tr}(\SgLP \A_P \SgLP)\right]^{q}\\ & \stackrel{(**)}{\leq} \frac{r!}{2}\theta^2 B^{r-2},
    \end{align*}
where $(*)$ follows since $\A_P$ is positive definite operator. ($\dag$) and ($\ddag$) follow by applying Holder's inequality. $(**)$ follows using 
the condition $\E_{P}[K(Z,Z)^r] \leq c r! \kappa^r$ and that for any $k>1,\ l>0$, we have $\left((lk)!\right)^{\frac{1}{k}} \leq l! k^l.$ 
% \textcolor{red}{From the two expectation terms, you get two factorials. I do not see how the bound of the factorial of product is used.} \textcolor{blue}{From the first expectation we bound $(\frac{2(r-2q)}{1-q}!)^{\frac{1-q}{2}} \leq (r-2q)! (\frac{2}{1-q})^{r-2q}\leq r! (\frac{2}{1-q})^{r-2q}$ and From the second Expectation we get the bound $(\frac{2q}{1-q}!)^{\frac{1-q}{2}} \leq (q)! (\frac{2}{1-q})^{q}\leq (\frac{2}{1-q})^{q}.$ then we combine terms with similar power.}

Thus using $\norm{\frac{1}{n}\sum_{i=1}^n \xi_i}_{\op}\leq \norm{\frac{1}{n}\sum_{i=1}^n \xi_i}_{\hs}$ and applying \citet[Theorem D.1]{kpca} yields that with probability at least $1-\delta$, 
$$\norm{ \SgLP (\hat{\A}_P- \A_P) \SgLP}_{\op} \leq \frac{d_q \log\frac{2}{\delta}}{n \lambda}+\sqrt{\frac{4d_q^{2-q}c^{1-q}\left[\text{Tr}(\SgLP \A_P \SgLP)\right]^{q} \log \frac{2}{\delta}}{n\lambda^{2-q}}},$$ holds for any $0<q\leq 1.$ Setting $q=0$, yields that 
$$\norm{ \SgLP (\hat{\A}_P- \A_P) \SgLP}_{\op} \leq \frac{2\kappa \log\frac{2}{\delta}}{n \lambda}+\sqrt{\frac{16\kappa^2 c \log \frac{2}{\delta}}{n\lambda^{2}}},$$
which under condition $(i)$ implies the desired bound in $(a).$ 
% \textcolor{red}{From the above inequality, I do not know how you got the bound on $\lambda$ in (i). I would bound say the first term by $1/4$ and the second by $1/4$ yielding two inequalities on $\lambda$. Then you want the second inequality to dominate which is true when $n$ is large enough. So there should be some condition on $n$ to be mentioned in the theorem statement, isn't it?}\textcolor{blue}{I think what I did, was dominating the two terms by the term in the condition in (i). So, we have $\frac{2\kappa \log\frac{2}{\delta}}{n \lambda} \leq \frac{4\kappa}{\sqrt{n}\lambda}\log (\frac{2}{\delta}) \max\{\sqrt{c},1\} \leq \frac{1}{4}$ and the for the second term we have $\sqrt{\frac{16\kappa^2 c \log \frac{2}{\delta}}{n\lambda^{2}}} \leq \frac{4\kappa}{\sqrt{n}\lambda}(\log \frac{2}{\delta}) \max\{\sqrt{c},1\} \leq \frac{1}{4}.$}

Suppose $q\neq 0$ and  let $\epsilon=1-q$, then we have 

\begin{align*}
 &\norm{ \SgLP (\hat{\A}_P- \A_P) \SgLP}_{\op} \\&\qquad\leq \frac{2\kappa \log\frac{2}{\delta}}{\epsilon n \lambda}+\sqrt{\frac{4(2\kappa)^{1+\epsilon}c^{\epsilon}\left[2\Nol+2\Cl\norm{u}_{\Lp}\right]^{1-\epsilon} \log \frac{2}{\delta}}{n\epsilon^{1+\epsilon}\lambda^{1+\epsilon}}},   
\end{align*}

where we used Lemma~\ref{Lem:AP-AP0}\emph{(ii)-}$b$. The above bound implies the desired result in $(a)$ when $$\lambda \geq \frac{256\kappa\max\{c,1\}\left(\Nol+\Cl\norm{u}_{\Lp}\right)\log \frac{2}{\delta}}{\epsilon n^{\frac{1}{1+\epsilon}}},$$
then by setting $\epsilon=\frac{1}{\log n}$ and using $n^\frac{\log n}{1+\log n} \geq \frac{n}{e}$ yields the condition on $\lambda$ in $(ii).$ 
% \textcolor{red}{For part (ii), you assume a condition involving $P_0$ for some $v>2$. But the bound does not seem to depend on it. $v$ appears through $C_\lambda$?} \textcolor{blue}{yes, exactly}
% \textcolor{red}{the same comment as above for this bound as well.} \textcolor{blue}{I started that the second term is less than $\frac{1}{4}$ which yields to the above condition, and it also implies the first term is less that $\frac{1}{4}$ once we set $\epsilon=\frac{1}{\log n}.$}

For \emph{$(iii)$}, when the kernel is bounded, we use Tropp's inequality for operators as stated in \citet[Theorem D.2]{kpca} instead of Bernstein's inequality. 
% the difference from the previous two cases where we used Bernstein's inequality, is applying Tropp's inequality for operators as stated in \citealp[Theorem D.2]{kpca}. 
Observe that $$\norm{\xi_i(Z)}_{\op}= \norm{\SgLP K(\cdot,Z)}_{\h_K} \leq \frac{\kappa}{\lambda}$$
and
\begin{align*}
     \E\left(\xi-\E(\xi)\right)^2 &\preccurlyeq \E(\xi^2) \\ &=\E\left(\A_{P,\lambda}^{-1/2}(K(\cdot,Z_i) \hhtens K(\cdot,Z_i))\A_{P,\lambda}^{-1}(K(\cdot,Z_i) \hhtens K(\cdot,Z_i))\A_{P,\lambda}^{-1/2}\right) \\ 
     &\preccurlyeq \sup_Z \norm{\xi(Z)}_{\op} \E(\A_{P,\lambda}^{-1/2}(K(\cdot,Z_i) \hhtens K(\cdot,Z_i))\A_{P,\lambda}^{-1/2}) \\
     & \preccurlyeq \frac{\kappa}{\lambda} \A_{P,\lambda}^{-1/2} \A_{P} \A_{P,\lambda}^{-1/2} := S,
\end{align*}
where
$$\norm{S}_{\op} \leq \frac{\kappa}{\lambda}:= \sigma^2,$$
and 
\begin{align*}
    d:= \frac{\text{Tr}(S)}{\norm{S}_{\op}} &= \frac{\text{Tr}( \A_{P,\lambda}^{-1/2} \A_{P} \A_{P,\lambda}^{-1/2})}{\norm{ \A_{P,\lambda}^{-1/2} \A_{P} \A_{P,\lambda}^{-1/2}}_{\op}} 
     \leq \frac{\kappa(\norm{\A_P}+\lambda)}{\norm{\A_P}_{\op}\lambda} \leq \frac{2\kappa}{\lambda}.
\end{align*}
Thus applying \citet[Theoerm D.2]{kpca} yields with probability at least $1-\delta$ that 
$$\norm{ \SgLP (\hat{\A}_P- \A_P) \SgLP}_{\op} \leq \frac{\beta \kappa}{\lambda n}+\sqrt{\frac{3\beta\kappa}{\lambda n}},$$
where $\beta := \frac{2}{3}\log \frac{4d}{\delta}.$ Then the desired result in $(a)$ follows under the condition on $\lambda$ in \emph{(iii)}.

%\vspace{2mm}
Finally, it remains to show that $(b)$ and $(c)$ follows from $(a)$. To this end, define $B_n:=\SgLP(\A_P-\hat{\A}_P)\SgLP,$ and observe that
\begin{align*}
\norm{\A_{P,\lambda}^{1/2}\hat{\A}_{P,\lambda}^{-1/2}}_{\op} &= \norm{\A_{P,\lambda}^{1/2}\hat{\A}_{P,\lambda}^{-1}\A_{P,\lambda}^{1/2}}^{1/2}_{\op} \\
&=\norm{(\Id-B_n)^{-1}}^{1/2}_{\op} \leq (1-\norm{B_n}_{\op})^{-1/2},
\end{align*}
and 
\begin{align*}
\norm{\A_{P,\lambda}^{-1/2}\hat{\A}_{P,\lambda}^{1/2}}_{\op} = \norm{\Id-B_n}_{\op}^{1/2} \leq (1+\norm{B_n}_{\op})^{1/2},
\end{align*}
the desired results follow.
\end{proof}

\begin{appxlem} \label{lem:bound-op-norm-2}
    Let $\phi_i$ be as defined in Lemma \ref{Lem:AP-AP0} with $\sup_{i}\norm{\phi_i}^2_{\op}\leq C < \infty$. If $$4C\Nol\norm{u}_{\Lp}^2 < 1,$$ $n> \frac{200C\Nol}{3}\log\left(\frac{12\Nol}{v_p \delta}\right),$ and $\lambda \leq \norm{\A_{P_0}}_{\op},$ where $v_p := \E_{P}\phi_1^2 >0.$ 
    Then for $0<\delta\leq 1$,
\begin{enumerate}
    \item $P\left(\norm{\SgL(\A_P-\hat{\A}_P)\SgL}_{\op}\leq \frac{1}{4}\right)\geq 1-\delta$;
    \item $P\left(\norm{\A_{P_0,\lambda}^{1/2}\hat{\A}_{P,\lambda}^{-1/2}}_{\op}\leq 2\right) \geq 1-\delta$;
    \item $P\left(\norm{\A_{P_0,\lambda}^{-1/2}\hat{\A}_{P,\lambda}^{1/2}}_{\op}\leq \frac{\sqrt{7}}{2}\right) \geq 1-\delta,$
\end{enumerate}
where $$\hat{\A}_{P}=\frac{1}{n}\sum_{i=1}^n K(\cdot,Z_i)\hhtens K(\cdot,Z_i), \ \ \ Z_i \stackrel{i.i.d}{\sim} P_0.$$
\end{appxlem}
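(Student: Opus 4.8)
The plan is to follow the architecture of the proof of Lemma~\ref{lem:bound-op-norm-1}: deduce the multiplicative bounds $(b)$ and $(c)$ from the additive bound $(a)$ by a resolvent identity, and prove $(a)$ by an operator concentration inequality — the difference being that, since $\sup_i\norm{\phi_i}_\infty^2\le C<\infty$, the relevant scale is the effective dimension $\Nol$ rather than the crude $\kappa/\lambda$ used in Lemma~\ref{lem:bound-op-norm-1}$(iii)$.

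\emph{Reduction of $(b)$ and $(c)$ to $(a)$.} Put $B_n:=\SgL(\hat{\A}_P-\A_{P_0})\SgL$, so that $\SgL\hat{\A}_{P,\lambda}\SgL=\Id+B_n$ and $B_n=-\SgL(\A_P-\hat{\A}_P)\SgL+\SgL\B\SgL$, where $\B:=\A_P-\A_{P_0}=\int K(\cdot,x)\hhtens K(\cdot,x)\,u(x)\,dP_0(x)$. In the present bounded case $\Cl\le C\Nol$ (Lemma~\ref{Lem:AP-AP0}), so \eqref{eq:hs-bound} together with $4C\Nol\norm{u}_{\Lp}^2<1$ gives $\norm{\SgL\B\SgL}_{\op}\le\norm{\SgL\B\SgL}_{\hs}\le\sqrt{C\Nol}\,\norm{u}_{\Lp}<\tfrac12$; combined with $(a)$, $\norm{B_n}_{\op}\le\tfrac34$ on the relevant event. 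Then $\norm{\A_{P_0,\lambda}^{1/2}\hat{\A}_{P,\lambda}^{-1/2}}_{\op}^2=\norm{(\Id+B_n)^{-1}}_{\op}\le(1-\tfrac34)^{-1}=4$ and $\norm{\A_{P_0,\lambda}^{-1/2}\hat{\A}_{P,\lambda}^{1/2}}_{\op}^2=\norm{\Id+B_n}_{\op}\le1+\tfrac34=\tfrac74$, which are $(b)$ and $(c)$.

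\emph{Proof of $(a)$.} Let $\xi_i:=\SgL\big(K(\cdot,Z_i)\hhtens K(\cdot,Z_i)-\A_P\big)\SgL$, where $Z_1,\dots,Z_n$ is the i.i.d.\ sample underlying $\hat{\A}_P$ (so $\E[K(\cdot,Z_1)\hhtens K(\cdot,Z_1)]=\A_P$), whence $\tfrac1n\sum_i\xi_i=\SgL(\hat{\A}_P-\A_P)\SgL$ and $\E\xi_i=0$; I would invoke the operator Bernstein inequality with intrinsic dimension of \citet[Theorem~D.2]{kpca}. Mercer's theorem \citep[Lemma~2.6]{Steinwart2012MercersTO} gives the pointwise identity $\norm{\SgL K(\cdot,z)}_{\h_K}^2=\sum_j\tfrac{\lambda_j}{\lambda_j+\lambda}\phi_j^2(z)\le\big(\sup_j\norm{\phi_j}_\infty^2\big)\Nol\le C\Nol$, hence the uniform bound $\norm{\xi_i}_{\op}\le C\Nol+\norm{\SgL\A_P\SgL}_{\op}\lesssim C\Nol$ (using $\norm{\SgL\A_P\SgL}_{\op}\le1+\norm{\SgL\B\SgL}_{\op}\le\tfrac32$). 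For the second moment, $K(\cdot,Z)\hhtens K(\cdot,Z)\,\A_{P_0,\lambda}^{-1}\,K(\cdot,Z)\hhtens K(\cdot,Z)=\norm{\SgL K(\cdot,Z)}^2_{\h_K}\,K(\cdot,Z)\hhtens K(\cdot,Z)\preccurlyeq C\Nol\,K(\cdot,Z)\hhtens K(\cdot,Z)$, so $\E\xi_1^2\preccurlyeq C\Nol\,\SgL\A_P\SgL=:S$, giving $\norm{S}_{\op}\le\tfrac32 C\Nol$ and, by Lemma~\ref{Lem:AP-AP0}$(ii)$-$(b)$ and $4C\Nol\norm{u}_{\Lp}^2<1$, $\text{Tr}(S)=C\Nol\,\text{Tr}(\SgL\A_P\SgL)\lesssim C\Nol^2$.

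\emph{The effective dimension and the $n$-threshold.} The crucial quantity is the intrinsic dimension $d=\text{Tr}(S)/\norm{S}_{\op}$, and the role of $v_p=\E_P\phi_1^2$ is to lower bound $\norm{S}_{\op}$. Testing $S$ against $\alpha_1:=\sqrt{\lambda_1}\,\phi_1$, the leading eigenfunction of $\A_{P_0}$, and using $\lambda\le\norm{\A_{P_0}}_{\op}=\lambda_1$ to get $\inner{\SgL\alpha_1}{\SgL\alpha_1}_{\h_K}=(\lambda_1+\lambda)^{-1}\ge(2\lambda_1)^{-1}$ together with $\inner{\A_P\alpha_1}{\alpha_1}_{\h_K}=\int\alpha_1^2\,dP=\lambda_1\,\E_P\phi_1^2=\lambda_1 v_p$, one obtains $\norm{S}_{\op}\ge\inner{S\alpha_1}{\alpha_1}_{\h_K}\ge\tfrac12 C\Nol\,v_p$; hence $d\lesssim\Nol/v_p$ and $\beta:=\tfrac23\log\tfrac{4d}{\delta}\asymp\log\tfrac{\Nol}{v_p\delta}$. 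Feeding $\norm{\xi_i}_{\op}\lesssim C\Nol$, $\norm{S}_{\op}\lesssim C\Nol$ and this $\beta$ into the tail bound $\norm{\tfrac1n\sum_i\xi_i}_{\op}\le\tfrac{\beta\,\norm{\xi_1}_{\op}}{n}+\sqrt{\tfrac{3\beta\,\norm{S}_{\op}}{n}}$, the hypothesis $n>\tfrac{200C\Nol}{3}\log\tfrac{12\Nol}{v_p\delta}$ drives both summands strictly below $\tfrac14$ after tracking the absolute constants, proving $(a)$ with probability at least $1-\delta$. I expect the main obstacle to be exactly this dimension-free bookkeeping: obtaining the pointwise bound $C\Nol$ via Mercer, and, more delicately, the lower bound $\norm{S}_{\op}\gtrsim C\Nol v_p$ that turns the intrinsic dimension into $\Nol/v_p$, all while keeping the numerical constants sharp enough for the claimed thresholds $\tfrac14$, $2$, $\tfrac{\sqrt7}{2}$ to come out.
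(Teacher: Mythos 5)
Your proposal is correct and follows essentially the same architecture as the paper's proof: deduce the multiplicative bounds $(b)$ and $(c)$ from the additive bound $(a)$ via a resolvent identity, prove $(a)$ by Tropp's intrinsic-dimension operator Bernstein inequality, use Mercer to get the pointwise bound $\norm{\SgL K(\cdot,z)}_{\h_{K}}^2\le C\Nol$, and crucially use $v_p$ together with $\lambda\le\norm{\A_{P_0}}_{\op}$ to lower-bound $\norm{\SgL\A_P\SgL}_{\op}\ge v_p/2$ so the intrinsic dimension collapses to $O(\Nol/v_p)$. The only nontrivial differences are cosmetic: you work with the centered $\xi_i:=\SgL(K(\cdot,Z_i)\hhtens K(\cdot,Z_i)-\A_P)\SgL$ (paper keeps $\xi_i$ uncentered and later subtracts $\E\xi$), which inflates the uniform bound from exactly $C\Nol$ to $C\Nol+\norm{\SgL\A_P\SgL}_{\op}$ and therefore costs a small amount of slack in the constant tracking; and you lower-bound $\norm{S}_{\op}$ via the quadratic form $\inner{S\alpha_1}{\alpha_1}$ whereas the paper uses $\norm{\SgL\A_P\SgL\psi_1}_{\h_K}$ (both give the same $v_p/2$ after using $\lambda\le\lambda_1$).
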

\begin{proof}
    Let $\xi_i:= \A_{P_0,\lambda}^{-1/2}(K(\cdot,Z_i) \hhtens K(\cdot,Z_i))\A_{P_0,\lambda}^{-1/2}.$ Then $\E(\xi_i)=\A_{P_0,\lambda}^{-1/2}\A_{P}\A_{P_0,\lambda}^{-1/2},$ and 
    $$\frac{1}{n}\sum_{i=1}^n \xi_i-\E(\xi)=\A_{P_0,\lambda}^{-1/2}(\hat{\A}_{P}-\A_P)\A_{P_0,\lambda}^{-1/2}.$$
Observe that 
\begin{align*}
    &\norm{\xi(Z)}_{\op} 
    = \norm{\A_{P_0,\lambda}^{-1/2}(K(\cdot,Z) \hhtens K(\cdot,Z))\A_{P_0,\lambda}^{-1/2}}_{\op}  \\
    &=\norm{\A_{P_0,\lambda}^{-1/2}K(\cdot,Z)}_{\h_K}^2 = \inner{K(\cdot,Z)}{\A_{P_0,\lambda}^{-1}K(\cdot,Z)}_{\h_K} \\
    &= \left\langle K(\cdot,Z),\sum_{i\geq1}\frac{1}{\lambda_i+\lambda}\inner{K(\cdot,Z)}{\sqrt{\lambda_i}\phi_i}_{\h_K}\sqrt{\lambda_i}\phi_i\right.\\
    &\qquad\qquad\left.+\frac{1}{\lambda}(K(\cdot,Z)-\sum_{i}\inner{K(\cdot,Z)}{\sqrt{\lambda_i}\phi_i}_{h_K}\sqrt{\lambda_i}\phi_i\right\rangle_{\h_K} \\
    &=\sum_{i}\frac{\lambda_i}{\lambda_i+\lambda}\phi_{i}^2(Z)+\frac{1}{\lambda}\left(K(Z,Z)-\sum_{i}\lambda_i\phi_i(Z)\phi_i(Z)\right) \\
    &\stackrel{(*)}{=} \sum_{i}\frac{\lambda_i}{\lambda_i+\lambda}\phi_{i}^2(Z) \leq \sup_{i}\norm{\phi_i}_{\op}^2 \Nol \leq C\Nol,
\end{align*}
     where $(*)$ follows by Mercer's theorem (see \citealt[Lemma 2.6]{Steinwart2012MercersTO}). Also, we have 
\begin{align*}
    \E\left(\xi-\E(\xi)\right)^2 &\preccurlyeq \E(\xi^2) \\&=\E\left(\A_{P_0,\lambda}^{-1/2}(K(\cdot,Z_i) \hhtens K(\cdot,Z_i))\A_{P_0,\lambda}^{-1}(K(\cdot,Z_i) \hhtens K(\cdot,Z_i))\A_{P_0,\lambda}^{-1/2}\right) \\ 
     &\preccurlyeq \sup_Z \norm{\xi(Z)}_{\op} \E(\A_{P_0,\lambda}^{-1/2}(K(\cdot,Z_i) \hhtens K(\cdot,Z_i))\A_{P_0,\lambda}^{-1/2}) \\
     & \preccurlyeq C \Nol \A_{P_0,\lambda}^{-1/2} \A_{P} \A_{P_0,\lambda}^{-1/2} := S
\end{align*}
so that
\begin{align*}
    \norm{S}_{\op} &= C\Nol \norm{\A_{P_0,\lambda}^{-1/2} \A_{P} \A_{P_0,\lambda}^{-1/2}}_{\op} 
    \stackrel{(\dag)}{\leq} C\Nol \left(1+\sqrt{C\Nol}\norm{u}_{\Lp}\right) \\&:= \sigma^2,
\end{align*}
where in $(\dag)$ we used the bound in \eqref{eq:hs-bound}. Define 
\begin{align*}
    d:= \frac{\text{Tr}(S)}{\norm{S}_{\op}} &= \frac{\text{Tr}(\A_{P_0,\lambda}^{-1/2} \A_{P} \A_{P_0,\lambda}^{-1/2})}{\norm{\A_{P_0,\lambda}^{-1/2} \A_{P} \A_{P_0,\lambda}^{-1/2}}_{\op}} 
    \stackrel{(**)}{\leq} \frac{\Nol+C\Nol\norm{u}_{\Lp}}{\norm{\A_{P_0,\lambda}^{-1/2} \A_{P} \A_{P_0,\lambda}^{-1/2}}_{\op}} \\
    & \stackrel{(\ddag)}{\leq} \frac{\left(\Nol+C\Nol\norm{u}_{\Lp}\right)\left(\norm{\A_{P_0}}_{\op}+\lambda\right)}{v_p \norm{\A_{P_0}}_{\op}} \\ &\leq \frac{2}{v_p} \Nol (1+C\norm{u}_{\Lp}),
\end{align*}
where the last inequality follows by using $\lambda \leq \norm{\A_{P_0}}_{\op}$. $(**)$ follows using Lemma \ref{Lem:AP-AP0}\emph{(ii)} and in $(\ddag)$ we used $$\norm{\A_{P_0,\lambda}^{-1/2} \A_{P} \A_{P_0,\lambda}^{-1/2}}_{\op} \geq \frac{v_p\norm{\A_{P_0}}_{\op}}{\norm{\A_{P_0}}_{\op}+\lambda},$$ which is proven below.
\begin{align*}
&\norm{\A_{P_0,\lambda}^{-1/2} \A_{P} \A_{P_0,\lambda}^{-1/2}}_{\op} = \sup_{\norm{f}_{\h_K}\leq 1} \norm{\A_{P_0,\lambda}^{-1/2} \A_{P} \A_{P_0,\lambda}^{-1/2} f}_{\h_K} \\
    & \geq \norm{\A_{P_0,\lambda}^{-1/2} \A_{P} \A_{P_0,\lambda}^{-1/2}\psi_1}_{\h_K} 
    = \norm{\sum_{i}\frac{\sqrt{\lambda_1}}{\sqrt{\lambda_1+\lambda}}\frac{\sqrt{\lambda_i}}{\sqrt{\lambda_i+\lambda}}(\E_P \phi_i \phi_1)\psi_i}_{\h_K} \\
    & = \sqrt{\frac{\lambda_1}{\lambda_1+\lambda}\sum_i \frac{\lambda_i}{\lambda_i+\lambda}\left(\E_P\phi_i \phi_1\right)^2} 
    \geq \frac{\lambda_1}{\lambda_1+\lambda}\E_P\phi_1^2 = \frac{v_p\norm{\A_{P_0}}_{\op}}{\norm{\A_{P_0}}_{\op}+\lambda},   
\end{align*}
where $\psi_i=\sqrt{\lambda_i} \phi_i$ are orthonormal eigenfunctions of $\A_{P_0}$. Finally, we apply Tropp's inequality for operators as stated in \citet[Theorem D.2]{kpca}, which yields that with probability at least $1-\delta$, 
% \textcolor{red}{where does the factor of 2 in $2\delta$ come from?} \textcolor{blue}{I think it should be just $\delta$. I have corrected it.} 
$$\norm{\SgL(\A_P-\hat{\A}_P)\SgL}_{\op} \leq \frac{C\beta\Nol}{n}+\sqrt{\frac{3C\beta\Nol(1+\sqrt{C\Nol}\norm{u}_{\Lp})}{n}},$$ 
where $\beta := \frac{2}{3}\log \frac{4d}{\delta}.$

Thus, the desired result in
\emph{(i)} follows under the conditions  $4C\Nol\norm{u}_{\Lp}^2 < 1$ and \\ $n> \frac{200C\Nol}{3}\log\left(\frac{12\Nol}{v_p \delta}\right)$. Then it remains to show that \emph{(ii)} and \emph{(iii)} follows from \emph{(i)}. 
For \emph{(ii)} observe that  
\begin{align*}
\norm{\A_{P_0,\lambda}^{1/2}\hat{\A}_{P,\lambda}^{-1/2}}^2_{\op} &= \norm{\A_{P_0,\lambda}^{1/2}\hat{\A}_{P,\lambda}^{-1}\A_{P_0,\lambda}^{1/2}}_{\op} \\ 
&= \norm{\Id- \A_{P_0,\lambda}^{-1/2}(\A_{P_0}-\hat{\A}_{P})\A_{P_0,\lambda}^{-1/2}}_{\op} \\
& \leq \left(1-\norm{\A_{P_0,\lambda}^{-1/2}(\A_{P_0}-\hat{\A}_{P})\A_{P_0,\lambda}^{-1/2}}_{\op}\right)^{-1} \leq 4,
\end{align*}
where the last inequality follows using 
\begin{align*}
  &\norm{\A_{P_0,\lambda}^{-1/2}(\A_{P_0}-\hat{\A}_{P})\A_{P_0,\lambda}^{-1/2}}_{\op} \\
  &\leq \norm{\A_{P_0,\lambda}^{-1/2}(\A_{P}-\hat{\A}_{P})\A_{P_0,\lambda}^{-1/2}}_{\op} + \norm{\A_{P_0,\lambda}^{-1/2}(\A_{P_0}-\A_{P})\A_{P_0,\lambda}^{-1/2}}_{\op} \\
  & \stackrel{(*)}{\leq} \frac{1}{4} + \sqrt{C\Nol}\norm{u}_{\Lp} < \frac{3}{4}.
\end{align*}
For \emph{(iii)} we have 
\begin{align*}
    &\norm{\A_{P_0,\lambda}^{-1/2}\hat{\A}_{P,\lambda}^{1/2}}_{\op}^2 \\&=\norm{\A_{P_0,\lambda}^{-1/2}\hat{\A}_{P,\lambda}\A_{P_0,\lambda}^{-1/2}}_{\op} \\
    & = \norm{\A_{P_0,\lambda}^{-1/2}(\hat{\A}_{P,\lambda}-\A_{P,\lambda}+\A_{P,\lambda})\A_{P_0,\lambda}^{-1/2}}_{\op} \\
    & = \norm{\A_{P_0,\lambda}^{-1/2}(\hat{\A}_{P}-\A_{P})\A_{P_0,\lambda}^{-1/2}+ \A_{P_0,\lambda}^{-1/2}\A_{P,\lambda}\A_{P_0,\lambda}^{-1/2}}_{\op} \\ 
    & = \norm{\A_{P_0,\lambda}^{-1/2}(\hat{\A}_{P}-\A_{P})\A_{P_0,\lambda}^{-1/2}+ \A_{P_0,\lambda}^{-1/2}(\A_{P}-\A_{P_0})\A_{P_0,\lambda}^{-1/2}+\Id}_{\op} \\
    & \leq 1 + \norm{\A_{P_0,\lambda}^{-1/2}(\hat{\A}_{P}-\A_{P})\A_{P_0,\lambda}^{-1/2}}_{\op} + \norm{\A_{P_0,\lambda}^{-1/2}(\A_{P}-\A_{P_0})\A_{P_0,\lambda}^{-1/2}}_{\op} \\
    & \stackrel{(**)}{\leq} 1+ \frac{1}{4} + \sqrt{C\Nol}\norm{u}_{\Lp} \leq \frac{7}{4},
\end{align*}
where $(*)$ and $(**)$ follow from \eqref{eq:hs-bound}.
\end{proof}

\begin{appxlem} \label{Lem: bound zeta}
    Let $\zeta= \norm{\gSh \Psi_P}_{\h_{K_0}}^2,$ $\W=\hat{\A}_{P,\lambda}^{-1/2}\A_{P,\lambda}^{1/2}$, and $\V=\hat{\A}_{P,\lambda}^{-1/2}\A_{P_0,\lambda}^{1/2}$, where $g_\lambda$ satisfies $(E_1)$--$(E_4)$. Suppose  $u \in \emph{Ran} (\ep_{P_0}^{\theta})$, $\theta>0$ and $$\norm{\U}_{\Lp}^2 \geq  d_1\norm{\ep_{P_0}}_{\opl}^{2\max(\theta-\xi,0)}\lambda^{2 \Tilde{\theta}} \norm{\ep_{P_0}^{-\theta}\U}_{\Lp}^2,$$  where $\Tilde{\theta}= \min(\theta,\xi)$, for some constant $d_1>0$. Then 
\begin{enumerate}
    \item $\zeta \geq d_2\norm{\W^{-1}}_{\opS}^{-2} \frac{\norm{u}^2_{\Lp}}{1+\sqrt{\Cl}\norm{u}_{\Lp}}$;
    \item $\zeta \geq d_3\norm{\V^{-1}}_{\opS}^{-2} \norm{u}^2_{\Lp},$
\end{enumerate}
where $\Cl$ is defined in Lemma \ref{Lem:AP-AP0} with $K=K_0$ and $d_2,d_3$ are positive constants.
% \textcolor{red}{should operator norms in i and ii wrt $K_0$?}\textcolor{blue}{Yes, I fixed it.}
\end{appxlem}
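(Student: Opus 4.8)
The plan is to reduce everything to the quantity $\statt^{P_0}=\norm{\SgL\Psi_P}_{\h_{K_0}}^2$ from Lemma~\ref{Lem: SP-SP0}: I would first give it a clean spectral formula, then show $\statt^{P_0}\gtrsim\norm{u}_{\Lp}^2$ using the range‑space hypothesis, and finally transfer this bound to $\zeta$ via a short chain of operator‑norm comparisons between the covariance operators $\A_P$, $\A_{P_0}$, $\hat{\A}_P$.

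First I would record that, because $\E_{P_0}K_0(\cdot,X)=0$, the mean element factors as $\Psi_P=\int K_0(\cdot,x)u(x)\,dP_0(x)=\id^* u$, while $\A_{P_0}=\id^*\id$ and $\ep_{P_0}=\id\id^*$ by Proposition~\ref{pro:ksd}. Combining the push‑through identity $(\id^*\id+\lambda)^{-1}\id^*=\id^*(\id\id^*+\lambda)^{-1}$ with $\inner{\id^* a}{\id^* b}_{\h_{K_0}}=\inner{\ep_{P_0}a}{b}_{\Lp}$ gives
$$\statt^{P_0}=\inner{\ep_{P_0}(\ep_{P_0}+\lambda)^{-1}u}{u}_{\Lp}=\sum_{i}\frac{\lambda_i}{\lambda_i+\lambda}\inner{u}{\tilde{\phi}_i}_{\Lp}^2,$$
and since $u\in\range(\ep_{P_0}^{\theta})\subseteq\overline{\range(\ep_{P_0})}$ we also have $\norm{u}_{\Lp}^2=\sum_i\inner{u}{\tilde{\phi}_i}_{\Lp}^2$. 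Writing $u=\ep_{P_0}^{\theta}w$ with $\norm{w}_{\Lp}=\norm{\ep_{P_0}^{-\theta}u}_{\Lp}$ and splitting the index set at $\lambda_i=\lambda$: on $\{\lambda_i\ge\lambda\}$ one has $\lambda_i/(\lambda_i+\lambda)\ge\tfrac12$, and on $\{\lambda_i<\lambda\}$ one has $\lambda_i^{2\theta}=\lambda_i^{2\tilde{\theta}}\lambda_i^{2(\theta-\tilde{\theta})}\le\lambda^{2\tilde{\theta}}\norm{\ep_{P_0}}_{\opl}^{2\max(\theta-\xi,0)}$, using $\tilde{\theta}=\min(\theta,\xi)>0$ and $\lambda_i\le\norm{\ep_{P_0}}_{\opl}$. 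Hence $\sum_{i:\lambda_i<\lambda}\lambda_i^{2\theta}\inner{w}{\tilde{\phi}_i}_{\Lp}^2\le\lambda^{2\tilde{\theta}}\norm{\ep_{P_0}}_{\opl}^{2\max(\theta-\xi,0)}\norm{\ep_{P_0}^{-\theta}u}_{\Lp}^2\le\tfrac1{d_1}\norm{u}_{\Lp}^2$ by the hypothesis, so for $d_1$ large enough (any $d_1>1$; $d_1=2$ is clean) $\statt^{P_0}\ge\tfrac12\sum_{i:\lambda_i\ge\lambda}\lambda_i^{2\theta}\inner{w}{\tilde{\phi}_i}_{\Lp}^2\ge\tfrac12\bigl(1-\tfrac1{d_1}\bigr)\norm{u}_{\Lp}^2=:c_0\norm{u}_{\Lp}^2$.

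Next I would pass to $\zeta$. By $(E_4)$ (equivalently Lemma~\ref{lemma: bounds for g}(ii)) applied to $\hat{\A}_P$ one has $\norm{\hat{\A}_{P,\lambda}^{-1/2}g_\lambda^{-1/2}(\hat{\A}_P)}_{\opS}\le C_4^{-1/2}$, and writing $\hat{\A}_{P,\lambda}^{-1/2}\Psi_P=\hat{\A}_{P,\lambda}^{-1/2}g_\lambda^{-1/2}(\hat{\A}_P)\,\gSh\Psi_P$ gives $\zeta\ge C_4\,\norm{\hat{\A}_{P,\lambda}^{-1/2}\Psi_P}_{\h_{K_0}}^2$. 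For (i), use $\hat{\A}_{P,\lambda}^{-1/2}\Psi_P=\W\,\SgLP\Psi_P$ and $\norm{\W v}\ge\norm{\W^{-1}}_{\opS}^{-1}\norm{v}$ to get $\norm{\hat{\A}_{P,\lambda}^{-1/2}\Psi_P}^2\ge\norm{\W^{-1}}_{\opS}^{-2}\norm{\SgLP\Psi_P}^2$; then $\norm{\SgL\Psi_P}\le\norm{\SgL\A_{P,\lambda}^{1/2}}_{\opS}\norm{\SgLP\Psi_P}$ with $\norm{\SgL\A_{P,\lambda}^{1/2}}_{\opS}^2=\norm{\Id+\SgL(\A_P-\A_{P_0})\SgL}_{\opS}\le1+\sqrt{\Cl}\norm{u}_{\Lp}$ by \eqref{eq:hs-bound}, so $\norm{\SgLP\Psi_P}^2\ge\statt^{P_0}/(1+\sqrt{\Cl}\norm{u}_{\Lp})$; chaining with the bound on $\statt^{P_0}$ yields (i) with $d_2=C_4c_0$. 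For (ii) the route is shorter: $\hat{\A}_{P,\lambda}^{-1/2}\Psi_P=\V\,\SgL\Psi_P$ gives $\norm{\hat{\A}_{P,\lambda}^{-1/2}\Psi_P}^2\ge\norm{\V^{-1}}_{\opS}^{-2}\statt^{P_0}$, and the bound on $\statt^{P_0}$ finishes (ii) with $d_3=C_4c_0$.

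The only genuinely delicate point is the tail estimate in the second step: the exponent $\tilde{\theta}=\min(\theta,\xi)$ and the amplification factor $\norm{\ep_{P_0}}_{\opl}^{2\max(\theta-\xi,0)}$ must be matched to the hypothesis exactly, which means keeping the regimes $\theta\le\xi$ and $\theta>\xi$ straight. Everything else is bookkeeping — tracking which of $\A_P$, $\A_{P_0}$, $\hat{\A}_P$ sits in each factor and the direction of $\norm{Av}\ge\norm{A^{-1}}^{-1}\norm{v}$ — and reuses the already‑established estimates \eqref{eq:hs-bound} and Lemma~\ref{Lem: SP-SP0}.
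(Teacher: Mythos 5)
Your proof is correct, and the transfer steps from $\statt^{P_0}:=\norm{\SgL\Psi_P}^2_{\h_{K_0}}$ up to $\zeta$ (through $(E_4)$, then through $\W$ or $\V$, then through the operator comparison $\norm{\SgL\A_{P,\lambda}^{1/2}}^2_{\opS}\le 1+\sqrt{\Cl}\norm{u}_{\Lp}$ from \eqref{eq:hs-bound}) coincide in substance with the paper's. Where you genuinely diverge is the core lower bound $\statt^{P_0}\gtrsim\norm{u}^2_{\Lp}$. The paper works with a general regularizer: it writes $4B_3\,\inner{\ep_{P_0}g_\lambda(\ep_{P_0})u}{u}_{\Lp}=\norm{\ep_{P_0}g_\lambda(\ep_{P_0})u}^2+4B_3^2\norm{u}^2-\norm{\ep_{P_0}g_\lambda(\ep_{P_0})u-2B_3u}^2$, reorganizes the signed remainder as $4B_3\sum_i\lambda_i^{2\theta}(\lambda_i g_\lambda(\lambda_i)-B_3)\inner{f}{\tilde\phi_i}^2$, and then invokes assumption $(E_3)$ with qualification $\varphi=\tilde\theta$ to control the negative part of the tail. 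You instead use the explicit Tikhonov spectral formula $\statt^{P_0}=\sum_i\tfrac{\lambda_i}{\lambda_i+\lambda}\inner{u}{\tilde\phi_i}^2$ and split the spectrum at $\lambda_i=\lambda$: the head contributes $\tfrac12\sum_{\lambda_i\ge\lambda}\inner{u}{\tilde\phi_i}^2$ and the tail is absorbed into $\tfrac1{d_1}\norm{u}^2$ via $\lambda_i^{2\theta}\le\lambda^{2\tilde\theta}\norm{\ep_{P_0}}_{\opl}^{2\max(\theta-\xi,0)}$ on $\{\lambda_i<\lambda\}$ and the range-space hypothesis. Your argument is shorter, does not invoke $(E_3)$ at all, and exploits the fact that $\statt^{P_0}$ as defined already has the Tikhonov kernel $(x+\lambda)^{-1}$ built in, so the qualification $\xi$ appears only because it must be matched to the stated hypothesis, not because it is needed in the estimate. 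The paper's version is framed for a general $g_\lambda$ and hence carries the $(E_3)$ machinery, but since the quantity being bounded, $\norm{\A_{P_0,\lambda}^{-1/2}\Psi_P}^2$, is itself Tikhonov, your more elementary route delivers the same conclusion with less overhead; both produce an explicit admissible $d_1$ (yours any $d_1>1$, the paper's $2C_3/B_3$).
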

\begin{proof}
\emph{(i)}: 
By applying Lemma \ref{lemma: bounds for g}(ii) we have 
    \begin{align*}
        \zeta &\stackrel{(*)}{\geq} \norm{\hat{\A}_{P,\lambda}^{-1/2} g^{-1/2}_{\lambda}(\hat{\A}_{P})}_{\opS}^{-2} \norm{\hat{\A}_{P,\lambda}^{-1/2} \Psi_P}_{\h_{K_0}}^2 
         \geq C_4 \norm{\W^{-1}}_{\opS}^{-2} \norm{\A_{P,\lambda}^{-1/2} \Psi_P}_{\h_{K_0}}^2 \\
        & \stackrel{(\dag)}{\geq}  C_4^2\norm{\W^{-1}}_{\opS}^{-2} \frac{\norm{\A_{P_0,\lambda}^{-1/2} \Psi_P}_{\h_{K_0}}^2}{1+\sqrt{\Cl}\norm{u}_{\Lp}},
    \end{align*}

    where $(*)$ follows using $$\norm{\hat{\A}_{P,\lambda}^{-1/2} \Psi_P}_{\h_{K_0}}^2=\norm{\hat{\A}_{P,\lambda}^{-1/2} g^{-1/2}_{\lambda}(\hat{\A}_{P}) g^{1/2}_{\lambda}(\hat{\A}_{P})\Psi_P}_{\h_{K_0}}^2 \leq \norm{\hat{\A}_{P,\lambda}^{-1/2}g^{-1/2}_{\lambda}(\hat{\A}_{P})}_{\opS}^{2} \zeta,$$
    and $(\dag)$ follows from Lemma \ref{Lem: SP-SP0}. For the bound in \emph{(ii)} we have,

\begin{align*}
   \zeta &\geq \norm{\hat{\A}_{P,\lambda}^{-1/2} g^{-1/2}_{\lambda}(\hat{\A}_{P})}_{\opS}^{-2} \norm{\hat{\A}_{P,\lambda}^{-1/2} \Psi_P}_{\h_{K_0}}^2   \geq C_4 \norm{\V^{-1}}_{\opS}^{-2}\norm{\A_{P_0,\lambda}^{-1/2} \Psi_P}_{\h_{K_0}}^2.
\end{align*}
Thus, it remains to show $$\norm{\A_{P_0,\lambda}^{-1/2} \Psi_P}_{\h_{K_0}}^2 \geq \tilde{d} \norm{u}_{\Lp}^2,$$ for some $\tilde{d}>0.$ For that matter, consider
\begin{align*}
   4 B_3 \norm{\A_{P_0,\lambda}^{-1/2} \Psi_P}_{\h_{K_0}}^2& = 4 B_3\inner{\ep_{P_0} g_{\lambda}(\ep_{P_0}) u}{u}_{\Lp}\\
    &= \norm{\ep_{P_0} g_{\lambda}(\ep_{P_0}) u}_{\Lp}^2+4B_3^2\norm{\U}_{\Lp}^2-\norm{\ep_{P_0} g_{\lambda}(\ep_{P_0}) u-2B_3\U}_{\Lp}^2.
\end{align*}
Since $u \in \range(\ep_{P_0}^{\theta})$, there exists $f\in \Lp$ such that $u= \ep_{P_0}^{\theta}f$. Therefore, we have 
$$\norm{\ep_{P_0} g_{\lambda}(\ep_{P_0}) u}_{\Lp}^2 = \sum_{i} \lambda_i^{2\theta+2}\gl^{2}(\lambda_i) \inner{f}{\Tilde{\phi}_i}_{\Lp}^2,$$ and $$\norm{\ep_{P_0} g_{\lambda}(\ep_{P_0}) u-2B_3 u}_{\Lp}^2 = \sum_{i} \lambda_i^{2\theta}(\lambda_i\gl(\lambda_i)-2B_3)^2 \inner{f}{\Tilde{\phi}_i}_{\Lp}^2,$$ where $(\lambda_i,\tilde{\phi}_i)_i$ are the eigenvalues and eigenfunctions of $\ep_{P_0}$. Using these expressions we have 
\begin{align*}
    \norm{\ep_{P_0} g_{\lambda}(\ep_{P_0}) u}_{\Lp}^2-\norm{\ep_{P_0} g_{\lambda}(\ep_{P_0}) u-2B_3\U}_{\Lp}^2
    = \sum_{i} 4B_3 \lambda_i^{2\theta}(\lambda_i\gl(\lambda_i)-B_3)\inner{f}{\Tilde{\phi}_i}_{\Lp}^2.
\end{align*}
Thus 
\begin{align*}
    4B_3\norm{\A_{P_0,\lambda}^{-1/2} \Psi_P}_{\h_{K_0}}^2 & = 4B_3^2\norm{u}_{\Lp}^2+\sum_{i} 4B_3 \lambda_i^{2\theta}\left(\lambda_i\gl(\lambda_i)-B_3\right)\inner{f}{\Tilde{\phi}_i}_{\Lp}^2 \\
    & \geq 4B_3^2\norm{u}_{\Lp}^2 - \sum_{\{i:\lambda_{i}\gl(\lambda_i)< B_3\}} 4B_3 \lambda_i^{2\theta}\left(B_3-\lambda_i\gl(\lambda_i)\right)\inner{f}{\Tilde{\phi}_i}_{\Lp}^2.
\end{align*}
When $\theta \leq \xi$, by Assumption $(E_3)$, we have
\begin{equation*}
    \sup_{\{i:\lambda_{i}\gl(\lambda_i)< B_3\}} \lambda_i^{2\theta}\left(B_3-\lambda_i\gl(\lambda_i)\right) \leq C_3 \lambda^{2\theta}. 
\end{equation*}
On the other hand, for $\theta > \xi$,
\begin{align*}
    &\sup_{\{i:\lambda_{i}\gl(\lambda_i)< B_3\}} \lambda_i^{2\theta}\left(B_3-\lambda_i\gl(\lambda_i)\right)\\
    &\le \sup_{\{i:\lambda_{i}\gl(\lambda_i)< B_3\}} \lambda_i^{2\theta-2\xi}\sup_{\{i:\lambda_{i}\gl(\lambda_i)< B_3\}}\lambda_i^{2\xi}\left(B_3-\lambda_i\gl(\lambda_i)\right)\\ &\stackrel{(*)}{\le} C_3\norm{\ep_{P_0}}^{2\theta-2\xi}_{\opl} \lambda^{2\xi},
\end{align*}
where $(*)$ follows by Assumption $(E_3)$. Therefore we can conclude that
\begin{align*}
    \norm{\A_{P_0,\lambda}^{-1/2} \Psi_P}_{\h_{K_0}}^2 \geq B_3\norm{u}^2_{\Lp}-C_3\norm{\ep_{P_0}}_{\opl}^{2\max(\theta-\xi,0)}\lambda^{2 \Tilde{\theta}} \norm{\ep_{P_0}^{-\theta}u}_{\Lp}^2 \stackrel{(\dag)}{\geq} \frac{B_3}{2}\norm{u}^2_{\Lp},
\end{align*}
where we used $\norm{u}_{\Lp}^2 \geq  \frac{2C_3}{B_3} \norm{\ep_{P_0}}_{\opl}^{2\max(\theta-\xi,0)}\lambda^{2 \Tilde{\theta}} \norm{\ep_{P_0}^{-\theta}u}_{\Lp}^2$ in $(\dag)$.
\end{proof}

\begin{appxlem} \label{lem:bound BS quantile}
    For any $\delta >0$, there exists $\tilde{C}$ such that 
    $$P_{H_1}(q_{1-\alpha}^{\lambda} \leq \tilde{C}\gamma) \geq 1-\delta,$$
    where $\gamma := \frac{\log \frac{2}{\alpha}}{n_1\sqrt{\delta}} \left(\norm{\W}_{\opS} \sqrt{\zeta} + \norm{\W}_{\opS}^2 \sqrt{D_{\lambda}}+\zeta\right)$. Furthermore the operator $\W$ can be replaced by $\V$ when $4\Cl\norm{u}_{\Lp}^2 \leq 1$,
    where $\W$, $\V,$ $D_{\lambda}$ and $\zeta$ are defined in Lemma \ref{lem:bound-var} and $\Cl$ is defined in Lemma \ref{Lem:AP-AP0}. 
    % \textcolor{red}{Previously in this lemma, you wrote $D_\lambda$ while lemma A.3 has $\D_\lambda$. I changed everything in this result to $\D_\lambda$. Is this the way you are using throughout?} \textcolor{blue}{Ok I will use $\D_{\lambda}$ throughout.}
\end{appxlem}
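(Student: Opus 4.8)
The plan is to show that with $\mathbb{X}_n$-probability at least $1-\delta$ one has $P_\epsilon\{\stat^\epsilon(\mathbb{X}_n)>\tilde C\gamma\mid\mathbb{X}_n\}\le\alpha$, which by the definition of $\qq$ forces $\qq\le\tilde C\gamma$ on that event. I would condition on $\mathbb{Z}_{n_2}$ throughout, so that $\gSh$ — hence $b(x):=\gSh K_0(\cdot,x)$, $\mu:=\gSh\Psi_P$ (with $\norm{\mu}^2_{\h_{K_0}}=\zeta$) and $r(x):=b(x)-\mu$ (which has $\E_P r(X)=0$) — is deterministic. Expanding $\inner{b(X_i)}{b(X_j)}_{\h_{K_0}}=\zeta+\inner{\mu}{r(X_i)}_{\h_{K_0}}+\inner{\mu}{r(X_j)}_{\h_{K_0}}+\inner{r(X_i)}{r(X_j)}_{\h_{K_0}}$ and summing over $i\ne j$ gives $n_1(n_1-1)\stat^\epsilon(\mathbb{X}_n)=T_1+T_2+T_3$, where $T_1=\zeta[(\sum_i\epsilon_i)^2-n_1]$, $T_2=2\sum_{i\ne j}\epsilon_i\epsilon_j\inner{\mu}{r(X_i)}_{\h_{K_0}}$ and $T_3=\sum_{i\ne j}\epsilon_i\epsilon_j\inner{r(X_i)}{r(X_j)}_{\h_{K_0}}$, each a centered, diagonal-free Rademacher chaos of order at most two given $\mathbb{X}_n$. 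Heuristically $T_1$ carries the signal $\zeta$, $T_3$ the ``noise'' governed by $D_\lambda$, and $T_2$ interpolates; because the wild bootstrap is not recentered under $H_1$ (it uses $K_0(\cdot,X_i)$, not $K_0(\cdot,X_i)-\Psi_P$), this split is what isolates the signal contribution.

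The first step is to bound each $T_k$ over $\epsilon$, given $\mathbb{X}_n$. For $T_1$, Hoeffding's inequality gives $|\sum_i\epsilon_i|\lesssim\sqrt{n_1\log(1/\alpha)}$ with $\epsilon$-probability $\ge1-\alpha/3$, so $T_1\lesssim n_1\zeta\log(2/\alpha)$ there. Writing $T_2=\epsilon^\top\mathbf G\epsilon$ and $T_3=\epsilon^\top\mathbf R\epsilon$ with $\mathbf G,\mathbf R$ the symmetric zero-diagonal matrices $[(\inner{\mu}{r(X_i)}_{\h_{K_0}}+\inner{\mu}{r(X_j)}_{\h_{K_0}})\mathds{1}_{i\ne j}]$ and $[\inner{r(X_i)}{r(X_j)}_{\h_{K_0}}\mathds{1}_{i\ne j}]$ (up to a harmless constant on $\mathbf G$), the Hanson--Wright inequality for Rademacher chaos yields, on events of $\epsilon$-probability $\ge1-\alpha/3$ each, $T_2\lesssim(\norm{\mathbf G}_{\hsS}+\norm{\mathbf G}_{\opS})\log(2/\alpha)$ and $T_3\lesssim(\norm{\mathbf R}_{\hsS}+\norm{\mathbf R}_{\opS})\log(2/\alpha)$, where I collapse $\sqrt{\log}+\log$ into $\log(2/\alpha)$ (valid since $\log(2/\alpha)\ge\log2$). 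Bounding $\norm{M}_{\opS}\le\norm{M}_{\hsS}$ and $\norm{\mathbf G}^2_{\hsS}\le(n_1-1)\Sigma^2$ with $\Sigma^2:=\sum_i\inner{\mu}{r(X_i)}^2_{\h_{K_0}}$, a union bound gives $P_\epsilon\{n_1(n_1-1)\stat^\epsilon>C'(n_1\zeta+\sqrt{n_1}\,\Sigma+\norm{\mathbf R}_{\hsS})\log(2/\alpha)\mid\mathbb{X}_n\}\le\alpha$.

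The second step is to control $\Sigma$ and $\norm{\mathbf R}_{\hsS}$ over $\mathbb{X}_n$. Since $\E_P[r(X)\htens r(X)\mid\mathbb{Z}_{n_2}]=\gSh(\A_P-\Psi_P\htens\Psi_P)\gSh\preccurlyeq\gSh\A_P\gSh$ and $(E_1)$--$(E_2)$ give $\norm{\gSh\hat{\A}_{P,\lambda}^{1/2}}^2_{\opS}\le C_1+C_2$, I would factor $\gSh\A_P\gSh$ through $\W=\hat{\A}_{P,\lambda}^{-1/2}\A_{P,\lambda}^{1/2}$ and $\SgLP\A_P\SgLP$ to obtain $\norm{\gSh\A_P\gSh}_{\opS}\lesssim\norm{\W}^2_{\opS}$ and $\norm{\gSh\A_P\gSh}_{\hsS}\lesssim\norm{\W}^2_{\opS}\norm{\SgLP\A_P\SgLP}_{\hsS}\lesssim\norm{\W}^2_{\opS}\sqrt{D_\lambda}$, using $\norm{\SgLP\A_P\SgLP}_{\opS}\le1$ and Lemma~\ref{Lem:AP-AP0}(i),(ii) (with $\text{Tr}(\A_P)=\E_P K_0(X,X)<\infty$ and $D_\lambda\gtrsim1$ in the regime of interest, so the additive constant in the case $4\Cl\norm{u}_{\Lp}^2\le1$ is absorbed into $D_\lambda$). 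Hence $\E[\Sigma^2\mid\mathbb{Z}_{n_2}]=n_1\inner{\mu}{\E[r(X)\htens r(X)]\mu}_{\h_{K_0}}\lesssim n_1\norm{\W}^2_{\opS}\zeta$ and $\E[\norm{\mathbf R}^2_{\hsS}\mid\mathbb{Z}_{n_2}]=n_1(n_1-1)\,\text{Tr}\big((\gSh(\A_P-\Psi_P\htens\Psi_P)\gSh)^2\big)\lesssim n_1(n_1-1)\norm{\W}^4_{\opS}D_\lambda$. Markov's inequality conditionally on $\mathbb{Z}_{n_2}$, with budget $\delta/2$ each, defines an event $A$ with $P_{H_1}(A)\ge1-\delta$ on which $\Sigma\lesssim\norm{\W}_{\opS}\sqrt{\zeta}\,\sqrt{n_1/\delta}$ and $\norm{\mathbf R}_{\hsS}\lesssim\norm{\W}^2_{\opS}\sqrt{D_\lambda}\,n_1/\sqrt\delta$. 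Inserting these into the Step-1 bound and dividing by $n_1(n_1-1)$ gives, on $A$, $\qq\lesssim\frac{\log(2/\alpha)}{n_1-1}\big(\zeta+\norm{\W}_{\opS}\sqrt{\zeta}/\sqrt\delta+\norm{\W}^2_{\opS}\sqrt{D_\lambda}/\sqrt\delta\big)\le\tilde C\gamma$ (using $1/\sqrt\delta\ge1$ and $n_1/(n_1-1)\le2$). Finally, when $4\Cl\norm{u}_{\Lp}^2\le1$, Lemma~\ref{Lem:AP-AP0}(ii) makes $\A_{P_0,\lambda}$ and $\A_{P,\lambda}$ — hence $\norm{\W}_{\opS}$ and $\norm{\V}_{\opS}$ — equivalent up to an absolute constant, so every estimate above holds verbatim with $\V$ in place of $\W$.

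I expect Step 1 to be the main obstacle: deriving the Rademacher-chaos tail bounds for $T_2$ and $T_3$ with tracked constants (the cross term $T_2$ has an asymmetric coefficient array that must be symmetrized before invoking Hanson--Wright), and — more delicately — ensuring that only the Hilbert--Schmidt quantities $\Sigma$ and $\norm{\mathbf R}_{\hsS}$ survive, since this is precisely what makes $D_\lambda$ (rather than a larger operator-norm quantity) appear in $\gamma$. That is secured by the crude bound $\norm{M}_{\opS}\le\norm{M}_{\hsS}$, which only costs a $\sqrt{\log}\to\log$ factor that is already built into $\gamma$, together with the absorption of the additive constant from Lemma~\ref{Lem:AP-AP0}(ii) into $D_\lambda$ in the small-$\lambda$ regime. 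The remaining pieces — Hoeffding for $T_1$, Markov for the data-dependent quantities, and bookkeeping of the conditioning on $\mathbb{Z}_{n_2}$ (so that the conditional Markov bounds yield the unconditional event $A$) — are routine.
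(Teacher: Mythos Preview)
Your argument is correct and reaches the stated bound, but it takes a different and more elaborate route than the paper. The paper does not split $\stat^{\epsilon}$ into $T_1+T_2+T_3$; instead it applies a single Rademacher--chaos exponential inequality (de la Pe\~na and Gin\'e, Corollary~3.2.6) directly to the full off-diagonal sum $\sum_{i\ne j}\epsilon_i\epsilon_j\,h(X_i,X_j)$ with $h(x,y)=\inner{\gSh K_0(\cdot,x)}{\gSh K_0(\cdot,y)}_{\h_{K_0}}$, obtaining almost surely in $\mathbb{X}_n$
\[
q^{\lambda}_{1-\alpha}\ \lesssim\ \frac{\log(2/\alpha)}{n_1(n_1-1)}\sqrt{\textstyle\sum_{i,j}h^2(X_i,X_j)}\,.
\]
It then bounds the conditional second moment of $I:=\frac{1}{n_1(n_1-1)}\sqrt{\sum_{i,j}h^2(X_i,X_j)}$ by $n_1^{-2}\big(\norm{\W}^2_{\opS}\zeta+\norm{\W}^4_{\opS}D_\lambda+\zeta^2\big)$, arguing that this computation is ``similar to the proof of Lemma~\ref{lem:bound-var}'', and finishes with a single Markov step. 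In effect, the paper applies the chaos inequality to the \emph{unexpanded} coefficient matrix $[h(X_i,X_j)]$ and defers the centering $b=\mu+r$ to the second-moment bound on $\sum_{i,j}h^2$, whereas you expand first and apply tail bounds piecewise. Your route makes the origin of each term in $\gamma$ transparent ($\zeta$ from $T_1$, the cross term from $T_2$, $D_\lambda$ from $T_3$) at the cost of three separate tail bounds and two Markov steps; the paper's route is shorter but leans on a pointer to the earlier variance calculation. The $\V$-replacement is likewise handled in the paper simply by invoking Lemma~\ref{lem:bound-var}(ii), rather than the operator-equivalence argument you sketch; the underlying mechanism---factoring through $\A_{P_0,\lambda}$ instead of $\A_{P,\lambda}$ using $\norm{\SgL\B\SgL}_{\hsS}\le\sqrt{\Cl}\norm{u}_{\Lp}$---is the same.
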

\begin{proof}
    Let $h(x,y):= \inner{\gSh K_0(\cdot,x)}{\gSh K_0(\cdot,y)}_{\h_{K_0}}$. Conditioned on the samples $\mathbb{X}_n$, it follows from 
\citet[Corollary 3.2.6]{delapena} that
% , we can deduce that conditioned on the given samples $\mathbb{X}_n$, we have 
$$P_{\epsilon}\left(\left|\sum_{i \neq j} \epsilon_i \epsilon_j h(X_i,X_j)\right| \geq t \right) \leq 2\exp\left(\frac{-at}{\sqrt{\sum_{i,j}h^2(X_i,X_j)}}\right),$$ for some constant $a>0$. This implies that 
\begin{equation}
  q_{1-\alpha}^{\lambda} \leq  \frac{\tilde{C}\log \frac{2}{\alpha}}{n_1(n_1-1)} \sqrt{\sum_{i,j}h^2(X_i,X_j)},  \label{eq:BS-quantile-bound}
\end{equation} 
almost surely. Let $I:=\frac{1}{n_1(n_1-1)}\sqrt{\sum_{i,j}h^2(X_i,X_j)}$. Then similar to the proof of Lemma \ref{lem:bound-var}, we can show that 
$$\E\left(I^2 | \mathbb Z_{n_2}\right) \lesssim \frac{1}{n_1^2}\left(\norm{\W}_{\opS}^2 \zeta+\norm{\W}_{\opS}^4 D_{\lambda} + \zeta^2\right),$$ where $\W$ can be replaced by $\V$ when $4\Cl \norm{u}^2_{\Lp} \leq 1.$  Thus using \eqref{eq:BS-quantile-bound} and Markov's inequality, we obtain the desired result.
\end{proof}

 \begin{appxlem} \label{lemma: bounds for g}
Let $\mathcal{C}$ be any compact, self-adjoint operator defined on a separable Hilbert space, $H$ and $(\tau_i,\alpha_i)_i$ are the eigenvalues and eigenfunctions of $\mathcal{C}$. Then for $g_{\lambda}$ that satisfies $(E_1)$--$(E_4)$ the following hold.
\begin{enumerate}[label=(\roman*)]
    \item $\norm{g_{\lambda}^{1/2}(\mathcal{C})(\mathcal{C}+\lambda\Id)^{1/2}}_{\op} \leq (C_1+C_2)^{1/2}$;
    \item $\norm{(\mathcal{C}+\lambda \Id)^{-1/2} g_{\lambda}^{-1/2}(\mathcal{C})}_{\op} \leq C_4^{-1/2}$;

\end{enumerate}
\end{appxlem}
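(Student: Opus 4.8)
The plan is to reduce both inequalities to elementary scalar estimates via the functional calculus for $\mathcal{C}$. Recall that $g_\lambda(\mathcal{C})$ is defined by the displayed formula as $\sum_{i\ge1} g_\lambda(\tau_i)\,\alpha_i\otimes_H\alpha_i + g_\lambda(0)\big(\Id - \sum_{i\ge1}\alpha_i\otimes_H\alpha_i\big)$, and since $g_\lambda$ takes strictly positive values on $(0,\infty)$ and $(E_2)$, $(E_4)$ force $0<g_\lambda(0)<\infty$, the operators $g_\lambda^{\pm1/2}(\mathcal{C})$ are defined the same way with $g_\lambda$ replaced by $g_\lambda^{\pm1/2}$; likewise $(\mathcal{C}+\lambda\Id)^{\pm1/2} = \sum_{i\ge1}(\tau_i+\lambda)^{\pm1/2}\alpha_i\otimes_H\alpha_i + \lambda^{\pm1/2}\big(\Id - \sum_{i\ge1}\alpha_i\otimes_H\alpha_i\big)$. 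Hence in each of (i) and (ii) the operator in question is diagonal in the orthonormal system $(\alpha_i)_i$ spanning $\overline{\range(\mathcal{C})}$ and a scalar multiple of the identity on its orthogonal complement $\ker\mathcal{C}$, so its operator norm equals the supremum of the associated scalar function over $\sigma(\mathcal{C})\cup\{0\}$, which is contained in $\Gamma=[0,\kappa]$ because in every instance where the lemma is invoked $\mathcal{C}$ is positive semidefinite with $\norm{\mathcal{C}}_{\op}\le\kappa$.

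For (i) I would write $g_\lambda^{1/2}(\mathcal{C})(\mathcal{C}+\lambda\Id)^{1/2}$ as the operator with eigenvalues $g_\lambda^{1/2}(\tau_i)(\tau_i+\lambda)^{1/2}$ on $\overline{\range(\mathcal{C})}$ and $g_\lambda^{1/2}(0)\lambda^{1/2}$ on $\ker\mathcal{C}$, so that
\[
\norm{g_\lambda^{1/2}(\mathcal{C})(\mathcal{C}+\lambda\Id)^{1/2}}_{\op}^2 = \sup_{x\in\sigma(\mathcal{C})\cup\{0\}} g_\lambda(x)(x+\lambda) = \sup_{x\in\sigma(\mathcal{C})\cup\{0\}}\big(x g_\lambda(x) + \lambda g_\lambda(x)\big) \le C_1 + C_2,
\]
where the last inequality uses $(E_1)$ and $(E_2)$ on $\Gamma$. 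Taking square roots gives (i). For (ii), the same diagonalization yields
\[
\norm{(\mathcal{C}+\lambda\Id)^{-1/2} g_\lambda^{-1/2}(\mathcal{C})}_{\op}^2 = \sup_{x\in\sigma(\mathcal{C})\cup\{0\}} \frac{1}{g_\lambda(x)(x+\lambda)} \le \frac{1}{C_4},
\]
by $(E_4)$, namely $\inf_{x\in\Gamma}g_\lambda(x)(x+\lambda)\ge C_4$ (which in particular covers $x=0$, giving $g_\lambda(0)\lambda\ge C_4$); taking square roots gives (ii).

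The computations themselves are routine, so I do not expect a genuine obstacle; the two points I would be careful about are (a) rigorously justifying that the operator norm of a spectral function of $\mathcal{C}$ equals the supremum of that function over $\sigma(\mathcal{C})\cup\{0\}$, with the correct treatment of the $g_\lambda(0)$-branch on $\ker\mathcal{C}$ and of the possible accumulation point $0$, and (b) verifying that $\sigma(\mathcal{C})\subseteq\Gamma=[0,\kappa]$ so that conditions $(E_1)$, $(E_2)$, $(E_4)$ are actually applicable — here I would record that each operator to which the lemma is applied (for instance $\hat\A_P$, $\A_{P_0}$, or $\hat\A_{P_0}$) is a positive semidefinite trace-class operator whose operator norm is bounded by the relevant constant $\kappa$, so the scalar bounds above are legitimate.
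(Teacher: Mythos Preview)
Your proof is correct and follows essentially the same route as the paper's: both diagonalize via the spectral decomposition of $\mathcal{C}$, reduce the operator norm to a supremum of the scalar function $g_\lambda(x)(x+\lambda)$ (or its reciprocal) over the spectrum, and invoke $(E_1)$--$(E_2)$ for (i) and $(E_4)$ for (ii). You are in fact a bit more careful than the paper, which writes the supremum only over the eigenvalues $(\tau_i)_i$ and does not explicitly track the $g_\lambda(0)$ branch on $\ker\mathcal{C}$ or the inclusion $\sigma(\mathcal{C})\subseteq\Gamma$.
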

\begin{proof}

\emph{(i)} 
\begin{align*}
    \norm{g_{\lambda}^{1/2}(\mathcal{C})(\mathcal{C}+\lambda\Id)^{1/2}}_{\op}& = \norm{g_{\lambda}^{1/2}(\mathcal{C})(\mathcal{C}+\lambda\Id)g_{\lambda}^{1/2}(\mathcal{C})}_{\op}^{1/2} \\
    &= \sup_{i}\left| g_{\lambda}(\tau_i)(\tau_i+\lambda) \right|^{1/2} \stackrel{(\dag)}{\leq} (C_1+C_2)^{1/2},
\end{align*}

where $(\dag)$ follows from Assumptions $(E_1)$ and $(E_2)$.\vspace{1mm} \\
\emph{(ii)} 
\begin{align*}
    \norm{(\mathcal{C}+\lambda \Id)^{-1/2} g_{\lambda}^{-1/2}(\mathcal{C})}_{\op}& =\norm{(\mathcal{C}+\lambda \Id)^{-1/2} g_{\lambda}^{-1}(\mathcal{C})(\mathcal{C}+\lambda \Id)^{-1/2}}_{\op}^{1/2} \\&= \sup_i\left|(\lambda+\tau_i)g_{\lambda}(\tau_i)\right|^{-1/2} \\ & \leq \left|\inf_i (\lambda+\tau_i)g_{\lambda}(\tau_i)\right|^{-1/2} \stackrel{(\ddag)}{\leq} C_4^{-1/2}, 
\end{align*}
where $(\ddag)$ follows from Assumption $(E_4)$.
\end{proof}

\section{Additional Experiments}
Recall that, given $n$ samples, we split them into two groups of sizes $n_1$ and $n_2$, where $n = n_1 + n_2$. Here, $n_2$ represents the number of samples from $P$ used to estimate the covariance operator. In this section, we provide additional experiments highlighting the impact of $n_2$ on the test power. Figures \ref{fig:RBM_n2}-\ref{fig:inf_freq_n2} show the test power of the proposed regularized KSD test for the experiments corresponding to Figures \ref{fig:RBM}-\ref{fig:Infdim2} in Section \ref{sec:experiments}, with different choices of \( n_2 \). For each experiment, we display the power of either KSD(Tikhonov) or KSD(TikMax), selecting the one that performs the best. Based on these figures, the overall takeaway is that test performance is generally not very sensitive to the choice of \( n_2 \). However, if \( n_2 \) is chosen too large (so that \( n_1 \) becomes too small), performance deteriorates. Similarly, when \( n_2 \) is too small, the performance also declines due to insufficient samples to estimate the covariance operator accurately. These results suggest that the best performance is achieved when \( n_2 \) is reasonably small relative to \( n_1 \) but not too small. For example, choosing \( n_2 \approx n/5 \) appears to yield good performance across all experiments we conducted. Alternatively, if training data is available, \( n_2 \) can be selected to maximize the power observed in the training set.

\begin{figure}[H]
\centering
\includegraphics[scale=0.4]{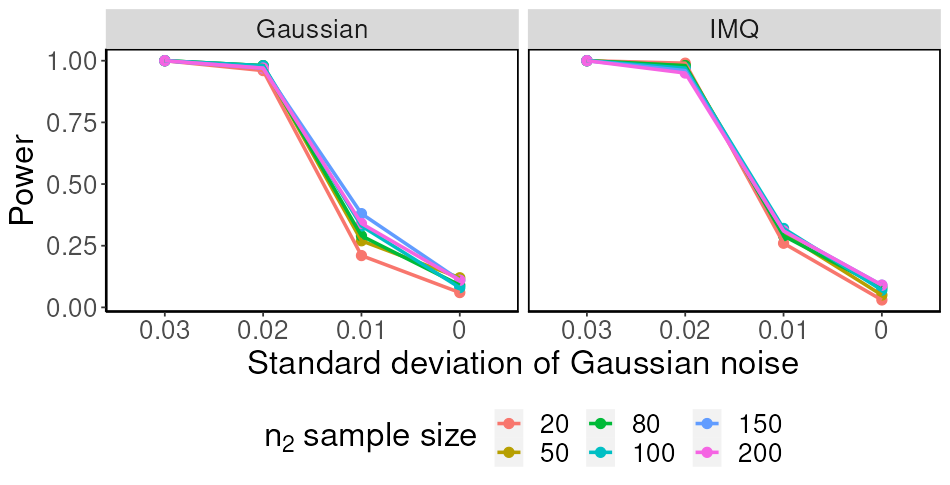}
\caption{Power of the KSD(Tikhonov) test with varying choices of $n_2$ for Gaussian-Bernoulli restricted Boltzmann machine with $d=50$ and a total sample size of $n=1000$.} \label{fig:RBM_n2}
\vspace{-2mm}
\end{figure}

\begin{figure}[H]
\centering
\includegraphics[scale=0.4]{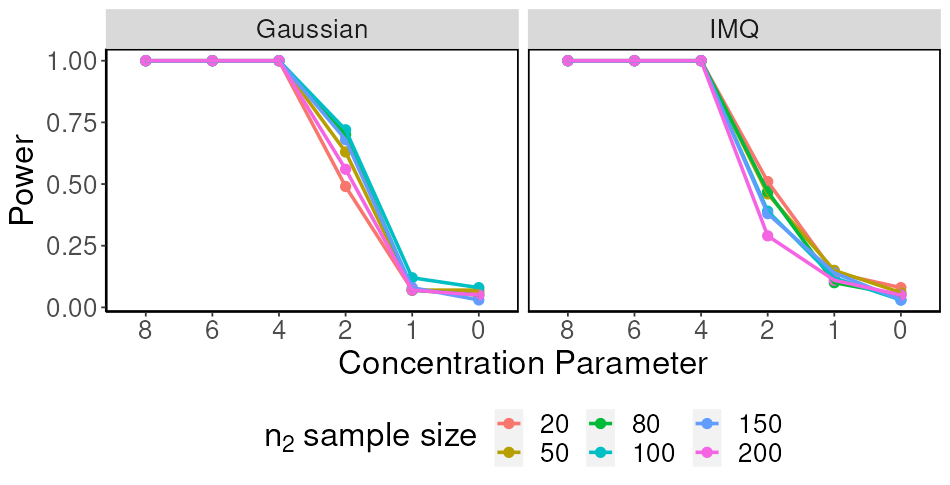}
\caption{Power of the KSD(TikMax) test with varying choices of $n_2$ for mixture of Watson distributions with a total sample size of $n=500$.} \label{fig:watson_n2}
\vspace{-2mm}
\end{figure}

\begin{figure}[H]
\centering
\includegraphics[scale=0.4]{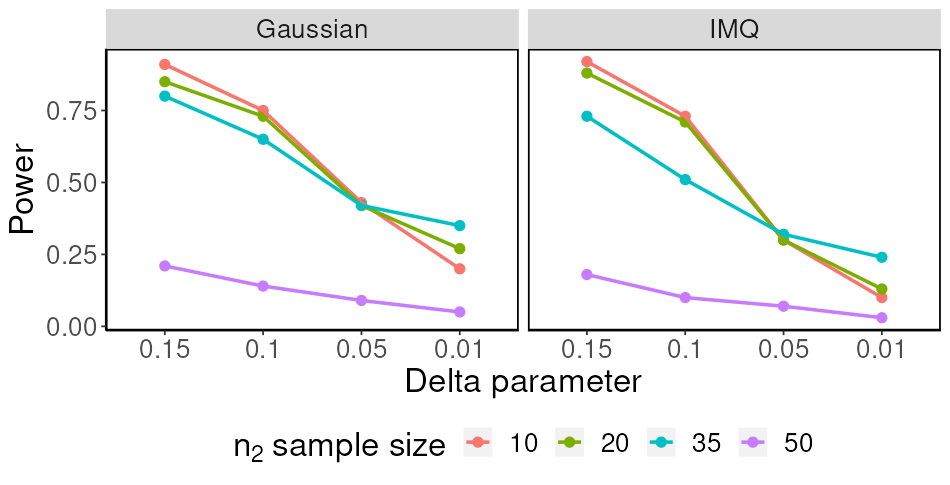}
\caption{Power of the KSD(Tikhonov) test with varying choices of $n_2$ for Brownian motion ($\delta$-perturbation experiment) with a total sample size of $n=70$.} \label{fig:inf_delta_n2}
\vspace{-2mm}
\end{figure}

\begin{figure}[H]
\centering
\includegraphics[scale=0.4]{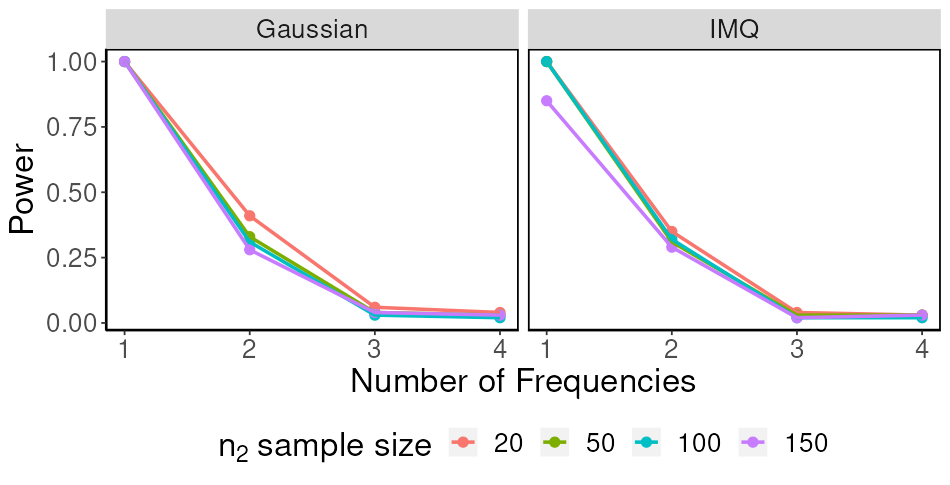}
\caption{Power of the KSD(Tikhonov) test with varying choices of $n_2$ for Brownian motion (truncated frequency experiment) with a total sample size of $n=200$.} \label{fig:inf_freq_n2}
\vspace{-2mm}
\end{figure}

\end{document}